\newtheorem{theorem}{Theorem}[section]
\newtheorem{lemma}[theorem]{Lemma}
\newtheorem{remark}[theorem]{Remark}
\newtheorem{definition}[theorem]{Definition}
\newtheorem{corollary}[theorem]{Corollary}
\newtheorem{proposition}[theorem]{Proposition}
\newtheorem{lem-def}[theorem]{Lemma-Definition}
\newtheorem{example}[theorem]{Example}
\DeclareRobustCommand\longtwoheadrightarrow
\renewenvironment{proof}{{\bfseries Proof.}}{\qed}
\newcommand{\ra}{\rightarrow}
\newcommand{\N}{\mathbb N}
\newcommand{\Z}{\mathbb Z}
\newcommand{\Q}{\mathbb Q}
\newcommand{\F}{\mathbb F}
\def\op{\operatorname}
\def\al{\alpha}
\def\ars#1{\renewcommand\arraystretch{#1}}
\def\aut{\op{Aut}}
\def\bs{\vskip.5cm}
\def\be{\beta}
\def\cc{\mathcal{C}}
\def\inn{\op{in}}
\def\d{\Delta}
\def\defn{\nn{\bf Definition. }}
\def\dgm{\op{deg}_\mu}
\def\dgmh{\op{deg}_{\muh}}
\def\dm{\Delta_\mu}
\def\diso{\lower.4ex\hbox{$\downarrow$}\raise.4ex\hbox{\mbox{\scriptsize
$\wr$}}}
\def\dta{\delta}
\def\e{\medskip}
\def\ee{\mathcal{E}}
\def\ep{\epsilon}
\def\epm{\epsilon_\mu}
\def\g{\Gamma}
\def\ga{\gamma}
\def\gal{\op{Gal}}
\def\gen#1{\big\langle\, {#1} \,\big\rangle}
\def\geta{\Gamma_\eta}
\def\gg{\mathcal{G}}
\def\ggm{\mathcal{G}_\mu}
\def\ggmh{\mathcal{G}_{\muh}}
\def\ggnh{\mathcal{G}_{\nuh}}
\def\gm{\g_\mu}
\def\gn{\g_\nu}
\def\gq{\g_\Q}
\def\gr{\operatorname{gr}}
\def\hk{\hookrightarrow}
\def\kth{K_\tau^h}
\def\kthx{K_\tau^h[x]}
\def\im{\op{Im}}
\def\inm{\op{in}_\mu}
\def\inmh{\op{in}_{\muh}}
\def\inn{\op{in}}
\def\irr{\op{Irr}}
\def\ism{\lower.3ex\hbox{\ars{.08}$\begin{array}{c}\,\to\\\mbox{\tiny $\sim\,$}\end{array}$}}
\def\iso{\ \lower.3ex\hbox{\ars{.08}$\begin{array}{c}\lra\\\mbox{\tiny $\sim\,$}\end{array}$}\ }
\def\ka{\kappa}
\def\kb{\overline{K}}
\def\vt{\vb_\tau}
\def\kh{K^h}
\def\khx{K^h[x]}
\def\km{k_\mu}
\def\kp{\op{KP}}
\def\kpi{\op{KP}_\infty}
\def\kpm{\op{KP}(\mu)}
\def\kpn{\op{KP}(\nu)}
\def\ks{K^{\op{sep}}}
\def\kxb{\overline{K(x)}}
\def\kx{K[x]}
\def\la{\lambda}
\def\La{\Lambda}
\def\lg{l\raise.6ex\hbox to.2em{\hss.\hss}l}
\def\lra{\,\longrightarrow\,}
\def\lx{\operatorname{lex}}
\def\md#1{\; \mbox{\rm(mod }{#1})}
\def\mon{\op{Mon}(L/K)}
\def\muh{\mu^h}
\def\mul{\mu_\la}
\def\nn{\noindent}
\def\np{N_{\mu,\phi}}
\def\npp{N^+_{\mu,\phi}}
\def\nuh{\nu^h}
\def\om{\omega}
\def\orb{\hbox to  .3em{$\backslash$}\backslash}
\def\ord{\op{ord}}
\def\mub{\overline{\mu}}
\def\p{\mathfrak{p}}
\def\pp{\mathfrak{P}}
\def\ppa{\mathcal{P}_{\al}}
\def\pset{\mathcal{P}}
\def\ri{\rho_i}
\def\rj{\rho_j}
\def\kx{K[x]}
\def\sg{\sigma}
\def\sii{\ \Longleftrightarrow\ }
\def\smu{\sim_\mu}
\def\sp{\op{Spec}}
\def\supp{\op{supp}}
\def\t{\theta}
\def\dm{\Delta_\mu}
\def\dn{\Delta_\nu}
\def\ttt{\mathcal{T}}
\def\ty{\mathbf{t}}
\def\vb{\bar{v}}
\def\vh{v^h}
\def\ab{\overline{a}}
\def\wb{\overline{w}}
\def\omb{\overline{\omega}}
\DeclareMathOperator{\inv}{in}
\newcounter{cs}
\newcommand{\casos}{\begin{itemize}}
\newcommand{\fcasos}{\end{itemize}\setcounter{cs}{1}}
\newfont{\tit}{cmr12 scaled \magstep3}
\title{The defect formula}
\author{Enric Nart}
\address{Departament de Matem\`{a}tiques,         Universitat Aut\`{o}noma de Barcelona,         Edifici C, E-08193 Bellaterra, Barcelona, Catalonia}
\email{nart@mat.uab.cat}
\author{Josnei Novacoski}
\address{Departamento de Matem\'{a}tica,         Universidade Federal de S\~ao Carlos, Rod. Washington Luís, 235, 13565--905, S\~ao Carlos -SP, Brazil}
\email{josnei@ufscar.br}
\thanks{Partially supported by grant PID2020-116542GB-I00  funded by the Spanish MCIN/AEI. 
During the realization of this project the second author was supported by two grants from Funda\c{c}\~ao de Amparo \`a Pesquisa do Estado de S\~ao Paulo (process numbers 2017/17835-9 and 2021/11246-7).}
\keywords{Key polynomials, graded algebras, Mac Lane-Vaqui\'e key polynomials, the defect}
\subjclass[2010]{Primary 13A18}
\begin{document}

\begin{abstract}
In this paper we present a characterization for the defect of a simple algebraic extensions of valued fields. This characterization generalizes the known result for the henselian case, namely that the defect is the product of the relative degrees of limit augmentations. The main tool used here is the graded algebra associated to a valuation on a polynomial ring. Let $\kh$ be a henselization of a valued field $K$. Another relevant result proved in this paper is that for every valuation $\muh$ on $\khx$, with restriction $\mu$ on $\kx$, the corresponding map $\mathcal G_\mu\hk\mathcal G_{\muh}$ of graded algebras is an isomorphism.
\end{abstract}

\maketitle

\section{Introduction}

Let $(K,v)$ be a valued field and $w$ an extension of $v$ to a finite extension $L$ of $K$. If $w$ is the unique extension of $v$ to $L$, then we define the defect of $w/v$ by
\[
d(w/v):=\frac{[L:K]}{e\cdot f},
\]
where $e$ and $f$ are the ramification index and inertia degree  of $w/v$, respectively. 
Fix an algebraic closure $\overline{K}$ of $K$ and an extension $\vb$ of $w$ to $\kb$. We consider the henselizations $K^h$ and $L^h$ of $K$ and $L$ inside this algebraic closure. Set $\vh=\vb_{\mid \kh}$ and $w^h=\vb_{\mid L^h}$. If $w$ is not the unique extension of $K$ to $L$, then we define
\[
d(w/v):=d(w^h/v^h).
\]
We will show (Section \ref{independencehens}) that this number does not depend on the choice of $\overline v$.

The main goal of this paper is to understand the defect $d(w/v)$ of an extension of $v$ to a simple finite extension $L=K[x]/(g)$, defined by an irreducible polynomial $g\in \kx$. The valuation $w$ may be identified with a valuation $\mu$ on $K[x]$ with
nontrivial support $gK[x]$. We use the Mac Lane-Vaqui\'e machinery  to compute  $d(w/v)$ in terms of data associated to the valuation $\mu$.

Let $\mu\ra \nu$ be an augmentation (ordinary or limit) of valuations on $K[x]$. Let $\phi\in\kx$ be either a key polynomial of minimal degree of $\nu$, or $\supp(\nu)=\phi\kx$. In Section \ref{defectofaaug} we define the defect $d(\mu\ra\nu)$ of $\mu\ra \nu$. This number is the degree of the image of $\phi$ in a suitable graded algebra. We show that if $\mu\ra \nu$ is ordinary, then $d(\mu\ra\nu)=1$. We also show that if $(K,v)$ is henselian and $\mu\ra \nu$ is a limit augmentation, then $d(\mu\ra\nu)$ is equal to $\deg(\nu)/\deg(\mu)$ (Lemma \ref{Lemma3}).

By a celebrated result of Mac Lane--Vaqui\'e, $\mu$ may be obtained by a finite chain of augmentations
\[
v\ \to\  \mu_0\ \to\  \mu_1\ \to\ \cdots
\ \to\ \mu_{r-1} 
\ \to\ \mu_{r}=\mu
\]
starting with $v$. 

Our main result (Theorem \ref{DF}) shows that, under certain mild assumptions on this chain, we have
\[
 d(w/v)=d(\mu_0\to\mu_1)\cdots d(\mu_{r-1}\to\mu). 
\]
If $(K,v)$ is henselian, then our formula becomes
\[
d(w/v)=\prod_{i\in I}\frac{\deg(\mu_{i+1})}{\deg(\mu_{i})}
\]
where $I$ is the set of indices $i$ for which $\mu_i\to\mu_{i+1}$ is a limit augmentation. This is precisely Vaqui\'e's formula, proved in \cite{Vaq3}.

Our proof relies heavily in an analysis of the behavior of all ingredients under henselization. The main goal is to prove the following theorem.
\begin{theorem}\label{defectcte}
Let $\mu\ra \nu$ be an augmentation of valuations on $K[x]$ and $\mu^h\ra \nu^h$ the corresponding augmentation on $K^h[x]$. Then
\[
d(\mu\to\nu)=d(\muh\to\nu^h).
\]
\end{theorem}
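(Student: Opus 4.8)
The plan is to derive the equality from the natural map $\ggm\hk\ggmh$ of graded algebras, which is an isomorphism --- this is the tool emphasised in the abstract, and it applies because $\muh$ restricts to $\mu$ on $\kx$; likewise $\ggn\hk\ggnh$. Recall from Section~\ref{defectofaaug} that $d(\mu\ra\nu)$ is the degree of the image of a distinguished polynomial $\phi\in\kx$ in a graded algebra attached to the augmentation, where $\phi$ is a key polynomial of $\nu$ of minimal degree or a generator of $\supp(\nu)$. The inclusion $\kx\hk\khx$ induces exactly the map $\ggm\to\ggmh$, which sends $\inm(f)\mapsto\inmh(f)$ and in particular $\inm(\phi)\mapsto\inmh(\phi)$; since it respects the degree of homogeneous elements (being induced by an inclusion compatible with the filtration by degree in $x$), the defect is computed from the same homogeneous element on both sides. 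Thus everything reduces to showing that $\inmh(\phi)$ is the element that computes $d(\muh\ra\nuh)$, i.e.\ that it agrees with $\inmh(\phi^h)$ for a distinguished polynomial $\phi^h$ of $\muh\ra\nuh$ (which exists, since $\nuh$, the $\vb$-compatible extension of $\nu$ to $\khx$, is an augmentation of $\muh$).

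The easy case is the one in which $\phi$ stays irreducible over $\kh$; then $\phi$ itself serves as $\phi^h$ and there is nothing to prove. This includes every ordinary augmentation: a key polynomial $\phi$ of $\mu$ has $\inm(\phi)$ prime in $\ggm$, hence $\inmh(\phi)$ is prime in $\ggmh$, so $\phi$ is a key polynomial of $\muh$ and in particular irreducible in $\khx$; then $\muh\ra\nuh=[\muh;\phi,\ga]$ with $\ga=\nu(\phi)$ is again an ordinary augmentation, and by the vanishing of the defect of ordinary augmentations both sides equal $1$. More generally, whenever $\phi$ remains irreducible over $\kh$ the prime factorisation of $\inm(\phi)$ in $\ggm$ transports verbatim to $\ggmh$, $\phi$ is again a limit key polynomial of $\muh$, $\nuh=[\muh;\phi,\ga]$, and the equality is immediate.

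The substantive case is a limit augmentation $\mu\ra\nu$ whose distinguished polynomial $\phi$ becomes reducible over $\kh$; this does occur, e.g.\ at the support step, where $\phi=g$ is the minimal polynomial of a generator of $L$ over $K$ and $w$ is not the unique extension of $v$ to $L$. Write $\phi=\phi_1\cdots\phi_s$ with the $\phi_j$ monic and irreducible in $\khx$, and let $\phi^h=\phi_1$ be the factor corresponding to the henselization $L^h$ (equivalently, to the fixed $\vb$). Then $\inmh(\phi)=\inmh(\phi_1)\cdots\inmh(\phi_s)$, whereas $d(\muh\ra\nuh)$ is read off $\inmh(\phi_1)$ alone. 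So the theorem, in this case, is the assertion that the factors $\phi_2,\dots,\phi_s$ do not contribute to the defect: removing $\inmh(\phi_2)\cdots\inmh(\phi_s)$ from $\inmh(\phi)$ leaves the degree that defines $d$ unchanged.

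I expect this last point to be the main obstacle. Its proof should combine an arithmetic input --- the roots of $\phi_2,\dots,\phi_s$ lie over the extensions of $v$ to $L$ other than the one that the continuous family underlying $\mu\ra\nu$ is built to approach, so that these factors are already stabilised along that family --- with a Krasner-type control of the factorisation of $\phi$ over $\kh$, translated into the graded setting so as to show that $\inmh(\phi_2)\cdots\inmh(\phi_s)$ is negligible for the degree relevant to the defect. Finally, the henselian formula $d(\muh\ra\nuh)=\deg(\nuh)/\deg(\muh)$ of Lemma~\ref{Lemma3} can be used as a consistency check on the value obtained.
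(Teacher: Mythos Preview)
Your outline has the right shape in the limit case --- pass to $\kh$, factor $\phi$, and show that all factors except the distinguished one are ``negligible'' --- but there are two genuine gaps and one false step.

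First, you have misread the definition of the defect. By Lemma~\ref{Lemma1}, $d(\mu\to\nu)$ is \emph{not} the degree of $\inm\phi$ in $\ggm$; it is
\[
d(\mu\to\nu)=\min\left\{\deg_{\rho_Q}(\phi)\ \middle|\ Q\in\ty(\mu,\nu)\right\},
\]
a minimum over the intermediate ordinary augmentations $\rho_Q=[\mu;Q,\nu(Q)]$. For a limit augmentation with continuous family $\cc=(\ri)_{i\in A}$ based on $\mu$, these $\rho_Q$ are (cofinally) the $\ri$, so the relevant isomorphisms are $\gg_{\ri}\hk\gg_{\ri^h}$ for \emph{all} $i\in A$, not the single map $\ggm\hk\ggmh$. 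This is how the paper argues: the isomorphism of Theorem~\ref{theoremabouthensel} is applied at every $\ri$ to get $\deg_{\ri}(\phi)=\deg_{\ri^h}(\phi)$, and then one takes the minimum. Relatedly, the preservation of $\deg_\mu$ has nothing to do with any ``filtration by degree in $x$''; $\deg_\mu$ is the degree in the variable $\pi=\inm\varphi$ of Theorem~\ref{g0gm}, and its invariance is exactly the content of Lemma~\ref{crit}.

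Second, your treatment of the ordinary case contains a false claim: a key polynomial $\phi\in\kpm$ need \emph{not} remain irreducible over $\kh$, nor lie in $\kp(\muh)$. Example~\ref{exampnaphel} produces such $\phi$, and Example~\ref{exampletwoaug} exhibits a key polynomial of minimal degree that splits over $\kh$. Primality of $\inmh\phi$ does transfer under the isomorphism, but $\muh$-minimality does not follow, because $\deg(\muh)$ may be strictly smaller than $\deg(\mu)$ (see Corollary~\ref{corollaryandrei}). The paper handles this by replacing $\phi$ with its singled-out factor $Q=Q_{\mu,\phi}$ via Proposition~\ref{P3.3Isom} and Lemma~\ref{Ordinary}; the conclusion $d=1$ then comes from Lemma~\ref{d=1} on both sides.

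Finally, the step you flag as the ``main obstacle'' is exactly Lemma~\ref{unstabilitythm}: the cofactor $\phi/Q$ is $\cc^h$-stable, hence $\inn_{\ri^h}(\phi/Q)$ is a unit for large $i$, so $\deg_{\ri^h}(\phi)=\deg_{\ri^h}(Q)$. Your heuristic (``the other roots lie over other extensions of $v$'') is morally right, but the paper's proof routes through the minimal limit augmentation $\nu_\cc$ (Lemma~\ref{MinAug}) and the unit criterion of Lemma~\ref{MinUnstable}, together with Proposition~\ref{P3.3Isom}, rather than a Krasner-type argument.
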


The proof of Theorem \ref{defectcte} is based on two main ingredients. The first main ingredient (Theorem \ref{fundamental}) is about properties of irreducible polynomials over henselian fields. This result was proved in \cite[Section 4]{defless} for \emph{defectless} irreducible polynomials over henselian fields. With slight modifications, the proof (which we present here for the ease of the reader) is valid for arbitrary irreducible polynomials.

The second main ingredient is about the structure of the graded algebras associated to the corresponding valuations. For a field extension $L/K$, if $\mu$ is a valuation on $K[x]$ and $\omega$ is an extension of $\mu$ to $L[x]$, then the canonical map $\iota: \mathcal G_\mu\hk \mathcal G_\omega$ is injective. It is natural to ask when it is an isomorphism. We will show the following result.
\begin{theorem}\label{theoremabouthensel}
Let $\mu$ be a valuation on $K[x]$ and $\mu^h$ its extension to $K^h[x]$. Then the mapping $\iota: \mathcal G_\mu\hk \mathcal G_{\mu^h}$ is an isomorphism.
\end{theorem}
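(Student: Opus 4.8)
The plan is to show that the injection $\iota\colon\ggm\hk\ggmh$ is surjective by a degree-by-degree argument on the homogeneous components, reducing everything to the behavior of key polynomials under henselization. Recall that $\ggm=\bigoplus_{\alpha\in\Gamma_\mu}\mathcal P_\alpha/\mathcal P_\alpha^+$, where $\mathcal P_\alpha=\{f\in\kx:\mu(f)\ge\alpha\}$, and similarly for $\ggmh$; a homogeneous element of $\ggmh$ is represented by some $f\in\khx$ with $\muh(f)=\alpha$, and we must produce $\tilde f\in\kx$ with $\mu(\tilde f)=\alpha$ and $\inm(\tilde f)=\inmh(f)$ (equivalently $\muh(f-\tilde f)>\alpha$). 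First I would reduce to the case where $f$ is a \emph{$\muh$-key polynomial}, or more precisely use the Mac Lane--Vaqui\'e description of $\ggmh$: by the theory of key polynomials, $\ggmh$ is generated as a graded algebra over the value group by the image of $K^h$ together with the images of a MacLane chain of key polynomials $\phi_0\to\phi_1\to\cdots$ for $\muh$. So it suffices to show (a) the images of elements of $(K^h)^\times$ lie in $\iota(\ggm)$, and (b) the image of each key polynomial $\phi_i$ of $\muh$ lies in $\iota(\ggm)$.

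For part (a), the key point is that $\mu$ and $\muh$ have the same value group and the same residue field in degree zero: since $K^h/K$ is an immediate extension of valued fields, $v^h(K^h{}^\times)=v(K^\times)$ and the residue field extension $k^h/k$ is trivial, so every $\inmh(c)$ with $c\in K^h$ equals $\inmh(c')$ for some $c'\in K$, i.e.\ lies in the image of $\iota$. For part (b), I would argue by induction along the MacLane chain, invoking Theorem \ref{fundamental} (the result on irreducible polynomials over henselian fields): a key polynomial $\phi$ for $\muh$ of degree $n$ is, up to the equivalence that does not change its image in $\ggmh$, replaceable by a polynomial in $\kx$ of the same degree and the same $\muh$-value, because the henselian field $K^h$ does not enlarge the relevant combinatorial data (Newton polygons, residual polynomials) beyond what is already visible over $K$. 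Concretely, writing the $\phi_i$-expansion of $\phi$, each coefficient lies in $K^h[\phi_0,\dots,\phi_{i-1}]$ and by the inductive hypothesis its initial form is already in $\iota(\ggm)$; one then lifts coefficient-by-coefficient and checks that the value is unchanged, which uses that the minimum defining $\muh(\phi)$ is attained and is insensitive to the passage from $K$ to $K^h$.

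I expect the main obstacle to be part (b) in the presence of \emph{limit} key polynomials, where there is no finite MacLane chain terminating in $\phi$ and the degree can jump; here the naive coefficient lift does not immediately apply, and one must instead use that a limit key polynomial of $\muh$ arises as a limit of a continuous family of key polynomials coming from a chain all of whose members already live over $K$ (this is exactly where Theorem \ref{fundamental} does the real work, controlling how irreducible factors over $K^h$ relate to those over $K$), together with the fact that $\iota$ preserves the graded-algebra structure so that taking the relevant limit of initial forms stays inside the image. A secondary technical point is bookkeeping: one should phrase the whole argument in terms of $\mu$-minimal pairs or of the $\mu$-Newton-polygon operators so that "same initial form" is checked intrinsically rather than through an ad hoc polynomial manipulation. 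Once (a) and (b) are in place, surjectivity of $\iota$ on each homogeneous component follows since every homogeneous element of $\ggmh$ is a sum of products of the generators handled in (a) and (b), and $\iota$ is a graded-algebra homomorphism; combined with the already-known injectivity, this gives the claimed isomorphism.
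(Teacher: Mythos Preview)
Your approach is genuinely different from the paper's, and as written it has a real gap in part (b).

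The paper does \emph{not} attack surjectivity by lifting key polynomials of $\muh$ back to $\kx$. Instead, for $\mu$ with trivial support it uses a purely field-theoretic trick: extend $\mu$ to $K(x)$, take the henselization $(K(x)^h,\mub)$, and observe that $\kh\subset K(x)^h$ because $\kh/K$ is separable algebraic and $K(x)^h$ is henselian. Hence $\muh$ is the restriction of $\mub$ to $\kh(x)$. Since henselization of a valued field is immediate, $e(\mub/\mu)=f(\mub/\mu)=1$, which forces $e(\muh/\mu)=f(\muh/\mu)=1$; a general criterion (the paper's Lemma~\ref{crit}) then converts $e=f=1$ into the graded-algebra isomorphism. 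The nontrivial-support case is handled afterwards by writing $\mu$ as an ordinary or limit augmentation and using exact sequences or direct limits built from the trivial-support case. Notice the logical order: in the paper, the behavior of key polynomials under henselization (Proposition~\ref{P3.3Isom}) is \emph{deduced from} the isomorphism $\ggm\simeq\ggmh$, not used to prove it.

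Your step (b) asserts that a key polynomial $\phi$ for $\muh$ is $\muh$-equivalent to a polynomial in $\kx$, and you justify this by citing Theorem~\ref{fundamental}. But Theorem~\ref{fundamental} says something different: it describes, over a henselian field, how $\inv_\mu F$ looks for an \emph{irreducible} $F$ in terms of a key polynomial $Q$; it does not give you a way to descend $\phi\in\khx$ to $\kx$. Your sentence ``the henselian field $\kh$ does not enlarge the relevant combinatorial data beyond what is already visible over $K$'' is essentially a restatement of the theorem you are trying to prove, so the argument is circular at that point. The inductive coefficient-lifting you sketch would need $\phi_i\in\kx$, not merely $\inmh\phi_i\in\iota(\ggm)$, and you have not established that. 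In rank one this obstacle disappears because density lets you approximate \emph{every} coefficient directly (that is the paper's Lemma~\ref{lemmarankone}); in higher rank there is no such approximation, which is exactly why the paper reroutes through the field-theoretic immediacy argument instead.
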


Fix the notation of Theorem \ref{defectcte} and assume that $\mu\to\nu$ is limit. Then,
\[
d(\mu\to\nu)=d(\muh\to\nu^h)=\deg(\nu^h)/\deg(\mu^h),
\]
but  (see Example \ref{exa1}) it is not necessarily true that 
\[
d(\mu\to\nu)=\deg(\nu)/\deg(\mu).                                                       
\]
However, if $\mu$ and $\nu$ are rank one valuations and $\supp(\nu)=0$, then the latter equality is also satisfied (see Section \ref{rankone}).

A related question is whether key polynomials for valuations on $K[x]$ are irreducible in $K^h[x]$. This question appears in \cite{Andrei} where the author studies \textit{diskoids} in order to understand extensions of valuations. Example \ref{exampnaphel} gives a negative answer to this question. We also provided a criterion (Corollary \ref{corollaryandrei}) that characterizes key polynomials for valuations on $K[x]$ that are irreducible in $K^h[x]$. In particular, it follows from the rank one analysis that this is true for rank one valuations with trivial support.

This paper is divided as follows. In Section \ref{preliminaries} we present the main definitions and results related to graded algebras that will be used in the paper. In Section \ref{secFinLeaves} we present some results about extensions of valuations to henselizations. Section \ref{newtonpoly} is devoted to introduce Newton polygons as a tool to prove Theorem \ref{fundamental}, which is a kind of generalization of Hensel's lemma. In Section \ref{prooofthm1} we present, among other things, the proof of Theorem \ref{theoremabouthensel}. The main purpose of Section \ref{Defectform} is to introduce the defect of an augmentation and relate it with the classic definition of defect. Finally, in Section \ref{rigofdefect} we present the proof of our main theorem (Theorem \ref{defectcte}). In Section \ref{rankone}, we discuss the rank one case. The results of this paper have a special form in this case. Finally, in the last section we present examples that illustrate the situations appearing in the paper.

\section{Graded algebra of a valuation on a polynomial ring}\label{preliminaries}

Let $(K,v)$ be a valued field with value group $\g=vK$ and residue field $k=Kv$.

Let $\mu$ be a valuation on $\kx$ extending $v$. The \textbf{support} of $\mu$ is the prime ideal 
\[
\supp(\mu)=\mu^{-1}(\infty)\in\sp(\kx).
\]
Only the valuations with trivial support extend to a valuation on the field $K(x)$.

The \textbf{value group} of $\mu$ is the subgroup $\gm$ generated by $\mu\left(\kx\setminus\supp(\mu)\right)$. The \textbf{residue field} $\km$ is defined as the residue field of the valuation naturally induced by $\mu$ on the field  of fractions of  $\kx/\supp(\mu)$.    

Any extension of $v$ to $K[x]$ is (exclusively) of one of the following types:
\begin{itemize}
	\item\textbf{Nontrivial support}: $\supp(\mu)=fK[x]$ for some $f\neq 0$.
	\item\textbf{Value-transcendental}: $\supp(\mu)=0$ and $\gm/\g$ is not a torsion group.
	\item\textbf{Residue-transcendental}: $\supp(\mu)=0$ and the extension $\km/k$  is transcendental.
	\item\textbf{Valuation-algebraic}: $\supp(\mu)=0$,  $\gm/\g$ is a torsion group and the extension $\km/k$  is algebraic.
\end{itemize}
The valuation $\mu$ will be called \textbf{valuation-transcendental} if it is value-transcendental or residue-transcendental.

For any field $\mathbb{K}$, we let $\irr(\mathbb{K})$ be the set of monic irreducible polynomials in $\mathbb{K}[x]$.

\subsection{Key polynomials}\label{subsecKP}

Let $\mu$ be an extension of $v$ to $\kx$. For all $\alpha\in\Gamma_\mu$ we consider the abelian groups
\[
\mathcal P_\alpha:=\{f\in K[x]\mid \mu(f)\geq \al\}\mbox{ and }\mathcal P_\alpha^+:=\{f\in K[x]\mid \mu(f)> \al\}.
\]
We define the {\bf graded algebra} of $\mu$ by
\[
\mathcal G_\mu:=\bigoplus_{\alpha \in \Gamma_\mu}\mathcal P_\alpha/\mathcal P_\alpha^+.
\]

For $f\in K[x]\setminus \supp(\mu)$, we denote its image in $\mathcal P_{\mu(f)}/\mathcal P_{\mu(f)}^+\subset \mathcal{G}_\mu$ by $\inv_\mu f$. 

For $f,g\in K[x]\setminus \supp(\mu)$ we say that $f$ and $g$ are \textbf{$\mu$-equivalent} if $\inv_\mu f=\inv_\mu g$.
In this case, we write $f\sim_\mu g$.

\begin{definition}
	For a valuation $\mu$ on $K[x]$ we say that a monic polynomial $\phi\in K[x]$ is a \textbf{Mac Lane--Vaqui\'e key polynomial} (or only a key polynomial) for $\mu$ if
	\begin{itemize}
		\item\textit{$\phi$ is $\mu$-irreducible}: if $\inv_\mu\phi\mid \inv_\mu(f\cdot g)$, then $\inv_\mu\phi\mid \inv_\mu f$ or $\inv_\mu \phi\mid \inv_\mu g$; and
		\item \textit{$\phi$ is $\mu$-minimal}: if $\inv_\mu\phi\mid \inv_\mu f$ and $f\ne0$, then $\deg(\phi)\leq \deg(f)$.
	\end{itemize}
\end{definition}
We denote by ${\rm KP}(\mu)$ the set of all key polynomials for $\mu$. They are necessarily irreducible in $\kx$.
The equivalence relation $\smu$ restricts to an equivalence relation on the set $\kpm$. For all $\phi\in\kpm$ we denote its class by
\[
[\phi]_\mu=\{\varphi\in\kpm\mid \phi\smu\varphi\}.
\]
All polynomials in $[\phi]_\mu$ have the same degree \cite[Proposition 6.6]{KP}. We denote by $\deg[\phi]_\mu$ this common degree.

It is easy to characterize the existence of key polynomials.
Let $\ggm^0\subset\ggm$ be the subalgebra generated by the set of all homogeneous units.

\begin{theorem}\cite[Theorem 4.4]{KP}\label{empty}
	For every valuation $\mu$ on $\kx$, the following conditions are equivalent.
	\begin{enumerate}
		\item[(a)] $\kpm=\emptyset$.
		\item[(b)] The valuation $\mu$ has nontrivial support, or it is valuation-algebraic.
		\item[(c)] The algebra $\ggm$ is simple; that is, $\ggm^0=\ggm$. 
	\end{enumerate}
	Therefore, $\kpm\ne\emptyset$ if and only if $\mu$ is valuation-transcendental. 
\end{theorem}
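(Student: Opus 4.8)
The plan is to prove the cycle $\neg(\mathrm{b})\Rightarrow\neg(\mathrm{a})\Rightarrow\neg(\mathrm{c})\Rightarrow\neg(\mathrm{b})$, which yields the three‑fold equivalence. The concluding sentence is then nothing more than $\neg(\mathrm{a})\Leftrightarrow\neg(\mathrm{b})$, since the four types of extension are mutually exclusive and exhaustive, so that $\neg(\mathrm{b})$ says precisely that $\mu$ is valuation‑transcendental. Throughout I will use the structural fact, belonging to the basic theory of these graded algebras, that $\ggm^0=\ggm$ if and only if every nonzero homogeneous element of $\ggm$ is a unit; let me call such a $\ggm$ a \emph{graded field}. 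Granting this, the implication $\neg(\mathrm{a})\Rightarrow\neg(\mathrm{c})$ is immediate: if $\phi\in\kpm$ then $\inv_\mu\phi$ cannot be a unit, since a unit divides $\inv_\mu(1)$ and $\mu$‑minimality applied with $f=1$ would then force $\deg(\phi)\le 0$, contradicting that $\phi$ is irreducible in $\kx$; hence $\ggm$ has a homogeneous non‑unit and is not a graded field.

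For $\neg(\mathrm{c})\Rightarrow\neg(\mathrm{b})$ I argue the contrapositive, that $(\mathrm{b})$ implies $\ggm$ is a graded field. Suppose first $\supp(\mu)=fK[x]$ with $f\ne 0$. Then $f$ is irreducible, $L:=K[x]/fK[x]$ is a field carrying the valuation $\mub$ induced by $\mu$, and since $\supp(\mu)\subseteq\mathcal P_\alpha^+$ for every $\alpha$ while $\mu(g)$ depends only on $g\bmod f$, reduction modulo $f$ induces a graded isomorphism $\ggm\cong\mathcal G_{\mub}$; the graded algebra of a valued field is a graded field, since $\inv_{\mub}(a^{-1})$ inverts $\inv_{\mub}(a)$ for $a\in L^\times$. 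Suppose instead $\mu$ is valuation‑algebraic. The degree‑zero component $\mathcal P_0/\mathcal P_0^+$ embeds into $\km$, and since $\km/k$ is algebraic, any nonzero $\xi$ in it satisfies a monic polynomial over $k$ with nonzero constant term, which exhibits $\xi$ as a unit; thus $\mathcal P_0/\mathcal P_0^+$ is a field. Now given any $g\in \kx\setminus\supp(\mu)$, torsion‑ness of $\gm/\g$ furnishes $n\in\N$ and $a\in K^\times$ with $n\mu(g)=v(a)$, so $\inv_\mu(g)^n\cdot\inv_\mu(a)^{-1}$ is a nonzero element of $\mathcal P_0/\mathcal P_0^+$, hence a unit, hence $\inv_\mu(g)^n$ and therefore $\inv_\mu(g)$ are units. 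In either case $\ggm$ is a graded field.

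The remaining implication, $\neg(\mathrm{b})\Rightarrow\neg(\mathrm{a})$ — that a valuation‑transcendental $\mu$ has a key polynomial — is the substantive one; it is essentially the existence theorem of Mac Lane--Vaqui\'e theory. Through the graded algebra the strategy is: first produce a homogeneous non‑unit of $\ggm$ (if $\mu$ is value‑transcendental, take $g$ with $\mu(g)$ of infinite order in $\gm/\g$; if $\mu$ is residue‑transcendental, take $g$ with $\mu(g)=0$ whose image in $\km$ is transcendental over $k$), and then, among all $g\in\kx\setminus\supp(\mu)$ with $\inv_\mu g$ a non‑unit, choose one of minimal degree $d$, normalized to be monic. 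Minimality of $d$ yields $\mu$‑minimality of $g$ at once, and $\mu$‑irreducibility follows by factoring $\inv_\mu g$ into homogeneous primes of the graded domain $\ggm$ and invoking a division‑algorithm argument on $g$‑adic expansions to rule out a proper factorization (a proper factor would yield a homogeneous non‑unit of degree strictly less than $d$). Hence $g\in\kpm$.

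The main obstacle is exactly this last construction: exhibiting a homogeneous non‑unit and then straightening a minimal‑degree one into a polynomial that is simultaneously $\mu$‑irreducible and $\mu$‑minimal. The technical heart is the control of $\mu$ on $g$‑adic expansions and the translation of ``homogeneous non‑unit of minimal degree'' into the two key‑polynomial axioms; once that is secured, together with the equivalence between $\ggm^0=\ggm$ and $\ggm$ being a graded field, the other implications in the cycle are essentially formal.
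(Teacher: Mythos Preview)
The paper does not supply its own proof of this theorem; it is quoted from \cite[Theorem~4.4]{KP} and used as a black box. There is therefore no in-paper argument to compare against.

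On the merits: your cycle is sound, and the implications $\neg(\mathrm a)\Rightarrow\neg(\mathrm c)$ and $(\mathrm b)\Rightarrow(\mathrm c)$ are argued correctly. In the remaining step $\neg(\mathrm b)\Rightarrow\neg(\mathrm a)$, two points in your sketch need tightening. First, producing a homogeneous non-unit: in the value-transcendental case you are tacitly using that $\gm^0/\g$ is torsion (recorded in Section~\ref{subsecDegree}, from \cite[Proposition~3.5]{KP}), since ``$\mu(g)$ of infinite order modulo $\g$'' does not by itself prevent $\inm g$ from being a unit; analogously, in the residue-transcendental case you need that homogeneous units of grade zero are algebraic over $k$. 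Second, your parenthetical ``a proper factor would yield a homogeneous non-unit of degree strictly less than $d$'' conflates the $\mu$-degree in $\ggm$ with polynomial degree in $\kx$; a homogeneous factor of $\inm g$ is some $\inm h$, but nothing a priori bounds $\deg(h)$. The honest route is to first establish the expansion formula $\mu\bigl(\sum_i a_i g^i\bigr)=\min_i\mu(a_i g^i)$ for $\deg a_i<d$ (which does follow from minimality of $d$, since each $\inm a_i$ is then a unit), deduce that $\ggm=\ggm^0[\inm g]$ with $\inm g$ transcendental over $\ggm^0$, and read off primality of $\inm g$ from that polynomial-ring description. This is exactly the content of Theorem~\ref{g0gm} here and of the corresponding development in \cite{KP}; your outline is correct, but those two lemmas are where the actual work sits.
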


\begin{definition}
	If ${\rm KP}(\mu)\neq \emptyset$, then we define
	$\deg(\mu)=\min_{\phi\in {\rm KP}(\mu)} \deg(\phi)$.
	
	If $\supp(\mu)=f\kx$ for some $f\in\irr(K)$, then we define $\deg(\mu)=\deg(f)$.
\end{definition}

By Theorem \ref{empty}, these definitions of $\deg(\mu)$ are completely independent.  

\subsection{Tangent directions}
Let $L/K$ be a field extension, $\mu$ a valuation on $K[x]$ and $\eta$ a valuation on $L[x]$, taking values in a common ordered abelian group. We say that $\mu\leq \eta$ if $\mu(f)\leq \eta(f)$ for all $f\in K[x]$. In this situation, we define the map
\begin{displaymath}
\begin{array}{ccc}
\mathcal G_\mu&\lra& \mathcal G_\eta\\[8pt]
\inv_\mu(f)&\longmapsto &
\left\{\begin{array}{cl}
\inv_\eta(f)&\mbox{ if }\mu(f)=\eta(f)\\
0 & \mbox{ if }\mu(f)<\eta(f)
\end{array}
\right..
\end{array}
\end{displaymath}
\begin{remark}
	\begin{description}
		\item[(i)] If $\eta$ is an extension of $\mu$ (i.e., $\mu(f)=\eta(f)$ for all $f\in K[x]$), then $\mathcal G_\mu\ra\mathcal G_\eta$ is injective.
		\item[(ii)] If $K=L$, then $\mathcal G_\mu\ra\mathcal G_\eta$ is injective if and only if $\mu=\eta$.
	\end{description}
\end{remark}

\begin{definition}
	Let $\mu, \eta$ be two valuations on $K[x]$ such that $\mu<\eta$ (i.e., $\mu\leq \eta$ and $\mu\neq \eta$). Let $\ty(\mu,\eta)$ be the set of monic polynomials $\phi\in K[x]$ of minimal degree satisfying $\mu(\phi)<\eta(\phi)$.
	We say that $\ty(\mu,\eta)$ is the\textbf{ tangent direction of $\mu$}, determined by $\eta$. 
\end{definition}

The following result follows from \cite[Theorem 1.15]{Vaq} and \cite[Corollary 2.5]{MLV}.

\begin{lemma}\label{tdef}
	Every $\phi\in \ty(\mu,\eta)$ is a key polynomial for $\mu$ and $\ty(\mu,\eta)=[\phi]_\mu$.
	Moreover, for all nonzero $f\in\kx$, we have $\mu(f)<\eta(f)$ if and only if $\inm \phi\mid \inm f$.
\end{lemma}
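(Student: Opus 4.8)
The plan is to obtain the whole statement from elementary valuation arithmetic built on the defining property of $\phi$ — monic, of minimal degree $m:=\deg\phi$ among polynomials with $\mu(\phi)<\eta(\phi)$ — together with $\mu\le\eta$; this is essentially a self‑contained repackaging of what is available in \cite[Theorem 1.15]{Vaq} and \cite[Corollary 2.5]{MLV}. I would organize it so that the ``moreover'' clause is proved first, after which $\mu$-minimality, $\mu$-irreducibility, and the identification with $[\phi]_\mu$ follow formally.

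The easy half of ``moreover'' is that $\inm\phi\mid\inm f$ implies $\mu(f)<\eta(f)$: a homogeneous factorization $\inm f=\inm\phi\cdot\inm g=\inm(\phi g)$ with $g\notin\supp(\mu)$ means $f\smu\phi g$, so $\mu(f)=\mu(\phi)+\mu(g)$, while $\eta(\phi g)\ge\eta(\phi)+\mu(g)>\mu(\phi)+\mu(g)$ and $\eta(f-\phi g)\ge\mu(f-\phi g)>\mu(f)$, whence $\eta(f)>\mu(f)$; this uses neither minimality nor that $\phi$ is a key polynomial. For the converse, assume $\mu(f)<\eta(f)$ and take the Euclidean division $f=q\phi+r$ with $\deg r<m$ (here $q\ne0$, else $f=r$ has degree $<m$ and hence $\mu(f)=\eta(f)$). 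Minimality of $m$ gives $\mu(r)=\eta(r)$, and $\mu(\phi)<\eta(\phi)$ together with $\mu(q)\le\eta(q)$ gives $\mu(q\phi)<\eta(q\phi)$. Now compare $\mu(q\phi)$ with $\mu(r)$: if $\mu(q\phi)\ge\mu(r)$, then $\eta(q\phi)>\mu(q\phi)\ge\mu(r)=\eta(r)$, so $\eta(f)=\eta(q\phi+r)=\eta(r)=\mu(r)$, whereas $\mu(f)=\mu(q\phi+r)\ge\mu(r)$, contradicting $\mu(f)<\eta(f)$. Hence $\mu(q\phi)<\mu(r)$, so $\mu(f)=\mu(q\phi)$ and $\mu(f-q\phi)=\mu(r)>\mu(f)$, i.e. $f\smu q\phi$ and therefore $\inm\phi\mid\inm f$.

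With ``moreover'' in hand, $\mu$-minimality is immediate: if $\inm\phi\mid\inm f$ and $f\ne0$, then $\mu(f)<\eta(f)$, so $\deg f\ge m$ by minimality of $m$. For $\mu$-irreducibility, $\inm\phi\mid\inm(fg)$ gives $\mu(f)+\mu(g)<\eta(f)+\eta(g)$, so, as $\mu\le\eta$, either $\mu(f)<\eta(f)$ or $\mu(g)<\eta(g)$, and the converse ``moreover'' yields $\inm\phi\mid\inm f$ or $\inm\phi\mid\inm g$; hence $\phi\in\kpm$. Finally, for $\ty(\mu,\eta)=[\phi]_\mu$: if $\varphi\in[\phi]_\mu$, then $\varphi$ is a key polynomial with $\varphi\smu\phi$, so $\deg\varphi=m$ by \cite[Proposition 6.6]{KP} and $\inm\phi\mid\inm\varphi$ gives $\mu(\varphi)<\eta(\varphi)$, so $\varphi\in\ty(\mu,\eta)$; conversely, if $\varphi\in\ty(\mu,\eta)$ with $\varphi\ne\phi$, then $\deg(\phi-\varphi)<m$, so $\mu(\phi-\varphi)=\eta(\phi-\varphi)$, and since $\eta(\phi)>\mu(\phi)$ and $\eta(\varphi)>\mu(\varphi)$ the ultrametric inequality forces first $\mu(\phi)=\mu(\varphi)$ and then $\mu(\phi-\varphi)>\mu(\phi)$, i.e. $\phi\smu\varphi$, so $\varphi\in[\phi]_\mu$.

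The step I expect to be the main obstacle is the converse direction of the ``moreover'' clause: ruling out the configuration $\mu(q\phi)\ge\mu(r)$ in the Euclidean division. That is the only place where all three features of the hypothesis must be used simultaneously — that $\eta$ dominates $\mu$ globally, strictly on $\phi$, and coincides with $\mu$ in degrees below $m$ — and one must check that cancellation (the possibility $\mu(f)>\min(\mu(q\phi),\mu(r))$) does not spoil the decisive inequality. Everything after that is formal bookkeeping; alternatively, the same facts can simply be quoted from the two cited references.
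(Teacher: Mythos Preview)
Your argument is correct. The paper does not actually prove this lemma: it simply records that the statement follows from \cite[Theorem 1.15]{Vaq} and \cite[Corollary 2.5]{MLV}. What you have written is precisely the standard self-contained unpacking of those references, organized in a sensible order (first the ``moreover'' equivalence via a single Euclidean division by $\phi$, then $\mu$-minimality and $\mu$-irreducibility as formal consequences, then the identification $\ty(\mu,\eta)=[\phi]_\mu$). So there is no real divergence in approach---you have supplied the proof that the paper elects to outsource. One small point worth making explicit for the reader: since $\mu<\eta$, the valuation $\mu$ necessarily has trivial support (otherwise $\mu$ and $\eta$ would induce comparable valuations on the residue field $\kx/\supp(\mu)$, forcing $\mu=\eta$), so $\inm f$ is defined for every nonzero $f$ and the various finiteness issues in your argument take care of themselves.
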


\begin{lemma}\label{td}
	Let $\mu$ be a valuation on $\kx$ admitting a key polynomial $\phi\in\kpm$. For any valuation $\mu<\eta$, we have
	$$
	\ty(\mu,\eta)=[\phi]_\mu\ \sii\ \mu(\phi)<\eta(\phi).
	$$
\end{lemma}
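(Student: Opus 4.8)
The plan is to prove the equivalence by treating the two implications separately, the forward one being essentially a tautology and the reverse one resting on a single fact about degrees of key polynomials. Assume first that $\ty(\mu,\eta)=[\phi]_\mu$. Since $\phi\in\kpm$ and $\phi\sim_\mu\phi$, we have $\phi\in[\phi]_\mu=\ty(\mu,\eta)$; by the definition of the tangent direction every element of $\ty(\mu,\eta)$ has strictly larger $\eta$-value than $\mu$-value, so $\mu(\phi)<\eta(\phi)$. This settles one direction.

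For the converse, suppose $\mu(\phi)<\eta(\phi)$. As $\mu<\eta$, some monic $f\in\kx$ satisfies $\mu(f)<\eta(f)$, so $\ty(\mu,\eta)\ne\emptyset$; I fix $\varphi\in\ty(\mu,\eta)$ and let $d=\deg\varphi$ be the minimal degree of such an $f$. By Lemma \ref{tdef}, $\varphi\in\kpm$, $\ty(\mu,\eta)=[\varphi]_\mu$, and, applying the last assertion of Lemma \ref{tdef} to $\phi$, one has $\inm\varphi\mid\inm\phi$ in $\ggm$. Since $\deg\phi\ge d$ is clear, the argument reduces to showing $\deg\phi\le d$: once this holds, $\phi$ is a monic polynomial of minimal degree with $\mu(\phi)<\eta(\phi)$, hence $\phi\in\ty(\mu,\eta)$, and Lemma \ref{tdef} immediately yields $\ty(\mu,\eta)=[\phi]_\mu$.

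To prove $\deg\phi\le d$, I argue by contradiction, assuming $\deg\phi>\deg\varphi$. Because $\mu$ admits a key polynomial its support is trivial and $\inm\phi\ne0$; writing the divisibility $\inm\varphi\mid\inm\phi$ as $\inm\phi=\inm\varphi\cdot\inm g$ for a suitable $g\in\kx$ (every nonzero homogeneous element of $\ggm$ is an initial form), we obtain $\inm\phi=\inm(\varphi g)$. Since $\phi$ is $\mu$-irreducible, either $\inm\phi\mid\inm\varphi$, in which case $\mu$-minimality of $\phi$ gives $\deg\phi\le\deg\varphi$, contradicting our assumption; or $\inm\phi\mid\inm g$, say $\inm g=\inm\phi\cdot h'$, and then substituting and cancelling $\inm\phi$ in the integral domain $\ggm$ gives $1=\inm\varphi\cdot h'$, so $\inm\varphi$ is a homogeneous unit, hence divides every initial form, and $\mu$-minimality of $\varphi$ forces $\deg\varphi=0$, i.e.\ $\varphi=1$; this is absurd, since $\varphi\in\ty(\mu,\eta)$ entails $\mu(\varphi)<\eta(\varphi)$ while $\mu(1)=\eta(1)=0$. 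Thus $\deg\phi\le d$, finishing the proof.

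The only substantive step is the bound $\deg\phi\le d$; everything else is bookkeeping with Lemma \ref{tdef} and with the elementary structure of $\ggm$, namely that it is a graded domain, that its homogeneous elements are initial forms, and that a key polynomial cannot have a homogeneous unit as its initial form. I do not expect a real obstacle here; the only point requiring care is running the $\mu$-irreducibility dichotomy correctly and invoking integrality of $\ggm$ at the cancellation step.
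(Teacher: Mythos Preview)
Your proof is correct. The forward direction matches the paper exactly. For the converse, the paper proceeds more briskly: after obtaining $\inm\varphi\mid\inm\phi$ from Lemma~\ref{tdef}, it invokes \cite[Proposition~6.6]{KP}, which states that if $\varphi,\phi\in\kpm$ and $\inm\varphi\mid\inm\phi$, then $\varphi\smu\phi$; this immediately gives $[\phi]_\mu=[\varphi]_\mu=\ty(\mu,\eta)$.

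What you do instead is essentially reprove the relevant part of that proposition from scratch, using only the defining properties of key polynomials (irreducibility plus minimality) and the fact that $\ggm$ is a graded domain. Your dichotomy argument on $\inm\phi\mid\inm\varphi$ versus $\inm\phi\mid\inm g$ is clean and the cancellation step is justified. The payoff is a self-contained argument that avoids the external reference; the cost is a few extra lines. Either route is fine here.
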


\begin{proof}
	If $\ty(\mu,\eta)=[\phi]_\mu$, then $\mu(\phi)<\eta(\phi)$ by the definition of the tangent direction.
	
	Conversely, suppose $\mu(\phi)<\eta(\phi)$ and let $\ty(\mu,\eta)=[\varphi]_\mu$. Then, $\inm \varphi\mid\inm\phi$, and this implies $\varphi\smu\phi$ by \cite[Proposition 6.6]{KP}.  
\end{proof}\bs

\subsection{Degree of homogeneous elements and relative ramification index}\label{subsecDegree}

Let $\phi\in\kpm$. For any $f\in K[x]$ there exist uniquely determined $f_0,\ldots,f_r\in K[x]$ 
with $\deg(f_i)<\deg(\phi)$ for every $i$, $0\leq i\leq r$, such that
\[
f=f_0+f_1\phi+\ldots+f_r\phi^r.
\]
This expression is called the \textbf{$\phi$-expansion of $f$}.

Since $\phi$ is $\mu$-minimal, we have
\begin{equation}\label{minimal}
\mu(f)=\min_{0\leq i\leq r}\left\{\mu(f_i\phi^i)\right\}.
\end{equation}
Consider the set $S=S_{\mu,\phi}(f)=\{i\mid \mu(f)=\mu(f_i\phi^i)\}$. Clearly,
$$
\inm f=\sum_{i\in S}(\inm f_i)\pi^i,\quad \pi=\inm\phi.
$$

\emph{Suppose that $\phi$ has minimal degree in $\kpm$.} In this case, all these coefficients $\inm f_i$ are homogeneous units \cite[Proposition 3.5]{KP}. Thus, $\inm f$ belongs to the subalgebra $\ggm^0[\pi]$. In particular, this proves that $\ggm^0[\pi]=\ggm$.

The representation of $\inm f$ as a polynomial in $\ggm^0[\pi]$ is unique, as the following result shows (cf. \cite[Remark 16]{Dec} or \cite[Proposition 4.5]{N2021}). 

\begin{theorem}\label{g0gm}
	Suppose that  $\kpm\ne\emptyset$ and let $\phi$ be a key polynomial of minimal degree for $\mu$. 
	Then, the prime $\pi=\inm\phi$ is transcendental over $\ggm^0$ and
	$\ggm=\ggm^0[\pi]$.
\end{theorem}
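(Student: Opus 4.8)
The plan is to prove two things: first, that $\pi = \inm\phi$ is transcendental over $\ggm^0$, and second, that $\ggm = \ggm^0[\pi]$. The second assertion was essentially already observed in the paragraph preceding the statement: for any $f \in K[x]$ with $\phi$-expansion $f = f_0 + f_1\phi + \cdots + f_r\phi^r$, the $\mu$-minimality of $\phi$ gives $\mu(f) = \min_i \mu(f_i\phi^i)$ by \eqref{minimal}, so $\inm f = \sum_{i \in S}(\inm f_i)\pi^i$; and since $\phi$ has \emph{minimal} degree in $\kpm$, each $\inm f_i$ is a homogeneous unit by \cite[Proposition 3.5]{KP}. Hence every homogeneous element of $\ggm$ lies in $\ggm^0[\pi]$, and since $\ggm$ is generated by its homogeneous elements, $\ggm = \ggm^0[\pi]$.

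For transcendence, I would argue by contradiction: suppose $\pi$ satisfies a nontrivial homogeneous algebraic relation over $\ggm^0$, say $\sum_{i \in T} u_i \pi^i = 0$ with $T \ne \emptyset$, $u_i \in \ggm^0$ homogeneous and nonzero, and all terms $u_i\pi^i$ of the same degree $\alpha \in \gm$. Lift each $u_i$ to a polynomial $g_i \in K[x]$ with $\inm g_i = u_i$; because $u_i$ is a homogeneous unit, \cite[Proposition 3.5]{KP} (or the characterization of units) lets us take $\deg(g_i) < \deg(\phi)$, i.e. $g_i$ is "$\phi$-free." Now consider $g = \sum_{i \in T} g_i \phi^i$. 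This is literally the $\phi$-expansion of $g$, its coefficients all have degree $< \deg(\phi)$, and each term satisfies $\mu(g_i\phi^i) = \alpha$. The relation $\sum_{i\in T} u_i\pi^i = 0$ says precisely that $\inm g = 0$ in $\ggm$, i.e. $\mu(g) > \alpha$. But $\eqref{minimal}$ applied to the $\phi$-expansion of $g$ forces $\mu(g) = \min_{i\in T}\mu(g_i\phi^i) = \alpha$, a contradiction. Therefore no such relation exists and $\pi$ is transcendental over $\ggm^0$.

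The only delicate point — and the step I would be most careful about — is the lifting: one must know that a homogeneous unit $u \in \ggm^0$ can be represented as $\inm g$ for some $g \in K[x]$ with $\deg(g) < \deg(\phi)$, so that the combination $\sum g_i\phi^i$ genuinely \emph{is} a $\phi$-expansion and \eqref{minimal} applies verbatim. This is exactly the content of \cite[Proposition 3.5]{KP}, which characterizes the homogeneous units of $\ggm$ (when $\phi$ has minimal degree) as the images of $\phi$-free polynomials; invoking it closes the argument. Everything else is a direct application of the $\mu$-minimality identity \eqref{minimal} and the definition of $\ggm^0$.
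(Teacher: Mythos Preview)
The paper does not actually prove this theorem; it simply records that $\ggm=\ggm^0[\pi]$ in the paragraph preceding the statement and then cites \cite[Remark 16]{Dec} and \cite[Proposition 4.5]{N2021} for the uniqueness/transcendence. Your argument is the standard one and is what one finds in those references: uniqueness of the $R$-representation (where $R=\{\inm a:\deg a<\deg\phi\}\cup\{0\}$) follows cleanly from \eqref{minimal}, exactly as you wrote.

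The one point that deserves more care is precisely the one you flag. What your contradiction argument literally proves is that there is no nontrivial relation $\sum c_i\pi^i=0$ with coefficients $c_i\in R$. To upgrade this to transcendence over $\ggm^0$ you need that every nonzero homogeneous element of $\ggm^0$ lies in $R$. You reduce this to the claim that every homogeneous \emph{unit} lies in $R$ (since homogeneous elements of $\ggm^0$ are sums of units of a fixed grade, and $R$ is closed under same-grade sums). As used in this paper, \cite[Proposition 3.5]{KP} is invoked only for the direction ``$\deg a<\deg\phi\Rightarrow\inm a$ is a unit'' and for the algebraicity of units over $\gg_v$; it is not clear from the citations here that it also gives the converse you need. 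More importantly, in some developments that converse is \emph{deduced from} the transcendence of $\pi$ (via the polynomial-ring structure $\ggm=\ggm^0[\pi]$), so there is a genuine circularity risk. If you have checked that \cite[Proposition 3.5]{KP} (or a nearby statement) establishes ``$\inm f$ is a unit $\Rightarrow f\smu f_0$'' independently of Theorem~\ref{g0gm}, then your proof is complete; otherwise this implication needs a short self-contained argument before the lifting step.
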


\begin{definition}
For all nonzero $f\in\kx$ we define its $\mu$-\textbf{degree} $\dgm(f)\in\N$  as the degree  of $\inm f$ as a polynomial in $\inm \phi$ with coefficients in $\ggm^0$, for some $\phi\in\kpm$ of minimal degree. 
\end{definition}

These definitions are independent of the choice of $\phi$ among all key poynomials of minimal degree for $\mu$.   
Note that a homogeneous element $\inm f$ is a unit if and only if $\dgm(f)=0$. Also, we have in general:  $\dgm(f)=\max\left(S_{\mu,\phi}(f)\right)$.\e

\nn{\bf Convention. }If $\kpm=\emptyset$, then $\ggm^0=\ggm$ by Theorem \ref{empty}. In this case, we agree that $\dgm(f)=0$ for all $f\in\kx\setminus\supp(\mu)$.

\begin{lemma}\label{lemma1x}\cite[Proposition 3.7]{KP}
	Suppose that  $\kpm\ne\emptyset$. A nonzero $f\in\kx$ is $\mu$-minimal if and only if $\deg(f)=\deg_\mu(f)\deg(\mu)$. 
\end{lemma}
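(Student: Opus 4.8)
The plan is to fix once and for all a key polynomial $\phi\in\kpm$ of minimal degree, so that $\deg(\phi)=\deg(\mu)$, and to argue throughout via $\phi$-expansions. Write $\pi=\inm\phi$. By Theorem \ref{g0gm} we have $\ggm=\ggm^0[\pi]$ with $\pi$ transcendental over $\ggm^0$, so $\dgm(g)$ is literally the $\pi$-degree of $\inm g$; and by \cite[Proposition 3.5]{KP}, in the $\phi$-expansion $f=f_0+\cdots+f_r\phi^r$ the homogeneous elements $\inm f_i$ with $i\in S:=S_{\mu,\phi}(f)$ are \emph{units} of $\ggm$. In particular the leading $\pi$-coefficient of $\inm f=\sum_{i\in S}(\inm f_i)\pi^i$, namely $\inm f_d$ with $d=\dgm(f)=\max S$, is a unit. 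The first thing I would record is the elementary inequality $\deg(g)\ge\dgm(g)\deg(\mu)$, valid for every nonzero $g$: since $\deg(g_i)<\deg(\phi)$ in its $\phi$-expansion $g=\sum_{i=0}^{r'}g_i\phi^i$, the degrees $\deg(g_i\phi^i)$ are pairwise distinct, hence $\deg(g)=\deg(g_{r'})+r'\deg(\phi)\ge r'\deg(\phi)\ge\dgm(g)\deg(\mu)$.

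For the implication ``$\deg(f)=\dgm(f)\deg(\mu)\Rightarrow f$ is $\mu$-minimal'', suppose $g\ne0$ and $\inm f\mid\inm g$, say $\inm g=\inm f\cdot h$ with $h\ne0$. Working in the polynomial ring $\ggm^0[\pi]$ and comparing leading $\pi$-coefficients, the product cannot drop in $\pi$-degree because the leading coefficient $\inm f_d$ of $\inm f$ is a unit (if $\inm f_d\cdot h_{\mathrm{top}}=0$ then $h_{\mathrm{top}}=0$). Hence $\dgm(g)\ge\dgm(f)$, and combining with the inequality above, $\deg(g)\ge\dgm(g)\deg(\mu)\ge\dgm(f)\deg(\mu)=\deg(f)$, so $f$ is $\mu$-minimal.

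For the converse I would prove the contrapositive: assuming $\deg(f)>\dgm(f)\deg(\mu)$ (the inequality above shows this is the only alternative), construct a nonzero $g$ with $\inm f\mid\inm g$ and $\deg(g)<\deg(f)$. The one extra ingredient is that every homogeneous unit of $\ggm$ equals $\inm a$ for some $a\in K[x]$ with $\deg(a)<\deg(\mu)$: such a unit is $\inm c$ with $\dgm(c)=0$, forcing $S_{\mu,\phi}(c)=\{0\}$, so it equals $\inm c_0$, where $c_0$ is the $\phi$-free coefficient of $c$ and $\deg(c_0)<\deg(\phi)$. Now with $d=\dgm(f)$ and $S=S_{\mu,\phi}(f)$, for each $i\in S$ pick $g_i\in K[x]$ with $\deg(g_i)<\deg(\phi)$ and $\inm g_i=(\inm f_d)^{-1}\inm f_i$ (so one may take $g_d=1$), and set $g=\sum_{i\in S}g_i\phi^i$. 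Since every term $g_i\phi^i$ ($i\in S$) has value $\mu(f_i\phi^i)-\mu(f_d)=\mu(f)-\mu(f_d)$, one checks $S_{\mu,\phi}(g)=S$ and $\inm g=(\inm f_d)^{-1}\inm f$; thus $\inm f\mid\inm g$, $g\ne0$, and $\deg(g)=d\deg(\phi)=\dgm(f)\deg(\mu)<\deg(f)$, so $f$ is not $\mu$-minimal.

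I expect the only real obstacle to be the converse construction: one must verify that the auxiliary polynomial $g$ has degree \emph{exactly} $\dgm(f)\deg(\mu)$ — this uses $g_d=1$ together with the bound $\deg(g_i)<\deg(\phi)$ for the lower indices — and that the bookkeeping identifying $S_{\mu,\phi}(g)$ with $S$ and computing $\inm g$ is carried out correctly. Everything else is a direct consequence of the structure theorem $\ggm=\ggm^0[\pi]$ and of the fact that the $\phi$-expansion coefficients indexed by $S$ are units.
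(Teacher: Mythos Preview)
The paper does not give its own proof of this lemma; it simply records the statement and cites \cite[Proposition 3.7]{KP}. Your argument is correct and self-contained relative to the tools already introduced here: Theorem~\ref{g0gm} (the polynomial-ring description $\ggm=\ggm^0[\pi]$) and \cite[Proposition 3.5]{KP} (units among the $\phi$-expansion coefficients indexed by $S_{\mu,\phi}(f)$). A couple of minor remarks. First, in the forward direction you may note that since $\kpm\ne\emptyset$ forces $\supp(\mu)=0$ (Theorem~\ref{empty}), the element $\inm g$ is nonzero; and because $\ggm$ is a graded domain and both $\inm f$, $\inm g$ are homogeneous, the factor $h$ in $\inm g=\inm f\cdot h$ is automatically homogeneous, so the $\pi$-degree bookkeeping is straightforward. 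Second, in the converse your identity $\inm g=(\inm f_d)^{-1}\inm f$ actually shows $\inm f$ and $\inm g$ are associates, so both divisibilities hold; in particular $\inm f\mid\inm g$ as you need. The degree computation $\deg(g)=d\deg(\phi)$ is justified exactly as you indicate, using $g_d=1$ and $\deg(g_i)<\deg(\phi)$ for $i<d$.
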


Caution! The $\mu$-degree $\dgm(f)$ should not be confused with the  
natural \textbf{grade}: $$\al=\gr_\mu(\inm f)=\mu(f)\in\gm$$ which indicates that $\inm f$ belongs to the homogeneous component $\ppa/\ppa^+$.

\begin{definition}
For any valuation $\mu$ on $\kx$, let 
$\gm^0\subset\gm$ be the subgroup of grades of all homogeneous units in $\ggm$.
The \textbf{relative ramification index} of $\mu$ is defined as $$e=e(\mu)=\left(\gm\colon \gm^0\right).$$
\end{definition}

The group $\gm^0/\g$ is a torsion group, because the homogeneous units are algebraic over the graded algebra of $v$ \cite[Proposition 3.5]{KP}.
On the other hand, if $\phi$ is a key polynomial for $\mu$  of minimal degree, then 
$$ \gm^0=\left\{\mu(a)\mid a\in \kx,\ 0\le \deg(a)<\deg(\phi)\right\},$$
and $\gm=\gen{\gm^0,\mu(\phi)}$ by (\ref{minimal}). The following statements follow immediately from these facts and Theorem \ref{empty}. 

\begin{itemize}
\item 	If $\mu$ is value-transcendental, then $e=\infty$.
\item If $\mu$ is residue-transcendental and $\phi$ is a key polynomial for $\mu$  of minimal degree, then  $e$ is the least positive integer such that $e\mu(\phi)\in\gm^0$.
\item If $\kpm=\emptyset$, then $e=1$. 
\end{itemize}

\begin{lemma}\label{00}
If $\mu<\eta$, then $\gm^0\subset \Gamma_\eta^0$.
\end{lemma}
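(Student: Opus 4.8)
\textbf{Proof proposal for Lemma \ref{00}.}

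The plan is to work directly with the characterization of $\gm^0$ in terms of homogeneous units and the definition of the map $\ggm\to\ggn$ attached to $\mu\le\eta$. First I recall that $\gm^0$ is, by definition, the set of grades $\mu(a)$ for which $\inm a$ is a homogeneous unit in $\ggm$, and similarly for $\Gamma_\eta^0$. So fix $\al\in\gm^0$ and choose $a\in\kx$ with $\mu(a)=\al$ and $\inm a$ a unit in $\ggm$; I must produce some $b\in\kx$ with $\eta(b)=\al$ and $\inu b$ a unit in $\ggn$.

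The key observation is that $\inm a$ being a homogeneous unit means there is $a'\in\kx$ with $\inm a\cdot\inm{a'}=\inm 1$, equivalently $\mu(aa')=0$ and $aa'\smu 1$, i.e. $\mu(aa'-1)>0$. Now I would track what happens under the canonical map $\ggm\to\ggn$. Since $\mu\le\eta$, we have $\eta(aa'-1)\ge\mu(aa'-1)>0$, hence $\eta(aa')=\eta(1)=0$; in particular $\eta(a)=-\eta(a')$ and neither lies in $\supp(\eta)$. This shows $\inu a$ is a unit in $\ggn$ with grade $\eta(a)$, so $\eta(a)\in\Gamma_\eta^0$. The remaining point is that $\eta(a)=\al$: since $\mu(a)=0$ would be the special case $\al=0$, in general I argue that $\mu(a)=\mu(aa')-\mu(a')=-\mu(a')$ and likewise on the $\eta$ side $\eta(a)=-\eta(a')$, while from $\mu(a)+\mu(a')=0$ and $\mu\le\eta$ together with $\eta(aa')=0$ one gets $\eta(a)=\mu(a)$. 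Concretely: $\mu(a)\le\eta(a)$ and $\mu(a')\le\eta(a')$, and adding gives $0=\mu(a)+\mu(a')\le\eta(a)+\eta(a')=0$, forcing both inequalities to be equalities. Hence $\eta(a)=\mu(a)=\al$, so $\al\in\Gamma_\eta^0$.

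This completes the argument, and I do not anticipate a genuine obstacle: the only thing to be careful about is the degenerate possibility that $a$ or $a'$ falls into $\supp(\eta)$, which the computation $\eta(aa')=0$ rules out, and the case distinctions around $\al=0$, which are absorbed by the additivity argument above. One could alternatively phrase the whole proof as: the canonical graded-algebra homomorphism $\ggm\to\ggn$ sends homogeneous units to homogeneous units (a unit $u$ with inverse $u'$ satisfies $uu'=1$, and $1\mapsto 1\ne 0$, so the images are again mutually inverse and in particular nonzero), and it multiplies grades by the identity on the common value group wherever it is nonzero; since it is nonzero on homogeneous units by the previous sentence, it carries the grade $\al$ of a unit in $\ggm$ to the same grade $\al$, now realized by a unit in $\ggn$. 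Either formulation gives $\gm^0\subseteq\Gamma_\eta^0$.
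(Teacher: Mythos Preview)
Your proof is correct and follows essentially the same approach as the paper's: both rely on the fact that the canonical homomorphism $\ggm\to\gge$ sends homogeneous units to homogeneous units, which forces $\mu(a)=\eta(a)$ for any $a$ with $\inm a$ a unit. Your first paragraph unpacks this at the level of elements, while your final alternative formulation is almost verbatim the paper's one-line argument.
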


\begin{proof}
Let $\al=\mu(f)$ for some $f\in\kx\setminus\supp(\mu)$ such that $\inm f$ is a unit. The canonical homomorphism of graded algebras $\ggm\to\mathcal G_\eta$ maps homogeneous units to homogeneous units. Hence, $\mu(f)=\eta(f)$ and $\inm f$ is mapped to $\inv_\eta f$. In particular, $\al=\eta(f)$ belongs to $\Gamma_\eta^0$.  
\end{proof}

\subsection{Ordinary and limit augmentations}\label{subsecLimAug}
As shown in \cite[Theorem 4.2]{mcla}, we can construct new valuations on $\kx$ by defining their action on $\phi$-expansions.

\begin{lemma}\label{Mlvord}
If $\phi\in\kpm$ and $\gamma>\mu(\phi)$, then the map acting on $\phi$-expansions as
	\[
	\nu\left(\sum_{i\geq 0}a_i\phi^i\right)=\min_{i\geq 0}\{\mu(a_i)+i\gamma\}
	\]
is a valuation on $K[x]$ such that $\mu<\nu$. Moreover, $\ty(\mu,\nu)=[\phi]_\mu$.
\end{lemma}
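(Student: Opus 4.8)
\textbf{Proof proposal for Lemma \ref{Mlvord}.}

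The plan is to verify the three valuation axioms for $\nu$ directly, then identify $\ty(\mu,\nu)$ using the results already available on tangent directions. First I would check that $\nu$ is well-defined, i.e., that its value on $\sum_i a_i\phi^i$ does not depend on the choice of $\phi$-expansion. Since $\phi$ is a key polynomial for $\mu$, it is monic of degree $\deg(\phi)$, and the $\phi$-expansion of any $f\in\kx$ is unique once we demand $\deg(a_i)<\deg(\phi)$; so there is nothing to check beyond recording that we use this canonical expansion. The additivity $\nu(f+g)\ge\min\{\nu(f),\nu(g)\}$ is then immediate from the formula, because the $\phi$-expansion of $f+g$ is the termwise sum of those of $f$ and $g$ (degrees stay below $\deg(\phi)$), and $\mu$ itself is a valuation.

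The substantive point is multiplicativity: $\nu(fg)=\nu(f)+\nu(g)$. Writing the $\phi$-expansions $f=\sum a_i\phi^i$, $g=\sum b_j\phi^j$, the product $fg=\sum_k\big(\sum_{i+j=k}a_ib_j\big)\phi^k$ is \emph{not} in general the $\phi$-expansion, since $\deg(a_ib_j)$ may reach up to $2\deg(\phi)-2$; one must first reduce each $a_ib_j$ modulo $\phi$. The cleanest route is to pass to the graded algebra: define a map $\ggm\to\ggn$ on homogeneous elements by $\inm(a)\mapsto\inu(a)$ for $\deg(a)<\deg(\phi)$ and $\pi=\inm\phi\mapsto$ a new homogeneous element of grade $\gamma$, and show this extends to a well-defined graded ring homomorphism; then $\nu(fg)=\gr_\nu(\text{image of }\inm(fg))$ follows formally. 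Concretely, one shows that for the $\phi$-expansion the minimum $\min_i\{\mu(a_i)+i\gamma\}$ is attained and that the corresponding leading graded term of $fg$ is nonzero, using that $\pi$ is transcendental over $\ggm^0$ (Theorem \ref{g0gm}) when $\phi$ has minimal degree, and reducing to that case by replacing $\mu$ with the truncation of $\mu$ at $\phi$ — a move already standard in \cite{mcla}. Since $\gamma>\mu(\phi)$, all the reductions $a_ib_j\bmod\phi$ contribute terms of strictly larger $\nu$-value than the naive term $\mu(a_ib_j)+(i+j)\gamma$ would, because lowering the $\phi$-degree by passing to the remainder raises the weight; so the leading term of the true $\phi$-expansion agrees with that of the formal product, and multiplicativity follows. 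This reduction/transcendence argument is the main obstacle, and it is exactly the content of \cite[Theorem 4.2]{mcla}, which I would cite rather than reprove in full.

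Finally, $\mu<\nu$: for any $a$ with $\deg(a)<\deg(\phi)$ we have $\nu(a)=\mu(a)$, while $\nu(\phi)=\gamma>\mu(\phi)$, so $\mu\le\nu$ and $\mu\ne\nu$. To identify the tangent direction, note $\phi\in\kpm$ and $\mu(\phi)<\nu(\phi)$, so by Lemma \ref{td} we conclude $\ty(\mu,\nu)=[\phi]_\mu$ directly.
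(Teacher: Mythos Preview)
The paper does not prove this lemma; it simply attributes the construction to \cite[Theorem~4.2]{mcla} in the sentence preceding the statement. Your proposal does essentially the same thing for the only substantive step (multiplicativity), while adding arguments for the easier claims $\mu<\nu$ and $\ty(\mu,\nu)=[\phi]_\mu$. The latter, via Lemma~\ref{td}, is exactly right.

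Two small comments on the added material. First, your justification of $\mu\le\nu$ is too quick: knowing $\nu(a)=\mu(a)$ for $\deg(a)<\deg(\phi)$ and $\nu(\phi)>\mu(\phi)$ does not by itself give $\mu(f)\le\nu(f)$ for arbitrary $f$. You need the $\mu$-minimality of $\phi$, i.e.\ equation~(\ref{minimal}), to write $\mu(f)=\min_i\{\mu(a_i)+i\mu(\phi)\}\le\min_i\{\mu(a_i)+i\gamma\}=\nu(f)$. Second, the graded-algebra sketch you offer for multiplicativity has circularity issues: you invoke $\ggn$ before $\nu$ is known to be a valuation, and the ``truncation of $\mu$ at $\phi$'' you propose as a reduction is itself an ordinary augmentation, which is what is being proved. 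Since you fall back on citing \cite{mcla} anyway, this does not affect the correctness of the proposal, but the sketch as written is not a self-contained argument.
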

\begin{definition}
	The valuation $\nu$ above is called an \textbf{ordinary augmentation} of $\mu$. We denote it by $\nu=[\mu;\phi,\gamma]$.   
\end{definition}

This augmented valuation has trivial support if $\ga<\infty$, while $\supp(\nu)=\phi\kx$ if $\ga=\infty$. In both cases, $\deg(\nu)=\deg(\phi)$. Indeed, $\phi$ becomes a key polynomial of minimal degree of $\nu$, if $\ga<\infty$ \cite[Corollary 7.3]{KP}.

Let $A$ be a well-ordered set without a last element. A family $\cc=\left(\ri\right)_{i\in A}$ of valuations on $\kx$ is said to be a \textbf{continuous family}, parametrized by $A$, if:
\begin{itemize}
	\item The map $i\mapsto \rho_i$ is an isomorphism of totally ordered sets.
	\item The set $\{\deg(\ri)\}_{i\in A}$ is stable (i.e., there exists $i_0\in A$ such that $\deg(\ri)=\deg(\rho_{i_0})$ for all $i\geq i_0$). We denote by $\deg(\cc)$ this stable degree. 
\end{itemize}

A polynomial $f\in K[x]$ is said to be $\cc$-\textbf{stable} if there exists $i_0\in A$ such that $\rho_i(f)=\rho_{i_0}(f)$ for all $i\geq i_0$. In this case, we denote $\rho_\cc(f):=\rho_{i_0}(f)$.

\begin{definition}
	A monic polynomial $\phi\in\kx$ is a \textbf{limit key polynomial} for the family $\cc$ if it is $\cc$-unstable and has the smallest degree among $\cc$-unstable polynomials. Denote by ${\rm KP}_\infty(\cc)$ the set of all limit key polynomials for $\cc$.
\end{definition} 

If ${\rm KP}_\infty(\cc)=\emptyset$, then $\rho_\cc$ defines a valuation on $\kx$. We write $\rho_\cc=\lim(\cc)$ and we say that $\rho_\cc$ is the \textbf{stable limit} of the family $\cc$. 

Stable limits are valuation-algebraic \cite[Proposition 3.1]{MLV}.

If $\kpi(\cc)\ne\emptyset$, then a limit key polynomial can be used to augment the valuation as in the ordinary case (c.f. \cite[Proposition 1.22]{Vaq} or \cite[Theorem 5.16]{N2021}).

\begin{lemma}
	Suppose that $\phi$ is a limit key polynomial for $\cc$ and take $\gamma>\ri(\phi)$ for all $i\in A$. Then the  map acting on $\phi$-expansions as
	\[
	\nu\left(\sum_{i\geq 0}a_i\phi^i\right)=\min_{i\geq 0}\{\rho_\cc(a_i)+i\gamma\}
	\]
	is a valuation on $K[x]$ such that $\ri<\nu$ for all $i\in A$. 
\end{lemma}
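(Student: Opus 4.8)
The statement to prove is that the map defined by the prescribed action on $\phi$-expansions is a valuation on $K[x]$ satisfying $\rho_i < \nu$ for all $i \in A$. The plan is to follow the same template that establishes Lemma \ref{Mlvord}, replacing $\mu(a_i)$ by the stable value $\rho_\cc(a_i)$, and then to account for the limit nature of the construction. First I would observe that, since $\deg(\phi) > \deg(\cc)$ (a limit key polynomial strictly exceeds the stable degree of the family, as every polynomial of degree less than $\deg(\phi)$ is $\cc$-stable), each coefficient $a_i$ in a $\phi$-expansion is $\cc$-stable, so $\rho_\cc(a_i)$ is well defined; this makes the formula for $\nu$ meaningful. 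Well-definedness of $\nu$ then reduces to checking that the value $\min_{i}\{\rho_\cc(a_i) + i\gamma\}$ does not depend on the $\phi$-expansion, which is automatic since $\phi$-expansions are unique once $\deg(\phi)$ is fixed.

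Next I would verify the valuation axioms. Additivity on products, $\nu(fg) = \nu(f) + \nu(g)$, and the ultrametric inequality $\nu(f+g) \ge \min\{\nu(f), \nu(g)\}$ are the substance. The standard approach is to pass to a large index $i_0 \in A$ beyond which all the finitely many coefficients appearing in the $\phi$-expansions of $f$, $g$, and $fg$ have stabilized, i.e. $\rho_i(a) = \rho_\cc(a)$ for $i \ge i_0$ and all relevant $a$. For such $i$, the ordinary augmentation $[\rho_i; \phi, \gamma]$ is a genuine valuation by Lemma \ref{Mlvord} (note $\gamma > \rho_i(\phi)$ holds by hypothesis), and on the polynomials in question $\nu$ agrees with $[\rho_i;\phi,\gamma]$ because the defining minima are computed from the same stabilized data. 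Hence $\nu$ inherits the valuation axioms from $[\rho_i;\phi,\gamma]$ on any finitely generated piece, and since the axioms only ever involve finitely many polynomials at a time, $\nu$ is a valuation on all of $K[x]$. This also shows $\nu$ extends $v$ (take $f \in K$, whose $\phi$-expansion is itself).

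Finally, for the inequality $\rho_i < \nu$: fix $i \in A$ and take $j \ge i$ large enough that all coefficients of the $\phi$-expansion of a given $f$ have stabilized at index $j$. Then $\rho_i \le \rho_j$ (the family is totally ordered by index) and, comparing term by term in the $\phi$-expansion $f = \sum a_k \phi^k$,
\[
\rho_i(f) = \min_k\{\rho_i(a_k \phi^k)\} \le \min_k\{\rho_j(a_k) + k\,\rho_j(\phi)\} \le \min_k\{\rho_\cc(a_k) + k\gamma\} = \nu(f),
\]
using $\rho_i(a_k\phi^k) \le \rho_j(a_k\phi^k) = \rho_j(a_k) + k\rho_j(\phi)$, then $\rho_j(\phi) \le \gamma$ and $\rho_j(a_k) = \rho_\cc(a_k)$; the first inequality uses $\rho_i$-minimality of $\phi$ via (\ref{minimal}). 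To get strict inequality, apply this to $f = \phi$: $\rho_i(\phi) < \gamma = \nu(\phi)$ by the choice of $\gamma$. Since $A$ has no last element and $\phi$ is $\cc$-unstable, there is no uniform $i$ with $\rho_i(\phi) = \gamma$, so $\rho_i \ne \nu$ for every $i$.

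The main obstacle I anticipate is the bookkeeping needed to justify that $\nu$ really coincides with the ordinary augmented valuation $[\rho_i;\phi,\gamma]$ on each finite collection of polynomials once $i$ is large enough — in particular, making sure that when one multiplies two $\phi$-expansions and re-expands $\phi^a\phi^b$-type cross terms, no coefficient of degree $\ge \deg(\phi)$ sneaks in (it cannot, since the $a_k$ already have degree $< \deg(\phi)$ and the $\phi$-expansion of a product is got purely by collecting powers of $\phi$ after reducing the coefficient products, all of which have degree $< \deg(\phi)$ and hence are $\cc$-stable). Once this is set up carefully, everything else reduces cleanly to Lemma \ref{Mlvord}.
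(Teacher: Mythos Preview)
The paper does not give a self-contained proof of this lemma; it only cites \cite[Proposition 1.22]{Vaq} and \cite[Theorem 5.16]{N2021}. Your strategy of reducing to the ordinary augmentation $[\rho_i;\phi,\gamma]$ for a sufficiently large index $i$ is exactly the approach taken in those references, and the overall architecture of your sketch is correct.

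There is, however, a genuine gap. You invoke Lemma~\ref{Mlvord} to conclude that $[\rho_i;\phi,\gamma]$ is a valuation, and you invoke equation~(\ref{minimal}) to write $\rho_i(f)=\min_k\{\rho_i(a_k\phi^k)\}$. Both of these require $\phi\in\kp(\rho_i)$ (or at least that $\phi$ is $\rho_i$-minimal), and you have not established this. A limit key polynomial for $\cc$ is \emph{not} automatically a key polynomial for an individual $\rho_i$ just from the definition; this is a separate lemma that must be proved first (it is the content of, e.g., \cite[Proposition 1.21]{Vaq}, and appears in \cite[Section 7]{KP} and \cite{N2021} as well). The argument runs roughly as follows: for $j>i$ with $\rho_j(\phi)>\rho_i(\phi)$, one has $\ty(\rho_i,\rho_j)=[\chi_i]_{\rho_i}$ for a key polynomial $\chi_i$ of minimal degree for $\rho_i$, and then one shows $\inm_{\rho_i}\phi$ is a unit times a power of $\inm_{\rho_i}\chi_i$; combined with a degree count this yields both $\rho_i$-minimality and $\rho_i$-irreducibility of $\phi$. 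Without this step your reduction to Lemma~\ref{Mlvord} is circular.

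One minor correction: you assert $\deg(\phi)>\deg(\cc)$, but in general only $\deg(\phi)\ge\deg(\cc)$ holds (the paper explicitly notes the case $\deg\kpi(\cc)=\deg(\cc)$, where the limit augmentation is simultaneously ordinary). This does not affect your argument, since what you actually use is that every polynomial of degree strictly less than $\deg(\phi)$ is $\cc$-stable, which follows directly from the minimality in the definition of a limit key polynomial.
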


\begin{definition}
	The valuation $\nu$ as above is called a \textbf{limit augmentation} of $\mu$. We denote it by $\nu=[\cc;\phi,\gamma]$.
\end{definition}

This augmented valuation has trivial support if $\ga<\infty$, while $\supp(\nu)=\phi\kx$ if $\ga=\infty$. In both cases, $\deg(\nu)=\deg(\phi)$. Indeed, $\phi$ becomes a key polynomial of minimal degree of $\nu$, if $\ga<\infty$ \cite[Corollary 7.13]{KP}.

From now on we will use the notation $\mu\,\ra\, \nu$
to describe an augmentation. This means that, either
\[
\ars{1.2}
\begin{array}{lll}
\nu=[\mu;\phi,\gamma], &\quad \phi\in {\rm KP}(\mu), &\quad \gamma>\mu(\phi), \ \mbox{ or}\\
\nu=[\cc;\phi,\gamma], &\quad \phi\in {\rm KP}_\infty(\cc), &\quad \gamma>\ri(\phi)\ \mbox{ for all }\ i\in A,
\end{array}
\]
where $\cc=\left(\ri\right)_{i\in A}$ is a continuous family  such that  $\mu=\rho_{i_{\op{min}}}$, for $i_{\op{min}}=\min(A)$.

We say that $\mu\ra \nu$ is an \emph{ordinary} or \emph{limit} augmentation, respectively. For the limit augmentations we have
\[
\ty(\mu,\nu)=\ty(\mu,\ri)\quad\mbox{  for all }i\in A.
\]

\begin{definition}
Let $\cc=\left(\ri\right)_{i\in A}$ be a continuous family  and let  $\mu=\rho_{i_{\op{min}}}$, for $i_{\op{min}}=\min(A)$.
We say that $\cc$ is \textbf{based on} $\mu$ if $\deg(\mu)=\deg(\cc)$.
\end{definition}

If $\mu\to\nu$ is a limit augmentation such that $\deg(\mu)<\deg(\cc)$,  then it splits into a chain of two augmentations
\[
\mu\to\rho_{i_0}\to\nu,
\]
where $i_0\in A$ satisfies $\deg(\rho_{i_0})=\deg(\cc)$.
The augmentation $\mu\to\rho_{i_0}$ is ordinary and $\rho_{i_0}\to\nu$ is a limit augmentation with respect to the continuous family $\cc'=\left(\ri\right)_{i\ge i_0}$, which is based on $\rho_{i_0}$.

Therefore, we may focus our analysis of limit augmentations on continuous families which are based on its initial valuation.

Moreover, since we are only interested in the limit behavior of continuous families, we may replace them by equivalent families  (both families cofinal in each other) satisfying certain  extra conditions described in \cite[Lemma 4.11]{VT}.\e

\nn{\bf Convention. }{\it In this paper, when referring to a continuous family $\cc=\left(\ri\right)_{i\in A}$, we will assume that 
\begin{itemize}
\item $\cc$  is based on its initial valuation.
\item All $\ri$ are residue-transcendental, have relative ramification index $e(\ri)=1$ and have a common value group: 
$$\g_\cc:=\g_{\ri}^0=\g_{\ri}\quad\mbox{for all }i\in A.$$
\end{itemize}	
}

There are augmentations which are simultaneously ordinary and limit. This occurs precisely when $\deg\kpi(\cc)=\deg(\cc)$. In this case, every $\phi\in\kpi(\cc)$ is a key polynomial for $\mu$ and $[\cc;\phi,\ga]=[\mu;\phi,\ga]$ whenever $\ga>\ri(\phi)$.\bs

\section{Extension of valuations and henselization}\label{secFinLeaves}

Let $(K,v)$ be a valued field with value group $\g=v(K^*)$. Let $\gq$ be the divisible closure of $\g$.

We fix an algebraic closure $\kb$ of $K$, and an extension $\vb$ of the valuation $v$ to $\kb$. This determines a henselization $(\kh,\vh)$ of $(K,v)$. If $\ks$ is the separable closure of $K$ in $\kb$, then the field $\kh$ is the fixed field of the \textbf{decomposition group} 
$$
D_{\vb}=\left\{\sg\in \gal(\ks/K)\mid \vb\circ\sg=\vb\right\}.
$$
The valuation $\vh$ is just the restriction of $\vb$ to $\kh$.
We have a chain of fields
$$
K\subset\kh\subset\ks\subset\kb.
$$
Since $\kb/\ks$ is purely inseparable, for all $\sg\in\aut(\kb/K)$ we have
$$
\vb\circ\sg=\vb\ \sii\ \sg\in\aut(\kb/\kh). 
$$ 
Thus, the valuation $\vh$ has a unique extension to $\kb$.\e

Let $g\in\irr(K)$. The extensions of $v$ to the simple field extension $\kx/(g)$ are in 1-1  correspondence with the irreducible factors of $g$ over $\khx$. For $v$ discrete of rank one, this fact goes back to Hensel. For an arbitrary valuation, it may be deduced from the techniques of \cite[Section 17]{endler}. For the ease of the reader, we provide a proof.

\subsection{Extensions of valuations to algebraic extensions}
Let us first describe all the extensions of $v$ to an arbitrary algebraic extension $L/K$. For any such extension $w/v$ the quotient $\g_w/\g$ of their value groups is a torsion group. Hence, we may assume that the extensions take values in the divisible closure $\gq$.

Thus, we aim to describe the set:
$$
\ee(L)=\left\{w\colon L\lra \gq\infty\,\mid\, w\, \mbox{ is a valuation extending }v\right\}.
$$

On the set  $\mon$ of all $K$-morphisms from $L$ to $\kb$, we define the following equivalence relation
$$
\la\sim_{\kh}\la' \ \sii\ \la'=\sg\circ\la\quad\mbox{for some}\quad \sg\in\aut(\kb/\kh).
$$

\begin{theorem}\label{endler}
	The  mapping $\mon\to\ee(L)$, defined by $\la\mapsto \vb\circ\la$, 
	induces a bijection between the quotient set $\mon{}/\!\sim_{\kh}$ and $\ee(L)$.
\end{theorem}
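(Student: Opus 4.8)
The plan is to verify the three things that, together, give a well-defined bijection: (a) the map $\la\mapsto\vb\circ\la$ actually lands in $\ee(L)$; (b) it is constant on $\sim_{\kh}$-classes, so it descends to $\mon/\!\sim_{\kh}$; and (c) the descended map is both injective and surjective. Part (a) is immediate: if $\la\colon L\hra\kb$ is a $K$-embedding then $\vb\circ\la$ is a valuation on $L$ extending $v=\vb_{\mid K}$. For (b), if $\la'=\sg\circ\la$ with $\sg\in\aut(\kb/\kh)$, then $\vb\circ\la'=(\vb\circ\sg)\circ\la=\vb\circ\la$, using that $\vb$ is the unique extension of $\vh$ to $\kb$ (established in the paragraph before the theorem, from $\kb/\ks$ purely inseparable), hence $\vb\circ\sg=\vb$ for every $\sg\in\aut(\kb/\kh)$. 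So the induced map $\overline{\Phi}\colon \mon/\!\sim_{\kh}\to\ee(L)$ is well defined.

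For surjectivity, take any $w\in\ee(L)$. Embed $L$ into $\kb$ as a $K$-algebra, i.e.\ pick $\la_0\in\mon$; then $\vb\circ\la_0$ and $w$ are both extensions of $v$ to $L$, so by the conjugacy of extensions of a valuation to an algebraic extension inside a fixed algebraically closed valued field (the standard fact that $\aut(\kb/K)$ acts transitively on $\ee(\kb)$, or rather that any two extensions to $L$ are related by an automorphism of $\kb/K$), there is $\tau\in\aut(\kb/K)$ with $w=\vb\circ\tau\circ\la_0$ on $L$. Wait — one must be slightly careful: one really uses that $\vb\circ\tau^{-1}$ and $\vb$ are both extensions of $v$ to $\kb$, hence equal after composing with a further automorphism; the clean statement is that any valuation on $L$ extending $v$ is of the form $\vb\circ\la$ for some $\la\in\mon$, which is exactly the classical extension theorem for valuations applied to the normal hull. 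So set $\la=\tau\circ\la_0\in\mon$; then $\overline{\Phi}$ of its class is $w$.

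For injectivity, suppose $\vb\circ\la=\vb\circ\la'$ for $\la,\la'\in\mon$. Consider $\sg_0=\la'\circ\la^{-1}$, an isomorphism $\la(L)\to\la'(L)$ of subfields of $\kb$ fixing $K$; extend it (using that $\kb$ is algebraically closed and normal over $K$) to some $\sg\in\aut(\kb/K)$ with $\sg\circ\la=\la'$. Then $\vb\circ\sg$ and $\vb$ agree on $\la(L)$. The point is to upgrade this to: $\vb\circ\sg=\vb$ on all of $\kb$, which would force $\sg\in\aut(\kb/\kh)$ and hence $\la\sim_{\kh}\la'$. This upgrade is the main obstacle, and it is not true for an arbitrary such $\sg$ — different choices of extension of $\sg_0$ need not all preserve $\vb$. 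The fix is to choose $\sg$ more carefully: $\vb\circ\sg$ and $\vb$ both restrict to $v$ on $K$ and agree on $\la(L)$, so $\vb\circ\sg$ is an extension of the valuation $\vb\circ\la$ (viewed on $\la(L)$) to $\kb$; since all extensions of a given valuation to $\kb/\la(L)$ are conjugate by $\aut(\kb/\la(L))$, there is $\rho\in\aut(\kb/\la(L))$ with $\vb\circ\sg\circ\rho=\vb$. Now replace $\sg$ by $\sg\circ\rho$; it still satisfies $\sg\rho\circ\la=\sg\circ\la=\la'$ (since $\rho$ fixes $\la(L)$ pointwise) and now $\vb\circ(\sg\rho)=\vb$, i.e.\ $\sg\rho\in\aut(\kb/\kh)$. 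Hence $\la'=(\sg\rho)\circ\la$ with $\sg\rho\in\aut(\kb/\kh)$, so $\la\sim_{\kh}\la'$, proving injectivity and completing the proof.
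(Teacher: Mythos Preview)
Your proof is correct and follows essentially the same route as the paper's. Both arguments establish surjectivity by extending $w$ (transported to a subfield of $\kb$) to all of $\kb$ via Chevalley and then invoking the Conjugation theorem over $K$; and both establish injectivity by extending the partial isomorphism $\la'\circ\la^{-1}$ to some $\sg\in\aut(\kb/K)$ and then correcting $\sg$ by an element $\rho\in\aut(\kb/\la(L))$ (again via the Conjugation theorem, now over $\la(L)$) so that the adjusted automorphism preserves $\vb$ and hence lies in $\aut(\kb/\kh)$. Your hesitation in the surjectivity paragraph is unnecessary: the clean formulation you land on---extend $w\circ\la_0^{-1}$ from $\la_0(L)$ to $\kb$, then conjugate over $K$---is exactly what the paper does.
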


\begin{center}
	\setlength{\unitlength}{5mm}
	\begin{picture}(8,10)
	\put(4,0){$K$}\put(3.6,4){$\la(L)$}\put(4,8){$\kb$}
	\put(0,2){$L$}\put(8,4.3){$\kh$}\put(8,6.8){$\ks$}
	\put(4.4,1.1){\line(0,1){2.3}}\put(4.4,5.1){\line(0,1){2.4}}\put(8.4,5.2){\line(0,1){1.3}}
	\put(3.8,0.4){\vector(-2,1){2.9}}\put(1,2.5){\vector(2,1){2.5}}
	\put(5,.3){\line(4,5){2.9}}\put(5,8.2){\line(3,-1){2.7}}
	\put(1.6,3.3){\footnotesize{$\la$}}
	\end{picture}
\end{center}\bs

\begin{proof}
Let us first show that the mapping  $\mon\to\ee(L)$ is onto.

Take any $w\in\ee(L)$. For an arbitrary $\la\in\mon$, consider the following valuation on the field $\la(L)$: 
$$
w'=w\circ \la^{-1}.
$$
By Chevalley's theorem, there exists an extension $\vb'$ of $w'$ to $\kb$. By the Conjugation theorem \cite[Theorem 14.2]{endler}, there exists $\sg\in\aut(\kb/K)$ such that $\vb'=\vb\circ \sg$.

Consider the embedding $\la'=\sg\circ\la\in\mon$. The image of this embedding in $\ee(L)$ is
$$
\vb\circ \la'=\vb'\circ\sg^{-1}\circ\sg\circ\la=\vb'\circ \la =w.
$$
This shows that $\mon\to\ee(L)$ is onto.
To end the proof, we must check that
$$
\la\sim_{\kh} \la'\ \sii\ \vb\circ \la=\vb\circ \la'.
$$
The implication $\Rightarrow$ is obvious. If
$\la'=\sg\circ\la$ for some $\sg\in\aut(\kh/K)$, then
 $\vb\circ \la'=\vb\circ \sg\circ \la=\vb\circ\la$.
 
Conversely, suppose that $\vb\circ \la=\vb\circ \la'$.
Let $\tau\in\aut(\kb/K)$ be any lifting of the $K$-isomorphism
$$
\la'\circ\la^{-1}\colon \la(L)\lra \la'(L).
$$

\begin{center}
	\setlength{\unitlength}{5mm}
	\begin{picture}(8,11)
	\put(4,0){$K$}\put(0.3,5){$\la(L)$}\put(4,9){$\kb$}
	\put(-1.6,2.4){$L$}\put(7,5){$\la'(L)$}
	\put(7,6){\line(-2,3){1.9}}\put(1.7,6){\line(2,3){1.9}}\put(2.4,5.3){\vector(1,0){4.2}}
	\put(3.7,0.3){\vector(-2,1){4.2}}\put(-.8,3){\vector(1,1){1.6}}
	\put(4.9,0.4){\line(2,3){2.8}}
	\put(3.9,0.4){\line(-2,3){2.8}}
	\put(-.6,4){\footnotesize{$\la$}}
	\put(4.5,3.2){\footnotesize{$\la'$}}
	\put(4,5.5){\footnotesize{$\tau$}}
	\put(-.5,2.7){\vector(4,1){7.5}}
	\end{picture}
\end{center}\bs

Our assumption $\vb\circ \la=\vb\circ \la'$ implies that the restriction to $\la(L)$ of the two valuations $\vb\circ\tau$ and $\vb$ coincide.
By the Conjugation theorem \cite[Theorem 14.2]{endler}, there exists $\rho\in\aut(\kb/\la(L))$ such that $\vb\circ\tau=\vb\circ \rho$.

Then, $\sg:=\tau\circ \rho^{-1}$ belongs to $\aut(\kb/\kh)$, because it satisfies $\vb\circ\sg=\vb$. Finally, since $\rho$ is the identity on $\la(L)$, we have $\rho^{-1}\circ\la=\la$. Thus,
$$
\sg\circ \la=\tau\circ \rho^{-1}\circ\la=\tau\circ\la=\la'.
$$
This ends the proof of the theorem.
\end{proof}


\subsection{Extensions to simple finite extensions}\label{independencehens}
Suppose now $L=\kx/(g)$ for some $g\in\irr(K)$. 
Let us recall how to describe the finite number of extensions of $v$ to $L$.

Since $\kh/K$ is a separable extension, the factorization of $g$ into a product of monic irreducible polynomials in $\kh[x]$ takes the form
$$
g=G_1\cdots G_r,
$$
with pairwise distinct $G_1,\dots,G_r\in\irr(\kh)$.
 Let $Z(g)\subset \kb$ be the set of zeros of $g$, avoiding multiplicities. We have a natural bijection
$$
Z(g)\lra\mon,\qquad \t\longmapsto \la_\t, 
$$
where $\la_\t$ is determined by $\la_\t\left(x+g\kx\right)=\t$.
Clearly,
$$
\begin{array}{rccl}
\la_\t \sim_{\kh} \la_{\t'}&\sii&\la_{\t'}=\sg\circ \la_\t&\quad\mbox{for some}\quad\sg\in\aut(\kb/\kh)\\
&\sii&\t'=\sg(\t)&\quad\mbox{for some}\quad\sg\in\aut(\kb/\kh)
\end{array}
$$
Therefore, $\la_\t \sim_{\kh} \la_{\t'}$ if and only if $\t$ and $\t'$ are roots of the same irreducible factor of $g$ over $\kh[x]$.

Let us choose an arbitrary root $\t_i\in Z(G_i)$ for each irreducible factor of $g$. By Theorem \ref{endler}, the valuations $\wb_{G_i}=\vb\circ\la_{\t_i}$ do not depend on the chosen roots and are all extensions of $v$ to $L$.

\begin{corollary}\label{CorEndler}
	There are $r$ many extensions  of $v$ to $L=\kx/(g)$, given by $\wb_{G_1},\dots,\wb_{G_r}$. 
\end{corollary}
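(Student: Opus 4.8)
The plan is to deduce the statement directly from Theorem \ref{endler} and the combinatorial discussion immediately preceding the corollary, so that only a short bookkeeping argument remains. By Theorem \ref{endler}, the map $\la\mapsto\vb\circ\la$ induces a bijection $\mon/\!\sim_{\kh}\ \iso\ \ee(L)$; hence it suffices to show that $\mon/\!\sim_{\kh}$ has exactly $r$ elements and that $\la_{\t_1},\dots,\la_{\t_r}$ represent its distinct classes.

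First I would transport the relation $\sim_{\kh}$ along the bijection $Z(g)\to\mon$, $\t\mapsto\la_\t$. As computed above, $\la_\t\sim_{\kh}\la_{\t'}$ holds exactly when $\t$ and $\t'$ are roots of the same irreducible factor $G_i$ of $g$ in $\kh[x]$. Since $g=G_1\cdots G_r$ with the $G_i$ pairwise distinct monic irreducible polynomials, the zero sets $Z(G_1),\dots,Z(G_r)$ are nonempty (as $\kb$ is algebraically closed and $\deg G_i\ge 1$), pairwise disjoint (as distinct monic irreducibles are coprime), and their union is $Z(g)$. Therefore they are precisely the equivalence classes of the transported relation, so $\mon/\!\sim_{\kh}$ has exactly $r$ classes, with representatives $\la_{\t_i}$ for any choice $\t_i\in Z(G_i)$.

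Finally I would translate back through the bijection of Theorem \ref{endler}: the $r$ extensions of $v$ to $L$ are $\vb\circ\la_{\t_i}=\wb_{G_i}$ for $i=1,\dots,r$, which, as already observed, are independent of the chosen roots $\t_i$. I do not expect any genuine obstacle at this stage; all the substantive work has already been done, namely the surjectivity of $\mon\to\ee(L)$ in Theorem \ref{endler} (via Chevalley's extension theorem and the Conjugation theorem) and the separability of $\kh/K$, which guarantees that the factorization $g=G_1\cdots G_r$ is squarefree. The corollary itself is simply the count of equivalence classes.
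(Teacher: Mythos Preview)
Your proposal is correct and follows exactly the line of reasoning the paper intends: the corollary is stated immediately after the discussion identifying the $\sim_{\kh}$-classes in $\mon$ with the irreducible factors $G_1,\dots,G_r$, and the paper offers no separate proof beyond that discussion and the invocation of Theorem \ref{endler}. Your write-up simply makes explicit the bookkeeping (partition of $Z(g)$ by the $Z(G_i)$ and transport through the bijection of Theorem \ref{endler}) that the paper leaves to the reader.
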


\subsection{Valuations with nontrivial support on $\khx$}

For every $G\in\irr(\kh)$, we may consider the following valuation on $\khx$:
$$
v_G\colon \khx\lra \gq\infty,\qquad g\longmapsto v_G(f)=\vb(f(\t)),
$$
where $\t\in Z(G)$.
By the henselian property, this valuation is independent on the choice of $\t$.
Clearly, $\supp(v_G)=G\khx$.

We denote the restriction of $v_G$ to $\kx$ by:
$$
w_G:=\left(v_G\right)_{\mid \kx}.
$$
Now, $\supp(w_G)=N_{\kh/K}(G)\kx$, where  $N_{\kh/K}(G)\in\irr(K)$ is the monic generator of the prime ideal  $\left(G\khx\right)\cap\kx$.
This justifies the notation of Corollary \ref{CorEndler}.

\begin{proposition}\label{CorEndler2}
For all $G\in\irr(\kh)$, this valuation $v_G$ is the unique extension of $v$ to $\khx$, with support $G\khx$. 	
Moreover, for all $F,G\in\irr(\kh)$ we have
\[
F=G\ \sii\ v_F=v_G\ \sii\ w_F=w_G.
\] 
\end{proposition}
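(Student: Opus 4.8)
The plan is to first establish the uniqueness statement, and then deduce the three-way equivalence from it. For uniqueness: let $w$ be any extension of $v$ to $\khx$ with $\supp(w)=G\khx$. Then $w$ is induced by a valuation $\wb$ on the residue field $\khx/G\khx\cong\kh(\t)$, where $\t\in Z(G)$, and $\wb$ extends $\vh$. Since $\kh$ is henselian and $\kh(\t)/\kh$ is finite, $\vh$ has a \emph{unique} extension to $\kh(\t)$; this is exactly the henselian property, and we recorded in Section \ref{secFinLeaves} that $\vh$ even has a unique extension to $\kb$. That unique extension is $\vb$ restricted to $\kh(\t)$ (viewed inside $\kb$ via $x+G\khx\mapsto\t$), which is precisely the recipe defining $v_G$. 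Hence $w=v_G$, and along the way we also see that $v_G$ does not depend on the choice of $\t\in Z(G)$ — all roots of $G$ are conjugate over $\kh$ and $\vb$ is $\aut(\kb/\kh)$-invariant, so $\vb(f(\t))$ is the same for every such $\t$.

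For the equivalences: the implication $F=G\Rightarrow v_F=v_G$ is trivial. For $v_F=v_G\Rightarrow w_F=w_G$, simply restrict to $\kx$. The substantive implication is $w_F=w_G\Rightarrow F=G$. Here I would argue via supports: $\supp(w_F)=N_{\kh/K}(F)\kx$ and $\supp(w_G)=N_{\kh/K}(G)\kx$, so $w_F=w_G$ forces $N_{\kh/K}(F)=N_{\kh/K}(G)=:g_0\in\irr(K)$. Thus $F$ and $G$ are both irreducible factors of $g_0$ in $\khx$. Now $w_F=w_G$ as valuations on $\kx$ with support $g_0\kx$ means they induce the \emph{same} valuation on $L_0:=\kx/g_0\kx$. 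By Corollary \ref{CorEndler}, the extensions of $v$ to $L_0$ are in bijection with the irreducible factors of $g_0$ over $\khx$, the factor $F$ corresponding to $\wb_F$ and $G$ to $\wb_G$; and one checks that under the identification $L_0\hookrightarrow\kh[x]/F\khx$ the valuation $\wb_F$ is exactly the restriction of $v_F$, i.e.\ $\wb_F=w_F$ and likewise $\wb_G=w_G$. Since $w_F=w_G$, the bijection of Corollary \ref{CorEndler} gives $F=G$. To close the circle, $v_F=v_G$ now follows from $F=G$.

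The main obstacle I anticipate is bookkeeping rather than conceptual: making the identifications precise — namely that the valuation $v_G$ on $\khx$ with support $G\khx$ "is" the unique extension of $\vh$ to the residue field $\kh[x]/G\khx$, and that its restriction $w_G$ to $\kx$ corresponds, under Corollary \ref{CorEndler}, to the extension $\wb_G$ of $v$ to $L_0=\kx/N_{\kh/K}(G)\kx$. Once these dictionary entries are pinned down, everything reduces to the uniqueness of extensions of a valuation from a henselian field to a finite extension, which we already have in hand, together with the bijection of Corollary \ref{CorEndler}. I would also remark explicitly that the independence of $v_G$ from the choice of $\t\in Z(G)$ — already used in the definition preceding the Proposition — is itself a consequence of the henselian property, so no circularity is introduced.
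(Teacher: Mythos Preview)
Your proposal is correct and follows essentially the same approach as the paper's proof: uniqueness via the henselian property (identifying valuations with support $G\khx$ with extensions of $\vh$ to $\khx/(G)$), and the implication $w_F=w_G\Rightarrow F=G$ via Corollary~\ref{CorEndler}. The paper is somewhat terser---it observes that $F=G\iff v_F=v_G$ follows immediately from the support (since $v_F=v_G$ forces $F\khx=G\khx$), rather than routing through $w_F=w_G$---but your circular chain of implications achieves the same end, and your extra care with the bookkeeping (the identification $\wb_F=w_F$) is justified.
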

	
\begin{proof}
There is a natural identification between valuations with a given support $G\khx$ and valuations on the field $L=\khx/(G)$.
Since $\vh$ admits a unique extension to $L$, it must be $\vb_G$, the valuation induced by $v_G$.

This proves the equivalence $F=G\ \sii\ v_F=v_G$. Finally, 
suppose that $w_F=w_G$ has support $g\kx$ for some $g\in\irr(K)$. By Corollary  \ref{CorEndler}, $F,G$ are irreducible factors of $g$ over $\khx$ and $\wb_F=\wb_G$ implies $F=G$.
\end{proof}	

\subsection{Comparison of different henselizations}

Any other extension of $v$ to $\kb$ is of the form
$\,\vt=\vb\circ \tau$, for some $\tau\in \aut(\kb/K)$.
The corresponding decomposition groups are conjugate:
$$
D_{\vt}=\tau^{-1}D_{\vb}\,\tau.
$$
Hence, the corresponding henselization $(\kth,v_\tau^h)$ is determined as 
$$\kth=\tau^{-1}\left(\kh\right),\qquad v_\tau^h=\vh\circ\tau.$$

\begin{center}
	\setlength{\unitlength}{4mm}
	\begin{picture}(8,5)
	\put(0.3,0){$\kh$}\put(4,4){$\ks$}
	\put(7,0){$\kth$}
	\put(7,1){\line(-2,3){1.9}}\put(1.7,1){\line(2,3){1.9}}\put(2.4,0.3){\vector(1,0){4.2}}
	\put(4,0.5){\footnotesize{$\tau^{-1}$}}
	\end{picture}
\end{center}


Recall the factorization  $g=G_1\dots G_r$ of some $g\in\irr(K)$ into a product of monic irreducible factors in $\khx$.
Then, $g=\tau^{-1}(G_1)\cdots \tau^{-1}(G_r)$ is the corresponding factorization of $g$ into a product of monic irreducible factors in $\kthx$. 

\begin{lemma}\label{sameWq}
	Let $G\in\irr(\kh)$ be an irreducible factor of $g$ in $\khx$. Let $v_G$ be the unique extension of $\vh$ to $\khx$ with support $G\khx$. Let $v_{\tau^{-1}(G)}$ be the unique extension of $v_\tau^h$ to $\kthx$ with support $\tau^{-1}(G)\kthx$. 
	
	Then, their restrictions to $\kx$ coincide: \ $\left(v_G\right)_{\mid\kx}=\left(v_{\tau^{-1}(G)}\right)_{\mid\kx}$. 
\end{lemma}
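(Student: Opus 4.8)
The plan is to chase everything back to the description of $v_G$ in terms of roots. Recall that $v_G$ is characterized by $v_G(f)=\vb(f(\t))$ for any $\t\in Z(G)$, and that $G\in\irr(\kh)$ is an irreducible factor of $g$ over $\khx$. First I would fix a root $\t\in Z(G)\subset\kb$; since $G\mid g$ in $\khx$, $\t$ is a root of $g$, and the embedding $\la_\t\colon L=\kx/(g)\to\kb$ sending $x+g\kx\mapsto\t$ satisfies, by definition, $\left(v_G\right)_{\mid\kx}(f)=\vb(f(\t))=(\vb\circ\la_\t)(f+g\kx)$ for all $f\in\kx$; that is, $\left(v_G\right)_{\mid\kx}$ is the valuation $\wb_G$ of Corollary \ref{CorEndler}.

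Next I would do the same computation for the other henselization. Set $\tau^{-1}(G)\in\irr(\kth)$, an irreducible factor of $g$ in $\kthx$ by the displayed factorization $g=\tau^{-1}(G_1)\cdots\tau^{-1}(G_r)$. A root of $\tau^{-1}(G)$ is $\tau^{-1}(\t)$, since $\tau^{-1}(G)(\tau^{-1}(\t))=\tau^{-1}\bigl(G(\t)\bigr)=0$. By the analogous characterization of $v_{\tau^{-1}(G)}$ (relative to the henselization $(\kth,v_\tau^h)$ and the extension $v_\tau^h\circ\tau=\vb$ of it to $\kb$... more precisely, the unique extension of $v_\tau^h$ to $\kb$ is $\vb\circ\tau$ composed appropriately; concretely $v_{\tau^{-1}(G)}(f)=(v_\tau^h\text{-extension})(f(\tau^{-1}\t))$), we get, for $f\in\kx$,
\[
\left(v_{\tau^{-1}(G)}\right)_{\mid\kx}(f)=(\vb\circ\tau)\bigl(f(\tau^{-1}\t)\bigr)=\vb\bigl(\tau(f(\tau^{-1}\t))\bigr)=\vb\bigl(f(\t)\bigr),
\]
where the last equality uses that $f$ has coefficients in $K$, so $\tau$ commutes with evaluation: $\tau(f(\tau^{-1}\t))=f(\tau\tau^{-1}\t)=f(\t)$. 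Comparing with the previous paragraph, both restrictions equal $f\mapsto\vb(f(\t))$, hence coincide.

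The only delicate point, which I would state carefully, is getting the extension of $v_\tau^h$ to $\kb$ right: since $\kth=\tau^{-1}(\kh)$ and $v_\tau^h=\vh\circ\tau$, and $\vh$ has a unique extension to $\kb$ (namely $\vb$, as $\kb/\ks$ is purely inseparable), the valuation $v_\tau^h$ has a unique extension to $\kb$ as well, and that extension is $\vb\circ\tau$ — one checks $(\vb\circ\tau)_{\mid\kth}=\vb\circ\tau_{\mid\tau^{-1}(\kh)}=\vh\circ\tau=v_\tau^h$. With this identification in hand the computation above is forced, and the lemma follows. I expect no real obstacle beyond bookkeeping the conjugation $\tau$ and the evaluation-homomorphism identities correctly; the essential content is simply that restricting $v_G$ to $\kx$ only remembers the valuation $\vb$ on the $K$-conjugacy class of the root $\t$, which is unchanged under replacing one henselization by a conjugate one.
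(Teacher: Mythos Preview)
Your proof is correct and follows essentially the same approach as the paper: choose a root $\t$ of $G$, use $\tau^{-1}(\t)$ as a root of $\tau^{-1}(G)$, and compute that both restrictions send $f\in\kx$ to $\vb(f(\t))$ via the identity $\vt(f(\tau^{-1}\t))=\vb(\tau(f(\tau^{-1}\t)))=\vb(f(\t))$. Your version simply makes explicit some bookkeeping (the identification of the extension of $v_\tau^h$ to $\kb$ with $\vb\circ\tau$, and the commutation of $\tau$ with evaluation of polynomials in $\kx$) that the paper leaves implicit.
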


\begin{proof}
	Choose any root $\t$ of $G$ in $\kb$. Then, $\tau^{-1}(\t)$ is a root of $\tau^{-1}(G)$. Clearly, 
	$$
\vt(f(\tau^{-1}(\t)))=\vb\left(\tau(f(\tau^{-1}(\t)))\right)=\vb(f(\t)),
	$$
	for all $f\in\kx$.
\end{proof}\e

\begin{corollary}\label{sameW}
	Let $G\in\irr(\kh)$ be an irreducible factor of $g$ in $\khx$.	Let $\tau^{-1}(G)\in\irr(\kth)$ be the corresponding irreducible factor of $g$ in $\kthx$. Then, the extensions of $v$ to $L=\kx/(g)$ determined by $G$ and $\tau^{-1}(G)$ coincide. 
\end{corollary}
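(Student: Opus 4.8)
The statement to prove is Corollary \ref{sameW}: the extension of $v$ to $L = \kx/(g)$ determined by the irreducible factor $G$ of $g$ in $\khx$ coincides with the one determined by $\tau^{-1}(G)$ in $\kthx$, where $\vt = \vb\circ\tau$ is another extension of $v$ to $\kb$ and $(\kth, v_\tau^h) = (\tau^{-1}(\kh), \vh\circ\tau)$ is the associated henselization. The plan is to reduce this to Lemma \ref{sameWq}, which has already done the computational work: it says precisely that the restrictions to $\kx$ of $v_G$ and $v_{\tau^{-1}(G)}$ agree.

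The key point to make explicit is that the "extension of $v$ to $L$ determined by a factor $G$" is, by the discussion preceding Corollary \ref{CorEndler} and by Proposition \ref{CorEndler2}, nothing but the restriction to $\kx$ of the unique valuation $v_G$ on $\khx$ with support $G\khx$, transported along the identification $L = \kx/N_{\kh/K}(G)\kx = \kx/(g)$ (note $N_{\kh/K}(G) = g$ only when $g$ is itself irreducible over $\kh$, so in general one works with the localization at the prime $G\khx\cap\kx$, but the resulting valuation on $L$ is the same). Concretely, $\wb_G$ is the valuation on $L$ sending $f + (g) \mapsto v_G(f) = \vb(f(\t))$ for a root $\t$ of $G$. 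Likewise, working inside the henselization $\kth$ attached to $\vt$, the factor $\tau^{-1}(G)$ determines the valuation on $L$ sending $f + (g)\mapsto v_{\tau^{-1}(G)}(f) = \vt(f(\tau^{-1}(\t)))$.

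So the proof is short: both $\wb_G$ and $\wb_{\tau^{-1}(G)}$ are valuations on the \emph{same} ring $L = \kx/(g)$, and by Lemma \ref{sameWq} their pullbacks to $\kx$ — namely $(v_G)_{\mid\kx}$ and $(v_{\tau^{-1}(G)})_{\mid\kx}$ — coincide. Since the quotient map $\kx \to L$ is surjective, two valuations on $L$ that agree after precomposition with a surjection are equal. Hence $\wb_G = \wb_{\tau^{-1}(G)}$ as valuations on $L$, which is the assertion.

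The only thing requiring care — and the single "obstacle," though a mild one — is bookkeeping about what object "the extension determined by $G$" denotes: one must check that the identifications of $L$ with the relevant quotients/localizations of $\kx$ used on the $\kh$-side and on the $\kth$-side are the \emph{same} identification (both are just $x + (g) \mapsto$ the class of $x$), so that "agreeing on $\kx$" really does force "agreeing on $L$." Once this is unwound, the corollary is immediate from Lemma \ref{sameWq}. I would phrase the final write-up as: by the description of extensions in Corollary \ref{CorEndler} and Proposition \ref{CorEndler2}, the extension of $v$ to $L$ attached to $G$ (resp. to $\tau^{-1}(G)$) is induced by $(v_G)_{\mid\kx}$ (resp. by $(v_{\tau^{-1}(G)})_{\mid\kx}$); Lemma \ref{sameWq} gives $(v_G)_{\mid\kx} = (v_{\tau^{-1}(G)})_{\mid\kx}$; surjectivity of $\kx \twoheadrightarrow L$ concludes.
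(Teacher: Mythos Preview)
Your proposal is correct and follows essentially the same approach as the paper: both observe that the extension of $v$ to $L$ attached to $G$ (resp.\ $\tau^{-1}(G)$) is given by $f+g\kx \mapsto v_G(f)$ (resp.\ $v_{\tau^{-1}(G)}(f)$), and then invoke Lemma~\ref{sameWq} to conclude. The paper's write-up is more terse, omitting your explicit remark about surjectivity of $\kx\twoheadrightarrow L$, but the argument is the same.
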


\begin{proof}
	Denote by $w$, $w_\tau$ the extensions of $v$ to $L$ 
	determined by $G$, $\tau^{-1}(G)$, respectively. We have
	$$
	w(f+g\kx)=v_G(f),\qquad w_\tau(f+g\kx)=v_{\tau^{-1}(G)}(f),
	$$
	for all $f\in\kx$.	Thus, the corollary follows from Lemma \ref{sameWq}.
\end{proof}\e

As a consequence, we may define the \textbf{defect} of $w/v$ as
$$
d(w/v)=\dfrac{\deg(G)}{e(w/v)f(w/v)}.
$$
This is well defined because $\deg(G)$ is independent on the chosen henselization.

From this definition we derive immediately that 
\[
\sum_{i=1}^re(w_i/v)f(w_i/v)d(w_i/v)=[L\colon K],
\]
if $w_1,\dots,w_r$ are all extensions of $v$ to $L$.

Nevertheless, these arguments are still not sufficient to show that $d(w/v)$ is a positive integer.


\section{Irreducible polynomials over henselian fields}\label{newtonpoly}
\subsection{Newton polygons}
Let $\mu$ be a valuation-transcendental valuation on $\kx$ and let $\La$ be the divisible closure of $\gm$. The choice of a key polynomial $\phi\in\kpm$ yields a \textbf{Newton polygon operator}
$$
\np\colon\, \kx\lra \pset\left({\Q\times \La}\right),
$$
where $\pset\left({\Q\times \La}\right)$ is the power set of the rational vector space $\Q\times \La$. 

The \textbf{Newton polygon} of the zero polynomial is the empty set. 
For a nonzero $g\in \kx$ with $\phi$-expansion $g=\sum\nolimits_{0\le n}a_n\phi^n$,
we define $N:=\np(g)$ as the lower convex hull in  $\Q\times \La$ of the finite set $\left\{\left(n,\mu\left(a_n\right)\right)\mid a_n\ne0\right\}$.

Thus, $N$ is either a single point or a chain of segments, $S_1,\dots, S_r$, called the \textbf{sides} of the polygon, ordered from left to right by increasing slopes.

The left and right endpoints of $N$, together with the points joining two different sides are called \textbf{vertices} of $N$.
In Figure \ref{figNmodel} we see the typical shape of such a polygon.

The abscissa of the left endpoint of $\np(g)$ is $\ord_\phi(g)$ in $\kx$. The abscissa $$\ell(\np(g))=\left\lfloor \deg(g)/\deg(\phi)\right\rfloor$$ of the right endpoint of $N$ is called the \textbf{length} of $N$. Clearly,
\begin{equation}\label{lengh}
\ell(\np(gh))\ge\ell(\np(g))+\ell(\np(h))\quad\mbox{ for all }g,h\in\kx.
\end{equation}
If $\deg(\phi)>1$, then  it is easy to construct examples where this inequality (\ref{lengh}) is strict. If $a,b\in\kx$ satisfy $\deg(a),\deg(b)<\deg(\phi)$, but $\deg(ab)>\deg(\phi)$, then
$$
\left(a\phi^m\right)\left(b\phi^n\right)=c\phi^{m+n}+q\phi^{m+n+1},
$$
where $ab=q\phi+c$ is the division with remainder of $ab$ by $\phi$.\e




\begin{figure}
	\caption{Newton polygon $N=\np(g)$ of $g\in \kx$. }\label{figNmodel}
	\begin{center}
		\setlength{\unitlength}{4mm}
		\begin{picture}(20,10)
		\put(14.8,6){$\bullet$}\put(13.5,7){$\bullet$}\put(12.8,2){$\bullet$}\put(10,2){$\bullet$}\put(12,0.4){$\bullet$}\put(7.4,1.5){$\bullet$}
		\put(6.2,8.35){$\bullet$}\put(3.75,6.9){$\bullet$}\put(2.8,8.35){$\bullet$}
		\put(-1,3.6){\line(1,0){20}}\put(0,0){\line(0,1){10}}
		\put(7.6,1.8){\line(-2,3){4.5}}\put(7.6,1.83){\line(-2,3){4.5}}
		\put(7.6,1.8){\line(4,-1){4.7}}\put(7.6,1.83){\line(4,-1){4.7}}
		\put(12.2,0.5){\line(1,2){2.8}}\put(12.2,0.53){\line(1,2){2.8}}
		\multiput(3,3.5)(0,.25){21}{\vrule height2pt}
		\multiput(15,3.5)(0,.25){11}{\vrule height2pt}
		\put(1.7,2.6){\begin{footnotesize}$\ord_{\phi}(g)$\end{footnotesize}}
		\put(14.5,2.6){\begin{footnotesize}$\ell(N)$\end{footnotesize}}
		\put(18.5,2.6){\begin{footnotesize}$\Q$\end{footnotesize}}
		\put(-1,9.2){\begin{footnotesize}$ \La$\end{footnotesize}}
		\put(-.6,2.8){\begin{footnotesize}$0$\end{footnotesize}}
		\end{picture}
	\end{center}
\end{figure}
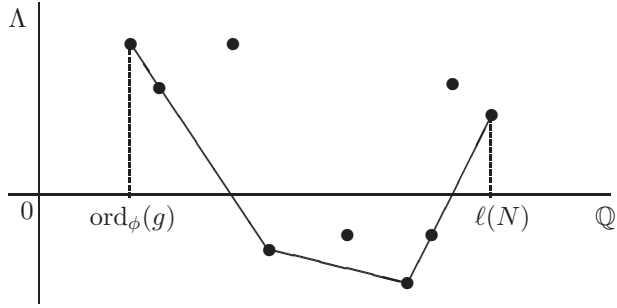\e

\defn
For all $\la\in \La$, the \mbox{\textbf{$\la$-component}} $S_\la(N)\subset N$ is the intersection of $N$ with the line $L$ of slope $-\la$ which first touches $N$ from below. In other words,
$$S_\la(N)= \{(n,\al)\in N\,\mid\, \al+n\la\mbox{ is minimal}\,\}.$$

Note that $L$ cuts the vertical axis at the point with ordinate the common value of $\al+n\la$, for all $(n,\al)\in S_\la(N)$.

The abscissas of the endpoints of $S_\la(N)$ are denoted \ $n_{\la}\le n'_{\la}$.

If $N$ has a side $S$ of slope $-\la$, then $S_\la(N)=S$. Otherwise, $S_\la(N)$ is a vertex of $N$.  Figure \ref{figComponent0} illustrates both possibilities.\e

\defn
$N$ is \textbf{one-sided} of slope $-\la$, if 
$N=S_\la(N)$, $n_{\la}=0$ and $n'_{\la}>0$.

\begin{figure}
	\caption{The line $L$ has slope $-\la$ and $S_\la(N)=L\cap N$ is the $\la$-component of $N=\np(g)$ for some $g\in\kx$. In the case $\la=\mu(\phi)$, the line $L$ cuts the vertical axis at the point with ordinate $\mu(g)$.}\label{figComponent0}
	\begin{center}
		\setlength{\unitlength}{4mm}
		\begin{picture}(30,9.6)
		\put(2.8,4.8){$\bullet$}\put(7.8,2.25){$\bullet$}
		\put(-0.2,6.5){\line(1,0){0.5}}
		\put(-1,0.6){\line(1,0){13}}\put(0,-0.4){\line(0,1){9}}
		\put(-1,7){\line(2,-1){11}}
		\put(3,5){\line(-1,2){1.5}}\put(3,5.04){\line(-1,2){1.5}}
		\put(3,5){\line(2,-1){5}}\put(3,5.04){\line(2,-1){5}}
		\put(8,2.5){\line(5,-1){4}}\put(8,2.54){\line(5,-1){4}}
		\multiput(3,.4)(0,.25){18}{\vrule height2pt}
		\multiput(8,.4)(0,.25){8}{\vrule height2pt}
		\put(7.8,-.4){\begin{footnotesize}$n'_{\la}$\end{footnotesize}}
		\put(2.6,-.4){\begin{footnotesize}$n_{\la}$\end{footnotesize}}
		\put(-1.2,7.3){\begin{footnotesize}$L$\end{footnotesize}}
		\put(-.6,-.2){\begin{footnotesize}$0$\end{footnotesize}}
		\put(-2.4,6){\begin{footnotesize}$\mu(g)$\end{footnotesize}}
		\put(5.5,4){\begin{footnotesize}$S_\la(N)$\end{footnotesize}}
		\put(20.8,5.35){$\bullet$}\put(23.8,3.3){$\bullet$}
		\put(17.75,5.58){\line(1,0){0.5}}
		\put(17,0.6){\line(1,0){13}}\put(18,-0.4){\line(0,1){9}}
		\put(17,5.9){\line(3,-1){10}}
		\put(24,3.6){\line(-3,2){3}}\put(24,3.64){\line(-3,2){3}}
		\put(21,5.8){\line(-1,2){1}}\put(21,5.84){\line(-1,2){1}}
		\put(24,3.6){\line(1,0){3}}\put(24,3.63){\line(1,0){3}}
		\multiput(24,.5)(0,.25){12}{\vrule height2pt}
		\put(22.5,-.4){\begin{footnotesize}$n_{\la}=n'_{\la}$\end{footnotesize}}
		\put(16.8,6.25){\begin{footnotesize}$L$\end{footnotesize}}
		\put(17.4,-.2){\begin{footnotesize}$0$\end{footnotesize}}
		\put(23.4,4.2){\begin{footnotesize}$S_\la(N)$\end{footnotesize}}
		\put(16,5){\begin{footnotesize}$\mu(g)$\end{footnotesize}}
		\end{picture}
	\end{center}
\end{figure}
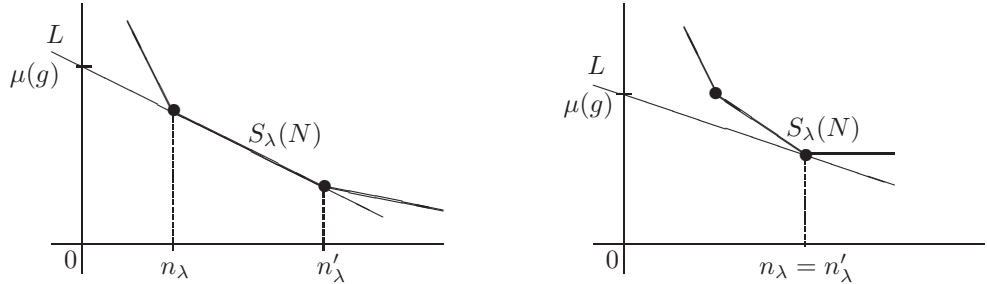\e


\defn
The \textbf{principal Newton polygon} $\npp(g)$ is the polygon formed by the sides of $\np(g)$ of slope less than $-\mu(\phi)$. 

If $\np(g)$ has no sides of slope less than $-\mu(\phi)$, then $\np^+(g)$ is defined to be the left endpoint of $\np(g)$. 
\e

Clearly, the set $S_{\mu,\phi}(g):=\left\{0\le n\mid \mu\left(a_n \phi^n\right)=\mu(g)\right\}$ coincides with the set of abscissas of the points lying \emph{on} the segment $S_{\mu(\phi)}(g)$. In particular, if  $\pi=\inm\phi$,  then
\begin{equation}\label{length}
\ell\left(\np^+(g)\right)=\min(S_{\mu,\phi}(g))=\ord_\pi(\inm g).
\end{equation}


There is an addition law for Newton polygons. 
Consider two  polygons $N$, $N'$ with sides $S_1,\dots,S_r$, $S'_1,\dots,S'_{s}$, respectively. 

The left endpoint of the sum $N+N'$ is  the vector sum in $\Q\times\La$ of the left endpoints of $N$ and $N'$, whereas the sides of $N+N'$ are obtained by joining to this endpoint all sides in the multiset $\left\{S_1,\dots,S_r,S'_1,\dots,S'_{s}\right\}$, ordered by increasing slopes.

If one of the polygons is a one-point polygon (say  $N'=\{P\}$), then it has an empty set of sides and the sum $N+N'$ coincides with the vector sum $N+P$ in $\Q\times  \La$.

Note that, by construction, $\ell(N+N')=\ell(N)+\ell(N')$.
 
\begin{theorem}\label{product}
	For a  valuation-transcendental $\mu$ and any $\phi\in\kpm$, we have   
$$
	\npp(gh)=\npp(g)+\npp(h),
$$
	for all nonzero $g,h\in \kx$.
\end{theorem}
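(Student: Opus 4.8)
The plan is to prove the multiplicativity $\npp(gh)=\npp(g)+\npp(h)$ by comparing it with the corresponding statement for ordinary Newton polygons and localizing the argument at each slope $-\la$ with $\la>\mu(\phi)$. The key reduction is the observation already recorded in \eqref{length}: the length of $\npp(g)$ equals $\ord_\pi(\inm g)$, where $\pi=\inm\phi$, and more generally the slopes of $\npp(g)$ with their multiplicities are governed by the images of the $\phi$-expansion coefficients in the graded algebra $\ggm$. So the first step is to recall that for a \emph{fixed} slope $-\la$, the $\la$-component $S_\la(N)$ is additive, i.e.\ $S_\la(N+N')=S_\la(N)+S_\la(N')$ in the obvious sense, which is immediate from the definition of the vector-sum-of-sides construction and the fact that joining sides in order of increasing slope commutes with selecting the sub-polygon cut out by a supporting line of slope $-\la$.

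Next I would pass from the full polygon $\np(gh)$ to the principal part. The classical additivity for full Newton polygons, $\np(gh)=\np(g)+\np(h)$, is \emph{not} literally available here because $\deg(\phi)>1$ causes the length inequality \eqref{lengh} to be strict in general; this is exactly the phenomenon flagged right before Figure~\ref{figNmodel}. The point is that the discrepancy occurs only in the part of the polygon of slope $\ge-\mu(\phi)$: when one multiplies $a\phi^m$ by $b\phi^n$ with $\deg(ab)\ge\deg(\phi)$, the ``carry'' $ab=q\phi+c$ produces an extra $\phi^{m+n+1}$ term, but since $\mu(q\phi)\ge\mu(ab)=\mu(c)$ (using that $\phi$ is $\mu$-minimal, so $\mu$ of a $\phi$-expansion is the min of the pieces) and $\mu(\phi)>0$-relatively, this carry sits at or above the segment of slope $-\mu(\phi)$ and does not affect sides of strictly smaller slope. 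So I would argue that the sides of slope $<-\mu(\phi)$ of $\np(gh)$ are unaffected by carries, and hence are computed purely multiplicatively.

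The cleanest way to make this precise is to work in the localization at $\pi$: if $g=\sum a_n\phi^n$ with $S=S_{\mu,\phi}(g)=\{n\mid \mu(a_n\phi^n)=\mu(g)\}$, then $\inm g=\sum_{n\in S}(\inm a_n)\pi^n$, and since $\phi$ has minimal degree in $\kpm$ each $\inm a_n$ for $n\in S$ is a homogeneous unit by \cite[Proposition 3.5]{KP} — wait, here $\phi$ need not have minimal degree, so instead I would argue directly: the principal Newton polygon records, for each $\la>\mu(\phi)$, the pair $(n_\la,n'_\la)$, and $n_\la'-n_\la$ for the side of a given slope can be read off from $\inn_{\nu}$ for the ordinary augmentation $\nu=[\mu;\phi,\la]$ via Lemma~\ref{Mlvord}: namely $\ell(\np^+_{\mu,\phi}(g))$ computed ``up to slope $-\la$'' is $\ord_\pi(\inn_\nu g)$ in $\gg_\nu=\gg_\nu^0[\pi]$. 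Since $\inn_\nu$ is a ring homomorphism on the graded algebra, $\ord_\pi(\inn_\nu(gh))=\ord_\pi(\inn_\nu g)+\ord_\pi(\inn_\nu h)$ because $\pi$ is transcendental over $\gg_\nu^0$ (Theorem~\ref{g0gm}), hence $\gg_\nu$ is a domain and $\ord_\pi$ is additive. Running this over all slopes $-\la<-\mu(\phi)$ — equivalently, letting $\la$ range over $(\mu(\phi),\infty)$ and tracking where the length jumps — reconstructs both sides of the claimed identity side-by-side.

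The main obstacle I expect is bookkeeping the two endpoints of $\npp$ simultaneously and handling the degenerate cases cleanly: when $\np(g)$ or $\np(h)$ has \emph{no} side of slope $<-\mu(\phi)$, so that $\npp$ is a single point (the left endpoint), and when the left endpoints involve $\ord_\phi(g)$, which is genuinely multiplicative in $\kx$ since $\phi$ is irreducible. So I would first settle the left-endpoint additivity ($\ord_\phi(gh)=\ord_\phi(g)+\ord_\phi(h)$ and additivity of the ``height'' $\mu(a_{\ord_\phi(g)})$ of the leftmost point — the latter again from $\gg_\nu$ being a domain for $\nu=[\mu;\phi,\la]$ with $\la$ large), then handle the genuine sides slope-by-slope as above, and finally assemble: the sides of $\npp(gh)$, listed with multiplicity and ordered by increasing slope, are precisely the concatenation of those of $\npp(g)$ and $\npp(h)$, which is the definition of $\npp(g)+\npp(h)$. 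I would present the slope-localized step as the heart of the argument and relegate the endpoint and degenerate-case checks to short remarks.
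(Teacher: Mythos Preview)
Your approach is correct and essentially the same as the paper's: the key move is exactly to pass, for each slope $-\la$ with $\la>\mu(\phi)$, to the ordinary augmentation $\nu=[\mu;\phi,\la]$ and exploit Theorem~\ref{g0gm} (namely that $\pi=\inn_\nu\phi$ is transcendental over $\gg_\nu^0$, since $\phi$ has minimal degree for $\nu$) to get additivity of the endpoints of $S_\la$. The paper packages this as a single clean lemma stating $S_\la(gh)=S_\la(g)+S_\la(h)$ for all $\la>\mu(\phi)$ (tracking both $\min$ and $\max$ of $S_{\nu,\phi}(f)$ simultaneously), which absorbs your separate left-endpoint and degenerate-case bookkeeping and makes the deduction of the theorem a one-liner.
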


This result follows immediately from the following observation.

\begin{lemma}\label{productSL}
Take $\phi\in\kpm$ and  $\la>\mu(\phi)$. For  all nonzero $g,h\in\kx$, we have   
\begin{equation}\label{Sl}
S_\la(gh)=S_\la(g)+S_\la(h).
\end{equation}
\end{lemma}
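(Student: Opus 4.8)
The plan is to recognize the $\la$-component $S_\la(\np(g))$ as data attached to the ordinary augmentation $\eta:=[\mu;\phi,\la]$ and to its graded algebra, and then to deduce (\ref{Sl}) from the multiplicativity of $\eta$ together with the structure theorem $\mathcal G_\eta=\mathcal G_\eta^0[\pi]$.

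First I would fix $\eta:=[\mu;\phi,\la]$, which is a valuation on $\kx$ by Lemma \ref{Mlvord} since $\phi\in\kpm$ and $\la>\mu(\phi)$; as $\la<\infty$ it has trivial support, and $\phi$ is a key polynomial of minimal degree for $\eta$ by \cite[Corollary 7.3]{KP}. For a nonzero $g=\sum_{n\ge 0}a_n\phi^n$ (the $\phi$-expansion, so $\deg(a_n)<\deg(\phi)$, whence $\eta(a_n)=\mu(a_n)$ and $\eta(a_n\phi^n)=\mu(a_n)+n\la$), the defining formula for $\eta$ gives $\eta(g)=\min_n\{\mu(a_n)+n\la\}$, which is exactly the ordinate at which the line of slope $-\la$ supporting $\np(g)$ from below meets the vertical axis. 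Hence, exactly as observed in the text for the special slope $-\mu(\phi)$, the set $S_{\eta,\phi}(g)=\{n\mid \eta(g)=\mu(a_n)+n\la\}$ is precisely the set of abscissas of the points of $\np(g)$ lying on $S_\la(g):=S_\la(\np(g))$; writing $n_\la\le n'_\la$ for the abscissas of the endpoints of $S_\la(g)$, we obtain $n_\la=\min S_{\eta,\phi}(g)$ and $n'_\la=\max S_{\eta,\phi}(g)$, the latter being the degree of $\inv_\eta g$ as a polynomial in $\pi:=\inv_\eta\phi$.

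Next I would pass to $\mathcal G_\eta$. Since $\phi$ is a key polynomial of minimal degree for $\eta$, Theorem \ref{g0gm} gives that $\pi$ is transcendental over $\mathcal G_\eta^0$ and $\mathcal G_\eta=\mathcal G_\eta^0[\pi]$, and one has $\inv_\eta g=\sum_{n\in S_{\eta,\phi}(g)}(\inv_\eta a_n)\,\pi^n$ with every $\inv_\eta a_n$ a homogeneous unit, because $\deg(a_n)<\deg(\phi)=\deg(\eta)$ \cite[Proposition 3.5]{KP}. Viewing $\inv_\eta g$ as an honest polynomial in $\pi$, the lowest and highest powers of $\pi$ occurring in it are $\pi^{n_\la}$ and $\pi^{n'_\la}$, and their coefficients are units, in particular nonzero. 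Now the map $f\mapsto\inv_\eta f$ is multiplicative, so $\inv_\eta(gh)=\inv_\eta(g)\,\inv_\eta(h)$ in $\mathcal G_\eta^0[\pi]$; the coefficient of the lowest power of $\pi$ occurring in this product is the product of the two lowest coefficients of $\inv_\eta g$ and of $\inv_\eta h$, a product of units and hence nonzero, which forces $n_\la(gh)=n_\la(g)+n_\la(h)$, and the same argument applied to the top coefficients gives $n'_\la(gh)=n'_\la(g)+n'_\la(h)$.

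Finally, $\eta(gh)=\eta(g)+\eta(h)$ forces the supporting line of slope $-\la$ for $\np(gh)$ to meet the vertical axis at $\eta(g)+\eta(h)$, so $S_\la(gh)$ is the segment of that line between abscissas $n_\la(g)+n_\la(h)$ and $n'_\la(g)+n'_\la(h)$; this is precisely the vector sum $S_\la(g)+S_\la(h)$ in $\Q\times\La$, the degenerate cases in which a $\la$-component reduces to a single vertex being covered automatically since the corresponding segment then degenerates to that point. The only genuinely delicate point is the bookkeeping at the two endpoints: one must ensure that the extreme coefficients of $\inv_\eta g$ and $\inv_\eta h$ are units, so that no cancellation can occur at either end of the product --- and this is exactly where \cite[Proposition 3.5]{KP} and the transcendence of $\pi$ over $\mathcal G_\eta^0$ are used.
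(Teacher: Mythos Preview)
Your proof is correct and follows essentially the same approach as the paper: both introduce the augmented valuation $\eta=[\mu;\phi,\la]$ (denoted $\mu_\la$ there), observe that $\phi$ is a key polynomial of minimal degree for it, and then exploit the polynomial-ring structure $\mathcal G_\eta=\mathcal G_\eta^0[\pi]$ from Theorem \ref{g0gm} together with multiplicativity of initial forms to match the endpoints of the $\la$-components. Your write-up is a bit more explicit about why the extreme coefficients are units and hence cannot cancel, but the underlying argument is identical.
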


\begin{proof}
For any such $\la$,
consider the augmented valuation 
$\mu_\la=[\mu;\,\phi,\la]$.
As mentioned in Section \ref{subsecKP}, $\phi$ is a key polynomial of minimal degree for $\mu_\la$. Denote $\pi=\inn_{\mu_\la}\phi$.

For all $f\in\kx$, consider the $\phi$-expansion $f=\sum_{n\ge0}a_n\phi^n$, and denote $$S(f):=S_{\mu_\la,\phi}(f)=\{n\mid \mu_\la(a_n\phi^n)=\mu_\la(f)\}.$$ The expression of $\inn_{\mu_\la}f$ as a polynomial in $\pi$ with coefficients in $\gg_{\mu_\la}^0$ is
$$
\inn_{\mu_\la}f=\sum_{\ell\in S(f)}(\inn_{\mul}a_\ell)\pi^\ell.
$$
Clearly,
the indices in the set $S(f)$ correspond to abscissas of points lying on the segment (or point) $S_\la\left(\np(f)\right)$. Hence, the initial point $P(f)=\left(s,\al\right)$ and last point $Q(f)=\left(t,\be\right)$ of $S_\la(f)$ are determined by
$$
s=\min(S(f)), \quad \al=\gr_{\mu_\la}(\ab_{s}),\qquad   
t=\max(S(f)), \quad \be=\gr_{\mu_\la}(\ab_{t}).
$$
Now, the equality (\ref{Sl}) follows from 
$$P(gh)=P(g)+P(h),\qquad Q(gh)=Q(g)+Q(h),\quad \mbox{ for all }g,h\in\kx.
$$
These identities follow immediately from Theorem \ref{g0gm}.
\end{proof}\e


\subsection{A generalization of Hensel's lemma}\label{henseliafields}
We assume in this section that the valued field $(K,v)$ is henselian. We still denote by $v$ the unique extension of $v$ to $\kb$.

The following result is well-known.

\begin {lemma}\label{symmetry}
For all $F,G\in\irr(K)$, we have \ $v_G(F)/\deg(F)=v_F(G)/\deg(G)$. 
\end {lemma}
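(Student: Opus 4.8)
The plan is to prove the symmetry identity $v_G(F)/\deg(F) = v_F(G)/\deg(G)$ by interpreting both sides as sums of values of one polynomial at the roots of the other, and then exploiting the product of all these values — which is, up to sign, a resultant. First I would fix roots: let $\theta_1,\dots,\theta_m$ be the roots of $G$ in $\kb$ (with $m = \deg(G)$) and $\eta_1,\dots,\eta_n$ the roots of $F$ (with $n = \deg(F)$). By definition of $v_G$ via any root (well-defined since $(K,v)$ is henselian, so $v$ has a unique extension to $\kb$ and the decomposition group acts transitively on each $\irr(K)$-orbit of roots), we have $v_G(F) = v(F(\theta_i))$ for every $i$, and likewise $v_F(G) = v(G(\eta_j))$ for every $j$.

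The key step is to compute the total value $\sum_{i=1}^m v(F(\theta_i))$ in two ways. On one hand, since all the terms are equal, this sum equals $m\cdot v_G(F) = \deg(G)\, v_G(F)$. On the other hand, writing $F(x) = c\prod_{j=1}^n (x-\eta_j)$ with $c$ the (unit, as $F$ is monic, so $c=1$) leading coefficient, we get
\[
\sum_{i=1}^m v(F(\theta_i)) = \sum_{i=1}^m \sum_{j=1}^n v(\theta_i - \eta_j) = \sum_{i,j} v(\theta_i-\eta_j).
\]
The right-hand side is manifestly symmetric in the two families of roots. Running the same computation starting from $\sum_{j=1}^n v(G(\eta_j)) = \deg(F)\, v_F(G)$ and factoring $G(x) = \prod_{i=1}^m(x-\theta_i)$ yields
\[
\sum_{j=1}^n v(G(\eta_j)) = \sum_{j,i} v(\eta_j - \theta_i) = \sum_{i,j} v(\theta_i - \eta_j),
\]
the same symmetric double sum. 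Hence $\deg(G)\, v_G(F) = \deg(F)\, v_F(G)$, which is exactly the claim after dividing by $\deg(F)\deg(G)$.

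The main thing to be careful about — really the only subtlety — is the well-definedness already built into the notation: one must know that $v(F(\theta_i))$ is independent of which root $\theta_i$ of $G$ one picks, and symmetrically for $F$. This is precisely the henselian hypothesis in force throughout this subsection: $v$ extends uniquely to $\kb$, and any two roots of an irreducible polynomial over $K$ are conjugate by an automorphism fixing $v$, so $v$ takes the same value on their images under any fixed polynomial. Everything else is the bare factorization of monic polynomials into linear factors over $\kb$ and the additivity of $v$ on products, so no Newton-polygon machinery or deeper input is needed here; the identity is genuinely elementary once the double-sum reformulation is in hand.
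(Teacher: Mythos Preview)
Your argument is correct and is the standard proof of this classical identity (essentially computing $v$ of the resultant $\operatorname{Res}(F,G)$ in two ways). The paper itself gives no proof: it simply declares the lemma well-known and proceeds directly to Theorem~\ref{fundamental}. One minor remark: in positive characteristic $F$ or $G$ may be inseparable, but since you list the roots $\theta_1,\dots,\theta_m$ and $\eta_1,\dots,\eta_n$ with $m=\deg(G)$ and $n=\deg(F)$, you are implicitly counting with multiplicity, and the double-sum computation goes through unchanged.
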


The following theorem is the main result of this section. 

\begin{theorem}\label{fundamental}
	Let $Q\in\kpm$ for some valuation $\mu$ on $\kx$. Then,
	\begin{equation}\label{aim}
\inv_\mu Q\mid \inv_\mu F\ \sii\ \mu<v_F \ \mbox{ and }\ \ty(\mu,v_F)=[Q]_\mu,
	\end{equation}
	for all $F\in\irr(K)$.
	Moreover, if these conditions hold, then:
	\begin{enumerate}
		\item[(i)] Either $F=Q$, or the Newton polygon $N_{\mu,Q}(F)$ is one-sided of slope $-v_F(Q)$.    
		\item[(ii)] $F\smu Q^\ell$ with $\ell=\ell(N_{\mu,Q}(F))=\deg(F)/\deg(Q)$. 
	\end{enumerate}
\end{theorem}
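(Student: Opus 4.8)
The plan is to exploit the fact that $(K,v)$ is henselian, so $v$ has a *unique* extension to $\kb$, and hence $v_F$ is the unique valuation on $\kx$ with support $F\kx$ extending $v$; moreover the factorization of $F$ over $\kh[x]=K[x]$ is trivial. First I would establish the forward implication of \eqref{aim}. Assume $\inv_\mu Q\mid \inv_\mu F$. By Lemma \ref{tdef} applied with $\eta=v_F$, it suffices to produce *some* valuation $\eta>\mu$ with $\mu(F)<\eta(F)$; then automatically $\ty(\mu,\eta)=[Q]_\mu$ and, once I know $\eta=v_F$, I am done. The natural candidate is an ordinary augmentation $\eta=[\mu;Q,\gamma]$ with $\gamma$ chosen large: since $Q$ is $\mu$-minimal and $\inm Q\mid\inm F$, the $Q$-expansion of $F$ has its $\mu$-value concentrated in positive powers of $Q$, forcing $\eta(F)>\mu(F)$ for $\gamma>\mu(Q)$. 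Letting $\gamma\to\infty$ along such augmentations, I want to show the values $\eta(F)$ stabilize or force $F$ to be a key polynomial equivalent to $Q$; the clean way is: either $F\smu Q$ (so $F=Q$ up to the normalization, giving case (i) first alternative), or $\deg(F)>\deg(Q)$ and I can keep augmenting. The key point is that $v_F$ has nontrivial support $F\kx$, so $v_F(F)=\infty$, and $v_F$ dominates $\mu$ precisely because... — here I would invoke that $v_F$ is obtained from $\mu$ by the Mac Lane–Vaquié chain and that $Q\in\kpm$ with $\inm Q\mid\inm F$ is exactly the condition for the chain through $\mu$ to continue in the tangent direction $[Q]_\mu$ toward $v_F$.

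For the reverse implication of \eqref{aim}: if $\mu<v_F$ and $\ty(\mu,v_F)=[Q]_\mu$, then by Lemma \ref{tdef} (the "moreover" clause), $\mu(f)<v_F(f)$ iff $\inm Q\mid\inm f$; taking $f=F$ and noting $v_F(F)=\infty>\mu(F)$ gives $\inm Q\mid\inm F$ immediately. So the reverse direction is essentially free once Lemma \ref{tdef} is in hand.

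For the "moreover" part, assume the equivalent conditions hold and $F\ne Q$, so $\deg(F)>\deg(Q)=\deg(\mu)$ by $\mu$-minimality of $Q$ together with $\inm Q\mid\inm F$ and Lemma \ref{lemma1x}. Write the $Q$-expansion $F=\sum_{n}a_n Q^n$. The Newton polygon $N=N_{\mu,Q}(F)$ has length $\ell=\lfloor\deg(F)/\deg(Q)\rfloor$, and since $Q$ is monic of degree $\deg(\mu)$ and $F$ monic, in fact $\deg(F)=\ell\deg(Q)+(\text{possible remainder})$; I would argue the remainder is zero, i.e. $\deg(F)=\ell\deg(Q)$, using that $v_F$ is the valuation with support $F$ and applying Lemma \ref{symmetry} to relate $v_F(Q)$ and $v_Q(F)$. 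The crucial structural claim is that $N$ is *one-sided of slope* $-v_F(Q)$: the left endpoint is at abscissa $0$ (since $a_0\ne0$ as $Q\nmid F$ in $\kx$, $F$ being irreducible $\ne Q$), and I must show $N$ has a single side. This follows from Theorem \ref{product}: $v_F$ restricted through the principal Newton polygon must see $F$ as "$v_F$-irreducible", and any genuine vertex of $N$ in the principal part would split $\inv_{v_F}F$ — contradicting that $F\kx=\supp(v_F)$ is prime, hence $\inv_\mu F$ maps to something that cannot factor non-trivially in the relevant graded piece. Concretely, the slope of the unique side is $-v_F(Q)$ because $\mu_\gamma=[\mu;Q,\gamma]$ satisfies $\mu_\gamma=v_F$ on polynomials of degree $<\deg(F)$ once $\gamma=v_F(Q)$, and $\mu_\gamma(F)$ is computed from the $\gamma$-component of $N$. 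Then (ii), $F\smu Q^\ell$, follows by reading off $\inm F$ from the single side: $\inm F=(\text{unit})\cdot\pi^\ell$ with $\pi=\inm Q$, since all interior and endpoint coefficients $\inm a_n$ for $n$ on the side are units (as $\deg(a_n)<\deg(Q)=\deg(\mu)$, Theorem \ref{g0gm}) — but for one-sidedness only the endpoints $n=0$ and $n=\ell$ lie on $N$, giving $\inm F\smu\inm(a_0)+\inm(a_\ell)\pi^\ell$, and matching $\mu$-values forces this to be $\smu\pi^\ell$ up to a unit.

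The main obstacle I anticipate is proving the *one-sidedness* in (i) and, tied to it, the identification of the slope with $-v_F(Q)$ — equivalently, ruling out that the principal Newton polygon $N^+_{\mu,Q}(F)$ has an interior vertex. This is where henselianity is genuinely used (over a non-henselian field $F$ could factor, breaking one-sidedness), and the honest argument requires either the Newton-polygon multiplicativity of Theorem \ref{product} combined with irreducibility of $F$ in $K[x]=\kh[x]$, or an induction on $\deg(F)/\deg(\mu)$ descending through the Mac Lane–Vaquié chain refining $\mu\to v_F$. I would structure the final write-up as: (1) Lemma \ref{tdef} gives both directions of \eqref{aim} almost for free; (2) $F=Q$ or $\deg F>\deg Q$ by minimality; (3) in the latter case, set $\gamma=v_F(Q)$, form $\mu_\gamma$, show $\mu_\gamma\le v_F$ with equality below degree $\deg F$; (4) deduce $N$ is one-sided of slope $-\gamma$ from the product formula plus irreducibility; (5) read off (ii).
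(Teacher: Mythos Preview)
Your reverse implication of \eqref{aim} is fine and matches the paper. The forward implication, and especially the one-sidedness claim in (i), have a genuine gap.

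The core missing idea is this: you want to apply the product formula (Theorem \ref{product}) to deduce that $N_{\mu,Q}(F)$ is one-sided, but you propose to combine it with the \emph{irreducibility} of $F$. That is the wrong direction. The product formula decomposes the Newton polygon of a \emph{product}; irreducibility of $F$ gives you no factors to which you could apply it. Your remark that a vertex of $N$ ``would split $\inv_{v_F}F$'' does not make sense, since $F\in\supp(v_F)$. The paper's decisive trick is to manufacture a polynomial in which $F$ appears as a \emph{factor} and whose Newton polygon is one-sided for elementary reasons: take a root $\theta$ of $F$, let $g(x)=\sum b_j x^j$ be the minimal polynomial of $Q(\theta)$ over $K$, and set $G=\sum b_j Q^j$. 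Since $(K,v)$ is henselian, all conjugates of $Q(\theta)$ have the same $v$-value $\epsilon=v_F(Q)$, which forces $N_{\mu,Q}(G)$ to be one-sided of slope $-\epsilon$ directly from the coefficient valuations. Because $G(\theta)=0$, we have $F\mid G$, and \emph{now} Theorem \ref{product} applies to $G=FH$: one-sidedness of $N^+_{\mu,Q}(G)$ forces one-sidedness of $N^+_{\mu,Q}(F)$ with the same slope. This same construction simultaneously yields $\mu(Q)<\epsilon=v_F(Q)$ (hence $\ty(\mu,v_F)=[Q]_\mu$ via Lemma \ref{td}), so it also closes the forward implication of \eqref{aim}, which in your write-up is left to a vague appeal to Mac Lane--Vaqui\'e chains.

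Two smaller issues. First, you write $\deg(Q)=\deg(\mu)$ and invoke Theorem \ref{g0gm} to say the $\inm a_n$ are units; but $Q$ is an arbitrary key polynomial, not assumed of minimal degree, so this is unjustified. The paper avoids this by showing directly, via Lemma \ref{symmetry} and the one-sided slope computation, that the top coefficient $a_\ell$ equals $1$; then $\mu(Q)<\epsilon$ forces $\mu(Q^\ell)<\mu(a_n Q^n)$ for all $n<\ell$, giving $F\smu Q^\ell$ without any unit claim on the lower $a_n$. Second, your step (3) --- setting $\gamma=v_F(Q)$ and forming $\mu_\gamma$ --- presupposes $v_F(Q)>\mu(Q)$, which is precisely what must be proved.
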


\begin{proof}
	If $F=Q$, then  all statements of the theorem are trivial. Assume $F\ne Q$.
	
	If $\mu<v_F$  and $\ty(\mu,v_F)=[Q]_\mu$, then $\inv_\mu Q\mid \inv_\mu F$ by Lemma \ref{tdef}, because $\mu(F)<v_F(F)=\infty$.
	
	Conversely, suppose $\inv_\mu Q\mid \inv_\mu F$. Since  $\inv_\mu F$ is not a unit in $\ggm$, we have $\mu<v_F$ by  \cite[Theorems 1.1,1.3]{BN}. By Lemma \ref{td}, in order to show that $\ty(\mu,v_F)=[Q]_\mu$ it suffices to check that $\mu(Q)<v_F(Q)$.

	Choose a root $\t\in\kb$  of $F$, and consider the minimal polynomial of $Q(\t)$ over $K$: 
	$$g=b_0+b_1x+\cdots+b_kx^k\in K[x],\qquad b_k=1.$$ 
	All roots of $g$ in $\kb$ have a constant $v$-value $\ep:=v(Q(\t))=v_F(Q)$. Hence,
	\begin{equation}\label{slope}
	v(b_0)=k\ep,\quad v(b_j)\ge (k-j)\ep, \ 1\le j<k,\quad v(b_k)=0.
	\end{equation}
	Let $G=\sum_{j=0}^kb_jQ^j\in\kx$. By (\ref{slope}), $N_{\mu,Q}(G)$ is one-sided of slope $-\ep$. 	
	Since $G(\t)=0$, the polynomial $F$ divides $G$ in $\kx$. If $G=FH$, then Theorem \ref{product} shows that
	\begin{equation}\label{sum}
	N^+_{\mu,Q}(G)=N^+_{\mu,Q}(F)+N^+_{\mu,Q}(H).
	\end{equation}
	By equation (\ref{length}), $\ell(N^+_{\mu,Q}(F))>0$, because $\inv_\mu Q\mid \inv_\mu F$. Then, (\ref{sum}) shows that  $N^+_{\mu,Q}(G)$ has positive length too. This implies
	$N_{\mu,Q}(G)=N^+_{\mu,Q}(G)$ and $\mu(Q)<\ep$, by the definition of the principal polygon. This ends the proof of (\ref{aim}).

	On the other hand, since $N^+_{\mu,Q}(G)$ is one-sided of slope $-\ep$, (\ref{sum}) shows that $N^+_{\mu,Q}(F)$ is one-sided of slope $-\ep$ too. 
	Finally, from the inequality, 
	\begin{align*}
		\ell(N_{\mu,Q}(G))&\;\ge\;\ell(N_{\mu,Q}(F))+\ell(N_{\mu,Q}(H))\ge\ell(N^+_{\mu,Q}(F))+\ell(N^+_{\mu,Q}(H))\\&\;=\;\ell(N^+_{\mu,Q}(G))=\ell(N_{\mu,Q}(G)).
	\end{align*}	
	we deduce that $\ell(N_{\mu,Q}(F))=\ell(N^+_{\mu,Q}(F))$. Therefore, $N_{\mu,Q}(F)=N^+_{\mu,Q}(F)$ is one-sided of slope $-\ep$. 	
	This proves that $\inv_\mu Q\mid \inv_\mu  F$ implies that the property (i) holds.

	\begin{figure}
		\caption{Newton polygon $N_{\mu,Q}(F)$ when $\inv_\mu Q\mid \inv_\mu F$.  We take $\ep=v_F(Q)$}\label{figNF}
		\begin{center}
			\setlength{\unitlength}{4mm}
			\begin{picture}(10,7)
			\put(9.8,1.25){$\bullet$}\put(-.25,4.75){$\bullet$}
			\put(-1,0.6){\line(1,0){14}}\put(0,-.4){\line(0,1){7}}
			\put(0,5){\line(3,-1){10}}
			\multiput(10,.5)(0,.25){5}{\vrule height2pt}
			\multiput(-.1,1.6)(.25,0){40}{\hbox to 2pt{\hrulefill }}
			\put(-2.6,4.8){\begin{footnotesize}$\mu(a_0)$\end{footnotesize}}
			\put(-2.5,1.5){\begin{footnotesize}$\mu(a_\ell)$\end{footnotesize}}
			\put(-.6,-.4){\begin{footnotesize}$0$\end{footnotesize}}
			\put(9.8,-.5){\begin{footnotesize}$\ell$\end{footnotesize}}
			\put(4.5,4){\begin{footnotesize}slope $-\ep$\end{footnotesize}}
			\end{picture}
		\end{center}
	\end{figure}
	
	In order to prove (ii), consider the $Q$-expansion $F=\sum\nolimits_{n=0}^\ell a_nQ^n$.	By Lemma \ref{symmetry}, $$\mu(a_0)=v_Q(F)=\deg(F)v_F(Q)/\deg(Q)=\deg(F)\ep/\deg(Q).$$ 
	Therefore, since $N_{\mu,Q}(F)$ is one-sided of slope $-\ep$, a look at Figure \ref{figNF} shows that: 
	$$
	\mu(a_\ell)=\mu(a_0)-\ell\ep=\ep\,\dfrac{\deg(F)}{\deg(Q)}-\ell\ep=\ep\,\dfrac{\deg(a_\ell)+\ell\deg(Q)}{\deg(Q)}-\ell\ep=\ep\,\dfrac{\deg(a_\ell)}{\deg(Q)}.
	$$
	If $\deg(a_\ell)>0$, then $a_\ell$ would be a monic polynomial contradicting \cite[Theorem 3.9]{KP}:
	$$
	\mu(a_\ell)/\deg(a_\ell)=\ep/\deg(Q)>
	\mu(Q)/\deg(Q).
	$$ 
	Hence, $a_\ell=1$, so that the leading monomial of the $Q$-expansion of $F$ is $Q^\ell$. In particular, $\ell=\deg(F)/\deg(Q)$.
	
	Since $\mu(Q)<\ep$, we have $\mu(Q^\ell)<\mu(a_nQ^n)$ for all $n<\ell$. Thus, $F\smu Q^\ell$.
\end{proof}\e

\begin{corollary}\label{unit/minimal}
	Let $\mu$ be a valuation on $\kx$ and $F\in\irr(K)\setminus \supp(\mu)$. Then, $F$ is $\mu$-minimal if and only if $\inv_\mu F$ is not a unit in $\mathcal G_\mu$.
\end{corollary}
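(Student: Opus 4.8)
The statement to prove is that for a valuation $\mu$ on $\kx$ and $F\in\irr(K)\setminus\supp(\mu)$, we have $F$ is $\mu$-minimal $\iff$ $\inv_\mu F$ is not a unit in $\ggm$. The plan is to reduce everything to Theorem \ref{fundamental} together with the basic facts about $\mu$-minimality recorded in the preliminaries. First I would dispose of the degenerate cases: if $\kpm=\emptyset$, then by Theorem \ref{empty} the algebra $\ggm$ is simple, so every nonzero homogeneous element is a unit, and in particular $\inv_\mu F$ is a unit; on the other hand in that case $\deg(\mu)$ is either $\deg(F)$ (if $\supp(\mu)=F\kx$, excluded by hypothesis) or $\mu$ has no key polynomials at all and no polynomial is $\mu$-minimal in the relevant sense — here one must be slightly careful about the convention for $\mu$-minimality when $\kpm=\emptyset$, but since $F\notin\supp(\mu)$ and $\ggm$ is simple, $\inv_\mu F$ being a unit means $\inv_\mu F\mid \inv_\mu h$ for every nonzero $h$, so $F$ would be $\mu$-minimal only if $\deg(F)\le\deg(h)$ for all $h$, which fails; so both sides are consistent. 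So assume $\kpm\ne\emptyset$.

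For the substantive case, I would argue the contrapositive on the forward direction and use Theorem \ref{fundamental} for the backward direction. Suppose first that $\inv_\mu F$ is a unit; I claim $F$ is not $\mu$-minimal. Indeed a unit divides everything, so $\inv_\mu F\mid\inv_\mu \phi$ for any $\phi\in\kpm$ of minimal degree; if $F$ were $\mu$-minimal, $\mu$-minimality would force $\deg(F)\le\deg(\phi)=\deg(\mu)$, and combined with Lemma \ref{lemma1x} — which says a nonzero $f$ is $\mu$-minimal iff $\deg(f)=\dgm(f)\deg(\mu)$ — applied to $F$ we would get $\deg(F)=\dgm(F)\deg(\mu)$ with $\dgm(F)=0$ (since $\inv_\mu F$ is a unit, its degree as a polynomial in $\pi$ is $0$), hence $\deg(F)=0$, contradicting $F\in\irr(K)$. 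Conversely, suppose $\inv_\mu F$ is not a unit. I want to produce $Q\in\kpm$ with $\inv_\mu Q\mid\inv_\mu F$ and then invoke Theorem \ref{fundamental}(ii), which gives $F\smu Q^\ell$ with $\ell=\deg(F)/\deg(Q)$; this exhibits a $\phi$-expansion-type factorization showing $\deg(F)=\ell\deg(Q)=\dgm(F)\deg(\mu)$ after checking $\deg(Q)=\deg(\mu)$ and $\dgm(F)=\ell$, whence $F$ is $\mu$-minimal by Lemma \ref{lemma1x}.

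The one real point to nail down is the existence of such a key polynomial $Q$ dividing $\inv_\mu F$, i.e.\ that $\ggm$ has a key polynomial $Q$ (not necessarily of minimal degree) whose image divides $\inv_\mu F$. Since $\inv_\mu F$ is not a unit and $\kpm\ne\emptyset$, write $\inv_\mu F$ as a polynomial in $\pi=\inm\phi$ over $\ggm^0$ of positive degree $\dgm(F)$ using Theorem \ref{g0gm}; any irreducible homogeneous factor of this polynomial in $\ggm=\ggm^0[\pi]$ is the image of a key polynomial (this is the standard dictionary between monic irreducible factors of $\inv_\mu F$ in $\ggm^0[\pi]$ and key polynomials lying above $\phi$; cf.\ the $\mu$-irreducibility/$\mu$-minimality characterization), and such a $Q$ satisfies $\inv_\mu Q\mid\inv_\mu F$. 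Then Theorem \ref{fundamental} applies to $Q\in\kpm$ and $F\in\irr(K)$: from $\inv_\mu Q\mid\inv_\mu F$ we get $\mu<v_F$, $\ty(\mu,v_F)=[Q]_\mu$, and statements (i)–(ii). Statement (ii) gives $\deg(F)=\ell\deg(Q)$ with $\ell=\ell(N_{\mu,Q}(F))$; since $Q\in\kpm$, $\deg(Q)$ is a multiple of $\deg(\mu)$, and one checks $Q$ can be taken of minimal degree $\deg(\mu)$ (replace the chosen $Q$ by $\phi$ if needed, or note $\dgm$ is computed with minimal-degree key polynomials), so $\deg(F)=\dgm(F)\deg(\mu)$ and Lemma \ref{lemma1x} finishes. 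The main obstacle is purely bookkeeping: matching the combinatorial quantity $\ell$ from the Newton polygon with the algebraic quantity $\dgm(F)$, and handling the reduction to a minimal-degree key polynomial cleanly; the equivalence \eqref{length}, $\ell(N^+(g))=\ord_\pi(\inm g)$, is exactly the bridge needed for that identification.
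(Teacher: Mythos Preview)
Your approach is essentially the paper's: for the nontrivial direction you find $Q\in\kpm$ with $\inv_\mu Q\mid\inv_\mu F$, apply Theorem \ref{fundamental} to get $F\smu Q^\ell$ with $\deg(F)=\ell\deg(Q)$, and conclude via Lemma \ref{lemma1x}. The paper cites \cite[Theorem 6.8]{KP} for the existence of such a $Q$; your argument via factoring in $\ggm^0[\pi]$ is an acceptable substitute.

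There is one genuine wrinkle in your bookkeeping. You write that the conclusion requires ``checking $\deg(Q)=\deg(\mu)$ and $\dgm(F)=\ell$'', and then suggest replacing $Q$ by a minimal-degree key polynomial $\phi$. This replacement need not be possible: if, say, $\inv_\mu F=u+\pi$ with $u$ a unit and $\pi=\inv_\mu\phi$, then $\phi\nmid_\mu F$ even though $\inv_\mu F$ is not a unit. Fortunately you do not need $Q$ to have minimal degree. Since $Q$ is a key polynomial it is itself $\mu$-minimal, so Lemma \ref{lemma1x} applied to $Q$ gives $\deg(Q)=\dgm(Q)\deg(\mu)$. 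From $\inv_\mu F=(\inv_\mu Q)^\ell$ you get $\dgm(F)=\ell\,\dgm(Q)$, hence
\[
\deg(F)=\ell\deg(Q)=\ell\,\dgm(Q)\deg(\mu)=\dgm(F)\deg(\mu),
\]
and Lemma \ref{lemma1x} finishes. This is exactly how the paper closes the argument. Also, your forward direction via Lemma \ref{lemma1x} is correct but heavier than needed: if $F$ is $\mu$-minimal then $\inv_\mu F\mid\inv_\mu 1$ would force $\deg(F)\le 0$, so $\inv_\mu F$ is not a unit --- no case analysis on $\kpm$ is required.
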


\begin{proof}
	If $F$ is $\mu$-minimal, then $F\nmid_\mu 1$, so that $\inv_\mu F$ is not a unit in $\ggm$.
	
	Conversely,	if $\inv_\mu F$ is not a unit, then  \cite[Theorem 6.8]{KP} shows that there exists $Q\in\kpm$ such that $\inv_\mu Q\mid \inv_\mu F$. By Theorem \ref{fundamental}, $\deg(F)=\ell\deg(Q)$ and $\inv_\mu F=\inv_\mu Q^\ell$  for some $\ell\in\Z_{>0}$.
	By Lemma \ref{lemma1x}, $F$ is $\mu$-minimal.
\end{proof}\e

\section{Strong rigidity of valuations under henselization}\label{prooofthm1}

Let  $\g\hk \La$ be a fixed order-preserving embedding of $\g$ into a divisible ordered abelian group $\La$.

Let $\ttt$, $\ttt^h$ be the trees of $\La$-valued extensions of $v$, $\vh$  to $\kx$, $\khx$, respectively.

The following rigidity statement was proven in \cite[Theorem A]{Rig}.
 
\begin{theorem} \label{Rig}
The restriction of valuations  induces an isomorphism of posets:
	$$
	\ttt^h\lra\ttt,\qquad \eta\ \longmapsto \ \eta_{\,\mid \kx}.
	$$	
\end{theorem}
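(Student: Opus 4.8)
The plan is to show that the order-preserving map $\ttt^h\to\ttt$, $\eta\mapsto\eta_{\mid\kx}$, is a bijection whose inverse is again order-preserving. Well-definedness (the restriction to $\kx$ of a $\La$-valued valuation on $\khx$ extending $\vh$ extends $v$) and monotonicity are immediate, so three things remain: surjectivity, injectivity, and order-reflection, i.e. that $\mu_1\le\mu_2$ in $\ttt$ implies $\eta_1\le\eta_2$ for the (then unique) preimages. For \emph{surjectivity}, given $\mu\in\ttt$ I would regard it as a valuation on the fraction field of $\kx/\supp(\mu)$, extend it by Chevalley's theorem to a valuation $\bar\mu$ on $\kb[x]$ (keeping it $\La$-valued, as $\La$ is divisible), and apply the Conjugation Theorem \cite[Theorem~14.2]{endler}: there is $\sigma\in\aut(\kb/K)$ with $\bar\mu_{\mid\kb}=\vb\circ\sigma$, and replacing $\bar\mu$ by $\bar\mu\circ\sigma^{-1}$ (with $\sigma$ acting on $\kb[x]$ fixing $x$) one may assume $\bar\mu_{\mid\kb}=\vb$. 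Since $\sigma$ fixes $\kx$, the valuation $\eta:=\bar\mu_{\mid\khx}$ then extends $\vh$ and restricts to $\mu$ on $\kx$.

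The heart of the matter is \emph{injectivity}, for which it suffices to prove that $\mu$ has at most one extension to $\khx$ extending $\vh$. Writing $\khx=\varinjlim_L L[x]$ over the finite subextensions $L/K$ of $\kh$, I would bound, for each such $L$, the extensions of $\mu$ to $L[x]$ restricting to $\vb_{\mid L}$ on $L$. As $L[x]=L\otimes_K\kx$ is a field in which $L$ is relatively algebraically closed, Theorem \ref{endler} applied to the pair $(K(x),\mu)$ identifies the extensions of $\mu$ to $L(x)$ with $K(x)$-embeddings $L(x)\hookrightarrow\overline{K(x)}$ modulo conjugacy over the henselization $K(x)^h$ of $(K(x),\mu)$; restricting such an embedding to $L$ produces a $K$-embedding $\lambda\colon L\to\kb$, and — after normalizing the chosen extension of $\mu$ to $\overline{K(x)}$ so that it restricts to $\vb$ on $\kb$, as in the surjectivity step — the corresponding valuation restricts to $\vb\circ\lambda$ on $L$. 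By Theorem \ref{endler} for $L/K$, the embeddings $\lambda$ with $\vb\circ\lambda=\vb_{\mid L}$ constitute exactly one $\sim_{\kh}$-class, namely that of the inclusion $L\hookrightarrow\kh$, so the whole point reduces to checking that they remain a single conjugacy class over $K(x)^h$. I would deduce this from the fact that $\kh$ embeds into $K(x)^h$ compatibly with the valuations: indeed the relative algebraic closure of $K$ inside $K(x)^h$ is a henselian, separable, algebraic extension of $(K,v)$, hence contains $\kh$. Passing to the limit over $L$ then gives uniqueness, and with surjectivity this yields injectivity of the restriction map.

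For \emph{order-reflection}, lift $\mu_2$ to $\eta_2$, extend $\eta_2$ to a $\La$-valued valuation $\bar\eta_2$ on $\kb[x]$ normalized (again via the Conjugation Theorem) to extend $\vb$, so that $(\bar\eta_2)_{\mid\kx}=\mu_2\ge\mu_1$. Going-down for valuations along the integral extension $\kx\subset\kb[x]$ produces an extension $\bar w$ of $\mu_1$ to $\kb[x]$ with $\bar w\le\bar\eta_2$; restricting to $\kb$ gives $\bar w_{\mid\kb}\le\vb$, and since both restrict to $v$ and, for every $\alpha\in\kb$, the product of the $K$-conjugates of $\alpha$ lies in $K$, a termwise comparison forces $\bar w_{\mid\kb}=\vb$. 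Hence $\bar w_{\mid\khx}\in\ttt^h$; it restricts to $\mu_1$ on $\kx$ and satisfies $\bar w_{\mid\khx}\le\eta_2$, so by injectivity $\bar w_{\mid\khx}=\eta_1$ and $\eta_1\le\eta_2$.

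I expect essentially all the difficulty to be concentrated in the injectivity step, and within it in the claim that $\kh$ lies inside $K(x)^h$ compatibly and that conjugacy over $K(x)^h$ is coarse enough to merge the relevant $K$-embeddings of $L$ into one orbit. The delicate feature is that $\kh/K$ need not be normal, so one cannot simply invoke conjugacy of all extensions of $v$; instead one must use the description of $\kh$ as the decomposition field of $\vb$ and control how decomposition groups behave under the (regular) base change from $K$ to $K(x)$. The going-down input used in the order-reflection step is the only other nontrivial ingredient, and it can be circumvented by arguing instead from the fact that $\ttt^h$ is a tree together with the surjectivity and injectivity already established.
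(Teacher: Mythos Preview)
The paper does not prove Theorem~\ref{Rig} at all: it is quoted from the external reference \cite{Rig} and used as a black box. So there is no ``paper's own proof'' to compare against, and your proposal is an attempt to supply one.

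Your surjectivity and injectivity arguments are essentially sound. Two remarks. First, ``$L[x]=L\otimes_K\kx$ is a field'' is a slip; you mean $L(x)=L\otimes_K K(x)$, and the argument via $K(x)^h$ only covers $\supp(\mu)=0$ (the nontrivial-support case reduces directly to Proposition~\ref{CorEndler2}). Second, the injectivity step is actually simpler than you fear: since $L\subset\kh$, the $\sim_{\kh}$-class of the inclusion $\iota\colon L\hookrightarrow\kb$ is the singleton $\{\iota\}$ (any $\sigma\in\aut(\kb/\kh)$ fixes $L$ pointwise), so there is exactly \emph{one} embedding $\lambda$ with $\vb\circ\lambda=\vb_{\mid L}$, and no ``merging'' of a conjugacy class is required. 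The inclusion $\kh\subset K(x)^h$ you invoke is correct---your justification via relative algebraic closure works, and it also follows at once from the universal property of henselization---but it is not even needed here.

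The genuine gap is the order-reflection step. What you call ``going-down for valuations along the integral extension $\kx\subset\kb[x]$'' is not the classical going-down (which concerns specialisation of primes or coarsening of valuation rings, a different partial order from the tree order $\mu\le\nu\Leftrightarrow\mu(f)\le\nu(f)$). The specific statement you need---given $\bar\eta_2$ on $\kb[x]$ and $\mu_1\le(\bar\eta_2)_{\mid\kx}$, there exists $\bar w\le\bar\eta_2$ on $\kb[x]$ restricting to $\mu_1$---is true, but it requires an argument (for instance, using that over the algebraically closed field $\kb$ the predecessors of $\bar\eta_2$ are the depth-zero valuations $[\vb;x-a,\delta]$ with $\delta$ below a fixed bound, and analysing how their restrictions sweep out $\{\mu\le\mu_2\}$). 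Your proposed circumvention---``$\ttt^h$ is a tree plus bijectivity''---does not work as stated: an order-preserving bijection between trees need not reflect order (take two incomparable points mapping bijectively to a two-element chain). You would need extra structure, such as existence of infima together with a comparison of tangent directions, and controlling tangent directions under restriction is close to Theorem~\ref{theoremabouthensel}, which in this paper is \emph{deduced from} Theorem~\ref{Rig}. So as written, the order-reflection step is incomplete.
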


In particular, for any valuation $\mu\in\ttt$, there is a unique common extension $\muh$ of $\mu$ and $\vh$ to  $\khx$. 

The aim of this section is to prove Theorem \ref{theoremabouthensel}. That is, the canonical embedding $\ggm\hk\ggmh$ is an isomorphism of graded algebras.

This strong-rigidity statement will be used to analyze the extensions of augmentations $\mu\to\nu$ from $\kx$ to $\khx$.

\subsection{A general rigidity criterion}
Let $\mu$ be an arbitrary valuation on $\kx$.
Let $L/K$ be an algebraic field extension and $\eta$ a valuation on $L[x]$ extending $\mu$. 

The mapping $\inm g\mapsto \inv_\eta g$, for all $g\in\kx\setminus \supp(\mu)$, induces a grade-preserving embedding between the respective graded algebras:
\begin{equation}\label{iota}
\iota\colon \ggm\hk\mathcal G_\eta. 
\end{equation}

We say that $\iota$ \textbf{preserves the degree} if 
$\deg_\eta(f)=\dgm(f)$,
for all $f\in\kx$.

We write $e(\eta/\mu)=1$ to indicate that the embedding $\gm\hk\Gamma_\eta$ is onto. 

We write $f(\eta/\mu)=1$ to indicate that the field extension $\km\hk k_\eta$ is onto. 

Let $\dm\subset\ggm$ be the subring of homogeneous elements of grade zero of $\ggm$, and let $\dm^*\subset\dm$ be the multiplicative subgroup of all units in $\dm$. Let  $\Delta^*_\eta\subset\Delta_\eta\subset\gg_\eta$ be the analogous objects in $\gg_\eta$.

\begin{lemma}\label{previous}
Suppose that $\gm^0=\geta^0$ and the embedding $\iota$ in (\ref{iota}) induces an isomorphism between $\dm^*$ and $\Delta^*_\eta$. Then, $\iota$ induces an isomorphism between $\ggm^0$ and $\gg^0_\eta$. 
\end{lemma}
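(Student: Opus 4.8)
The statement says: given that $\gm^0 = \geta^0$ (equality of the groups of grades of homogeneous units) and that $\iota$ restricts to an isomorphism $\dm^* \xrightarrow{\sim} \Delta^*_\eta$ on the units of the grade-zero pieces, then $\iota$ induces an isomorphism $\ggm^0 \xrightarrow{\sim} \gg^0_\eta$ between the subalgebras generated by all homogeneous units.

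The plan is as follows. First I would recall that $\iota$ is injective by construction (Remark after the definition of the map $\mathcal G_\mu \to \mathcal G_\eta$, item (i), since $\eta$ extends $\mu$), so the only thing to prove is surjectivity of $\ggm^0 \to \gg^0_\eta$; more precisely, that every homogeneous unit of $\gg_\eta$ lies in the image $\iota(\ggm^0)$, since $\gg^0_\eta$ is generated by such units. So let $u \in \gg_\eta$ be a homogeneous unit, of grade $\al = \gr_\eta(u) \in \geta^0$. By hypothesis $\geta^0 = \gm^0$, so $\al \in \gm^0$, which means there is a homogeneous unit $u_0 \in \ggm$ with $\gr_\mu(u_0) = \al$; let $u_0' = \iota(u_0) \in \gg_\eta$, which is again a homogeneous unit of grade $\al$.

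Next, consider the quotient $v := u \cdot (u_0')^{-1} \in \gg_\eta$. This is a homogeneous unit of grade $0$, i.e.\ $v \in \Delta^*_\eta$. By the hypothesis that $\iota$ induces an isomorphism $\dm^* \xrightarrow{\sim}\Delta^*_\eta$, there is $v_0 \in \dm^* \subset \ggm^0$ with $\iota(v_0) = v$. Then $u = v \cdot u_0' = \iota(v_0)\,\iota(u_0) = \iota(v_0 u_0)$, and $v_0 u_0 \in \ggm^0$ since both factors are homogeneous units of $\ggm$. Hence $u \in \iota(\ggm^0)$. Since $\gg^0_\eta$ is, by definition, the subalgebra of $\gg_\eta$ generated by its homogeneous units, and every homogeneous unit is in $\iota(\ggm^0)$, and $\iota$ is a ring homomorphism, we get $\gg^0_\eta \subseteq \iota(\ggm^0)$. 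The reverse inclusion $\iota(\ggm^0) \subseteq \gg^0_\eta$ holds because $\iota$ maps homogeneous units of $\ggm$ to homogeneous units of $\gg_\eta$ (as noted in the proof of Lemma \ref{00}), hence maps the subalgebra they generate into $\gg^0_\eta$. Therefore $\iota$ restricts to a bijective ring homomorphism $\ggm^0 \to \gg^0_\eta$, which is the desired isomorphism.

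The argument is essentially a two-step "lift the grade, then lift the unit part" decomposition, and I do not expect a serious obstacle; the only points requiring a little care are (a) making sure $\gg^0_\eta$ really is generated by homogeneous units and not merely by those of a restricted shape — this is just its definition — and (b) checking that a homogeneous unit of grade $0$ is exactly an element of $\Delta^*_\eta$, which is immediate from the definitions of $\Delta_\eta$ and $\Delta^*_\eta$. One should also remark that the hypothesis $\gm^0 = \geta^0$ is used only to guarantee the grade $\al$ of an arbitrary homogeneous unit of $\gg_\eta$ is realized already in $\ggm$; without it the grade could escape. No calculation beyond these bookkeeping steps is needed.
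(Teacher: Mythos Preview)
Your proof is correct and follows essentially the same approach as the paper: pick a homogeneous unit in $\gg_\eta$, use the equality $\gm^0=\geta^0$ to match its grade with a homogeneous unit coming from $\ggm$, form the quotient to land in $\Delta_\eta^*$, and then use the hypothesis on $\dm^*\simeq\Delta_\eta^*$ to lift it. The only cosmetic difference is that the paper phrases everything via polynomial representatives $F,f,h$ rather than abstract homogeneous elements $u,u_0,v$.
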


\begin{proof}
Since $\iota$ maps units to units, it induces an embedding $\ggm^0\hk\gg^0_\eta$. We must show that this embedding is onto.  It suffices to show that every homogeneous unit in $\gg_\eta$ is the image of an homogeneous unit in $\ggm$.

Take any $F\in L[x]\setminus\supp(\eta)$ such that  $\inn_\eta F$ is a unit. Since $\eta(F)$ belongs to $\geta^0=\ggm^0$, there exists $f\in\kx$ such that $\eta(F)=\mu(f)=\eta(f)$ and $\inm f$ is a unit.

Hence,  $(\inv_\eta F)(\inv_\eta f)^{-1}\in\Delta_\eta^*$. By our assumption,  there exists $h\in\kx$ such that $\inm h\in \dm^*$ and $\inv_\eta h=(\inv_\eta F)(\inv_\eta f)^{-1}$. Hence, $\inm(gh)$ is mapped to $\inv_\eta F$ by $\iota$. 
\end{proof}\e

\begin{lemma}\label{crit}
Let $L/K$ be an algebraic field extension and $\eta$ a valuation on $L[x]$. Let $\mu=\eta_{\mid \kx}$ and $\iota\colon \ggm\hk\mathcal G_\eta$ the canonical embedding of graded algebras. 
	Then, the following conditions are equivalent.
	\begin{enumerate}
		\item [(i)] The embedding $\iota$ preserves the degree and induces an  isomorphism $\ggm^0\simeq\mathcal G_\eta^0$.
		\item [(ii)] The embedding $\iota$ is an isomorphism.
		\item [(iii)] $e(\eta/\mu)=1=f(\eta/\mu)$.
	\end{enumerate}	
\end{lemma}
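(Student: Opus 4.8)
The plan is to prove the cyclic chain of implications $(ii)\Rightarrow(i)\Rightarrow(iii)\Rightarrow(ii)$, exploiting Lemma \ref{previous} and Theorem \ref{g0gm} for the structural parts. The implication $(ii)\Rightarrow(i)$ is essentially trivial: if $\iota$ is an isomorphism, it restricts to an isomorphism $\ggm^0\simeq\gg_\eta^0$ because $\iota$ carries homogeneous units to homogeneous units and, being surjective, hits every homogeneous unit of $\gg_\eta$; that $\iota$ preserves degree then follows because $\dgm(f)$ is read off from the expansion of $\inm f$ in $\pi=\inm\phi$ over $\ggm^0$ for a key polynomial $\phi$ of minimal degree of $\mu$, and $\iota$ transports this to the analogous expansion in $\gg_\eta$ (one must check $\phi$ is still a key polynomial of minimal degree for $\eta$ and that $\inn_\eta\phi$ is still transcendental over $\gg_\eta^0$, but this is immediate from $\ggm\simeq\gg_\eta$ and Theorem \ref{g0gm}).

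For $(i)\Rightarrow(iii)$: The condition $\ggm^0\simeq\gg_\eta^0$ forces $\gm^0=\gg_\eta^0$ (the grade zero parts of the graded rings are the value groups of the homogeneous units, and these are identified) and $\dm^*\simeq\Delta_\eta^*$. From $\dm^*\simeq\Delta_\eta^*$ one extracts $f(\eta/\mu)=1$ by recalling that $\km$ (resp.\ $k_\eta$) is, up to the torsion coming from $\gm^0/\g$ (resp.\ trivial here), controlled by $\dm/\mathfrak{m}$-type data; more directly, the residue field $k_\eta$ is generated over $\km$ by residues of homogeneous units of grade zero, so an isomorphism on $\Delta^*$ gives $k_\eta=\km$. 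For $e(\eta/\mu)=1$: since $\iota$ preserves degree, $\deg_\eta(\phi)=\dgm(\phi)=1$, so $\phi$ is also a key polynomial of minimal degree for $\eta$; then $\Gamma_\eta=\langle\gg_\eta^0,\eta(\phi)\rangle=\langle\gm^0,\mu(\phi)\rangle=\gm$ using $\gg_\eta^0=\gm^0$ and $\eta(\phi)=\mu(\phi)$. (The valuation-algebraic and nontrivial-support cases, where $\kpm=\emptyset$, are handled separately and are easier: there $\ggm=\ggm^0$, so $(i)$ gives $\ggm\simeq\gg_\eta^0\subset\gg_\eta$, forcing $\gg_\eta=\gg_\eta^0$ as well and hence equality of value groups and residue fields.)

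For $(iii)\Rightarrow(ii)$: Assume $e(\eta/\mu)=f(\eta/\mu)=1$. First I would show $\gm^0=\gg_\eta^0$: the inclusion $\gm^0\subset\gg_\eta^0$ is Lemma \ref{00}, and the reverse follows because $\gg_\eta^0\subset\Gamma_\eta=\gm$ (by $e(\eta/\mu)=1$) and every element of $\gg_\eta^0$ is the $\eta$-value of some $F\in L[x]$ with $\inn_\eta F$ a unit — but by Corollary \ref{unit/minimal} this makes $F$ non-$\mu$-minimal data that one pushes down using $f(\eta/\mu)=1$; more cleanly, one argues that the torsion subgroup $\gg_\eta^0/\g$ is already realized inside $\gm^0/\g$ because the residue extension is trivial. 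Next, the hypothesis $f(\eta/\mu)=1$ says $\km=k_\eta$, which combined with $\gm^0=\gg_\eta^0$ yields that $\iota$ induces an isomorphism $\dm^*\simeq\Delta_\eta^*$ (a homogeneous unit of grade $\alpha\in\gm^0$ is determined up to $\dm^*$ by its residue in $k_\eta=\km$). Lemma \ref{previous} then gives $\ggm^0\simeq\gg_\eta^0$. Finally, to upgrade this to $\ggm\simeq\gg_\eta$: if $\kpm\ne\emptyset$, pick $\phi\in\kpm$ of minimal degree; by Theorem \ref{g0gm}, $\ggm=\ggm^0[\pi]$ with $\pi=\inm\phi$ transcendental over $\ggm^0$, and similarly — using that $\phi$ remains a key polynomial of minimal degree for $\eta$, which I would verify from $e=f=1$ forcing $\deg(\eta)=\deg(\mu)$ — we get $\gg_\eta=\gg_\eta^0[\inn_\eta\phi]$; since $\iota$ matches $\ggm^0$ with $\gg_\eta^0$ and sends the transcendental $\pi$ to the transcendental $\inn_\eta\phi$, it is an isomorphism. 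If $\kpm=\emptyset$ then $\ggm=\ggm^0\simeq\gg_\eta^0=\gg_\eta$ (the last equality because $\eta$ is then also valuation-algebraic or of nontrivial support, by Theorem \ref{empty} together with $e=f=1$).

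The main obstacle I anticipate is the careful bookkeeping in $(iii)\Rightarrow(ii)$ showing that $\phi\in\kpm$ of minimal degree stays a key polynomial of minimal degree for $\eta$ — equivalently that $\deg(\eta)=\deg(\mu)$ — purely from $e(\eta/\mu)=f(\eta/\mu)=1$; this is where one needs to rule out that passing to $L[x]$ creates new key polynomials of smaller degree, and it likely uses Lemma \ref{lemma1x} (degree $=\dgm\cdot\deg(\mu)$) together with Corollary \ref{unit/minimal} to transfer $\mu$-minimality statements across the extension. The identification $\dm^*\simeq\Delta_\eta^*$ from $f(\eta/\mu)=1$ is the other delicate point, since it requires knowing precisely how grade-zero homogeneous units of a fixed grade $\alpha$ are parametrized by $k_\eta$ (a torsor statement), which I would cite or reprove from \cite[Proposition 3.5]{KP}.
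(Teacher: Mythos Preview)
Your cycle runs opposite to the paper's (the paper proves $(i)\Rightarrow(ii)\Rightarrow(iii)\Rightarrow(i)$), which is harmless in itself; your $(ii)\Rightarrow(i)$ is fine, and your $(i)\Rightarrow(iii)$ can be made to work, most cleanly by noting that $(i)\Rightarrow(ii)$ is immediate from Theorem~\ref{g0gm} (as the paper does) and then reading off $e=f=1$ from the isomorphism.

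The genuine content lies in getting from $(iii)$ back to $(i)$ or $(ii)$, and here your sketch has real gaps. First, to obtain $\Gamma_\eta^0\subset\gm^0$ you invoke Corollary~\ref{unit/minimal}, but that result is proved only over a \emph{henselian} base field and only for irreducible polynomials; neither hypothesis is available in the generality of this lemma. Your alternative ``torsion subgroup'' remark is not an argument: knowing that $\Gamma_\eta^0/\g$ is torsion does not force $\Gamma_\eta^0\subset\gm^0$. From $\gm^0\subset\Gamma_\eta^0\subset\Gamma_\eta=\gm$ one sees that $\Gamma_\eta^0=\gm^0$ is equivalent to $e(\mu)=e(\eta)$, and this equality is not obvious from $e(\eta/\mu)=f(\eta/\mu)=1$. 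Second, the step you flag as the ``main obstacle''---that a key polynomial $\phi$ of minimal degree for $\mu$ has $\deg_\eta(\phi)=1$---is indeed the crux, and the tools you propose (Lemma~\ref{lemma1x} plus Corollary~\ref{unit/minimal}) will not settle it.

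The paper's route through $(iii)\Rightarrow(i)$ resolves both issues simultaneously by a case analysis on the type of $\mu$. In the value-transcendental case, $\gm^0$ is intrinsically characterized as the torsion part of $\gm$ over $\g$, so $e(\eta/\mu)=1$ gives $\gm^0=\Gamma_\eta^0$ directly, and then a rank comparison in $\Gamma_\eta\simeq\Gamma_\eta^0\times\Z$ forces $\deg_\eta(\phi)=1$. The residue-transcendental case is the heart of the proof and uses the explicit structure $\dm=\ka_\mu[\xi_\mu]$, $\Delta_\eta=\ka_\eta[\xi_\eta]$ as polynomial rings in one variable over fields. The condition $f(\eta/\mu)=1$ says the associated fraction fields coincide, which forces $\ka_\mu=\ka_\eta$ and $\iota(\xi_\mu)=a\,\xi_\eta+b$ for some $a,b\in\ka_\eta$ (a L\"uroth-type statement). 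Taking $\eta$-degrees in this identity yields $m\,e(\mu)=e(\eta)$, where $m=\deg_\eta(\phi)$; since $e(\eta)\mid e(\mu)$, one concludes $m=1$ and $e(\mu)=e(\eta)$ at once. This structural argument is absent from your plan and is not recoverable from the lemmas you cite.
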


\begin{proof}
	(i) implies (ii)

	If $\mu$ is valuation-algebraic or has nontrivial support, then  $\ggm^0=\ggm$. On the other hand, since $L/K$ is algebraic, $\eta$ must be of the same type (valuation-algebraic or nontrivial support) too. Thus, (ii) follows trivially from (i).
	
	If $\mu$ is valuation-transcendental, then (ii) follows from (i) by Theorem \ref{g0gm}. Indeed, let $\ggm=\ggm^0[\pi]$ for some 
	homogeneous $\pi\in\ggm$ with $\dgm(\pi)=1$. From (i) we deduce $\iota\left(\ggm\right)=\gg_\eta^0[\iota(\pi)]$ with $\deg_\eta(\iota(\pi))=1$. Hence, $\gg_\eta^0[\iota(\pi)]=\mathcal G_\eta$, again by Theorem  \ref{g0gm}.\e 
	
	(ii) implies (iii)
	
	For any $F\in L[x]$, there is some $f\in\kx$ such that $\inv_\eta f=\inv_\eta F$. Hence, $\eta(F)=\eta(f)=\mu(f)$ belongs to $\gm$. This proves that $e(\eta/\mu)=1$.
	
	The isomorphism $\iota$ restricts to an isomorphism $\dm\simeq \Delta_\eta$. By \cite[Section 4]{KP}, the residue fields $\km$, $k_\eta$ are the fields of fractions of  $\dm$, $\Delta_\eta$, respectively. Hence, the natural embedding $\km\hk k_\eta$ is onto, so that $f(\eta/\mu)=1$.\e
	
	(iii) implies (i)
	
	Let us first suppose that $\mu$ is valuation-algebraic or it has nontrivial support.
	As mentioned above, $\eta$ is of the same type as $\mu$. By \cite[Theorem 4.3]{KP},
	$$\ggm^0=\ggm,\quad \dm=\km,\qquad \mathcal G_\eta^0=\mathcal G_\eta,\quad \Delta_\eta=k_\eta.$$
	Thus, $\iota$ preserves the degree, because all homogeneous elements have degree zero.
	
	Since $\gm=\gm^0$ and $\geta=\geta^0$, the condition $e(\eta/\mu)=1$ implies $\gm^0=\geta^0$. Also, the condition  $f(\eta/\mu)=1$ implies that $\iota(\dm^*)=\Delta_\eta^*$. By Lemma \ref{previous}, $\iota$ induces an isomorphism between  $\ggm^0$ and $\gg_\eta^0$.

	From now on,  assume that $\mu$ (hence $\eta$) is valuation-transcendental.

	Let $\pi=\inm\phi\,$ for some fixed $\phi\in\kpm$ of minimal degree. Since $\iota$ maps homogeneous units to homogeneous units, Theorem \ref{g0gm} shows that $\iota$ preserves the degree if and only if $\deg_\eta(\phi)=1$.

	Suppose that $\mu$ (hence $\eta$) is value-transcendental. We have again $\dm=\km$, $\Delta_\eta=k_\eta$ \cite[Theorem 4.2]{KP}. 
	By \cite[Lemma 4.1]{KP}
	we have a group isomorphism
	\[
	\gm^0\times \Z\lra \gm,\qquad (\al,\ell)\longmapsto \al+\ell\mu(\phi).
	\]
Thus, $\gm^0$ is the subgroup of all elements in $\gm$ having torsion over $\g$. Hence, the condition $e(\eta/\mu)=1$ implies   $\gm^0=\geta^0$. As in the previous case, $\iota$ induces an isomorphism between  $\ggm^0$ and $\gg_\eta^0$, by Lemma \ref{previous}. 

On the other hand, if $\deg_\eta(\phi)=m$, then Theorem \ref{g0gm} shows that
	the images by $\iota$ of all homogeneous elements in $\ggm$ take values in the subgroup $\Gamma_\eta^0\times m\Z$. Since $e(\eta/\mu)=1$, this subgroup must be the whole group $\mathcal G_\eta^0\times \Z$. Hence, $m=1$ and  $\iota$ preserves the degree.

	Finally, suppose that $\mu$ is residue-transcendental. Then, $\km=\op{Frac}\left(\dm\right)$ is transcendental over $k$. More precisely,  $\dm=\ka_\mu[\xi_\mu]$, where the field $\ka_\mu\subset\dm$ is the relative algebraic closure of $k$ in $\dm$, and $\xi_\mu\in\dm$ is transcendental over $\ka_\mu$ with $\dgm(\xi_\mu)=e(\mu)$ \cite[Theorem 4.6]{KP}. 
	
	Hence, $\eta$ is residue-transcendental too, and we have analogous statements: 
	$$ \Delta_\eta=\ka_\eta[\xi_\eta],\quad k_\eta=\op{Frac}\left(\Delta_\eta\right),
	$$
	with $\xi_\eta\in\Delta_\eta$ transcendental over $\ka_\eta$, having degree $\deg_\eta(\xi_\eta)=e(\eta)$ .
	
	The embedding $\iota$ restricts to an embedding $$\dm=\ka_\mu[\xi_\mu]\hk \dn=\ka_\nu[\xi_\nu],$$ and the condition $f(\eta/\mu)=1$ implies that this embedding induces an isomorphism between the corresponding fields of fractions. Hence, the embedding $\ka_\mu\hk\ka_\eta$ is necessarily onto  and 
	\begin{equation}\label{xi}
	\iota(\xi_\mu)=a\xi_\eta+b \quad\mbox{ for some }\quad a,b\in\ka_\eta.  
	\end{equation}

	Now, since the homomorphism $\iota$ maps units to units and $e(\eta/\mu)=1$, we have $\gm^0\subset\Gamma_\eta^0\subset\Gamma_\eta=\gm$; thus, $e(\mu)$ is a multiple of $e(\eta)$. 
	
	Finally, if $m=\deg_\eta(\phi)$, then  we deduce from (\ref{xi}) that
	$$
	me(\mu)=\deg_\eta(\iota(\xi_\mu))=\deg_\eta(\xi_\eta)=e(\eta).
	$$
	Hence, $m=1$ and $e(\mu)=e(\eta)$. Therefore, $\iota$ preserves the degree and $\gm^0=\geta^0$. By Lemma \ref{previous}, $\iota$ induces an isomorphism between  $\ggm^0$ and $\gg_\eta^0$.
\end{proof}\e

We may apply this criterion to prove Theorem \ref{theoremabouthensel} for valuations with trivial support.
\begin{theorem}\label{StrongRigVT}
	Let $\mu$ be a valuation on $\kx$ with trivial support. Then, the canonical embedding $\ggm\hk\ggmh$ is an isomorphism. 
\end{theorem}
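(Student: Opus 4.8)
The plan is to invoke the general rigidity criterion of Lemma \ref{crit} with $L = K^h$ and $\eta = \mu^h$. Since $K^h/K$ is an algebraic extension and $\mu^h$ restricts to $\mu$ on $K[x]$ (this is the unique common extension provided by Theorem \ref{Rig}), the criterion applies, and it suffices to verify condition (iii): namely that $e(\mu^h/\mu) = 1 = f(\mu^h/\mu)$. In other words, I must show that the value group does not grow, $\Gamma_{\mu^h} = \Gamma_\mu$, and that the residue field does not grow, $k_{\mu^h} = k_\mu$, when passing from $\mu$ to $\mu^h$.

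The key point is that these two equalities already hold "one level down," for the base valued field: the henselization $(K^h, v^h)$ is an immediate extension of $(K, v)$, so $v^h K^h = v K = \Gamma$ and $K^h v^h = K v = k$. The task is to propagate this immediacy from the base fields up to the polynomial-ring valuations $\mu$ and $\mu^h$. I would argue as follows. Since $\mu$ has trivial support, Theorem \ref{empty} tells us $\mu$ is valuation-transcendental, and we may fix a key polynomial $\phi \in \mathrm{KP}(\mu)$ of minimal degree; by Theorem \ref{Rig} and the compatibility of key polynomials under the isomorphism $\mathcal T^h \to \mathcal T$, the same $\phi$ (viewed in $K^h[x]$) serves as a key polynomial of minimal degree for $\mu^h$, and $\deg_K(\phi) = \deg_{K^h}(\phi)$, i.e. $\deg(\mu) = \deg(\mu^h)$. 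Now $\Gamma_\mu = \langle \Gamma_\mu^0, \mu(\phi)\rangle$ and $\Gamma_{\mu^h} = \langle \Gamma_{\mu^h}^0, \mu^h(\phi)\rangle$ by the description recalled in Section \ref{subsecDegree}, and $\mu^h(\phi) = \mu(\phi)$; so it remains to check $\Gamma_\mu^0 = \Gamma_{\mu^h}^0$. But $\Gamma_\mu^0 = \{\mu(a) \mid a \in K[x],\ \deg(a) < \deg(\phi)\}$ and similarly $\Gamma_{\mu^h}^0 = \{\mu^h(a) \mid a \in K^h[x],\ \deg(a) < \deg(\phi)\}$; the containment $\subseteq$ is clear, and for $\supseteq$ one uses that each coefficient of such an $a \in K^h[x]$ lies in $K^h$, whose value group is $\Gamma$, to reduce — via a defectless/immediate approximation argument on the coefficients — to the value of a polynomial over $K$ of the same degree. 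A parallel argument handles the residue fields: $\Delta_\mu$ and $\Delta_{\mu^h}$ are generated over the residue algebras of $v$ and $v^h$ (which coincide) by the images of such low-degree polynomials, and immediacy of $K^h/K$ forces these residue-algebra generators to agree, whence $k_\mu = \mathrm{Frac}(\Delta_\mu) = \mathrm{Frac}(\Delta_{\mu^h}) = k_{\mu^h}$.

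The main obstacle I anticipate is precisely this last propagation step: showing $e(\mu^h/\mu) = 1$ and $f(\mu^h/\mu) = 1$ rigorously, i.e. that values and residues of polynomials of bounded degree over $K^h$ can always be matched by polynomials over $K$. For the value group this is cleanest when $\mu$ is residue-transcendental with $e(\mu)$ controlling $\mu(\phi)$ modulo $\Gamma_\mu^0$; for the value-transcendental case the splitting $\Gamma_\mu \cong \Gamma_\mu^0 \times \mathbb Z$ (via $\alpha + \ell\mu(\phi)$) reduces everything to $\Gamma_\mu^0$, which lives among torsion-over-$\Gamma$ elements. In either case, the heart of the matter is approximating a polynomial over $K^h$ by one over $K$ closely enough (in the $\mu$-adic sense, using that $K$ is dense in $K^h$ for the valuation topology) that the value and the initial form are preserved — a standard but slightly delicate continuity argument. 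Once condition (iii) of Lemma \ref{crit} is in hand, the theorem follows immediately from the equivalence (iii) $\Rightarrow$ (ii) in that lemma.
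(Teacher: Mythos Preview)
Your proposal has two genuine gaps, and both come down to the same underlying issue: you are implicitly assuming rank one.

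First, the claim that a key polynomial $\phi\in\kpm$ of minimal degree ``serves as a key polynomial of minimal degree for $\mu^h$'' is false in general. Theorem~\ref{Rig} gives only an isomorphism of \emph{posets}; it says nothing about preservation of key polynomials. In fact, the paper's Example~\ref{exampnaphel} and Example~\ref{exampletwoaug} exhibit key polynomials for $\mu$ that are not even irreducible in $\khx$, hence cannot be key polynomials for $\muh$. What \emph{is} true (Proposition~\ref{P3.3Isom}) is that a certain irreducible factor $Q_{\mu,\phi}$ of $\phi$ over $\kh$ is a key polynomial for $\muh$---but that proposition is proved \emph{using} Theorem~\ref{StrongRigVT}, so invoking it here would be circular. (You also overlook the valuation-algebraic case: trivial support does not force $\kpm\ne\emptyset$.)

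Second, and more fundamentally, your approximation argument relies on ``$K$ is dense in $\kh$ for the valuation topology.'' This holds when $\rnk(v)=1$, and indeed the paper carries out essentially your argument in that case (Lemma~\ref{lemmarankone} and Corollary~\ref{Thm1.3}), explicitly using rank one to find $f_i\in K$ with $\vh(F_i-f_i)$ large. But density fails for higher rank, so the coefficient-by-coefficient approximation is not available in general.

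The paper's proof avoids both problems with a single clean idea: rather than comparing $\mu$ and $\muh$ directly, it factors through the henselization $(K(x)^h,\mub)$ of the \emph{function field} $(K(x),\mu)$. Henselization is always an immediate extension, so $e(\mub/\mu)=1=f(\mub/\mu)$ for free. Since $\kh\subset K(x)^h$, the valuation $\muh$ on $\kh(x)$ sits between $\mu$ and $\mub$, and the multiplicativity of $e$ and $f$ forces $e(\muh/\mu)=1=f(\muh/\mu)$. Then Lemma~\ref{crit} applies exactly as you intended.
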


\begin{proof}
	Since $\supp(\mu)=(0)$ it extends, in an obvious way, to a valuation on the field $K(x)$. Consider the following situation.
	\begin{equation}\label{sit}
	\left\{\begin{array}{ll}
	\vb & \mbox{a fixed extension of }v\mbox{ to a certain fixed algebraic closure } \kb,\\
	\kh & \mbox{the corresponding henselization,}\\
	\kxb & \mbox{a fixed algebraic closure of }K(x)\mbox{, compatible with }\kb,\\
	\omb &\mbox{an arbitrary common extension of $\mu$ and $\vb$  to }\kxb,\\
	K(x)^h &\mbox{the henselization of }(K(x),\mu)\mbox{ with respect to this choice of }\omb,\\
	\mub &\mbox{the restriction of }\omb\mbox{ to }K(x)^h.
	\end{array}\right.
	\end{equation}
	
	Since $\kh\subset K(x)^h$ we can consider the following network of valued fields:
	
	\begin{center}
		\setlength{\unitlength}{4mm}
		\begin{picture}(20,13)
		\put(0,11){$(\kb(x),\overline \omega)$}
		\put(-0.5,7){$(\kh(x),\mu^h)$}
		\put(-0.1,0.9){$(K(x),\mu)$}
		\put(13,11){$(\kxb,\overline \omega)$}
		\put(13,7){$(K(x)^h,\overline \mu)$}
		\put(2,2.2){\line(0,1){4.3}}
		\put(4.6,1.1){\line(2,1){10.7}}
		\put(4.8,11.4){\line(1,0){8}}
		\put(4.8,7.4){\line(1,0){8}}
		\put(2,8.4){\line(0,1){2}}
		\put(15.5,8.4){\line(0,1){2}}
		\end{picture}
	\end{center}
	
	By Theorem \ref{Rig}, since the restriction of $\mub$ to $K(x)$ is $\mu$, its restriction to $\kh(x)$ must be the unique extension  $\muh$ of $\mu$ to $\kh(x)$. 
	
	Since $e(\mub/\mu)=1=f(\mub/\mu)$, we deduce that 
	$$
	e(\muh/\mu)=1=f(\muh/\mu).
	$$
	Hence, the canonical embedding $\ggm\hk\gg_{\muh}$ is an isomorphism, by Lemma \ref{crit}. 
\end{proof}\e

Next, we proceed to apply this partial version of Theorem \ref{theoremabouthensel} to analyze the behavior of augmentations under henselization.

\subsection{Strong rigidity of augmentations}


Let $\mu$ be a valuation on $\kx$ and $\phi\in\kpm$. The valuation $[\mu;\phi,\infty]$ has support $\phi\kx$. 

Now, let $\cc$ be a continuous family of valuations on $\kx$ and  $\phi\in\kpi(\cc)$. The valuation $[\cc;\phi,\infty]$ has support $\phi\kx$.

By Proposition \ref{CorEndler2}, in both cases there exists a unique irreducible factor $Q\in\irr(\kh)$ of $\phi$ in $\khx$ such that
$$
[\mu;\phi,\infty]=w_Q=(v_Q)_{\mid \kx},\quad \mbox{ or }\quad 
[\cc;\phi,\infty]=w_Q=(v_Q)_{\mid \kx}.
$$

\begin{definition}
	We denote this irreducible factor of $\phi$  by $Q_{\mu,\phi}$, $Q_{\cc,\phi}$, respectively. It is called the {\bf irreducible factor} of $\phi$ over $\khx$ {\bf singled out} by the valuation $\mu$, or the continuous family $\cc$.
\end{definition}

\begin{proposition}\label{P3.3Isom}
	For all $\phi\in\kpm$, the irreducible factor $Q=Q_{\mu,\phi}\in\irr(\kh)$ is a key polynomial for $\muh$ and $\inmh(\phi/Q)$ is a unit in $\ggmh$. Moreover, $\dgm(\phi)=\dgmh(Q)$. 
	In particular, if $\phi$ has minimal degree in $\kpm$, then $Q$ has minimal degree in $\kp(\muh)$.
\end{proposition}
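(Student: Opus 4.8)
The plan is to reduce everything to the strong rigidity result for valuations with trivial support (Theorem \ref{StrongRigVT}) by way of a carefully chosen intermediate valuation with trivial support. The key observation is that $\phi$ is a key polynomial of minimal degree for the ordinary augmentation $\nu_\gamma=[\mu;\phi,\gamma]$ for any finite $\gamma>\mu(\phi)$, and that the graded algebra $\mathcal G_{\nu_\gamma}$ "sees" $\phi$ exactly the way $\mathcal G_\mu$ does through the tangent-direction formalism. So first I would fix a finite $\gamma>\mu(\phi)$ and set $\nu=[\mu;\phi,\gamma]$, a valuation with trivial support, and likewise let $\nu^h=[\muh;Q,\gamma']$ be the corresponding augmentation on $\khx$; here one has to check that $\nu^h$ restricts to $\nu$ on $\kx$, which follows from Theorem \ref{Rig} applied to the unique common extension of $\nu$ and $\vh$. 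By Theorem \ref{StrongRigVT}, $\mathcal G_\nu\hk\mathcal G_{\nu^h}$ is an isomorphism, and in particular $e(\nu^h/\nu)=1=f(\nu^h/\nu)$ and the embedding preserves degree.

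Next I would transport this back to $\mu$. The point is that $Q=Q_{\mu,\phi}$ is, by definition, the unique irreducible factor of $\phi$ over $\khx$ with $[\muh;Q,\infty]=w_Q=(v_Q)_{\mid\kx}=[\mu;\phi,\infty]$. I want to promote this to the statement that $Q\in\kp(\muh)$ and that $\inmh(\phi/Q)$ is a homogeneous unit in $\mathcal G_{\muh}$. For the latter: write the $Q$-expansion $\phi=\sum_{n\ge0}a_n Q^n$ in $\khx$; since $\deg(Q)\le\deg(\phi)$ (properly, $\deg(Q)\mid\deg(\phi)$ by the henselian theory, Theorem \ref{fundamental}) and $Q$ divides $\phi$, Theorem \ref{fundamental} applied to the pair $Q\in\kp(\muh)$ — once we know $Q$ is a key polynomial for $\muh$ — gives $\phi\smuh Q^\ell$ with $\ell=\deg(\phi)/\deg(Q)$ and the Newton polygon $N_{\muh,Q}(\phi)$ one-sided. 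But $\phi$ is irreducible in $\kx$ of degree $\deg(\phi)$, and $Q$ is an irreducible factor of degree $\deg(\phi)/\ell$; combining with the degree-preservation from the isomorphism $\mathcal G_\nu\simeq\mathcal G_{\nu^h}$ forces $\ell=1$, i.e. $\deg(Q)=\deg(\phi)$ and $\phi\smuh Q$, whence $\inmh(\phi/Q)$ is a homogeneous unit. To see that $Q$ is genuinely a key polynomial for $\muh$: $\inmh(\phi)$ is not a unit in $\mathcal G_{\muh}$ (because $\mu(\phi)<\nu(\phi)$ translates, via the tangent direction, into $\inmh\phi$ being a non-unit — it lies in the kernel of $\mathcal G_{\muh}\to\mathcal G_{\nu^h}$), so by Corollary \ref{unit/minimal} applied to $F=Q\in\irr(\kh)$, $Q$ is $\muh$-minimal; and $\muh$-irreducibility of $Q$ will follow from the fact that $\inmh Q$ is $\muh$-equivalent to $\inmh\phi$ and $\phi$ is a key polynomial for $\mu$, pushed through the isomorphism.

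For the degree statement $\dgm(\phi)=\dgmh(Q)$: once $\phi\smuh Q$ and $Q\in\kp(\muh)$ are established, $\dgmh(Q)$ is the degree of $\inmh Q$ as a polynomial in a key polynomial of minimal degree for $\muh$ — and by the isomorphism $\mathcal G_\mu\simeq\mathcal G_{\muh}$ for the trivial-support case (or more precisely by chasing the degree through $\mathcal G_\nu\simeq\mathcal G_{\nu^h}$, which preserves $\mu$-degree), this equals the degree of $\inm\phi$ in $\mathcal G_\mu$, which is $\dgm(\phi)$. The final "in particular" clause is then immediate: if $\phi$ has minimal degree $\deg(\mu)$ in $\kpm$, then $\dgm(\phi)=1$, so $\dgmh(Q)=1$, and by Lemma \ref{lemma1x} (or the definition of $\deg_\mu$) $Q$ has minimal degree in $\kp(\muh)$, namely $\deg(\muh)=\deg(Q)=\deg(\phi)=\deg(\mu)$.

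The main obstacle I anticipate is the bootstrapping: Theorem \ref{fundamental} requires as input that $Q$ already be a key polynomial for $\muh$, so I must establish $Q\in\kp(\muh)$ \emph{before} I can invoke the one-sided Newton polygon machinery, and the cleanest route to $Q\in\kp(\muh)$ seems to go through Corollary \ref{unit/minimal} plus an irreducibility argument that itself wants to know something about the relationship between $\phi$ and $Q$ in $\mathcal G_{\muh}$. The way around this circularity is to do the trivial-support rigidity first (for $\nu$ and $\nu^h$), extract the \emph{equality of relative degrees} $e(\muh/\mu)=1=f(\muh/\mu)$ from it (these are intrinsic to the valuations, not to the choice of $\gamma$), and only then feed that into Lemma \ref{crit} to conclude $\mathcal G_\mu\simeq\mathcal G_{\muh}$ directly — at which point $\phi$ and $Q$ being $\muh$-equivalent, $Q$ being a key polynomial, and the degree equality all drop out without needing Theorem \ref{fundamental} at all. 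I would structure the final write-up around that shortcut.
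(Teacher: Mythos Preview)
Your proposal contains a genuine error and an unnecessary detour.

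First, the detour: you pass to an auxiliary valuation $\nu=[\mu;\phi,\gamma]$ in order to apply Theorem~\ref{StrongRigVT} to something with trivial support. But $\mu$ \emph{itself} has trivial support: the hypothesis $\phi\in\kpm$ forces $\kpm\ne\emptyset$, and by Theorem~\ref{empty} this means $\mu$ is valuation-transcendental. So Theorem~\ref{StrongRigVT} applies directly to $\mu$, giving the isomorphism $\ggm\simeq\ggmh$ at once. The paper's proof does exactly this and is much shorter: since $\inm\phi$ is prime in $\ggm$, its image $\inmh\phi$ is prime in $\ggmh$; writing $\phi=Q_1\cdots Q_r$ in $\khx$, exactly one factor $\inmh Q_i$ is prime and the rest are units; Theorem~\ref{Rig} gives $\muh<v_Q$, so $\inmh Q$ is not a unit (it is divisible by the tangent direction $\ty(\muh,v_Q)$), which identifies $Q$ as the prime factor. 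Then Corollary~\ref{unit/minimal} yields $\muh$-minimality and hence $Q\in\kp(\muh)$.

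Second, the error: you claim that degree-preservation ``forces $\ell=1$, i.e.\ $\deg(Q)=\deg(\phi)$ and $\phi\smuh Q$'', and later that $\deg(\muh)=\deg(Q)=\deg(\phi)=\deg(\mu)$. This is false in general. Example~\ref{exampnaphel} (and Example~\ref{exampletwoaug}) exhibit key polynomials $\phi\in\kpm$ that are reducible in $\khx$, so $\deg(Q)<\deg(\phi)$ is perfectly possible. The proposition does \emph{not} assert $\phi\smuh Q$; it asserts only that $\inmh(\phi/Q)$ is a unit, which is a weaker statement allowing $\phi/Q$ to be a nontrivial polynomial whose initial form happens to be a unit. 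Relatedly, your proposed application of Theorem~\ref{fundamental} to $\phi$ over $\kh$ is invalid: that theorem requires $F\in\irr(\kh)$, and $\phi$ need not be irreducible there.

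Your final-paragraph ``shortcut'' is closer to the right idea, but the route you sketch---extracting $e(\muh/\mu)=f(\muh/\mu)=1$ from $\ggn\simeq\ggnh$---is not justified: the equalities for $\nu$ do not obviously descend to $\mu$. The correct move is simply to notice that $\mu$ already has trivial support.
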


\begin{proof}
	Since $\inm \phi$ is a prime element in $\ggm$, its image $\inmh \phi$ is a prime element in $\ggmh$, by Theorem \ref{StrongRigVT}. If $\phi=Q_1\cdots Q_r$ is the factorization of $\phi$ into irreducible factors in $\khx$, then all $\inmh Q_i$ must be units, except for one factor, for which $\inmh Q_i$ is a prime.
	
	By Theorem \ref{Rig}, $\muh< v_Q$. Hence,
	$\inmh Q$ is divided by the prime class $\ty(\muh,v_Q)$, and this implies that $\inmh Q$ cannot be a unit. Therefore, $\inmh Q$ is a prime in $\ggmh$ and $\inmh (\phi/Q)$ is a unit.
	Also, $Q$ is $\muh$-minimal by Corollary \ref{unit/minimal}; hence $Q$ is a key polynomial for $\muh$.
	
	Since $\inmh (\phi/Q)$ is a unit, we have $\dgmh(\phi/Q)=0$, so that $\dgmh(\phi)=\dgmh(Q)$. 
	By Lemma \ref{crit}, the isomorphism $\ggm\hk\ggmh$ preserves the degree; thus, $\dgm(\phi)=\dgmh(\phi)=\dgmh(Q)$. In particular $\dgm(\phi)=1$ implies $\dgmh(Q)=1$.
\end{proof}

\begin{lemma}\label{Ordinary}
	For some $\phi\in\kpm$, let $Q=Q_{\mu,\phi}$. Consider the ordinary augmentation $\nu=[\mu;\,\phi,\ga]$, for some $\ga\in\La\infty$ such that $\ga>\mu(\phi)$. Then, \[\nuh=[\muh;\, Q,\ga'], \quad \mbox{ where }\ \ga'=\ga-\muh(\phi/Q).
	\]
Moreover, $Q=Q_{\nu,\phi}$ as well.	
\end{lemma}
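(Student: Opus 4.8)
The plan is to write down the valuation that should be $\nuh$ and then use the rigidity of Theorem \ref{Rig} to confirm the identification. Put $\lambda:=[\muh;\,Q,\ga']$. This is a genuine ordinary augmentation of $\muh$: by Proposition \ref{P3.3Isom} we have $Q\in\kp(\muh)$, and $\ga'>\muh(Q)$ because $\muh(\phi)=\mu(\phi)$ and $\muh(\phi)=\muh(Q)+\muh(\phi/Q)$, so $\ga'=\ga-\muh(\phi/Q)=\ga-\mu(\phi)+\muh(Q)>\muh(Q)$ (with $\ga'=\infty$ when $\ga=\infty$). Since $\lambda$ restricts to $v^h$ on $\kh$ and $\nuh$ is by definition the unique common extension of $\nu$ and $v^h$ to $\khx$ (Theorem \ref{Rig}), it suffices to prove $\lambda_{\mid\kx}=\nu$.

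I would prove this by two opposite inequalities. For $\lambda_{\mid\kx}\ge\nu$: since $\inmh(\phi/Q)$ is a unit (Proposition \ref{P3.3Isom}) and the canonical homomorphism $\ggmh\to\mathcal G_\lambda$ attached to $\muh\le\lambda$ carries homogeneous units to homogeneous units, it sends $\inmh(\phi/Q)$ to $\inv_\lambda(\phi/Q)$; in particular $\lambda(\phi/Q)=\muh(\phi/Q)$, whence $\lambda(\phi)=\lambda(Q)+\lambda(\phi/Q)=\ga'+\muh(\phi/Q)=\ga$. For $f=\sum_i a_i\phi^i\in\kx$ (the $\phi$-expansion), using $\lambda(a_i)\ge\muh(a_i)=\mu(a_i)$,
\[
\lambda(f)\ \ge\ \min_i\{\lambda(a_i)+i\lambda(\phi)\}\ \ge\ \min_i\{\mu(a_i)+i\ga\}\ =\ \nu(f).
\]
For the reverse inequality I would move to $\khx$. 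By Theorem \ref{Rig}, $\nuh$ exists and $\muh<\nuh$; moreover $\nuh(\phi)=\nu(\phi)=\ga$ and $\nuh(\phi/Q)=\muh(\phi/Q)$ (again because $\inmh(\phi/Q)$ is a unit), so $\nuh(Q)=\ga-\muh(\phi/Q)=\ga'$. Comparing $\nuh$ with $\lambda=[\muh;\,Q,\ga']$ on $Q$-expansions $g=\sum_j b_j Q^j\in\khx$,
\[
\nuh(g)\ \ge\ \min_j\{\nuh(b_j)+j\nuh(Q)\}\ \ge\ \min_j\{\muh(b_j)+j\ga'\}\ =\ \lambda(g),
\]
using $\nuh\ge\muh$ and $\nuh(Q)=\ga'$. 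Hence $\nuh\ge\lambda$, so $\nu=\nuh_{\mid\kx}\ge\lambda_{\mid\kx}$; together with the previous inequality, $\lambda_{\mid\kx}=\nu$, and therefore $\lambda=\nuh$.

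The last assertion is meaningful when $\ga<\infty$, so that $\nu$ has trivial support and $\phi$ is a key polynomial of minimal degree of $\nu$, i.e. $\deg(\nu)=\deg(\phi)$ (remark after Lemma \ref{Mlvord}). Recall $Q_{\nu,\phi}$ is the irreducible factor $Q'$ of $\phi$ in $\khx$ singled out by $[\nu;\phi,\infty]=(v_{Q'})_{\mid\kx}$, and such a factor is unique by Proposition \ref{CorEndler2}; so it is enough to show $w_Q:=(v_Q)_{\mid\kx}=[\nu;\phi,\infty]$. First, $\nuh=\lambda=[\muh;Q,\ga']\le v_Q$ (compare the defining minima, using that $\muh$ and $v_Q$ agree on polynomials of degree $<\deg Q$), so restriction gives $\nu\le w_Q$. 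Second, $\supp(w_Q)=N_{\kh/K}(Q)\kx$, and the monic generator $N_{\kh/K}(Q)$ of $Q\khx\cap\kx$ divides $\phi$ in $\kx$ (as $\phi\in\kx$ and $Q\mid\phi$ in $\khx$); since $\phi$ is irreducible in $\kx$ and $\deg(Q)\ge1$, it equals $\phi$, so $\supp(w_Q)=\phi\kx$. Third, any valuation $\eta$ on $\kx$ with $\eta\ge\nu$ and $\supp(\eta)=\phi\kx$ equals $[\nu;\phi,\infty]$: on a $\phi$-expansion $f=\sum_i a_i\phi^i$ one has $\eta(f)=\eta(a_0)$, and $\eta(a_0)=\nu(a_0)$ because $\deg(a_0)<\deg(\phi)=\deg(\nu)$ forbids a strict increase of value (a monic polynomial in the $\nu$-tangent direction $\ty(\nu,\eta)$ has degree $\ge\deg(\nu)$, by Lemma \ref{tdef}). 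Applying this with $\eta=w_Q$ gives $w_Q=[\nu;\phi,\infty]$, hence $Q=Q_{\nu,\phi}$.

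The value-group bookkeeping around the factorization $\phi=Q\cdot(\phi/Q)$ is routine. The one genuinely delicate step is the inequality $\lambda_{\mid\kx}\le\nu$: a direct term-by-term estimate on $\phi$-expansions of elements of $\kx$ is not sufficient, because leading forms may cancel, so the comparison must be transported to $\khx$, where $\lambda$ is given by an honest minimum over $Q$-expansions and the inequality $\nuh\ge\muh$ is supplied by Theorem \ref{Rig}.
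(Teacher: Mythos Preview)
Your proof is correct and takes a somewhat different route from the paper's. The paper argues structurally: it uses Theorem \ref{Rig} to place $\nuh$ in the interval $(\muh,v_Q]$, then invokes \cite[Lemma 2.7]{MLV} (which parametrizes this interval by ordinary augmentations $[\muh;Q,\ga']$) to conclude $\nuh=[\muh;Q,\ga']$ for some $\ga'$, and finally computes $\ga'=\nuh(Q)=\ga-\muh(\phi/Q)$. You instead construct the candidate $\lambda=[\muh;Q,\ga']$ explicitly and verify $\lambda_{\mid\kx}=\nu$ by two opposite inequalities on $\phi$- and $Q$-expansions, so that $\lambda=\nuh$ follows from the uniqueness in Theorem \ref{Rig}. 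Your approach is more self-contained in that it avoids the external lemma from \cite{MLV}; the price is the detour through $\khx$ for the inequality $\lambda_{\mid\kx}\le\nu$, which (as you correctly observe) cannot be read off directly from $\phi$-expansions in $\kx$. For the final assertion $Q=Q_{\nu,\phi}$, the paper works with initial forms: since $\inmh(\phi/Q)$ is a unit, so is $\inn_{\nuh}(\phi/Q)$, whence $Q$ is the unique irreducible factor of $\phi$ whose $\nuh$-initial form is not a unit, forcing $Q=Q_{\nu,\phi}$ via Proposition \ref{P3.3Isom}. Your direct identification of $w_Q$ with $[\nu;\phi,\infty]$ is an equally valid alternative.
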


\begin{proof}
	By Theorem \ref{Rig}, the restriction mapping from $\khx$ to $\kx$ establishes an isomorphism of totally ordered sets between the intervals $[\muh,v_Q]$ and $[\mu,w_Q]$.
	Since $\mu<\nu\le w_Q$, we deduce that $\muh<\nuh\le v_Q$. 

	By Proposition \ref{P3.3Isom}, $Q\in\kp(\muh)$ and $\inmh (\phi/Q)$ is a unit. Since, $Q\in\kp(\muh)$, we have $v_Q=[\muh;Q,\infty]$ by Proposition \ref{CorEndler2}. By \cite[Lemma 2.7]{MLV}, there exists $\ga'\in\La\infty$,  such that $\muh(Q)<\ga'\le\infty$ and $\nuh=[\muh;Q,\ga']$.
	
	Clearly, $\ga'=\infty$ if $\ga=\infty$. Otherwise, since $\nuh(\phi)=\nu(\phi)=\ga$, we deduce that $\ga'=\nuh(Q)=\ga-\nuh(\phi/Q)$. Finally, since $\inmh (\phi/Q)$ is a unit, we have $\muh(\phi/Q)=\nuh(\phi/Q)$.  
	
Finally, as in every augmentation, $\phi$ and $Q$ become  key polynomials for $\nu$ and $\nuh$, respectively. By Proposition   \ref{P3.3Isom}, $Q_{\nu,\phi}$ is a key polynomial for $\nuh$ too.

Since  $\inmh (\phi/Q)$ is a unit in $\ggmh$, necessarily $\inn_{\nuh}  (\phi/Q)$ is a unit in $\ggnh$ too. Thus, $Q$ is the unique irreducible factor of $\phi$ such that $\inn_{\nuh}Q$ is not a unit. Hence, we must have $Q=Q_{\nu,\phi}$
\end{proof}\e

Now, let us analyze the situation of limit augmentations.

\begin{definition}
Let $\cc=\left(\ri\right)_{i\in A}$ be a continuous family of valuations on $\kx$ and let $\phi\in\kpi(\cc)$ be a limit key polynomial. Consider the supremum
$$\ga_\cc=\sup\{\ri(\phi)\mid i\in A\},$$
taking values in some chosen extension $\La\hk\Hat{\La}$ containing the completion of $\La$ with respect to the order topology. 

The limit augmentation $\nu_\cc:=[\cc;\, \phi, \ga_\cc]$, taking values in $\Hat{\La}$, is called the \textbf{minimal limit augmentation} of $\cc$. 

The value $\ga_\cc$ and the valuation $\nu_\cc$ do not depend on the choice of $\phi\in\kpi(\cc)$.
\end{definition}	

In \cite{csme}, a concrete minimal model for $\Hat{\La}$ is constructed.

\begin{lemma}\cite[Section 7.3.3]{VT}\label{MinAug}
Let $\nu_\cc$ be the minimal limit augmentation of  a continuous family $\cc$. 
 Then, $\nu_\cc$ is value-transcendental and $\kp(\nu_\cc)=\kpi(\cc)$.
 Moreover, every limit augmentation $\nu=[\cc;\phi,\ga]$ can be obtained as the ordinary augmentation $\nu=[\nu_\cc;\phi,\ga]$. 	
\end{lemma}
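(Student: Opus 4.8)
The plan is to prove the three assertions in turn. The single tool throughout is the $\phi$-expansion relative to a limit key polynomial $\phi\in\kpi(\cc)$, together with the remark that any $a\in\kx$ with $\deg(a)<\deg(\phi)$ is $\cc$-stable (because $\deg(\phi)$ is by definition the least degree of a $\cc$-unstable polynomial); hence $\rhc(a)$ is defined, and since $\phi$ becomes a key polynomial of minimal degree for the limit augmentation $\nu_\cc=[\cc;\phi,\ga_\cc]$ while its $\phi$-expansion is just $a$, we have $\nu_\cc(a)=\rhc(a)$.

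\emph{Step 1: $\nu_\cc$ is value-transcendental.} First I would compute the value group. By the formula $\g_\mu^0=\{\mu(a)\mid 0\le\deg(a)<\deg(\phi)\}$ for $\phi$ a key polynomial of minimal degree, applied to $\nu_\cc$, one gets $\g_{\nu_\cc}^0=\{\rhc(a)\mid\deg(a)<\deg(\phi)\}=\gc$: every such $a$ is $\cc$-stable with $\rhc(a)\in\gc$, and $\cc$ being based on its initial valuation makes the values of polynomials of degree $<\deg(\cc)$ already exhaust $\gc$. Moreover $\g_{\nu_\cc}=\gen{\gc,\ga_\cc}$. Since $\gc\subseteq\La$ with $\La$ divisible, an identity $e\ga_\cc\in\gc$ with $e\ge1$ would place $\ga_\cc$ in $\La$; but $\ga_\cc\notin\La$, because $\phi$ is $\cc$-\emph{unstable}, so $\ga_\cc=\sup_i\ri(\phi)$ is a non-attained limit genuinely lying in $\widehat{\La}\setminus\La$ (the content of the minimal model for $\widehat{\La}$ built in \cite{csme}). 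Hence $e(\nu_\cc)=\infty$; together with $\supp(\nu_\cc)=0$ and $\kp(\nu_\cc)\neq\emptyset$ (the inclusion $\kpi(\cc)\subseteq\kp(\nu_\cc)$ below, which does not use value-transcendence), this rules out the nontrivial-support, valuation-algebraic and residue-transcendental cases, leaving $\nu_\cc$ value-transcendental.

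\emph{Step 2: $\kp(\nu_\cc)=\kpi(\cc)$.} For $\kpi(\cc)\subseteq\kp(\nu_\cc)$: any $\phi'\in\kpi(\cc)$ also presents $\nu_\cc$ as $[\cc;\phi',\ga_\cc]$ (independence of the choice of limit key polynomial), and $\ga_\cc<\infty$, so $\phi'\in\kp(\nu_\cc)$ by \cite[Corollary 7.13]{KP}. For the reverse inclusion I would first record that a value-transcendental valuation $\mu$ satisfies $\kpm=[\phi]_\mu$: by the isomorphism $\g_\mu^0\times\Z\to\g_\mu$, $(\al,\ell)\mapsto\al+\ell\mu(\phi)$ of \cite[Lemma 4.1]{KP}, a homogeneous element of $\ggm=\ggm^0[\pi]$, $\pi=\inm\phi$, can only be a monomial $c\pi^n$ with $c$ homogeneous in $\ggm^0$; since $\dm=\km$ is a field in this case (\cite[Theorem 4.2]{KP}), such $c$ is a unit, so the only homogeneous primes of $\ggm$ up to units are the associates of $\pi$, whence \cite[Proposition 6.6]{KP} forces $\psi\smu\phi$ for every $\psi\in\kpm$. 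Applying this to $\nu_\cc$, $\kp(\nu_\cc)=[\phi]_{\nu_\cc}$. Finally, for $\psi\in[\phi]_{\nu_\cc}$ write $\psi=\phi+r$ with $\deg(r)<\deg(\phi)$; then $\inv_{\nu_\cc}\psi=\inv_{\nu_\cc}\phi$ forces $\rhc(r)=\nu_\cc(r)>\nu_\cc(\phi)=\ga_\cc$, and since $\ri(\phi)<\ga_\cc$ for all $i$ and $r$ is $\cc$-stable, one gets $\ri(\psi)=\ri(\phi)$ for large $i$; as $\{\ri(\phi)\}$ does not stabilize, neither does $\{\ri(\psi)\}$, so $\psi$ is $\cc$-unstable of degree $\deg(\phi)$, i.e.\ $\psi\in\kpi(\cc)$.

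\emph{Step 3: factorization, and the main obstacle.} Let $\nu=[\cc;\phi,\ga]$ be a limit augmentation, so $\ga>\ri(\phi)$ for all $i$ and hence $\ga\ge\ga_\cc=\sup_i\ri(\phi)$. If $\ga=\ga_\cc$ then $\nu=\nu_\cc$; if $\ga>\ga_\cc=\nu_\cc(\phi)$ then $[\nu_\cc;\phi,\ga]$ is a legitimate ordinary augmentation, and for $f=\sum_i a_i\phi^i$ with $\deg(a_i)<\deg(\phi)$,
\[
[\nu_\cc;\phi,\ga](f)=\min_i\{\nu_\cc(a_i)+i\ga\}=\min_i\{\rhc(a_i)+i\ga\}=\nu(f),
\]
using $\nu_\cc(a_i)=\rhc(a_i)$; the case $\ga=\infty$ is identical (with $\ga'=\infty$). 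The principal obstacle is the value-transcendence step, more precisely the input $\ga_\cc\notin\La$, that is, that $\ga_\cc$ is not torsion over $\gc$: this is exactly where the instability of $\phi$ (so that $\{\ri(\phi)\}$ does not stabilize) and the construction of $\widehat{\La}$ in \cite{csme} are indispensable. Once that is granted, the $\phi$-expansion computations underlying the other two assertions are routine.
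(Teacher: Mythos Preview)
The paper does not give its own proof of this lemma; it is quoted from \cite[Section~7.3.3]{VT}. Your argument is a correct self-contained proof of all three assertions, and the organization around $\phi$-expansions is exactly the natural one.

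Two minor comments. First, in Step~1 the decisive input $\ga_\cc\notin\La$ is indeed the content of the small-extension construction in \cite{csme}: the paper's loose phrasing ``$\widehat{\La}$ containing the completion of $\La$'' would not by itself guarantee this when $\sup_i\ri(\phi)$ already lies in $\La$, so your deferral to \cite{csme} is essential rather than cosmetic. Once $\ga_\cc\notin\La$ is granted, your torsion argument via divisibility of $\La$ is clean. Second, having established $\ga_\cc\notin\La$ while any limit augmentation has $\ga\in\La\infty$, the case $\ga=\ga_\cc$ in Step~3 is vacuous and the strict inequality $\ga>\ga_\cc$ is automatic; also, the fact $\kp(\mu)=[\phi]_\mu$ for value-transcendental $\mu$ that you reprove in Step~2 is cited in the paper itself from \cite[Theorem~4.2]{KP}, so you may simply invoke it.
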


\begin{lemma}\label{MinUnstable}
	Let $\cc$ be a continuous family of valuations on $\kx$. Let $\nu_\cc$ be the minimal limit augmentation of $\cc$. Then, for all $f\in\kx$ we have
\[
	\inn_{\nu_\cc} f \ \mbox{ is a unit in }\ \gg_{\nu_\cc} \ \sii\ f\ \mbox{ is $\cc$-stable}.
\]
\end{lemma}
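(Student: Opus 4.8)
The plan is to prove the two implications separately, using the characterization of $\nu_\cc$ from Lemma \ref{MinAug} together with the description of $\cc$-stability in terms of values. First I would recall that, by Lemma \ref{MinAug}, $\nu_\cc$ is value-transcendental with $\kp(\nu_\cc)=\kpi(\cc)$, so any $\phi\in\kpi(\cc)$ is a key polynomial of minimal degree for $\nu_\cc$; fix such a $\phi$ and set $\pi=\inn_{\nu_\cc}\phi$. By Theorem \ref{g0gm}, $\gg_{\nu_\cc}=\gg_{\nu_\cc}^0[\pi]$ with $\pi$ transcendental over $\gg_{\nu_\cc}^0$, so $\inn_{\nu_\cc}f$ is a unit if and only if $\deg_{\nu_\cc}(f)=0$, i.e. $\ord_\pi(\inn_{\nu_\cc}f)=0$ and $\inn_{\nu_\cc}f$ has degree $0$ as a polynomial in $\pi$. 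Equivalently, writing the $\phi$-expansion $f=\sum_{n\ge0}a_n\phi^n$ with $\deg(a_n)<\deg(\phi)$, we have $\inn_{\nu_\cc}f$ a unit iff $S_{\nu_\cc,\phi}(f)=\{0\}$, which (since $\nu_\cc(a_n\phi^n)=\rho_\cc(a_n)+n\ga_\cc$) says $\rho_\cc(a_0)<\rho_\cc(a_n)+n\ga_\cc$ for all $n\ge1$.

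For the implication ``$\cc$-stable $\Rightarrow$ unit'': if $f$ is $\cc$-stable, then each $a_n$ (having degree $<\deg(\phi)=\deg(\cc)$) is automatically $\cc$-stable, so $\rho_\cc(a_n)=\rho_{i}(a_n)$ for $i$ large. Since $f$ is $\cc$-stable and $\phi$ is $\cc$-unstable of minimal degree, $\ord_\phi$ of the leading behaviour stabilizes; more precisely, for $i$ large one has $\rho_i(f)=\rho_i(a_0)$ (this is where I would use that $\rho_i(\phi)\to\ga_\cc$ strictly increasingly and that for any fixed $n\ge1$ the term $\rho_i(a_n)+n\rho_i(\phi)$ eventually exceeds $\rho_i(a_0)=\rho_\cc(a_0)$, combined with stability of $f$ to handle all $n$ uniformly). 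Passing to the limit gives $\rho_\cc(a_0)=\rho_{\nu_\cc}(f)-0\cdot\ga_\cc<\rho_\cc(a_n)+n\ga_\cc$ for $n\ge1$, hence $\inn_{\nu_\cc}f$ is a unit. Alternatively, and perhaps more cleanly, I would invoke Lemma \ref{crit}-style reasoning: $\nu_\cc=[\nu_\cc]$ is obtained from $\cc$ and $\cc$-stable polynomials are exactly those on which $\rho_\cc$ agrees with all large $\rho_i$, which by Lemma \ref{MinAug} and the minimality of $\ga_\cc$ forces $\deg_{\nu_\cc}(f)=0$.

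For the converse ``unit $\Rightarrow$ $\cc$-stable'': suppose $\inn_{\nu_\cc}f$ is a unit, so in the $\phi$-expansion $f=\sum a_n\phi^n$ we have $\rho_\cc(a_0)<\rho_\cc(a_n)+n\ga_\cc$ strictly for every $n\ge1$. Each $a_n$ is $\cc$-stable (degree $<\deg(\cc)$), so there is $i_0$ with $\rho_i(a_n)=\rho_\cc(a_n)$ for all $i\ge i_0$ and all finitely many relevant $n$. Since $\rho_i(\phi)$ increases to $\ga_\cc$, for $i\ge i_0$ large enough the strict inequalities $\rho_\cc(a_0)<\rho_\cc(a_n)+n\rho_i(\phi)$ persist (the right-hand side is at most the limit value but the gap at the limit is strictly positive, so it survives for $i$ close to the supremum — here one uses that there are only finitely many $n$ with $a_n\ne0$). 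Hence $\rho_i(f)=\rho_i(a_0)=\rho_\cc(a_0)$ for all such $i$, which is precisely $\cc$-stability of $f$, with $\rho_\cc(f)=\rho_\cc(a_0)$.

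The main obstacle is the analytic point in both directions: controlling, uniformly over the finitely many nonzero $\phi$-coefficients, how the strict inequality involving $\ga_\cc=\sup_i\rho_i(\phi)$ descends to a strict inequality involving $\rho_i(\phi)$ for $i$ sufficiently large. This requires knowing that the supremum $\ga_\cc$ is approached strictly from below (so each $\rho_i(\phi)<\ga_\cc$) and that a strict inequality at the limit value, being an inequality in the divisible ordered group $\Hat\La$ with a positive gap, is not ruined by replacing $\ga_\cc$ by a slightly smaller $\rho_i(\phi)$ — one must check that ``slightly smaller'' can be made small enough simultaneously for all finitely many terms, which is exactly why the finiteness of the $\phi$-expansion is essential. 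Once this is in place, both implications are immediate from the value-computation of $\nu_\cc$ on $\phi$-expansions furnished by Lemma \ref{MinAug}.
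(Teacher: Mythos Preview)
Your converse direction (unit $\Rightarrow$ $\cc$-stable) is essentially the paper's argument and is correct: write the $\phi$-expansion, use value-transcendence of $\nu_\cc$ to get the strict inequalities $\rho_\cc(a_0)<\rho_\cc(a_n)+n\ga_\cc$, then descend to some $\rho_i(\phi)$ using the definition of $\ga_\cc$ as a supremum and the finiteness of the expansion. The resulting strict minimum at $n=0$ forces $\rho_j(f)=\rho_\cc(a_0)$ for $j\ge i$ by the ultrametric inequality (no minimality of $\phi$ for $\rho_i$ is needed here, since a \emph{unique} minimum gives equality).

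Your forward direction (stable $\Rightarrow$ unit), however, has a gap. You assert that ``for any fixed $n\ge1$ the term $\rho_i(a_n)+n\rho_i(\phi)$ eventually exceeds $\rho_i(a_0)$'', but this is precisely the unit condition you are trying to establish; you have not explained how stability of $f$ forces it. (Also, $\deg(\phi)=\deg(\cc)$ is generally false; what you need is only $\deg(a_n)<\deg(\phi)$, which suffices for the $a_n$ to be $\cc$-stable.) The argument \emph{can} be completed along your lines, but by contrapositive: if $S_{\nu_\cc,\phi}(f)=\{m\}$ with $m\ge1$, then for $n<m$ the difference $(\rho_\cc(a_n)+n\rho_i(\phi))-(\rho_\cc(a_m)+m\rho_i(\phi))$ is decreasing in $i$ with positive limit, hence always positive; for $n>m$ it is increasing with positive limit, hence eventually positive. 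So for large $i$ the unique minimum is at $n=m$, giving $\rho_i(f)=\rho_\cc(a_m)+m\rho_i(\phi)$, which is strictly increasing---contradicting stability.

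The paper bypasses all of this. Its forward direction is a one-liner: if $f$ is $\cc$-stable then $\rho_i(f)=\rho_j(f)=\nu_\cc(f)$ for some $i<j$, and the canonical map $\gg_{\rho_i}\to\gg_{\nu_\cc}$ sends $\inn_{\rho_i}f$ to $\inn_{\nu_\cc}f$; by \cite[Corollary~2.6]{MLV} such a nonzero image is automatically a unit. This avoids any analysis of $\phi$-expansions for this implication.
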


\begin{proof}
	If $f$ is $\cc$-stable, then $\ri(f)=\rj(f)=\nu_\cc(f)$ for some $i<j$ in $A$. Hence, $\inn_{\nu_\cc} f$ is a unit in $\gg_{\nu_\cc}$, because it is the image of $\inn_{\ri}f$ under the canonical homomorphism $\gg_{\ri}\to \gg_{\nu_\cc}$ \cite[Corollary 2.6]{MLV}.

Conversely, suppose that $\inn_{\nu_\cc} f$ is a unit in $ \gg_{\nu_\cc}$. Take any limit key polynomial $\phi$ and consider the $\phi$-expansion $f=\sum_{n\ge0}f_n\phi^n$. Since $\ga_\cc$ has no torsion over $\La$, \cite[Proposition 3.5]{KP} shows that
\[
\rho_\cc(f_0)<\rho_\cc(f_n) + n\ga_\cc\quad\mbox{ for all }n>0. 
\]
Take $i_0\in A$ large enough so that $\rho_\cc(f_n)=\rho_i(f_n)$ for all $n$ and all $i\ge i_0$. By the definition of the supremum, there exists $i\ge i_0$ such that    $\rho_\cc(f_0)<\rho_\cc(f_n) + n\ri(\phi)$. This implies $\rj(f)=\ri(f)=\rho_\cc(f_0)$ for all $j\ge i$, so that $f$ is $\cc$-stable. 
\end{proof}


	
	

\begin{lemma}\label{unstabilitythm}
	Let $\cc=\left(\ri\right)_{i\in A}$ be a continuous family of valuations on $\kx$ and $\cc^h$ its unique extension to a continuous family on $\khx$. Let $\phi$ be a limit key polynomial for $\cc$ and
consider the limit augmentation $\nu=[\cc;\phi,\ga]$, for some $\ga\in\La\infty$ such that $\ga>\ri(\phi)$ for all $i\in A$. Then, $Q:=Q_{\cc,\phi}$ is a limit key polynomial for $\cc^h$, $\phi/Q$ is $\cc^h$-stable and
\[\nuh=[\cc^h;\, Q,\ga'], \quad \mbox{ where }\ \ga'=\ga-(\rho_\cc)^h(\phi/Q).
\]	
Moreover, $Q=Q_{\nu_\cc,\phi}=Q_{\nu,\phi}$.
\end{lemma}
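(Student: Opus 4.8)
The plan is to reduce the limit case to the ordinary case treated in Lemma \ref{Ordinary}, using the minimal limit augmentation $\nu_\cc$ as a bridge. Since $\phi\in\kpi(\cc)=\kp(\nu_\cc)$, Lemma \ref{MinAug} rewrites our limit augmentation as the \emph{ordinary} augmentation $\nu=[\nu_\cc;\phi,\ga]$, to which Lemma \ref{Ordinary} will apply with $\mu=\nu_\cc$.

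The key intermediate step, and the one I expect to be the main obstacle, is to identify $(\nu_\cc)^h$, the unique common extension of $\nu_\cc$ and $\vh$ to $\khx$ provided by Theorem \ref{Rig}, with the minimal limit augmentation $\nu_{\cc^h}$ of the extended family $\cc^h$. One direction is easy: the poset isomorphism of Theorem \ref{Rig} turns $\ri<\nu_\cc$ (for all $i$) into $\ri^h<(\nu_\cc)^h$, so $(\nu_\cc)^h$ lies above $\cc^h$; and $(\nu_\cc)^h$ is value-transcendental because $\nu_\cc$ is and the canonical isomorphism $\gg_{\nu_\cc}\simeq\gg_{(\nu_\cc)^h}$ of Theorem \ref{StrongRigVT} preserves grades. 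Pinning $(\nu_\cc)^h$ down as the \emph{minimal} limit augmentation is done by comparing restrictions to $\kx$ and invoking tree rigidity once more together with the characterization of $\nu_{\cc^h}$ in Lemma \ref{MinAug}; here one must apply Theorems \ref{Rig} and \ref{StrongRigVT} with $\La$ replaced by its completion $\Hat{\La}$, since $\nu_\cc$ is $\Hat{\La}$-valued.

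Granting $(\nu_\cc)^h=\nu_{\cc^h}$, the remaining assertions follow by unwinding Lemma \ref{Ordinary} applied to $\mu=\nu_\cc$, $\nu=[\nu_\cc;\phi,\ga]$ and $Q=Q_{\nu_\cc,\phi}$: it gives that $Q$ is a key polynomial for $(\nu_\cc)^h=\nu_{\cc^h}$, that $\inn_{(\nu_\cc)^h}(\phi/Q)$ is a unit in $\gg_{(\nu_\cc)^h}$, that $\nuh=[(\nu_\cc)^h;Q,\ga'']$ with $\ga''=\ga-(\nu_\cc)^h(\phi/Q)$, and that $Q_{\nu_\cc,\phi}=Q_{\nu,\phi}$. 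By Lemma \ref{MinAug} applied to $\cc^h$ we have $\kp(\nu_{\cc^h})=\kpi(\cc^h)$, so $Q\in\kpi(\cc^h)$ is a limit key polynomial for $\cc^h$. By Lemma \ref{MinUnstable} applied to $\cc^h$, the fact that $\inn_{\nu_{\cc^h}}(\phi/Q)$ is a unit forces $\phi/Q$ to be $\cc^h$-stable, and then $(\nu_\cc)^h(\phi/Q)$ equals its stable value $(\rho_\cc)^h(\phi/Q)$, so $\ga''=\ga'$. Rewriting the ordinary augmentation $[(\nu_\cc)^h;Q,\ga'']=[\nu_{\cc^h};Q,\ga'']$ as a limit augmentation of $\cc^h$ via Lemma \ref{MinAug} yields $\nuh=[\cc^h;Q,\ga']$. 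Finally, $Q_{\cc,\phi}=Q_{\nu_\cc,\phi}$ because the singled-out factor depends only on the valuation with support $\phi\kx$ and $[\cc;\phi,\infty]=[\nu_\cc;\phi,\infty]$ by Lemma \ref{MinAug}; combined with $Q_{\nu_\cc,\phi}=Q_{\nu,\phi}$ this gives $Q=Q_{\cc,\phi}=Q_{\nu_\cc,\phi}=Q_{\nu,\phi}$.
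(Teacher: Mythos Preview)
Your proposal is correct and follows essentially the same route as the paper: reduce to the ordinary case via the minimal limit augmentation $\nu_\cc$, identify $(\nu_\cc)^h=\nu_{\cc^h}$ using the poset isomorphism of Theorem \ref{Rig} together with the minimality characterization in Lemma \ref{MinAug}, then apply Lemma \ref{Ordinary}, Lemma \ref{MinUnstable}, and Lemma \ref{MinAug} to transfer everything to $\cc^h$. The only cosmetic difference is that the paper cites Proposition \ref{P3.3Isom} directly for the facts that $Q_{\nu_\cc,\phi}\in\kp((\nu_\cc)^h)$ and $\inn_{(\nu_\cc)^h}(\phi/Q)$ is a unit, whereas you obtain them by ``unwinding'' the proof of Lemma \ref{Ordinary}; since Lemma \ref{Ordinary} itself rests on Proposition \ref{P3.3Isom}, the content is the same.
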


\begin{proof}
By Lemma \ref{MinAug}, the minimal limit augmentations $\nu_\cc$, $\nu_{\cc^h}$ are the minimal valuations which are larger than all $\ri\in\cc$, $\ri^h\in\cc^h$, respectively. By Theorem \ref{Rig}, we have $\nu_{\cc^h}=\left(\nu_\cc\right)^h$.

Let $P=Q_{\nu_\cc,\phi}$.
By Proposition \ref{P3.3Isom}, $P$ is a key polynomial for $\left(\nu_\cc\right)^h$ and $\inn_{\left(\nu_\cc\right)^h}(\phi/P)$ is a unit.
By Lemma \ref{MinUnstable}, the polynomial $\phi/P$ is $\cc^h$-stable.

By  Lemma \ref{MinAug}, $P$ is a limit key polynomial for $\cc^h$.  Also, we deduce $P=Q$ from $$w_Q=[\cc;\phi,\infty]=[\nu_\cc;\phi,\infty]=w_P.$$

By Lemma \ref{Ordinary} applied to the ordinary augmentation $\nu=[\nu_\cc;\phi,\ga]$, we have \[\nuh=[\left(\nu_\cc\right)^h; Q,\ga'], \quad \mbox{ where }\ \ga'=\ga-\left(\nu_\cc\right)^h(\phi/Q),
\]
and $Q=Q_{\nu,\phi}$. Since $\phi/Q$ is $\cc^h$-stable, we have $\left(\nu_\cc\right)^h(\phi/Q)=\rho_{\cc^h}(\phi/Q)$. Finally, $\nuh=[\cc^h; Q,\ga']$ by Lemma \ref{MinAug}. This ends the proof.
\end{proof}\e

\begin{figure}
	\caption{Isomorphism determined by the restriction mapping}\label{figPathsLim}
	\begin{center}
		\setlength{\unitlength}{4mm}
		\begin{picture}(27,6.5)
		\put(-0.2,4){$\cc^h$}
		\put(1.7,1){\begin{footnotesize}$\cdots$\end{footnotesize}}
		\put(3,1){\begin{footnotesize}$\cdots$\end{footnotesize}}
		\put(1.7,4){\begin{footnotesize}$\cdots$\end{footnotesize}}
		\put(3,4){\begin{footnotesize}$\cdots$\end{footnotesize}}
		\put(5,5){$\nu_{\cc^h}$}\put(5,0){$\nu_\cc$}\put(5,4){$\circ$}
		\put(10,5){$\nuh$}\put(10,0){$\nu$}\put(10,4){$\bullet$}
		\put(5.4,4.3){\line(1,0){14}}\put(23,4){$\bullet$}
		\put(21.5,4){\begin{footnotesize}$\cdots$\end{footnotesize}}
		\put(20.2,4){\begin{footnotesize}$\cdots$\end{footnotesize}}
		\put(24,4){\begin{footnotesize}$v_Q=[\cc^h;Q,\infty]$\end{footnotesize}}
		\put(0,0.9){$\cc$}
		\put(5,1){$\circ$}\put(10,1){$\bullet$}\put(5.4,1.3){\line(1,0){14}}\put(23,1){$\bullet$}
		\put(21.5,1){\begin{footnotesize}$\cdots$\end{footnotesize}}
		\put(20.2,1){\begin{footnotesize}$\cdots$\end{footnotesize}}
		\put(24,1){\begin{footnotesize}$w_Q=[\cc;\phi,\infty]$\end{footnotesize}}
		\put(.3,1.9){\line(0,1){1.9}}\put(5.3,1.8){\line(0,1){2}}\put(10.2,1.8){\line(0,1){2}}
		\put(23.3,1.8){\line(0,1){2}}
		\end{picture}
	\end{center}
\end{figure}
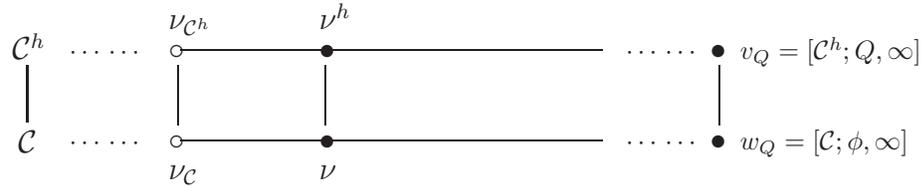

We may proceed to complete the proof of Theorem \ref{theoremabouthensel}.

\begin{theorem}\label{StrongRigEmpty}
	Let $\mu$ be a valuation on $\kx$ with nontrivial support. Then, the canonical embedding $\ggm\hk\ggmh$ is an isomorphism. 
\end{theorem}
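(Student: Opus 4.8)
I would reduce to the trivial-support case, already settled in Theorem~\ref{StrongRigVT}, by exhibiting $\mu$ as an augmentation with $\gamma=\infty$ of a valuation-transcendental valuation $\rho$. Since $\mu$ has support $\phi\kx$ for a monic irreducible $\phi$, the last step $\mu_{r-1}\to\mu_r=\mu$ of a Mac Lane--Vaqui\'e chain for $\mu$ is an augmentation with $\gamma=\infty$ whose (limit) key polynomial is $\phi$. If this step is ordinary, then $\mu=[\mu_{r-1};\phi,\infty]$; after replacing $\mu_{r-1}$, if necessary, by the ordinary augmentation $[\mu_{r-1};\phi,\gamma_0]$ with $\mu_{r-1}(\phi)<\gamma_0<\infty$, I may assume that, writing $\mu=[\rho;\phi,\infty]$, the polynomial $\phi$ has \emph{minimal} degree in $\kpr$. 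If the step is limit, Lemma~\ref{MinAug} gives $\mu=[\nu_\cc;\phi,\infty]$, and $\rho:=\nu_\cc$ is value-transcendental with $\kpr=\kpi(\cc)$, so again $\phi\in\kpr$ has minimal degree. In both cases $\mu=[\rho;\phi,\infty]$ with $\rho$ valuation-transcendental, $\phi\in\kpr$ of minimal degree, and $\rho^h$ its unique extension to $\khx$ (equal to $\nu_{\cc^h}$ in the limit case, by Theorem~\ref{Rig}).

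Next I would realize $\ggm$ and $\ggmh$ as quotients. The canonical homomorphism $c\colon\ggr\ra\ggm$ induced by $\rho<\mu$ kills $\inv_\rho\phi$ (since $\rho(\phi)<\infty=\mu(\phi)$); it is surjective, since every homogeneous element of $\ggm$ is $\inv_\mu a$ with $\deg a<\deg\phi$, and then $\mu(a)=\rho(a)$, so $c(\inv_\rho a)=\inv_\mu a$; and, because $\ggr=\ggr^0[\inv_\rho\phi]$ with $\inv_\rho\phi$ transcendental over $\ggr^0$ (Theorem~\ref{g0gm}), a computation with $\phi$-expansions shows that $\ker c$ is the principal ideal $(\inv_\rho\phi)$. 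Thus $c$ induces a canonical isomorphism $\ggm\simeq\ggr/(\inv_\rho\phi)\simeq\ggr^0$. The identical computation over $\kh$, applied to the ordinary augmentation $\muh=[\rho^h;Q,\infty]$ furnished by Lemma~\ref{Ordinary} with $Q=Q_{\rho,\phi}$, gives a canonical isomorphism $\ggmh\simeq\mathcal G_{\rho^h}/(\inv_{\rho^h}Q)$; and $(\inv_{\rho^h}Q)=(\inv_{\rho^h}\phi)$, since $\inv_{\rho^h}(\phi/Q)$ is a unit by Proposition~\ref{P3.3Isom}.

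Finally I would assemble the square of canonical maps. By Theorem~\ref{StrongRigVT} the canonical embedding $\ggr\hk\mathcal G_{\rho^h}$ is an isomorphism, and it sends $\inv_\rho\phi$ to $\inv_{\rho^h}\phi$; hence it maps $\ker c=(\inv_\rho\phi)$ isomorphically onto $(\inv_{\rho^h}\phi)=(\inv_{\rho^h}Q)$, the kernel of the analogous map over $\kh$. It therefore descends to an isomorphism $\ggm\ra\ggmh$, and a diagram chase comparing $\inv_\rho f\mapsto\inv_\mu f$ with $\inv_\rho f\mapsto\inv_{\rho^h}f\mapsto\inv_{\muh}f$ identifies this isomorphism with $\iota$. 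The step I expect to be the crux is the kernel computation for $c$ — checking that \emph{every} homogeneous element annihilated by $c$, not merely those coming from multiples of $\phi$, lies in $(\inv_\rho\phi)$; the apparent need to treat ordinary and limit augmentations separately evaporates once one passes to $\nu_\cc$ and views $[\rho;\phi,\infty]$ uniformly as an ordinary augmentation of $\rho$, and the residual bookkeeping — that all four arrows of the square are the canonical embeddings, so commutativity is free — is routine.
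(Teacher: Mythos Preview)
Your argument is correct. In the ordinary case it is essentially the paper's own proof: the paper also writes down the short exact sequence $0\to(\inn_\eta\phi)\gg_\eta\to\gg_\eta\to\ggm\to0$ (with $\ggm=\ggm^0$ since $\mu$ has nontrivial support), the analogous sequence over $\kh$ with $(\inn_{\eta^h}Q)$ in place of $(\inn_\eta\phi)$, and then invokes Theorem~\ref{StrongRigVT} on the first two vertical arrows.

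For the limit case, however, you take a genuinely different route. The paper treats this case separately, appealing to \cite{caiospiv} to express $\ggm$ and $\ggmh$ as direct limits $\displaystyle\lim_{i\in A}\gg_{\rho_i}^0$ and $\displaystyle\lim_{i\in A}\gg_{\rho_i^h}^0$, and then concludes from the levelwise isomorphisms $\gg_{\rho_i}\simeq\gg_{\rho_i^h}$. You instead absorb the limit case into the ordinary one by passing to the minimal limit augmentation $\rho=\nu_\cc$ (value-transcendental, trivial support) and viewing $\mu=[\rho;\phi,\infty]$ as an ordinary augmentation; Theorem~\ref{StrongRigVT} then applies directly to $\rho$, and the exact-sequence argument runs verbatim. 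This is a clean unification that avoids the external direct-limit reference; indeed the paper already uses this same reduction-to-$\nu_\cc$ trick in the proof of Lemma~\ref{unstabilitythm}, so all the pieces you need (in particular $(\nu_\cc)^h=\nu_{\cc^h}$ and the applicability of Proposition~\ref{P3.3Isom} and Lemma~\ref{Ordinary} to $\nu_\cc$) are already established. The only caveat is that $\nu_\cc$ takes values in the enlarged group $\hat\Lambda$, so you are tacitly invoking Theorem~\ref{StrongRigVT} and Theorem~\ref{Rig} in that slightly extended setting---but the paper does the same in Lemma~\ref{unstabilitythm}, so this is consistent with the framework.
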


\begin{proof}
According to the main theorem of Mac Lane--Vaqui\'e \cite[Theorem 4.3]{MLV}, we may distinguish two cases:

\begin{enumerate}
\item [(a)] The valuation $\mu$ can be obtained as an ordinary augmentation $\mu=[\eta;\phi,\infty]$ of some valuation-transcendental $\eta$. 
\item [(b)] The valuation $\mu$ can be obtained as a limit augmentation $\mu=[\cc;\phi,\infty]$ of some continuous family $\cc$.
\end{enumerate}

In case (a), as in every ordinary augmentation, we have
\[
\op{Ker}\left(\gg_\eta\to\ggm\right)=(\inm\phi)\gg_\eta,\qquad 
\im\left(\gg_\eta\to\ggm\right)=\ggm^0. 
\]  	

By Lemma \ref{Ordinary}, we have
 $\muh=[\eta^h;Q,\ga']$, for $Q=Q_{\eta,\phi}$. We deduce a commutative diagram of exact sequences of  graded algebras:
$$
\ars{1.2}
\begin{array}{ccccccc}
0\ \lra&\left(\inn_{\eta^h}Q\right)\gg_{\eta^h}&\lra&\gg_{\eta^h}&\lra&\ggmh^0=\ggmh&\lra\ 0\\
&\uparrow&&\uparrow&&\uparrow&\\
0\ \lra&\left(\inn_{\eta}\phi\right)\gg_{\eta}&\lra&\gg_{\eta}&\lra&\ggm^0=\ggm&\lra\ 0\\
\end{array}
$$ 
By Theorem \ref{theoremabouthensel}, the two first vertical arrows are isomorphisms. Hence, the third vertical arrow is an isomorphism too. 

In case (b), \cite[Corollary 5.6]{caiospiv} shows that 
\[
\ggm=\ggm^0=\lim_{i\in A} \gg^0_{\rho_i},\qquad \ggmh=\ggmh^0=\lim_{i\in A} \gg^0_{\rho_i^h}.
\]
Since for every $i\in A$ the canonical embedding $\mathcal G_{\rho_i}\hk \mathcal G_{\rho_i^h}$ is an isomorphism and these isomorphims commute with the homomorphisms $\gg_{\ri}\to\gg_{\rj}$ and $\gg_{\ri^h}\to\gg_{\rj^h}$, we deduce that $\ggm\hk\ggmh$ is an isomorphism. 
\end{proof}

\subsection{Rigidity of minimal pairs}
Although we do not need it for the defect formula, we include another interesting application of Theorem \ref{theoremabouthensel}. Fix an extension $\overline v$ of $v$ to $\overline K$. For $(a,\delta)\in\kb\times \La$, we denote by $\omega_{a,\delta} $ the valuation on $\overline K[x]$ defined by
\[
\omega_{a,\delta}\left(\sum_{n\geq 0}a_n(x-a)^n\right)=\min_{n\geq 0}\{\overline v(a_n)+n\delta\}.
\]
This valuation is called a \textbf{depth-zero} extension of $v$ and sometimes denoted by $[v;x-a,\delta]$.
The pair $(a,\delta)$ is a $(K,v)$-\textbf{minimal pair} if $a$ has the smallest degree over $K$ among all $a'\in\kb$ for which there exists $\delta'\in\La$ such that
\[
\left(\om_{a,\dta}\right)_{\mid\kx}=\left(\om_{a',\dta'}\right)_{\mid\kx}.
\]

Let us recall \cite[Propositions 2.8, 3.3]{Rig}:

\begin{proposition}\label{UpDown}\mbox{\null}
	
	\begin{enumerate}
		\item If $(a,\dta)\in \kb\times \La$ is a $(K,v)$-minimal pair, then the minimal polynomial of $a$ over $K$ is a key polynomial of minimal degree of $\left(\om_{a,\dta}\right)_{\mid\kx}$. 
		\item Let $\mu$ be a valuation-transcendental valuation. Take $\phi\in\kpm$ of minimal degree and denote 
\[
\dta=\epm(\phi)=\max_{b\in Z(\phi)}\{\overline \mu(x-b)\}
\]
where $\overline\mu$ is an arbitrary extension of $\mu$ to $\overline K[x]$.
 Then, for all $a\in Z(Q_{\mu,\phi})$, the pair $(a,\dta)$ is a $(K,v)$-minimal pair and $\mu=\left(\om_{a,\dta}\right)_{\mid\kx}$. 
	\end{enumerate} 
\end{proposition}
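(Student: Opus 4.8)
The plan is to establish the equality $\mu=(\om_{a,\dta})_{\mid\kx}$ in part (2) directly, and then to deduce part (1) and the minimality clause of part (2) from it. Fix an extension $\overline\mu$ of $\mu$ to $\kb[x]$, so that $\dta=\epm(\phi)=\max_{b\in Z(\phi)}\overline\mu(x-b)$. I would use two elementary facts: $(\ast)$ for $b,c\in\kb$ one has $\vb(b-c)=\overline\mu\bigl((x-b)-(x-c)\bigr)\ge\min\{\overline\mu(x-b),\overline\mu(x-c)\}$, with equality whenever the two values differ; and $(\ast\ast)$ every valuation on $\kb[x]$ is multiplicative along the splitting $h=\prod_{c\in Z(h)}(x-c)$ (roots with multiplicity) of a monic $h\in\kb[x]$. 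The structural input — and, I expect, the only real difficulty — is the description of the singled-out factor: the roots of $Q_{\mu,\phi}$ are exactly the $b\in Z(\phi)$ with $\overline\mu(x-b)=\dta$, and $\overline\mu(x-c)\le\dta$ for every $c\in\kb$ with $\deg(c/K)<\deg\phi$. The first assertion comes from unwinding the defining relation $[\mu;\phi,\infty]=w_{Q_{\mu,\phi}}=(v_{Q_{\mu,\phi}})_{\mid\kx}$: evaluating a polynomial at a root of $\phi$ reads off the constant term of its $\phi$-expansion, and by $(\ast)$ the root that reproduces $\mu$ on all polynomials of degree $<\deg\phi$ is precisely one maximizing $\overline\mu(x-\,\cdot\,)$; the second is the monotonicity of the valuative distance in the degree.

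Fix $a\in Z(Q_{\mu,\phi})$, so $\overline\mu(x-a)=\dta$. I would then prove the equality by a two-sided estimate. First, for every $c\in\kb$, $(\ast)$ gives $\min\{\vb(a-c),\dta\}\le\overline\mu(x-c)$, with equality as soon as $\overline\mu(x-c)\le\dta$ (if $\overline\mu(x-c)<\dta$ then $\vb(a-c)=\overline\mu(x-c)$, and if $\overline\mu(x-c)=\dta$ then $\vb(a-c)\ge\dta$). Multiplying over the roots of an arbitrary $g\in\kx$ by $(\ast\ast)$ gives $\om_{a,\dta}(g)\le\overline\mu(g)=\mu(g)$. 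Conversely, in the $\phi$-expansion $g=\sum_n g_n\phi^n$ ($\deg g_n<\deg\phi$) every root of $g_n$, and every root of $\phi$, has $\overline\mu$-distance $\le\dta$, so by the previous remark $\om_{a,\dta}(g_n)=\mu(g_n)$ and $\om_{a,\dta}(\phi)=\mu(\phi)$; then (\ref{minimal}) gives $\mu(g)=\min_n\{\mu(g_n)+n\mu(\phi)\}=\min_n\om_{a,\dta}(g_n\phi^n)\le\om_{a,\dta}(g)$. Hence $\mu=(\om_{a,\dta})_{\mid\kx}$, and as $a$ is a root of $\phi\in\irr(K)$ with $\deg\phi=\deg(\mu)$ we obtain $\deg(a/K)=\deg(\mu)$.

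For part (1), let $g\in\irr(K)$ be the minimal polynomial of $a$ and set $\mu:=(\om_{a,\dta})_{\mid\kx}$, which is known to be valuation-transcendental with trivial support, so $\kpm\ne\emptyset$ and $d:=\deg(\mu)$ is defined. Since $g(a)=0$, the class $\inm g$ is not a unit of $\ggm$ — otherwise $gg'\smu 1$ for some $g'$, whence $\vb\bigl((gg')(a)-1\bigr)>0$, i.e.\ $\vb(1)>0$, which is absurd — so $\deg g\ge d$, because polynomials of degree $<d$ have unit class by \cite[Proposition 3.5]{KP}. On the other hand, applying part (2) to $\mu$ and a key polynomial of minimal degree produces some $a_0$ with $(\om_{a_0,\dta_0})_{\mid\kx}=\mu$ and $\deg(a_0/K)=d$, so minimality of the pair $(a,\dta)$ forces $\deg g=\deg(a/K)\le d$. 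Thus $\deg g=d=\deg\phi$ for a minimal key polynomial $\phi$, the $\phi$-expansion of $g$ is $g=\phi+g_0$ with $\deg g_0<\deg\phi$, and since $\inm g$ is not a unit it equals $\pi$ or $\pi+\inm g_0$ (with $\pi=\inm\phi$ and $\inm g_0\in\ggm^0$ a homogeneous unit); by Theorem \ref{g0gm} ($\ggm^0$ is a domain and $\pi$ is transcendental over it, with $\ggm=\ggm^0[\pi]$) any such element is prime, hence $g$ is $\mu$-irreducible with $\dgm(g)=1$, and by Lemma \ref{lemma1x} it is $\mu$-minimal. Therefore $g\in\kpm$ of minimal degree, which is part (1). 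Finally, the minimality clause of part (2) is now immediate: with $\mu,\phi,\dta,a$ as there, we have shown $\mu=(\om_{a,\dta})_{\mid\kx}$ and $\deg(a/K)=\deg(\mu)$; by part (1), a pair $(a',\dta')$ of least degree with $(\om_{a',\dta'})_{\mid\kx}=\mu$ satisfies $\deg(a'/K)=\deg(\mu)=\deg(a/K)$, so $(a,\dta)$ is itself a $(K,v)$-minimal pair.

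In short, everything reduces to the two highlighted facts relating $Q_{\mu,\phi}$ and $\epm(\phi)$ to valuative distances in $\kb$; these tie together the Mac Lane--Vaqui\'e description of $\mu$ (through the singled-out factor and graded-algebra data) and the minimal-pair description (through distances to points of $\kb$), and are where I would anticipate the actual work — once they are available, both parts follow from the routine computations above.
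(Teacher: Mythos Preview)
The paper does not prove Proposition~\ref{UpDown}: it is quoted verbatim from \cite[Propositions~2.8 and~3.3]{Rig}, and the sentence immediately following the statement records that part~(1) is classical (\cite{PP}, \cite{N2019}). So there is no in-paper argument to compare your proposal against.

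That said, your outline is sound and lands exactly where the cited references do the work. The two ``structural inputs'' you isolate --- that $Z(Q_{\mu,\phi})$ coincides with the set of $b\in Z(\phi)$ maximizing $\overline\mu(x-b)$, and that $\overline\mu(x-c)\le\dta$ whenever $\deg(c/K)<\deg\phi$ --- are precisely the content of \cite[Proposition~3.3]{Rig} (the first) and of the classical minimal-pair characterization underlying \cite[Proposition~2.8]{Rig} and \cite{N2019} (the second). Once those are granted, your two-sided estimate for the equality $\mu=(\om_{a,\dta})_{\mid\kx}$ is correct: the inequality $\om_{a,\dta}(x-c)=\min\{\vb(a-c),\dta\}\le\overline\mu(x-c)$ gives one direction by multiplicativity, and the $\phi$-expansion together with (\ref{minimal}) gives the other. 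Your derivation of part~(1) from the equality clause of part~(2), and of the minimality clause of part~(2) from part~(1), is also correct and free of circularity (you only invoke the already-established equality when proving part~(1)).

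Two small points worth tightening. First, for $(\ast)$ to read literally as $\vb(b-c)=\overline\mu((x-b)-(x-c))$ you need $\overline\mu$ to restrict to $\vb$ on $\kb$; since the statement allows $\overline\mu$ arbitrary and $\dta$ is independent of that choice, you should simply fix $\overline\mu$ extending both $\mu$ and $\vb$ at the outset. Second, in part~(1) you assert that $\mu:=(\om_{a,\dta})_{\mid\kx}$ is valuation-transcendental; this is true (restriction along the algebraic extension $\kb(x)/K(x)$ preserves the type), but deserves a word since you then invoke $\kpm\ne\emptyset$.
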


The first statement is classical \cite{PP} and can be found in \cite{N2019} as well.

\begin{theorem}\label{minimalitythem}
	If $(a,\dta)\in \kb\times \La$ is a $(K,v)$-minimal pair, then it is a $(\kh,\vh)$-minimal pair.
\end{theorem}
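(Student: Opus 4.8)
The plan is to deduce the statement from Proposition \ref{UpDown}(2) applied over the henselian field $(\kh,\vh)$. Put $\mu=\left(\omega_{a,\delta}\right)_{\mid\kx}$. Since $\omega_{a,\delta}$ extends $\vb$, its restriction to $\khx$ extends $\vh$ and restricts to $\mu$ on $\kx$, so by the uniqueness in Theorem \ref{Rig} it is the canonical common extension $\muh$ of $\mu$ and $\vh$. Let $\phi$ be the minimal polynomial of $a$ over $K$. By Proposition \ref{UpDown}(1), $\phi$ is a key polynomial of minimal degree for $\mu$; in particular $\mu$ is valuation-transcendental (Theorem \ref{empty}), hence so is $\muh$, and by Proposition \ref{P3.3Isom} the irreducible factor $Q:=Q_{\mu,\phi}\in\irr(\kh)$ of $\phi$ over $\khx$ is a key polynomial of minimal degree for $\muh$.

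The crux is to prove that $a\in Z(Q)$, i.e.\ that $Q$ is the minimal polynomial of $a$ over $\kh$. I would first record the following Krasner-type fact about any $(K,v)$-minimal pair $(a,\delta)$: one has $\vb(a-c)\le\delta$ whenever $c\in\kb$ satisfies $[K(c):K]<[K(a):K]$. Indeed, assume $\vb(a-c)>\delta$ for such a $c$. Writing an arbitrary nonzero $g\in\kx$ as a product of its leading coefficient and linear factors $x-d$ over $\kb$, one checks termwise that $\omega_{a,\delta}(g)=\omega_{c,\delta}(g)$: for each root $d$, either $\vb(c-d)<\delta$, in which case $\vb(a-d)=\vb(c-d)$, or $\vb(c-d)\ge\delta$, in which case $\vb(a-d)\ge\delta$ as well, and in both cases $\min\{\vb(a-d),\delta\}=\min\{\vb(c-d),\delta\}$. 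Hence $\left(\omega_{c,\delta}\right)_{\mid\kx}=\mu=\left(\omega_{a,\delta}\right)_{\mid\kx}$ while $[K(c):K]<[K(a):K]$, contradicting the minimality of $(a,\delta)$.

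Granting this, every root $d$ of a nonzero $f\in\kx$ with $\deg f<\deg\phi$ satisfies $[K(d):K]<[K(a):K]$, so $\omega_{a,\delta}(x-d)=\vb(a-d)$ and therefore $\omega_{a,\delta}(f)=\vb(f(a))$. The valuation $[\mu;\phi,\infty]$ on $\kx$ has support $\phi\kx$, so it induces a valuation $\wb$ on $K(a)\cong\kx/(\phi)$ with $\wb(f(a))=\mu(f)=\omega_{a,\delta}(f)=\vb(f(a))$ for all such $f$; since these elements exhaust $K(a)$, $\wb=\vb_{\mid K(a)}$. By Corollary \ref{CorEndler}, the extensions of $v$ to $K(a)$ are in bijection with the irreducible factors of $\phi$ over $\khx$, with $\vb_{\mid K(a)}$ corresponding to the factor having $a$ as a root; on the other hand $[\mu;\phi,\infty]=\left(v_Q\right)_{\mid\kx}$ by the definition of $Q_{\mu,\phi}$, so $\wb$ corresponds to $Q$. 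As distinct factors give distinct extensions, $a\in Z(Q)$. Now I would apply Proposition \ref{UpDown}(2) to $\muh$ over $(\kh,\vh)$ with the key polynomial $Q$ of minimal degree: $Q$ is irreducible over $\kh$, so $Q_{\muh,Q}=Q$, and choosing $\omega_{a,\delta}$ as the auxiliary extension of $\muh$ to $\kb[x]$ gives $\epsilon_{\muh}(Q)=\max_{b\in Z(Q)}\omega_{a,\delta}(x-b)=\delta$, because $a\in Z(Q)$ makes the term $b=a$ equal to $\delta$ while all other terms are $\le\delta$. Proposition \ref{UpDown}(2) then yields that $(a',\delta)$ is a $(\kh,\vh)$-minimal pair for every $a'\in Z(Q_{\muh,Q})=Z(Q)$; taking $a'=a$ proves the theorem.

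The main obstacle is the step $a\in Z(Q)$: once it is available, the rest is routine bookkeeping. Its delicacy lies in the fact that the roots of $\phi$ split among several irreducible factors of $\phi$ over $\kh$, possibly of different degrees, so one must show that the prescribed root $a$ lies on the particular factor $Q$ singled out by $\mu$ --- and this is precisely where the minimality of $(a,\delta)$ is used essentially, through the inequality $\vb(a-c)\le\delta$.
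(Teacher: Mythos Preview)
Your proof is correct and follows the same overall architecture as the paper's: identify the minimal polynomial $\phi$ of $a$ as a key polynomial of minimal degree for $\mu$, pass to $Q=Q_{\mu,\phi}\in\kp(\muh)$ via Proposition~\ref{P3.3Isom}, establish $a\in Z(Q)$, and then invoke Proposition~\ref{UpDown}(2) over $(\kh,\vh)$.

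The one genuine difference lies in the crucial step $a\in Z(Q)$. The paper obtains this by quoting \cite[Proposition~3.3]{Rig}, which gives the equivalence $(\omega_{a,\delta})_{\mid\kx}=\mu\iff a\in Z(Q)$ directly. You instead prove this implication from scratch: first the Krasner-type inequality $\vb(a-c)\le\delta$ for $[K(c):K]<[K(a):K]$, then the identity $\mu(f)=\vb(f(a))$ for $\deg f<\deg\phi$, and finally the identification of $[\mu;\phi,\infty]$ with $w_Q$ via Corollary~\ref{CorEndler}. This makes your argument entirely self-contained within the present paper, at the cost of a bit more length; the paper's route is shorter but leans on the companion reference. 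You also make explicit the verification $\epsilon_{\muh}(Q)=\delta$ needed to apply Proposition~\ref{UpDown}(2), which the paper leaves implicit.
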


\begin{proof}
	Let $(a,\dta)$ be a $(K,v)$-minimal pair and denote
	$$\om=\om_{a,\dta},\qquad \mu=\om_{\mid \kx},\qquad \muh=\om_{\mid \khx},\qquad Q=Q_{\mu,\phi}.
	$$
	By item 1 in Proposition \ref{UpDown}, the minimal polynomial $\phi\in\kx$ of $a$ over $K$ is a key polynomial of minimal degree for $\mu$. By \cite[Proposition 3.3]{Rig}, we have
\[
\left(\om_{a,\dta}\right)_{\mid \kx}=\mu \ \sii\ a\in Z(Q). 
\]	
Hence, $Q$ is the minimal polynomial of $a$ over $\kh$.  By Proposition \ref{P3.3Isom},  $Q$ is a key polynomial for $\muh$ of minimal degree.
 Hence, the second item in  Proposition \ref{UpDown} shows that  $(a,\dta)$ is a $(\kh,\vh)$-minimal pair
\end{proof}

\begin{corollary}\label{corollaryandrei}
Let $\mu$ be valuation-transcendental and take $\phi\in {\rm KP}(\mu)$ of minimal degree. Then $\phi$ is irreducible in $K^h[x]$ if and only if $\phi\in {\rm KP}(\mu^h)$ if and only if $\deg(\mu)=\deg(\mu^h)$.
\end{corollary}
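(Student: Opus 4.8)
The plan is to prove the chain of equivalences by establishing the implications in a cycle, using the earlier results about the factorization of $\phi$ over $K^h[x]$ and the behavior of degrees under henselization. First I would set $Q = Q_{\mu,\phi}$, the irreducible factor of $\phi$ over $K^h[x]$ singled out by $\mu$, as in the definition preceding Proposition \ref{P3.3Isom}. By that proposition, $Q$ is a key polynomial of minimal degree for $\mu^h$, so $\deg(\mu^h) = \deg(Q)$. Since $\deg(\mu) = \deg(\phi)$ (as $\phi$ has minimal degree in $\kpm$) and $Q \mid \phi$ in $K^h[x]$, we always have $\deg(\mu^h) = \deg(Q) \le \deg(\phi) = \deg(\mu)$, with equality if and only if $\phi = Q$, i.e. if and only if $\phi$ is already irreducible over $K^h[x]$. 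This immediately gives the equivalence ``$\phi$ is irreducible in $K^h[x]$ $\iff$ $\deg(\mu) = \deg(\mu^h)$'' and also shows that $\phi$ irreducible in $K^h[x]$ forces $\phi = Q \in \kp(\mu^h)$.

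Next I would close the loop with the middle statement. If $\phi \in \kp(\mu^h)$, then by Proposition \ref{P3.3Isom} we have $\dgm(\phi) = \dgmh(Q)$, and since $\inmh(\phi/Q)$ is a unit in $\ggmh$, the element $\inmh\phi$ is, up to a unit, equal to $\inmh Q$; as $\phi$ is assumed to be a key polynomial for $\mu^h$ it is in particular $\mu^h$-minimal, so $\deg(\phi) \le \deg(Q)$. Combined with $\deg(Q) \le \deg(\phi)$ from $Q \mid \phi$, this forces $\deg(\phi) = \deg(Q) = \deg(\mu^h)$, hence $\deg(\mu) = \deg(\mu^h)$. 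This completes the cycle: $\phi$ irreducible in $K^h[x]$ $\Rightarrow$ $\phi \in \kp(\mu^h)$ $\Rightarrow$ $\deg(\mu) = \deg(\mu^h)$ $\Rightarrow$ $\phi$ irreducible in $K^h[x]$, so all three conditions are equivalent.

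I expect the main subtlety — though not a serious obstacle — to be the step showing $\phi \in \kp(\mu^h) \Rightarrow \phi = Q$: one must be careful that being a key polynomial for $\mu^h$ and dividing the minimal-degree key polynomial $Q$ (up to a $\mu^h$-unit factor) genuinely forces equality of the monic polynomials, which follows from $\mu^h$-minimality of $\phi$ together with the fact that $\deg[\,\cdot\,]_{\mu^h}$ is constant on $\mu^h$-equivalence classes (cited after the definition of key polynomials, from \cite[Proposition 6.6]{KP}), since $\phi \smu[\mu^h] Q$. Everything else is bookkeeping with the degree identities already packaged in Proposition \ref{P3.3Isom} and the observation that $Q \mid \phi$ in $K^h[x]$. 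Alternatively, one could route the whole argument through Theorem \ref{minimalitythem} and Proposition \ref{UpDown}: if $(a,\delta)$ is the $(K,v)$-minimal pair associated to $\phi$, then $\phi$ is irreducible over $K^h$ exactly when the minimal polynomial of $a$ over $K$ equals that over $K^h$, which by Theorem \ref{minimalitythem} happens exactly when $[K^h(a):K^h] = [K(a):K]$, i.e. $\deg(Q) = \deg(\phi)$; but the direct argument above via Proposition \ref{P3.3Isom} is shorter and self-contained.
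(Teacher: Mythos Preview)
Your proof is correct, but it takes a different route from the paper's. The paper places this corollary immediately after Theorem \ref{minimalitythem} and proves it as an application of the minimal-pair machinery: taking $a\in Z(Q)$ with $Q=Q_{\mu,\phi}$, Proposition \ref{UpDown}(2) gives that $(a,\delta)$ is a $(K,v)$-minimal pair, Theorem \ref{minimalitythem} upgrades it to a $(K^h,v^h)$-minimal pair, and then Proposition \ref{UpDown}(1) yields that $Q$ is a key polynomial of minimal degree for $\mu^h$; from there all three statements are seen to be equivalent to $\phi=Q$. You instead go straight to Proposition \ref{P3.3Isom}, which already asserts that $Q$ has minimal degree in $\kp(\mu^h)$, and then close the loop with a clean degree-comparison argument using $\mu^h$-minimality. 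Your route is shorter and avoids the detour through minimal pairs (indeed you note this alternative yourself at the end); the paper's route is chosen to showcase Theorem \ref{minimalitythem}, which is the result being illustrated at that point in the exposition. Both arguments reduce to the same pivot, namely that the three conditions are each equivalent to $\phi=Q$.
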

\begin{proof}
Let $a$ be a root of $Q:=Q_{\mu,\phi}$. Proposition \ref{UpDown} (2) implies that $(a,\delta)$ is a $(K,v)$-minimal pair. Hence, by Theorem \ref{minimalitythem}, it is also a $(K^h,v^h)$-minimal pair. Hence the minimal polynomial $Q$ of $a$ over $K^h$ is a key polynomial of minimal degree for $\mu^h$. Hence, all the statements of the corollary are equivalent to $Q=\phi$.
\end{proof}

\section{The defect formula}\label{Defectform}
\subsection{The defect of an augmentation}\label{defectofaaug}

The following result was proved by Herrera-Olalla-Spivakovsky in the rank-one case, and by Vaqui\'e \cite{Vaq0} in the general case.\e

\begin{lemma}\label{Lemma1} Let $\mu\to\nu$ be an augmentation. Let $\ty=\ty(\mu,\nu)$ be the corresponding tangent direction, equipped with the pre-ordering determined by the action of $\nu$. For $Q\in \ty$ set $\rho_Q=[\mu;Q,\nu(Q)]$. Let $\phi\in\kx$ be either a key polynomial of minimal degree of $\nu$, or $\supp(\nu)=\phi\kx$. 
Then, the positive integer
$$
d=\min\left\{\deg_{\rho_Q}(\phi)\mid Q\in\ty\right\}
$$
is independent of the choice of $\phi$. Moreover, the set of all $Q\in\ty$ such that $\deg_{\rho_Q}(\phi)=d$ is cofinal in $\ty$.
\end{lemma}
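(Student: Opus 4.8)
The plan is to first understand the structure of the pre-ordered set $\ty=\ty(\mu,\nu)$ and then to track how $\deg_{\rho_Q}(\phi)$ varies as $Q$ runs through $\ty$. Recall from Lemma \ref{tdef} that $\ty=[\phi_0]_\mu$ for any $Q=\phi_0\in\ty$, so all elements of $\ty$ are key polynomials for $\mu$ of the same degree $\deg(\mu)$ and are pairwise $\mu$-equivalent. The pre-ordering on $\ty$ is defined by $Q\preccurlyeq Q'$ iff $\nu(Q)\le\nu(Q')$, and since $\mu<\nu$ with $\ty(\mu,\nu)=[\phi_0]_\mu$, the value $\nu(Q)$ strictly exceeds $\mu(Q)$ for all $Q\in\ty$. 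The first step is to observe that for each $Q\in\ty$, the augmented valuation $\rho_Q=[\mu;Q,\nu(Q)]$ satisfies $\mu<\rho_Q\le\nu$ (it equals $\nu$ precisely when $\nu$ is the ordinary augmentation $[\mu;Q,\nu(Q)]$), and that $\rho_Q$ is the largest augmentation of $\mu$ by $Q$ that stays $\le\nu$; then $\deg_{\rho_Q}(\phi)$ measures how $\phi$ decomposes in $\gg_{\rho_Q}$.

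The core of the argument is the independence of $d$ from the choice of $\phi$. If $\phi$ and $\phi'$ are two admissible choices (both key polynomials of minimal degree of $\nu$, or both generators of $\supp(\nu)$), then $\phi\snu\phi'$ — in the support case because $\phi\kx=\phi'\kx$ forces $\phi=\phi'$, and in the minimal-degree case by \cite[Proposition 6.6]{KP}. Thus $\inv_\nu\phi=\inv_\nu\phi'$ (up to a unit), and the plan is to show that for each fixed $Q\in\ty$ one has $\deg_{\rho_Q}(\phi)=\deg_{\rho_Q}(\phi')$. The key point is that the natural map $\gg_{\rho_Q}\to\gg_\nu$ sends $\inv_{\rho_Q}\phi$ and $\inv_{\rho_Q}\phi'$ to $\mu$-compatible elements that agree in $\gg_\nu$; combined with the fact that $\rho_Q$-equivalence classes of elements of degree $<\deg(\phi)$ behave as units (since $\deg(\mu)=\deg(\rho_Q)\le\deg(\phi)$ when $\phi$ is a minimal-degree key polynomial of $\nu$), one deduces that $\phi$ and $\phi'$ have $\rho_Q$-equivalent leading terms in their $Q$-expansions. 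This requires a careful comparison of the $Q$-expansions of $\phi$ and $\phi'$; I would argue that $\phi-\phi'$ (or $\phi-c\phi'$ for a suitable constant $c$, in the support case this is moot) has strictly larger $\nu$-value, hence its $Q$-expansion contributes only lower-degree terms to the leading behavior at $\rho_Q$, so $\dg_{\rho_Q}(\phi)=\dg_{\rho_Q}(\phi')$.

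Having fixed $\phi$, it remains to show the minimum $d=\min\{\dg_{\rho_Q}(\phi)\mid Q\in\ty\}$ is attained on a cofinal subset. Here the plan is to show that $Q\mapsto\dg_{\rho_Q}(\phi)$ is ``eventually monotone non-increasing'' along the pre-ordering: if $Q\preccurlyeq Q'$ then $\rho_Q\le\rho_{Q'}$ (both are $\le\nu$ and $\rho_{Q'}$ assigns at least as large a value to its key polynomial), and under the canonical map $\gg_{\rho_Q}\to\gg_{\rho_{Q'}}$ the element $\inv_{\rho_Q}\phi$ maps to something whose $Q'$-degree is at most $\dg_{\rho_Q}(\phi)$ — because passing to a larger valuation can only kill terms in a $Q$-expansion, never create new top-degree ones, and the length of the principal Newton polygon $\ell(\npp(\phi))$ is non-increasing under augmentation by Theorem \ref{product} and equation (\ref{length}). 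Since the set $\N$ of possible values is well-ordered, the minimum $d$ is achieved, and once $\dg_{\rho_Q}(\phi)=d$ it stays $=d$ (it cannot drop below $d$ by minimality, nor rise by monotonicity) for all $Q'\succcurlyeq Q$; that is exactly cofinality. Finally, $d\ge1$ because $\phi$ is $\rho_Q$-unstable relative to $\mu$ — more precisely $\mu(\phi)<\nu(\phi)$ forces $\inv_{\rho_Q}\phi$ to be divisible by $\inv_{\rho_Q}Q=\pi$, so $\dg_{\rho_Q}(\phi)\ge1$.

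**Main obstacle.** The delicate point I expect to be hardest is the monotonicity step: showing $\dg_{\rho_{Q'}}(\phi)\le\dg_{\rho_Q}(\phi)$ whenever $Q\preccurlyeq Q'$. One must be careful that $Q$ and $Q'$ are different key polynomials of $\mu$ (only $\mu$-equivalent), so the $Q$-expansion and the $Q'$-expansion of $\phi$ are genuinely different objects; the comparison of their leading terms at $\rho_Q$ versus $\rho_{Q'}$ is where the interplay between the Newton polygon operator (Section \ref{newtonpoly}), the relation $\ty(\mu,\rho_Q)=\ty(\mu,\nu)=[Q]_\mu=[Q']_\mu$, and the behavior of $\dg$ under the canonical maps $\gg_{\rho_Q}\to\gg_{\rho_{Q'}}\to\gg_\nu$ must all be used together, presumably via a result from \cite{KP} or \cite{MLV} on how $\dg$ transforms under augmentation with an equivalent key polynomial.
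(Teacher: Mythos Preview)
The paper does not prove this lemma itself; it attributes the result to Herrera--Olalla--Spivakovsky in the rank-one case and to Vaqui\'e \cite{Vaq0} in general, and then uses it as a black box. So there is no in-paper proof to compare your proposal against, and your outline---independence of $\phi$ followed by eventual monotonicity of $Q\mapsto\deg_{\rho_Q}(\phi)$ along the $\nu$-preorder---is the natural shape such an argument takes.

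That said, there are two concrete problems in your sketch. First, a minor one: you assert that all $Q\in\ty$ have degree $\deg(\mu)$. This is false for ordinary augmentations, where $\deg\ty=\deg(\phi_0)$ can strictly exceed $\deg(\mu)$; the equality $\deg\ty=\deg(\mu)$ holds only for limit augmentations under the paper's convention. This does not break your argument but should be fixed.

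Second, and this is a genuine gap: you claim that any two minimal-degree key polynomials $\phi,\phi'$ for $\nu$ satisfy $\phi\snu\phi'$, citing \cite[Proposition~6.6]{KP}. That proposition asserts only that all members of a given class $[\phi]_\nu$ share the same degree; it does \emph{not} say that minimal-degree key polynomials form a single $\nu$-class. Indeed they need not: when $\nu$ is residue-transcendental with $e(\nu)=1$, there can be several $\nu$-inequivalent key polynomials of minimal degree (see the discussion of proper classes in Section~\ref{subsecProper}). So your route to independence via $\inv_\nu\phi=\inv_\nu\phi'$ collapses in this case. What does survive is that $a:=\phi-\phi'$ has $\deg(a)<\deg(\nu)$; one then argues that for $Q$ cofinal in $\ty$ the $Q$-expansion of $a$ cannot contribute to the top $\rho_Q$-degree term of $\phi$, so that the \emph{minima} over $Q$ coincide even if individual values $\deg_{\rho_Q}(\phi)$, $\deg_{\rho_Q}(\phi')$ differ for some $Q$. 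Note the lemma only claims independence of the minimum, not of each term.

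Finally, your monotonicity heuristic (``passing to a larger valuation can only kill terms'') is imprecise as stated, because replacing $Q$ by $Q'$ changes the expansion, not just the valuation applied to a fixed expansion. The clean way is to first establish $\rho_Q\le\rho_{Q'}$ when $\nu(Q)\le\nu(Q')$ (using that $Q-Q'$ has degree $<\deg\ty$, hence $\mu$, $\rho_Q$, $\rho_{Q'}$ and $\nu$ all agree on it, together with \cite[Lemma~2.7]{MLV}), and then compare $\deg_{\rho_Q}(\phi)$ with $\deg_{\rho_{Q'}}(\phi)$ via the canonical map $\gg_{\rho_Q}\to\gg_{\rho_{Q'}}$, which sends homogeneous units to homogeneous units and hence cannot increase degree.
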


\begin{definition}
This stable value $d$ is called the \textbf{defect} of the augmentation.  We denote it by  $d=d(\mu\to\nu)$.
\end{definition}

\begin{lemma}\label{d=1} 
If $\mu\to\nu$ is an ordinary augmentation, then	$d(\mu\to\nu)=1$.
\end{lemma}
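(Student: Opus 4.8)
The plan is to unwind the definition of the defect in Lemma~\ref{Lemma1} and produce a single element $Q$ of the tangent direction $\ty=\ty(\mu,\nu)$ with $\deg_{\rho_Q}(\phi)=1$. Since Lemma~\ref{Lemma1} asserts that $d(\mu\to\nu)=\min\{\deg_{\rho_Q}(\phi)\mid Q\in\ty\}$ is a \emph{positive} integer, exhibiting any $Q$ with $\deg_{\rho_Q}(\phi)\le 1$ forces $d(\mu\to\nu)=1$.

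Write $\nu=[\mu;\phi_0,\ga]$ with $\phi_0\in\kpm$ and $\ga>\mu(\phi_0)$, and let $\phi$ be the polynomial selected in Lemma~\ref{Lemma1}: a key polynomial of minimal degree of $\nu$ if $\ga<\infty$, the generator of $\supp(\nu)=\phi\kx$ if $\ga=\infty$. In either case $\phi_0$ itself qualifies (for $\ga<\infty$ it becomes a key polynomial of minimal degree of $\nu$, as recalled after Lemma~\ref{Mlvord}; for $\ga=\infty$ one has $\supp(\nu)=\phi_0\kx$), so I take $\phi=\phi_0$. By Lemma~\ref{Mlvord}, $\ty=[\phi_0]_\mu$, so every $Q\in\ty$ is a key polynomial for $\mu$, monic of degree $\deg(\phi_0)$; hence the $Q$-expansion of $\phi_0$ is $\phi_0=(\phi_0-Q)+Q$ with $\deg(\phi_0-Q)<\deg(Q)$, and only the indices $0$ and $1$ can occur in $S_{\rho_Q,Q}(\phi_0)$. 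When $\ga<\infty$ I take $Q=\phi_0$: then $\rho_{\phi_0}=[\mu;\phi_0,\nu(\phi_0)]=[\mu;\phi_0,\ga]=\nu$, and $\deg_\nu(\phi_0)=1$ by the definition of the $\nu$-degree, $\phi_0$ being a key polynomial of minimal degree of $\nu$. When $\ga=\infty$ I take instead some $Q\in\ty$ with $Q\ne\phi_0$: then $\nu(Q)=\mu(\phi_0-Q)$ is finite, so $\rho_Q=[\mu;Q,\nu(Q)]$ has trivial support and $Q$ is a key polynomial of minimal degree for it, and since $\rho_Q(Q)=\nu(Q)=\mu(\phi_0-Q)=\rho_Q(\phi_0-Q)$, both monomials of the $Q$-expansion are $\rho_Q$-minimal, giving $S_{\rho_Q,Q}(\phi_0)=\{0,1\}$ and $\deg_{\rho_Q}(\phi_0)=1$. (In fact $\rho_Q(Q)=\nu(Q)=\min\{\mu(\phi_0-Q),\ga\}\le\rho_Q(\phi_0-Q)$ for every $Q\in\ty\setminus\{\phi_0\}$, so $1\in S_{\rho_Q,Q}(\phi_0)$ and $\deg_{\rho_Q}(\phi_0)=1$ regardless of $\ga$.) Either way $d(\mu\to\nu)\le 1$, hence $d(\mu\to\nu)=1$.

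The single point demanding care is the case $\ga=\infty$: here $\phi_0$ is the largest element of the $\nu$-preordered set $\ty$, but $\rho_{\phi_0}=[\mu;\phi_0,\infty]$ has nontrivial support $\phi_0\kx$, so $\deg_{\rho_{\phi_0}}(\phi_0)$ is not covered by the usual convention and $Q=\phi_0$ cannot be used directly; the argument must be carried through elements $Q\ne\phi_0$ of the tangent class, for which $\rho_Q$ has trivial support (such $Q$ exist in the relevant cases, e.g. $\phi_0+c$ for a suitable $c\in K$). Equivalently, for such $Q$ neither $\rho_Q$ nor $\phi$ changes when $\ga$ is replaced by a finite $\ga'>\mu(\phi_0-Q)$, so $\deg_{\rho_Q}(\phi)$ already appears in the computation of $d(\mu\to[\mu;\phi_0,\ga'])=1$, which reduces the $\ga=\infty$ case to the finite one. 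Beyond this bookkeeping, the verification is a routine unwinding of the definitions of $S_{\rho_Q,Q}$, of the $\mu$-degree, and of key polynomials of minimal degree.
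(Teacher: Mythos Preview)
Your argument for the case $\ga<\infty$ is exactly the paper's proof: take $Q=\phi_0$, observe that $\rho_{\phi_0}=[\mu;\phi_0,\nu(\phi_0)]=\nu$, and use that $\phi_0$ is a key polynomial of minimal degree for $\nu$, so $\deg_\nu(\phi_0)=1$.

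You go beyond the paper in explicitly treating $\ga=\infty$. The paper's one-line justification ``$\phi$ is a key polynomial of minimal degree for $\nu=\rho_\phi$'' literally fails there, since $\nu$ then has nontrivial support and $\kp(\nu)=\emptyset$. Your fix---pass to some $Q\in[\phi_0]_\mu$ with $Q\ne\phi_0$, so that $\rho_Q$ has trivial support, and read off $\deg_{\rho_Q}(\phi_0)=1$ from the two-term $Q$-expansion $\phi_0=(\phi_0-Q)+Q$ and the equality $\rho_Q(Q)=\nu(Q)=\mu(\phi_0-Q)=\rho_Q(\phi_0-Q)$---is correct and is the natural completion of the argument. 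The alternative reduction you sketch (replace $\ga=\infty$ by a large finite $\ga'$ without changing $\rho_Q$ or $\phi$) is also valid. The only residual point, which you flag yourself, is the existence of some $Q\ne\phi_0$ in $[\phi_0]_\mu$; but this is already implicit in Lemma~\ref{Lemma1}, since otherwise the minimum defining $d$ would range over the single undefined term $\deg_{\rho_{\phi_0}}(\phi_0)$.
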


\begin{proof}
If $\nu=[\mu;\phi,\ga]$, then $\ty(\mu,\nu)=[\phi]_\mu$. Since $\phi$ is a key polynomial of minimal degree for $\nu=\rho_\phi$, we have $\deg_{\rho_\phi}(\phi)=1$. Hence, $d(\mu\to\nu)=1$.
\end{proof}

\begin{lemma}\label{Lemma3} 
Suppose $(K,v)$ henselian. For all augmentations $\mu\to\nu$ we have 
$$d(\mu\to\nu)=\deg(\nu)/\deg\ty.$$
In particular, if $\mu\to\nu$ is a limit augmentation, then 
$d(\mu\to\nu)=\deg(\nu)/\deg(\mu)$.
\end{lemma}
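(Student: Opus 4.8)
The plan is to exploit the henselian hypothesis through Theorem \ref{fundamental}, which governs the Newton-polygon structure of irreducible polynomials over a henselian field. Fix $\phi$ as in the statement of Lemma \ref{Lemma1} (a key polynomial of minimal degree of $\nu$, or a generator of $\supp(\nu)$); in either case $\deg(\phi)=\deg(\nu)$. First I would reduce to computing $\deg_{\rho_Q}(\phi)$ for a single well-chosen $Q\in\ty(\mu,\nu)$ that attains the minimum defining $d(\mu\to\nu)$ and is moreover irreducible in $\kx$. Since $(K,v)$ is henselian, every $Q\in\ty$ is a key polynomial for $\mu$ (Lemma \ref{tdef}), hence irreducible in $\kx$, so $Q\in\irr(K)$ and the valuation $v_Q$ on $\kx$ makes sense. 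The key observation is that for $Q\in\ty$ we have $\mu<v_Q$ (because $\mu(Q)<\nu(Q)\le v_Q(Q)=\infty$; use Lemma \ref{tdef} together with the fact that $\rho_Q=[\mu;Q,\nu(Q)]\le v_Q$) and $\ty(\mu,v_Q)=[Q]_\mu$, so Theorem \ref{fundamental} applies to the pair $(\mu,Q)$.

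Next I would bring in $\phi$. We have $\inv_\mu\phi$ divisible by $\inv_\mu Q$ — indeed, for $Q\in\ty$ chosen so that $\mu(\phi)<v_Q(\phi)$ (such $Q$ exist and are cofinal in $\ty$, by the standard properties of tangent directions recalled around Lemma \ref{tdef} and in Lemma \ref{Lemma1}'s setup), Lemma \ref{tdef} gives $\inv_\mu Q\mid\inv_\mu\phi$ precisely when $\mu(\phi)<v_Q(\phi)$. Theorem \ref{fundamental}(ii), applied with $F=\phi$ and this $Q$, then yields $\phi\sim_\mu Q^\ell$ with $\ell=\deg(\phi)/\deg(Q)=\deg(\nu)/\deg\ty$. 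It remains to translate the $\mu$-equivalence $\phi\sim_\mu Q^\ell$ into the statement $\deg_{\rho_Q}(\phi)=\ell$. Here one uses that $Q$ is a key polynomial of minimal degree for $\rho_Q=[\mu;Q,\nu(Q)]$ (by the remark following Lemma \ref{Mlvord}), so $\dgm[\rho_Q]$ is computed by the $Q$-expansion; since $\phi\sim_\mu Q^\ell$ forces the $Q$-expansion of $\phi$ to have leading term exactly $Q^\ell$ with the top index dominating (this is exactly what Theorem \ref{fundamental}(ii) and its proof deliver — $a_\ell=1$ and $\mu(Q^\ell)<\mu(a_nQ^n)$ for $n<\ell$, which is preserved under passing from $\mu$ to $\rho_Q$ since $\rho_Q(Q)=\nu(Q)>\mu(Q)$ only strengthens the inequalities at lower indices), we get $\deg_{\rho_Q}(\phi)=\ell$.

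Finally I would assemble the conclusion. By Lemma \ref{Lemma1} the value $d(\mu\to\nu)=\min_{Q\in\ty}\deg_{\rho_Q}(\phi)$ is well defined, and the computation above shows it equals $\ell=\deg(\nu)/\deg\ty$ on a cofinal set of $Q$; since all such values coincide, $d(\mu\to\nu)=\deg(\nu)/\deg\ty$. For the last sentence: when $\mu\to\nu$ is a limit augmentation with $\cc$ based on its initial valuation $\mu$ (our standing convention), $\deg\ty=\deg(\mu)$ because $\ty(\mu,\nu)=[\varphi]_\mu$ for a key polynomial $\varphi$ of $\mu$ of minimal degree — indeed $\ty(\mu,\nu)=\ty(\mu,\rho_i)$ for all $i\in A$ and, $\cc$ being based on $\mu$, these tangent directions have degree $\deg(\cc)=\deg(\mu)$. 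Hence $d(\mu\to\nu)=\deg(\nu)/\deg(\mu)$.

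The main obstacle I anticipate is the careful verification that the $\mu$-equivalence $\phi\sim_\mu Q^\ell$ upgrades to the \emph{exact} $\rho_Q$-degree statement $\deg_{\rho_Q}(\phi)=\ell$: one must check that passing from $\mu$ to the augmented valuation $\rho_Q=[\mu;Q,\nu(Q)]$ does not disturb which index of the $Q$-expansion of $\phi$ is dominant, i.e.\ that $a_\ell=1$ and the inequality $\rho_Q(a_\ell Q^\ell)<\rho_Q(a_nQ^n)$ persists for $n<\ell$. This is essentially bookkeeping with the Newton polygon $N_{\mu,Q}(\phi)$ from Figure \ref{figNF} and the definition of an ordinary augmentation, but it is the step where the henselian input (via Theorem \ref{fundamental}) is actually consumed, so it deserves to be written out with some care.
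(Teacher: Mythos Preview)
Your overall strategy is the same as the paper's: apply Theorem~\ref{fundamental} with $F=\phi$ and $Q\in\ty(\mu,\nu)$, and read off $\ell=\deg(\phi)/\deg(Q)$. The paper routes this through $v_\phi$ rather than $v_Q$: since $\mu<\nu\le v_\phi=[\nu;\phi,\infty]$, one has $\ty(\mu,v_\phi)=\ty(\mu,\nu)=[Q]_\mu$, so $\inm Q\mid\inm\phi$ directly and Theorem~\ref{fundamental} applies for every $Q\in\ty$, not just a cofinal subset. This is cleaner than your detour through $v_Q$, but both arrive at the same place.

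However, the step you flag as ``the main obstacle'' contains a genuine error. You write that $\rho_Q(Q)=\nu(Q)>\mu(Q)$ ``only strengthens the inequalities at lower indices''. This is backwards. In the $Q$-expansion $\phi=\sum_{n=0}^\ell a_nQ^n$ (with $a_\ell=1$), passing from $\mu$ to $\rho_Q$ increases $\rho_Q(Q^\ell)$ by $\ell(\nu(Q)-\mu(Q))$ but increases $\rho_Q(a_nQ^n)$ by only $n(\nu(Q)-\mu(Q))$; since $n<\ell$, the gap $\mu(a_nQ^n)-\mu(Q^\ell)>0$ \emph{shrinks}, and there is no a~priori reason it cannot become negative for large $\nu(Q)$. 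The $\mu$-equivalence $\phi\sim_\mu Q^\ell$ alone does \emph{not} force $\deg_{\rho_Q}(\phi)=\ell$.

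The correct argument uses part~(i) of Theorem~\ref{fundamental}, not just~(ii). The Newton polygon $N_{\mu,Q}(\phi)$ is one-sided of slope $-v_\phi(Q)$, so $\mu(a_n)\ge(\ell-n)\,v_\phi(Q)$ for all $n$. Since $\nu\le v_\phi$ (either $\nu=v_\phi$ or $\nu<[\nu;\phi,\infty]=v_\phi$), we have $\nu(Q)\le v_\phi(Q)$, whence
\[
\rho_Q(a_nQ^n)=\mu(a_n)+n\,\nu(Q)\ \ge\ (\ell-n)\,v_\phi(Q)+n\,\nu(Q)\ \ge\ \ell\,\nu(Q)=\rho_Q(Q^\ell).
\]
Thus $\ell\in S_{\rho_Q,Q}(\phi)$ and $\deg_{\rho_Q}(\phi)=\ell$. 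This is presumably what the paper's terse sentence ``Theorem~\ref{fundamental} shows that $\deg_{\rho_Q}(\phi)=\deg(\phi)/\deg(Q)$'' is encoding.
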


\begin{proof}
	If  $\supp(\nu)=\phi\kx$, then we have directly $\nu=v_\phi$, by Proposition \ref{CorEndler2}.
	
If $\phi$ is a key polynomial of minimal degree for $\nu$, then $\nu<[\nu;\phi,\infty]=v_\phi$. 

In both cases, $\mu<v_\phi$ and $\ty(\mu,v_\phi)=\ty(\mu,\nu)=[Q]_\mu$, for some $Q\in\kpm$.
 Since $\phi$ is irreducible in $\kx$, Theorem \ref{fundamental} shows that $\deg_{\rho_Q}(\phi)=\deg(\phi)/\deg(Q)$. By definition, $\deg(\nu)=\deg(\phi)$ and $\deg\ty=\deg(Q)$. 

If  $\mu\to\nu$ is a limit augmentation, then $\deg\ty=\deg(\mu)$ by our convention on limit augmentations (cf. Section \ref{subsecLimAug}).
\end{proof}

\subsection{Rigidity of defect}\label{rigofdefect}

We proceed now with the proof of Theorem \ref{defectcte}.\e


\nn{\bf Proof of Theorem \ref{defectcte}.}
If $\mu\to\nu$ is an ordinary augmentation, then $\muh\to\nu^h$ is an ordinary augmentation too, by Lemma \ref{Ordinary}. By Lemma \ref{d=1}, $d(\mu,\nu)=1=d(\muh,\nu^h)$.
	
	Suppose now that $\mu\to\nu$ is a limit augmentation; that is, $\nu=[\cc;\phi,\ga]$ for some continuous family $\cc$ based on $\mu$. 
	
	Let $Q=Q_{\cc,\phi}$.
	 By Lemma \ref{unstabilitythm}, $\nu^h=[\cc^h;Q,\ga']$ and $\phi/Q$ is $\cc^h$-stable. Consider an index $i_0\in A$ such that $\inn_{\ri^h}(\phi/Q)$ is a unit in $\gg_{\ri^h}$ for all $i\ge i_0$.

	 Since the canonical embedding $\gg_{\ri}\hk\gg_{\ri^h}$ is an isomorphism for all $i\in A$, it preserves degrees of homogeneous elements. Hence,
\[
\deg_{\ri}(\phi)=\deg_{\ri^h}(\phi)=\deg_{\ri^h}(Q)\quad\mbox{ for all }i\ge i_0.
\]
By the definition of the defect of an augmentation,
\[
\quad\qquad\qquad d(\mu\to\nu)=\min_{i\in A}\deg_{\ri}(\phi)=\min_{i\in A}\deg_{\ri^h}(Q)=d(\muh\to\nuh). \qquad\qquad\quad\Box
\]


\subsection{Rigidity of inertia degree}\label{subsecInertia}

We define the \textbf{inertia degree} of an augmentation $\mu\to\nu$ as
\[
f(\mu\to\nu)=[\ka_\nu\colon \ka_\mu],
\]
where $\ka_\mu$ is the relative algebraic closure of $k$ inside the grade-zero component $\d_\mu$ of the graded algebra $\ggm$. 

As an immediate consequence of Theorem \ref{theoremabouthensel}, we get
\[
f(\mu\to\nu)=f(\muh\to\nu^h).
\]
Indeed, we have a commutative diagram of field extensions
\[
\ars{1.2}
\begin{array}{ccc}
\ka_{\muh}&\lra&\ka_{\nu^h}\\
\uparrow&&\uparrow\\
\ka_{\mu}&\lra&\ka_{\nu}
\end{array}
\]
and the vertical arrows are isomorphisms.

Let us quote a direct computation of the inertia degree of an augmentation, \cite[Lemma 5.2]{MLV}:
\begin{equation}\label{Compf}
f(\mu\to\nu)=
\dfrac{\deg\ty(\mu,\nu)}{e(\mu)\deg(\mu)}.
\end{equation}

Note that  $f(\mu\to\nu)=1$, if $\mu\to\nu$ is a limit augmentation. Indeed, by our convention on limit augmentations, we have $e(\mu)=1$ and $\deg\ty(\mu,\nu)=\deg(\mu)$. 

From the rigidity of defect and inertia degree we derive a relevant consequence.

\begin{proposition}\label{degG}
For every augmentation $\mu\to\nu$, we have
\[
e(\mu)f(\mu\to\nu)d(\mu\to\nu)=\deg(\nu^h)/\deg(\muh).
\]
\end{proposition}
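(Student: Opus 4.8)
The plan is to push all three factors on the left-hand side to the henselization $(\kh,\vh)$ and then evaluate the product there by means of the explicit henselian formulas already available. So the first step is to record three rigidity statements. By Theorem \ref{defectcte} the defect is rigid: $d(\mu\to\nu)=d(\muh\to\nuh)$. The inertia degree is rigid as observed above, since the commutative square formed by the inclusions $\ka_\mu\hookrightarrow\ka_\nu$ and $\ka_{\muh}\hookrightarrow\ka_{\nuh}$ has isomorphisms for its vertical arrows (a consequence of Theorem \ref{theoremabouthensel}); hence $f(\mu\to\nu)=f(\muh\to\nuh)$. Finally, the relative ramification index is rigid: by Theorem \ref{theoremabouthensel} the canonical map $\ggm\hookrightarrow\ggmh$ is an isomorphism of \emph{graded} algebras, so it identifies $\gm$ with $\g_{\muh}$ and carries homogeneous units to homogeneous units, whence it also identifies the subgroup $\gm^0$ of grades of homogeneous units with its counterpart for $\muh$; therefore $e(\mu)=(\gm:\gm^0)=(\g_{\muh}:\g_{\muh}^0)=e(\muh)$.

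Combining these, the left-hand side equals $e(\muh)\,f(\muh\to\nuh)\,d(\muh\to\nuh)$. Now I would invoke the henselian hypothesis on $(\kh,\vh)$: Lemma \ref{Lemma3} gives $d(\muh\to\nuh)=\deg(\nuh)/\deg\ty(\muh,\nuh)$, while formula (\ref{Compf}) gives $f(\muh\to\nuh)=\deg\ty(\muh,\nuh)\big/\big(e(\muh)\deg(\muh)\big)$. Substituting both and cancelling the common factors $e(\muh)$ and $\deg\ty(\muh,\nuh)$ leaves exactly $\deg(\nuh)/\deg(\muh)$, as claimed.

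I do not expect any real obstacle: the argument only assembles results already in hand, plus one routine remark. The single point that deserves an explicit line is the equality $e(\mu)=e(\muh)$, which is not literally stated in Theorem \ref{theoremabouthensel} but follows at once from the isomorphism there being grade-preserving and unit-preserving. (It is also worth stressing that one cannot replace $\deg(\nuh)/\deg(\muh)$ by $\deg(\nu)/\deg(\mu)$: these differ in general because $\deg(\muh)$ may be strictly smaller than $\deg(\mu)$, which is exactly why the degrees over the henselization appear in the statement.)
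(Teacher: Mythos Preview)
Your proof is correct and follows essentially the same approach as the paper's: push the three factors to the henselization via rigidity (Theorems \ref{defectcte} and \ref{theoremabouthensel}) and then evaluate there using (\ref{Compf}) and Lemma \ref{Lemma3}. The only difference is cosmetic: the paper splits into the ordinary case (where $d=1$ and $\deg\ty(\muh,\nuh)=\deg(\nuh)$) and the limit case (where $e(\mu)=f(\mu\to\nu)=1$ by convention), whereas you treat both at once by invoking the general statement of Lemma \ref{Lemma3}, which already covers arbitrary augmentations over a henselian base.
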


\begin{proof}
If  $\mu\to\nu$ is ordinary, then $d(\mu\to\nu)=1$ by Lemma \ref{d=1}. By Lemma \ref{Ordinary}, the augmentation $\muh\to\nu^h$ is ordinary too. If we apply (\ref{Compf}) to this augmentation, then we get
\[
f(\mu\to\nu)=f(\muh\to\nu^h)=\dfrac{\deg\ty(\muh,\nu^h)}{e(\muh)\deg(\muh)}=\dfrac{\deg(\nu^h)}{e(\mu)\deg(\muh)},
\]
where the equality $e(\muh)=e(\mu)$ follows from Theorem \ref{theoremabouthensel}.

If  $\mu\to\nu$ is limit, then $f(\mu\to\nu)=1$ and $e(\mu)=1$. The latter equality, by our general convention on limit augmentations (see Section \ref{subsecLimAug}).

In this case, the statement follows from Theorem \ref{defectcte} and Lemma \ref{Lemma3}.
\end{proof}

\subsection{Proper augmentations}\label{subsecProper}
A key polynomial $\phi\in\kpm$ is \textbf{proper} if there exists some key polynomial $\varphi\in\kpm$ of minimal degree  
such that $\phi\not\smu\varphi$.

Clearly, this condition extends to $\mu$-equivalence classes
of key polynomials, so that we may speak of proper and improper classes.

Let $e:=e(\mu)$ be the relative ramification index of $\mu$. By \cite[Proposition 6.3]{KP}, 
\begin{itemize}
	\item If $e=1$, then all key polynomials for $\mu$ are proper. 
	\item If $e>1$, then there is a unique improper class, formed by all key polynomials for $\mu$ of minimal degree. 
\end{itemize}  

If $\mu$ is value-transcedental, then all key polynomials for $\mu$ are in the same $\mu$-equivalence class: $\kpm=[\phi]_\mu$ \cite[Theorem 4.2]{KP}. In other words:

\begin{lemma}\label{AllImproper}
If $\mu$ is value-transcendental, then all its key polynomials are improper.
\end{lemma}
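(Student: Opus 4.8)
The plan is to reduce the statement directly to \cite[Theorem 4.2]{KP}, which describes the full set of key polynomials of a value-transcendental valuation. First I would record that a value-transcendental valuation is in particular valuation-transcendental, so by Theorem \ref{empty} the set $\kpm$ is nonempty; in particular $\deg(\mu)$ is defined and there does exist at least one key polynomial of minimal degree, so the notion of properness is not vacuous.

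Next, I would fix an arbitrary $\phi\in\kpm$ and invoke \cite[Theorem 4.2]{KP}: when $\mu$ is value-transcendental, all key polynomials for $\mu$ lie in a single $\smu$-equivalence class, i.e. $\kpm=[\varphi]_\mu$ for any $\varphi\in\kpm$. Consequently $\phi\smu\varphi$ for \emph{every} $\varphi\in\kpm$, and a fortiori for every key polynomial of minimal degree. By the definition of properness, this means there is no $\varphi\in\kpm$ of minimal degree with $\phi\not\smu\varphi$; hence $\phi$ is not proper, that is, $\phi$ is improper. Since $\phi$ was arbitrary, every key polynomial for $\mu$ is improper.

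I do not expect any real obstacle here: the entire content sits in the cited structural theorem, and all that remains is to match its conclusion against the definition of proper/improper, which is immediate. As an alternative route one could instead combine $\kpm=[\phi]_\mu$ with \cite[Proposition 6.3]{KP}: a value-transcendental $\mu$ has $e(\mu)=\infty>1$, so there is a unique improper class, namely all key polynomials of minimal degree; since all key polynomials lie in one class, that class must be all of $\kpm$, and again every key polynomial is improper. Either phrasing makes the lemma a one-line consequence, which is consistent with the ``In other words'' preceding its statement.
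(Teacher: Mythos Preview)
Your argument is correct and matches the paper's own reasoning exactly: the paper states just before the lemma that all key polynomials of a value-transcendental $\mu$ lie in a single $\smu$-class by \cite[Theorem 4.2]{KP}, and then introduces the lemma with ``In other words'', so the entire content is precisely the observation you make. Your alternative phrasing via $e(\mu)=\infty$ and \cite[Proposition 6.3]{KP} is also valid but not needed.
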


\begin{lemma}\label{wholeGroup}
	If $\phi$ is a proper key polynomial for $\mu$, then
	\[
	\g_{\mu,\phi}:=\left\{\mu(a)\mid a\in\kx,\ 0\le \deg(a)<\deg(\phi)\right\}=\gm.
	\]
\end{lemma}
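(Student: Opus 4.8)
The plan is to reduce the assertion to a single numerical inequality, namely $\deg(\phi)\ge e\,\deg(\mu)$ with $e=e(\mu)$, and then, using it, to realize every element of $\gm$ as $\mu(a)$ for some $a\in\kx$ of degree strictly below $\deg(\phi)$.

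I would first dispose of the extreme cases. If $\mu$ admits no key polynomials there is nothing to prove, and if $\mu$ is value-transcendental then $e=\infty$; but in that case Lemma \ref{AllImproper} says that $\mu$ has no proper key polynomial, so the statement is vacuous. Hence I may assume $\mu$ is residue-transcendental, so that $e$ is finite. Fix a key polynomial $\psi\in\kpm$ of minimal degree and write $\pi=\inm\psi$. Recall $\deg(\psi)=\deg(\mu)$, $\gm^0=\{\mu(a)\mid a\in\kx,\ 0\le\deg(a)<\deg(\psi)\}$, and $\gm=\gen{\gm^0,\mu(\psi)}$ with $(\gm\colon\gm^0)=e$.

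The core step is the inequality $\deg(\phi)\ge e\,\deg(\mu)$. When $e=1$ this is immediate, since $\psi$ has minimal degree. When $e\ge2$, properness of $\phi$ forces $\deg(\phi)>\deg(\mu)$, because the unique improper class consists exactly of the key polynomials of minimal degree; hence $\dgm(\phi)\ge2$ by Lemma \ref{lemma1x}. Now consider the $\psi$-expansion $\phi=\sum_i a_i\psi^i$ and the set $S=S_{\mu,\psi}(\phi)$, so that $\inm\phi=\sum_{i\in S}(\inm a_i)\pi^i$ with every $\inm a_i$ a homogeneous unit and $\dgm(\phi)=\max(S)$. Since all these monomials have grade $\mu(\phi)$, comparing grades gives $(i-i')\mu(\psi)\in\gm^0$ for all $i,i'\in S$, so $e$ divides $i-i'$ and all exponents in $S$ are congruent modulo $e$. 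The point now is that $S$ has at least two elements: otherwise $\inm\phi$ would be a homogeneous unit times a power of the prime $\pi$, and since $\inm\phi$ is itself prime (as $\phi$ is $\mu$-irreducible) this would force $\dgm(\phi)=1$, contradicting $\dgm(\phi)\ge2$. With $|S|\ge2$ and all exponents congruent mod $e$, we get $\max(S)\ge\min(S)+e\ge e$, whence $\deg(\phi)=\dgm(\phi)\,\deg(\mu)\ge e\,\deg(\mu)$.

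To finish, the inclusion $\g_{\mu,\phi}\subseteq\gm$ is trivial. Conversely, given $\alpha\in\gm$, write $\alpha=\beta+j\,\mu(\psi)$ with $\beta\in\gm^0$ and $0\le j<e$, choose $b\in\kx$ with $0\le\deg(b)<\deg(\psi)$ and $\mu(b)=\beta$, and set $a=b\psi^j$. Then $\mu(a)=\alpha$, while $\deg(a)=\deg(b)+j\deg(\psi)<(j+1)\deg(\psi)\le e\,\deg(\psi)\le\deg(\phi)$, so $\alpha\in\g_{\mu,\phi}$. The step I expect to be the real obstacle is the verification that $|S|\ge2$: this is the only place where one genuinely uses that $\inm\phi$ is prime rather than merely that $\phi$ is $\mu$-minimal, and it is what makes properness — as opposed to the weaker $\deg(\phi)\ge\deg(\mu)$ — do actual work when $e\ge2$.
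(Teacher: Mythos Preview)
Your proof is correct. The paper's own argument is much terser: it notes $\gm^0\subset\g_{\mu,\phi}\subset\gm$, and for $e>1$ uses properness only to conclude $\deg(\phi)>\deg(\varphi)$, hence $\mu(\varphi)\in\g_{\mu,\phi}$, and then writes ``$\gm=\gen{\gm^0,\mu(\varphi)}\subset\g_{\mu,\phi}$''. That last inclusion tacitly treats the \emph{set} $\g_{\mu,\phi}$ as a subgroup; this is true, but only because proper key polynomials satisfy $e\mid\dgm(\phi)$, a structural fact implicit in \cite[Proposition 6.3, Lemma 6.1]{KP} (the proper prime ideals of $\ggm$ come from irreducible polynomials in $\kappa_\mu[\xi]$ with $\deg_\mu\xi=e$). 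Your argument proves the needed inequality $\dgm(\phi)\ge e$ directly: the exponents in $S_{\mu,\psi}(\phi)$ lie in a single residue class mod $e$, and $|S|\ge2$ since a homogeneous prime cannot be an associate of $\pi^j$ with $j\ge2$. This is a genuinely self-contained route that makes explicit what the paper's proof leaves to the cited reference; the final construction $a=b\psi^j$ with $0\le j<e$ is then the natural common endpoint of both approaches.
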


\begin{proof}
	Clearly, $\gm^0\subset\g_{\mu,\phi}\subset\gm$. If $e:=e(\mu)=1$, then the three groups in this chain coincide.
	Suppose $e>1$, and let $\varphi$ be a key polynomial for $\mu$ of minimal degree. As mentioned in Section \ref{subsecDegree}, $\gm=\gen{\gm^0,\mu(\varphi)}$. Since $\phi$ is proper, necessarily $\deg(\phi)>\deg(\varphi)$. Hence, $\gm=\gen{\gm^0,\mu(\varphi)}\subset \g_{\mu,\phi}$.
\end{proof}

\begin{definition}
	An augmentation $\mu\to\nu$ is \textbf{proper} if $\ty(\mu,\nu)$ is a proper class. 
\end{definition}

\begin{proposition}\label{properAugm}
	If $\mu\to\nu$ is a proper augmentation, then $\gm=\gn^0$.
\end{proposition}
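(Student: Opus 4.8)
The plan is to show that $\gm = \gn^0$ by bounding $\gn^0$ from both sides. First I would recall that $\gn^0 \subset \gn$ always, and that the augmentation $\mu \to \nu$ produces a key polynomial $\phi$ of $\nu$ of minimal degree (either the $\phi$ used in the ordinary augmentation, or the limit key polynomial), with $\gn = \gen{\gn^0, \nu(\phi)}$ by (\ref{minimal}). So it suffices to show $\gm = \gn^0$ as subgroups of $\gn$. Since $\mu < \nu$, Lemma \ref{00} gives the inclusion $\gm^0 \subset \gn^0$, but we need the stronger statement with $\gm$ in place of $\gm^0$; this is where properness enters.

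The key step is the following: take $\phi_0 \in \ty(\mu,\nu)$, so $\phi_0$ is a proper key polynomial for $\mu$ (a proper class by hypothesis), and $\deg(\nu) = \deg(\ty(\mu,\nu)) = \deg(\phi_0)$. By Lemma \ref{wholeGroup}, because $\phi_0$ is proper we have
\[
\gm = \g_{\mu,\phi_0} = \left\{\mu(a) \mid a \in \kx,\ 0 \le \deg(a) < \deg(\phi_0)\right\}.
\]
Now for any such $a$ with $\deg(a) < \deg(\phi_0) = \deg(\nu)$, the element $\inm a$ maps to $\inv_\nu a$ under the canonical homomorphism $\ggm \to \gn$, and I claim $\inv_\nu a$ is a homogeneous unit in $\gn$ with $\nu(a) = \mu(a)$. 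Indeed, $a$ has degree strictly less than $\deg(\nu)$, hence less than the degree of any key polynomial of $\nu$; so $\inv_\nu a$ cannot be divisible by any $\inv_\nu \varphi$ for $\varphi \in \kpn$ (that would force $\deg(\varphi) \le \deg(a) < \deg(\nu)$, contradicting minimality). By \cite[Theorem 6.8]{KP} (as used in the proof of Corollary \ref{unit/minimal}), a homogeneous element not divisible by any $\inv_\nu\varphi$ with $\varphi$ a key polynomial must be a unit. Hence $\nu(a) \in \gn^0$. Moreover, since $\inv_\nu a$ is a unit, $\mu(a) = \nu(a)$ (the canonical map sends homogeneous units to homogeneous units of the same grade, as in the proof of Lemma \ref{00}). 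Therefore $\gm = \g_{\mu,\phi_0} \subset \gn^0$.

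For the reverse inclusion $\gn^0 \subset \gm$: every homogeneous unit of $\gn$ is $\inv_\nu a$ for some $a$ with $\deg(a) < \deg(\nu)$ — this is the standard description of $\gn^0$ recorded in Section \ref{subsecDegree}, namely $\gn^0 = \{\nu(a) \mid a \in \kx,\ 0 \le \deg(a) < \deg(\nu)\}$ when $\nu$ admits a key polynomial of minimal degree. For such $a$ we showed $\nu(a) = \mu(a) \in \gm$. This gives $\gn^0 \subset \gm$, and combining the two inclusions, $\gm = \gn^0$. One case to handle separately at the start: if $\supp(\nu) = \phi\kx$ rather than $\nu$ having a key polynomial, the same argument runs with $\deg(\nu) = \deg(\phi)$ and $\gn = \gn^0$ automatically (the graded algebra of a nontrivial-support valuation is simple, by Theorem \ref{empty}), so there one only needs $\gm \subset \gn$ together with $\gn = \gn^0$; but in fact $\nu$ value-transcendental is excluded here since $\mu \to \nu$ proper forces, via Lemma \ref{AllImproper}, that we are not in the value-transcendental-limit situation — I would check this compatibility explicitly.

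I expect the main obstacle to be the careful bookkeeping around which $\phi$ (or support generator) plays the role of the minimal-degree object of $\nu$, and verifying that $\deg(\ty(\mu,\nu)) = \deg(\nu)$ in the proper case so that the degree bound "$\deg(a) < \deg(\nu)$" is exactly what Lemma \ref{wholeGroup} delivers. Once that identification is pinned down, the unit argument is routine given Corollary \ref{unit/minimal} and \cite[Theorem 6.8]{KP}.
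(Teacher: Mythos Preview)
Your argument has a genuine gap in the limit case, stemming from the claimed equality $\deg(\ty(\mu,\nu)) = \deg(\nu)$. This holds for ordinary augmentations, but it is \emph{false} for limit augmentations: if $\nu = [\cc;\phi,\ga]$ with $\cc$ based on $\mu$, then $\ty(\mu,\nu)$ has degree $\deg(\mu) = \deg(\cc)$, while $\deg(\nu) = \deg(\phi)$ can be strictly larger. Since the convention forces $e(\mu)=1$, limit augmentations are automatically proper, so you cannot avoid this case.

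The failure breaks your reverse inclusion $\gn^0 \subset \gm$. You take $a$ with $\deg(a) < \deg(\nu)$ and assert $\nu(a) = \mu(a)$ ``as shown'', but what you actually showed is $\nu(a)=\mu(a)$ for $a$ with $\deg(a) < \deg(\phi_0) = \deg\ty(\mu,\nu)$. In the limit case there exist $a$ with $\deg\ty(\mu,\nu) \le \deg(a) < \deg(\nu)$ --- for instance $a=\phi_0$ itself --- for which $\mu(a) < \nu(a)$, so the argument collapses. (There is also a minor circularity in your forward inclusion: you deduce $\mu(a)=\nu(a)$ from $\inv_\nu a$ being a unit, but the correct route is via Lemma~\ref{tdef}, since $\deg(a)<\deg(\phi_0)$ forces $\inm\phi_0\nmid\inm a$.) The paper handles the limit case separately: it uses that $\gn^0 \subset \{\rho_\cc(a)\mid \deg(a)<\deg(\phi)\}\subset \g_\cc$, and then invokes the convention $\g_\cc = \gm$. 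That convention is the missing ingredient you need.
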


\begin{proof}
	Let $e=e(\mu)$. If $e=1$, then we have $\gm=\gm^0\subset\gn^0$ by Lemma \ref{00}.

	Suppose that $e>1$. Take any $\phi\in\ty(\mu,\nu)$ and consider the ordinary augmentation $\eta=[\mu;\phi,\nu(\phi)]$. Clearly, $\mu<\eta\le \nu$.
	
	Since $\phi$ is proper, Lemma \ref{wholeGroup} shows that $\gm=\g_{\mu,\phi}$. Hence, any $\al\in\gm$ can be written as $\al=\mu(a)$ for some $a\in\kx$ such that $\deg(a)<\deg(\phi)$.
	
	Then, $\inm \phi\nmid \inm a$, and Lemma \ref{tdef} shows that $\mu(a)=\nu(a)=\eta(a)$. Since $\phi$ is a key polynomial for $\eta$ of minimal degree, we have $\al=\eta(a)\in\geta^0\subset \gn^0$, the last inclusion by Lemma \ref{00}.
	This ends the proof of the inclusion $\gm\subset\gn^0$.
	
	Conversely, take $\al\in\gn^0$. If the augmentation $\mu\to\nu$ is ordinary, then we can write $\al=\nu(a)$ for some $a\in\kx$ with $\deg(a)<\deg(\phi)=\deg(\nu)$. Then, $\inm \phi\nmid \inm a$, and Lemma \ref{tdef} shows that $\al=\nu(a)=\mu(a)\in\gm$. Thus, $\gn^0\subset\gm$.
	
	If the augmentation $\mu\to\nu$ is limit, then $\nu=[\cc;\phi,\ga]$  for some continuous family $\cc$ based on $\mu$. Then,
	$$
	\gn^0=\{\rho_\cc(a)\mid a\in\kx,\ 0\le\deg(a)<\deg(\phi)\}\subset\g_\cc.
	$$
	By our convention about limit augmentations we have $\g_\cc=\gm$. 
\end{proof}

\begin{lemma}\label{RigProper}
 An augmentation $\mu\to\nu$ is proper if and only if the augmentation $\muh\to\nuh$ is proper.  
\end{lemma}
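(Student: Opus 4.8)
The plan is to split the argument according to the relative ramification index $e(\mu)$, using Theorem \ref{theoremabouthensel} as the bridge between $\mu$ and $\muh$. As a preliminary, I would note that the canonical isomorphism $\ggm\simeq\ggmh$ of Theorem \ref{theoremabouthensel} is grade preserving and carries homogeneous units to homogeneous units (in both directions, being an isomorphism); hence it identifies $\gm$ with $\Gamma_{\muh}$ and $\gm^0$ with $\Gamma_{\muh}^0$, so that $e(\mu)=e(\muh)$. I would also recall that, since an augmentation satisfies $\mu<\nu$, Lemma \ref{tdef} guarantees that $\ty(\mu,\nu)$ is a $\mu$-equivalence class of key polynomials for $\mu$, and likewise $\ty(\muh,\nuh)$ is such a class for $\muh$; thus ``$\mu\to\nu$ is proper'' is precisely the assertion that the class $\ty(\mu,\nu)$ is proper, and the governing fact is \cite[Proposition 6.3]{KP}.

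If $e(\mu)=1$, then also $e(\muh)=1$, and by \cite[Proposition 6.3]{KP} every key polynomial for $\mu$ and every key polynomial for $\muh$ is proper; hence both augmentations are automatically proper and the equivalence holds trivially. This already disposes of every limit augmentation, since by the convention of Section \ref{subsecLimAug} a limit augmentation has $e=1$ at its initial valuation.

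It remains to treat the case $e(\mu)>1$. Then $\mu\to\nu$ cannot be limit, so it is ordinary, say $\nu=[\mu;\phi,\ga]$ with $\phi\in\kpm$; thus $\ty(\mu,\nu)=[\phi]_\mu$, and by Lemma \ref{Ordinary} we have $\nuh=[\muh;Q,\ga']$ with $Q=Q_{\mu,\phi}\in\irr(\kh)$, so $\ty(\muh,\nuh)=[Q]_{\muh}$; moreover $e(\muh)=e(\mu)>1$. By \cite[Proposition 6.3]{KP}, on each side the unique improper class consists of the key polynomials of minimal degree, so $[\phi]_\mu$ is proper precisely when $\deg(\phi)>\deg(\mu)$, and $[Q]_{\muh}$ is proper precisely when $\deg(Q)>\deg(\muh)$. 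Since $\phi$ and $Q$, being key polynomials, are $\mu$- resp. $\muh$-minimal, Lemma \ref{lemma1x} rewrites these conditions as $\dgm(\phi)>1$ and $\dgmh(Q)>1$; and Proposition \ref{P3.3Isom} gives $\dgm(\phi)=\dgmh(Q)$. Stringing these equivalences together shows that $\mu\to\nu$ is proper if and only if $\muh\to\nuh$ is, completing the proof.

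The one point that needs care — more a pitfall than a genuine obstacle — is that $\deg(\mu)$ and $\deg(\muh)$ need not be equal (compare Corollary \ref{corollaryandrei} and Example \ref{exampnaphel}), so one must not compare the polynomial degrees $\deg(\phi)$ and $\deg(Q)$ directly; the invariant that transports across the henselization is the $\mu$-degree, and it is exactly the identity $\dgm(\phi)=\dgmh(Q)$ of Proposition \ref{P3.3Isom}, together with Lemma \ref{lemma1x} relating the two notions of degree, that makes the comparison go through. Everything else is assembly of results already in hand.
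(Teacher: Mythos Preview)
Your proof is correct and complete. It takes a genuinely different route from the paper's own argument, and arguably a more direct one.

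The paper, after disposing of the $e=1$ case as you do, treats $e>1$ by giving an \emph{intrinsic algebraic characterization} of the improper prime ideal $\pp=(\inm\varphi)\ggm$: it is the unique homogeneous prime whose generators, raised to the $e$-th power, generate a prime ideal in the grade-zero subring $\dm=\ka_\mu[\xi]$ (this is the ramification $\p_0\ggm=\pp^e$ from \cite[Lemma 6.1]{KP}). Since any graded-algebra isomorphism must respect this property, the isomorphism $\ggm\simeq\ggmh$ of Theorem \ref{theoremabouthensel} carries the improper ideal of $\ggm$ to that of $\ggmh$; one then checks via Lemmas \ref{Ordinary} and \ref{unstabilitythm} that the tangent-direction ideals correspond under the same isomorphism.

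You instead bypass the structure of $\dm$ entirely. Your key observation is that, for $e>1$, the convention on continuous families forces the augmentation to be ordinary, and then the criterion ``proper $\Leftrightarrow$ degree strictly larger than $\deg(\mu)$'' becomes, via Lemma \ref{lemma1x}, the criterion $\dgm(\phi)>1$; the identity $\dgm(\phi)=\dgmh(Q)$ of Proposition \ref{P3.3Isom} then transports this across the henselization in one stroke. This is shorter and avoids invoking the ramification theory of prime ideals in $\ggm$. The paper's approach, on the other hand, is more conceptual: it explains \emph{why} the improper class is canonical from the graded-algebra viewpoint, and it treats ordinary and limit augmentations uniformly at the end rather than using the convention to eliminate the limit case up front. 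Your closing remark about the pitfall $\deg(\mu)\ne\deg(\muh)$ is well taken and identifies exactly where a careless argument would go wrong.
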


\begin{proof}
Since $\mu<\nu$, we have $\kpm\ne\emptyset$, so that $\mu$ is necessarily valuation-transcendental. By Lemma \ref{AllImproper}, if $\mu$ is value-transcendental it admits no proper augmentations. 
Since the same argument applies to $\muh$, we can assume that $\mu$ (hence $\muh$) is residue-transcendental. 

Let $e=e(\mu)=e(\muh)$. If $e=1$, then all key polynomials are proper and the statement of the lemma is obvious. 

Suppose that $e>1$. In this case, all key polynomials of minimal degree are $\mu$-equivalent and form the unique improper class of $\mu$. Thus, the prime element  $\pi=\inm\varphi$  does not depend on the choice of $\varphi$ among the key polynomial for $\mu$ of minimal degree. Consider the improper homogeneous prime ideal $\pp=\pi\ggm$.

The subring $\d=\dm\subset\ggm$ formed by all homogeneous elements of grade zero is a polynomial ring with coefficients in the field $\ka=\ka_\mu$:
$$
\d=\ka[\xi],\qquad \xi=\pi^e/u,
$$
for any  homogeneous unit $u$ of grade $-e\mu(\phi)$. The element $\xi$ is transcendental over $\ka_\mu$ \cite[Theorem 4.6]{KP}.

The canonical prime ideal $\p_0=\xi\d$ ramifies in $\ggm$; more precisely, $\p_0\ggm=\pp^e$. However, all other prime ideals of $\d$ remain prime in $\ggm$ \cite[Lemma 6.1]{KP}.

Hence, the improper ideal $\pp$ is characterized by an algebraic property: any generator of $\pp$ raised to the $e$-th power generates a prime ideal in $\d$. 

As a consequence, the isomorphism $\ggm\hk\ggmh$ must send the improper ideal of $\ggm$ to the improper ideal of $\ggmh$.  

Finally, let $\ty(\mu,\nu)=[\phi]_\mu$, $\ty(\muh,\nuh)=[Q]_{\muh}$. By Lemmas \ref{Ordinary} and \ref{unstabilitythm}, the isomorphism $\ggm\hk\ggmh$ maps the prime ideal $\left(\inm\phi\right)\ggm$ to the prime ideal $\left(\inmh Q\right)\ggmh$. This ends the proof.  
\end{proof}

\subsection{Proper chains of augmentations}\label{subsecProperChains}

Suppose that  $\mu$ is either valuation-trans\-cen\-den\-tal or has nontrivial support.

By the celebrated result of Mac Lane--Vaqui\'e, $\mu$ can be obtained from $v$ after a finite number of augmentations: 
\begin{equation}\label{MLV}
v\ \to\  \mu_0\ \to\  \mu_1\ \to\ \cdots
\ \to\ \mu_{r} 
\ \to \ \mu_{r+1}=\mu.
\end{equation}

The initial step $v\to\mu_0$ has only a formal purpose, because the valuation $v$ is not a valuation on $\kx$. This step only indicates that $\mu_0$ is a depth-zero valuation $\mu_0=\om_{a,\dta}$, for some $a\in K$, $\ga\in\La\infty$ (see Section 6.3).\e

As shown in \cite[Theorem 4.3]{MLV}, we can impose some strong conditions on the chain, ensuring a certain kind of uniqueness. The chains satisfying these conditions are called  \emph{MLV-chains}. However, these chains are not useful to our purposes, because the MLV-condition does not (necessarily) hold any more when we  extend the chain to the henselization (see Examples \ref{exa1} and \ref{exampletwoaug}).

\emph{Proper} chains of augmentations are best suited.

\begin{definition}
A chain of augmentations as in (\ref{MLV}) is said to be \textbf{proper} if all augmentations $\mu_n\to\mu_{n+1}$ are proper.  
\end{definition}

All MLV-chains are proper \cite[Section 4.1]{MLV}. Thus, our valuations $ \mu$ can be obtained as the final node of a proper chain. Also, proper chains remain proper under henselization, by Lemma \ref{RigProper}.   

Obviously, by considering the canonical homomorphisms $\gg_{\mu_n}\to\gg_{\mu_{n+1}}$, any chain induces a tower of fields
\[
k=\ka_{\mu_0}\,\to\,\cdots\,\to\,\ka_{\mu_{r+1}}=\ka_\mu.
\]
Hence, we have
\[
[\ka_\mu\colon k]=f(\mu_0\to\mu_1)\cdots f(\mu_r\to\mu).  
\]

Also, if the chain is proper, then Proposition \ref{properAugm} shows that the value groups of the nodes of the chain form a chain 
\[
\g_{\mu_{-1}}:=\g\subset\g_{\mu_0}\subset \cdots \subset \g_{\mu_{r}}\subset \g_{\mu_{r+1}}= \gm,
\]
such that $\g_{\mu_{n-1}}=\g_{\mu_n}^0$ for all $n$, $0\le n\le r+1$.
Hence, we have
\[
\left(\gm\colon\g\right)=e(\mu_0)\cdots e(\mu_{r+1}). 
\]

Finally, let us denote
\[
d(\mu/v):=d(\mu_0\to\mu_1)\cdots d(\mu_r\to\mu). 
\]

Since $\deg(\mu_0)=1$, Proposition \ref{degG} shows  that 
\[
\left(\gm\colon\g\right)[\ka_\mu\colon k]d(\mu/v)=e(\mu)\deg(\muh).
\]
Hence the following definition is correct, and Theorem \ref{degGthm} follows  immediately.

\begin{definition}
The integer $d(\mu/v)$ does not depend on the choice of the proper chain ending in $\mu$.  We call it the defect of $\mu/v$. 
\end{definition}

\begin{theorem}\label{degGthm}
Suppose that $\mu$ is valuation-transcendental or has nontrivial support. Then,
 \[
  \left(\gm\colon\g\right)[\ka_\mu\colon k]d(\mu/v)=e(\mu)\deg(\muh).
 \]
\end{theorem}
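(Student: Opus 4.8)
The plan is to obtain the identity by multiplying the single-augmentation formula of Proposition~\ref{degG} along a proper chain and telescoping. Since $\mu$ is valuation-transcendental or has nontrivial support, Mac Lane--Vaqui\'e guarantees that $\mu$ is the last node of a chain of augmentations
\[
v\ \to\ \mu_0\ \to\ \mu_1\ \to\ \cdots\ \to\ \mu_r\ \to\ \mu_{r+1}=\mu,
\]
and we may choose this chain to be \emph{proper}, since every MLV-chain is proper \cite[Section 4.1]{MLV}. First I would apply Proposition~\ref{degG} to each of the augmentations $\mu_n\to\mu_{n+1}$, $0\le n\le r$, obtaining
\[
e(\mu_n)\,f(\mu_n\to\mu_{n+1})\,d(\mu_n\to\mu_{n+1})=\deg(\mu_{n+1}^h)/\deg(\mu_n^h).
\]

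Next I would multiply these $r+1$ equalities together. On the right-hand side the product telescopes to $\deg(\mu_{r+1}^h)/\deg(\mu_0^h)=\deg(\muh)/\deg(\mu_0^h)$; and since $\mu_0=\om_{a,\dta}$ is a depth-zero valuation with $a\in K$, the linear polynomial $x-a\in\kx$ is already irreducible over $\kh$ and is a key polynomial of minimal degree for $\mu_0^h$, so $\deg(\mu_0^h)=1$ and the total right-hand side equals $\deg(\muh)$. On the left-hand side I would regroup the three families of factors using the identities recalled just before the theorem: $\prod_{n=0}^{r}f(\mu_n\to\mu_{n+1})=[\ka_\mu\colon k]$, coming from the tower $k=\ka_{\mu_0}\to\cdots\to\ka_\mu$; $\prod_{n=0}^{r}d(\mu_n\to\mu_{n+1})=d(\mu/v)$ by definition; and, using that properness of the chain together with Proposition~\ref{properAugm} yields $\left(\gm\colon\g\right)=e(\mu_0)\cdots e(\mu_{r+1})$, one gets $\prod_{n=0}^{r}e(\mu_n)=\left(\gm\colon\g\right)/e(\mu)$. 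Equating the two products and multiplying through by $e(\mu)$ gives exactly
\[
\left(\gm\colon\g\right)[\ka_\mu\colon k]\,d(\mu/v)=e(\mu)\deg(\muh).
\]

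There is no genuine obstacle at this stage: all the substance has been packed into Proposition~\ref{degG}, which itself rests on the rigidity results (Theorems~\ref{theoremabouthensel} and~\ref{defectcte}) and on the inertia-degree computation~(\ref{Compf}). What does require care is purely the bookkeeping: that $\mu$ admits a proper chain at all, that properness is preserved under henselization so that the grouping of the factors is consistent (Lemma~\ref{RigProper}), and --- the point most easily mishandled --- the indexing, namely that the product of relative ramification indices runs over the augmentation sources $\mu_0,\dots,\mu_r$ and therefore omits precisely the top index $e(\mu)=e(\mu_{r+1})$. Finally, since the right-hand side of the displayed identity is manifestly independent of the chosen proper chain, this argument simultaneously establishes the well-definedness asserted in the Definition of $d(\mu/v)$ immediately preceding the theorem.
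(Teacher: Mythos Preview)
Your proof is correct and follows essentially the same approach as the paper: take a proper chain, apply Proposition~\ref{degG} to each augmentation, and telescope, using the product decompositions of $(\gm\colon\g)$, $[\ka_\mu\colon k]$, and $d(\mu/v)$ recorded just before the theorem. The paper compresses all of this into a single sentence (``Since $\deg(\mu_0)=1$, Proposition~\ref{degG} shows that \ldots''), whereas you spell out the telescoping and the indexing of the $e(\mu_n)$ factors explicitly; the only superfluous remark is the appeal to Lemma~\ref{RigProper}, since the argument never actually uses the chain on $\khx$.
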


\subsection{The defect formula}

Let $g\in\irr(K)$ and let $w$ be an extension of $v$ to the field $L=\kx/(g)$.

This valuation $w$ on $L$ determines a valuation on $\kx$ in an obvious way:
\[
\mu\colon \kx\longtwoheadrightarrow L\stackrel{w}\lra \gq\infty.
\]
This valuation has support $g\kx$. As shown in Section \ref{secFinLeaves}, we have $\mu=w_G$ for some irreducible factor $G\in \irr(\kh)$ of $g$ in $\khx$. According to the notation in that section, we have $\muh=v_G$.

By Theorem \ref{degGthm}, we deduce that
\[
 e(w/v)f(w/v)d(\mu/v)=\deg(\muh)=\deg(G).
\]
Indeed, we have obviously $\gm=\g_w$. Also, $e(\mu)=1$ because the algebra $\ggm$ is simple: $\ggm=\ggm^0$. On the other hand, $k_w=k_\mu$ by definition, and it is easy to check that $\ka_\mu=k_\mu$.  
We deduce our main result.

\begin{theorem}\label{DF}
 Let $w$ be an extension of $v$ to a finite simple extension $L/K$. Let $\mu$ be the valuation on $\kx$ induced by $w$.    
For any proper chain of augmentations ending in $\mu$ as in (\ref{MLV}), we have  
\[
d(w/v)=d(\mu_0\to\mu_1)\cdots d(\mu_r\to\mu).
\]
\end{theorem}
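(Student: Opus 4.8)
The plan is to read off the formula from Theorem~\ref{degGthm} once the classical invariants $e(w/v)$, $f(w/v)$ and $d(w/v)$ are matched with the graded-algebra invariants of $\mu$. First I would recall the dictionary established just before the statement: the valuation $w$ on $L=\kx/(g)$ corresponds to the valuation $\mu$ on $\kx$ with $\supp(\mu)=g\kx$, and by Corollary~\ref{CorEndler} and Proposition~\ref{CorEndler2} one has $\mu=w_G$ and $\muh=v_G$ for the unique irreducible factor $G\in\irr(\kh)$ of $g$ over $\khx$ singled out by $w$; in particular $\deg(\muh)=\deg(G)$. By the definition of the defect of a finite simple extension given in Section~\ref{secFinLeaves},
\[
d(w/v)=\frac{\deg(G)}{e(w/v)\,f(w/v)}.
\]

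Next I would check that $\mu$ has the form to which Theorem~\ref{degGthm} applies and that its left-hand side simplifies. Since $\supp(\mu)\ne 0$, Theorem~\ref{empty} gives $\kpm=\emptyset$, so $\ggm=\ggm^0$ is simple; in particular the relative ramification index is $e(\mu)=1$ and, by the Convention following Theorem~\ref{g0gm}, $\dgm\equiv 0$. Applying Theorem~\ref{degGthm} to $\mu$ (which is of nontrivial-support type) we obtain
\[
\left(\gm\colon\g\right)\,[\ka_\mu\colon k]\,d(\mu/v)=e(\mu)\deg(\muh)=\deg(G).
\]

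It then remains to identify the three factors on the left. The value group $\gm$ is by definition the group generated by the $\mu$-values of the polynomials outside $\supp(\mu)$, which is precisely the value group $\g_w$ of the induced valuation on $L$; hence $\left(\gm\colon\g\right)=(\g_w\colon\g)=e(w/v)$. Likewise the residue field $\km$ equals the residue field $k_w$ of $w$, and $k_w/k$ is finite, hence algebraic, so $\km$ coincides with its relative algebraic subfield $\ka_\mu$; thus $[\ka_\mu\colon k]=[k_w\colon k]=f(w/v)$. Substituting these identities into the displayed equality yields $\deg(G)=e(w/v)f(w/v)\,d(\mu/v)$, and therefore
\[
d(w/v)=\frac{\deg(G)}{e(w/v)f(w/v)}=d(\mu/v)=d(\mu_0\to\mu_1)\cdots d(\mu_r\to\mu),
\]
the last equality being the definition of $d(\mu/v)$, which by the Definition preceding Theorem~\ref{degGthm} does not depend on the chosen proper chain.

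The only points in this argument that are not purely formal are the two identifications $\gm=\g_w$ and $\km=k_w$, together with the observation $\ka_\mu=k_\mu$; each follows at once from unwinding the definitions of the value group and residue field of a valuation on $\kx$ with nontrivial support, and none presents any difficulty. All the substance has already been absorbed into Theorem~\ref{degGthm}, behind which stand Theorem~\ref{defectcte}, Theorem~\ref{theoremabouthensel} and the rigidity statements of Sections~\ref{prooofthm1} and~\ref{rigofdefect}, so no real obstacle remains at this final step.
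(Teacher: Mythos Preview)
Your proposal is correct and follows essentially the same route as the paper: both deduce the formula directly from Theorem~\ref{degGthm} after identifying $e(\mu)=1$, $\gm=\g_w$, $\ka_\mu=k_\mu=k_w$, and $\deg(\muh)=\deg(G)$. The only minor imprecision is that $\ka_\mu$ is defined as the relative algebraic closure of $k$ inside $\Delta_\mu$ rather than inside $k_\mu$, but since $\Delta_\mu=k_\mu$ for valuations of nontrivial support this makes no difference.
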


In particular, $d(w/v)$ is a positive integer. The next result follows from Theorems  \ref{defectcte}, \ref{theoremabouthensel} and 
\ref{DF}.

\begin{corollary}\label{efdHensel}
 Let $w$ be an extension of $v$ to a finite simple extension $L/K$. Let $w^h$ be the natural extension of $\vh$ to $L^h=L \kh$ determined by $w$. Then,
 \[
 e(w/v)=e(w^h/v^h),\quad 
 f(w/v)=f(w^h/v^h),\quad 
 d(w/v)=d(w^h/v^h). 
 \]
\end{corollary}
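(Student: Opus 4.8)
The plan is to reduce everything to the valuations on polynomial rings studied above and then simply assemble Theorems \ref{theoremabouthensel}, \ref{defectcte} and \ref{DF}. First I would fix notation exactly as in Section \ref{secFinLeaves}: write $L=\kx/(g)$ for some $g\in\irr(K)$, let $\mu$ be the valuation on $\kx$ induced by $w$ (so $\supp(\mu)=g\kx$), and let $G\in\irr(\kh)$ be the irreducible factor of $g$ in $\khx$ singled out by $w$, so that $\mu=w_G$ and $\muh=v_G$. Identifying $L^h=L\kh$ with $\khx/(G)$, the valuation $w^h$ is the one induced by $v_G=\muh$, and $L^h/\kh$ is again a finite simple extension, of degree $\deg(G)$. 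Under these identifications $\g_w=\gm$, $\g_{w^h}=\g_{\muh}$, $Lw=\km$ and $L^hw^h=k_{\muh}$.

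For the defect, fix a proper chain of augmentations $v\ \to\ \mu_0\ \to\ \cdots\ \to\ \mu_{r+1}=\mu$ as in (\ref{MLV}); such a chain exists because $\mu$ has nontrivial support, and all MLV-chains are proper. Now I would run the analysis of Section \ref{prooofthm1} one step at a time: the formal initial node $\mu_0=\om_{a,\dta}$ with $a\in K$ extends to the depth-zero valuation $\mu_0^h=[\vh;\,x-a,\dta]$ on $\khx$ (uniqueness by Theorem \ref{Rig}); each ordinary augmentation $\mu_n\to\mu_{n+1}$ extends to an ordinary augmentation $\mu_n^h\to\mu_{n+1}^h$ (Lemma \ref{Ordinary}); each limit augmentation extends to a limit augmentation (Lemma \ref{unstabilitythm}); and each extended augmentation is again proper (Lemma \ref{RigProper}). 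Hence $v^h\ \to\ \mu_0^h\ \to\ \cdots\ \to\ \mu_{r+1}^h=\muh$ is a proper chain of the form (\ref{MLV}) ending in $\muh$, which is the valuation on $\khx$ induced by $w^h$. Applying Theorem \ref{DF} to $w/v$ and to $w^h/v^h$ gives
\[
d(w/v)=\prod_{n=0}^{r}d(\mu_n\to\mu_{n+1}),\qquad
d(w^h/v^h)=\prod_{n=0}^{r}d(\mu_n^h\to\mu_{n+1}^h),
\]
and Theorem \ref{defectcte} yields $d(\mu_n\to\mu_{n+1})=d(\mu_n^h\to\mu_{n+1}^h)$ for every $n$; therefore $d(w/v)=d(w^h/v^h)$.

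For the ramification index and inertia degree I would invoke that the henselization of a valued field is an immediate extension, so that both $(\kh,\vh)/(K,v)$ and $(L^h,w^h)/(L,w)$ are immediate; equivalently, $\g_v=\g_{\vh}$, $Kv=\kh\vh$, $\g_w=\g_{w^h}$ and $Lw=L^hw^h$. (The last two equalities can alternatively be read off from $\g_w=\gm=\g_{\muh}=\g_{w^h}$ and $Lw=\km=k_{\muh}=L^hw^h$, using the isomorphism $\ggm\simeq\ggmh$ of Theorem \ref{theoremabouthensel} together with Lemma \ref{crit}, which says precisely that $e(\muh/\mu)=1=f(\muh/\mu)$.) Consequently
\[
e(w^h/v^h)=\big(\g_{w^h}\colon\g_{\vh}\big)=\big(\g_w\colon\g_v\big)=e(w/v),\qquad
f(w^h/v^h)=\big[L^hw^h\colon\kh\vh\big]=\big[Lw\colon Kv\big]=f(w/v).
\]

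The only point needing care is the second paragraph: one must check that henselizing a proper MLV-type chain for $\mu$ produces a chain for $\muh$ that is still a legitimate proper chain in the sense of (\ref{MLV}) — that the augmentation type and properness survive each step, including the cosmetic depth-zero initial step. This is exactly what Lemmas \ref{Ordinary}, \ref{unstabilitythm}, \ref{RigProper} and Theorem \ref{Rig} are designed to supply, so beyond this bookkeeping the corollary is an immediate consequence of Theorems \ref{theoremabouthensel}, \ref{defectcte} and \ref{DF}.
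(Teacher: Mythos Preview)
Your proposal is correct and follows essentially the same route as the paper, which simply records that the corollary ``follows from Theorems \ref{defectcte}, \ref{theoremabouthensel} and \ref{DF}'' without further argument. You have supplied exactly the bookkeeping the paper suppresses: lifting the proper chain to $\khx$ via Lemmas \ref{Ordinary}, \ref{unstabilitythm}, \ref{RigProper} (so that Theorem \ref{DF} may be applied over $\kh$), matching the defects term by term via Theorem \ref{defectcte}, and reading off $e(\muh/\mu)=1=f(\muh/\mu)$ from Theorem \ref{theoremabouthensel} through Lemma \ref{crit} (or equivalently from immediateness of henselization).
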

In particular, Ostrowski's lemma shows that $d(w/v)$ can be expressed as a power of the residue characteristic exponent of $v$.

\begin{corollary}\label{efdHensel2}
Suppose that $(K,v)$ is henselian. Let $w$ be an extension of $v$ to a finite simple extension $L/K$. Let $\mu$ be the valuation on $\kx$ determined by $w$.  For any proper chain of augmentations ending in $\mu$ as in (\ref{MLV}), we have
\[
d(w/v)=\prod_{n\in I_{\lim}}\dfrac{\deg(\mu_{n+1})}{\deg(\mu_n)},
\]
where the set $I_{\lim}$ contains all indices $n$ such that $\mu_n\to\mu_{n+1}$ is a limit augmentation.
\end{corollary}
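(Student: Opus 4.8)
The plan is to read off the statement from Theorem \ref{DF} together with the two lemmas that compute the defect of a single augmentation, the whole argument being essentially bookkeeping once the henselian hypothesis is in place.

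First I would invoke Theorem \ref{DF}: for the given proper chain $v\to\mu_0\to\mu_1\to\cdots\to\mu_r\to\mu_{r+1}=\mu$ we have
\[
d(w/v)=d(\mu_0\to\mu_1)\cdots d(\mu_r\to\mu)=\prod_{n=0}^{r}d(\mu_n\to\mu_{n+1}),
\]
so it suffices to evaluate each factor. I would then split the index set $\{0,\dots,r\}$ into the ordinary and the limit augmentations. For $n\notin I_{\lim}$, i.e.\ $\mu_n\to\mu_{n+1}$ ordinary, Lemma \ref{d=1} gives $d(\mu_n\to\mu_{n+1})=1$, so these steps drop out of the product and we are left with $d(w/v)=\prod_{n\in I_{\lim}}d(\mu_n\to\mu_{n+1})$.

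For the remaining factors I would use that $(K,v)$ is henselian, so Lemma \ref{Lemma3} applies \emph{directly} on $K[x]$ with no passage to a henselization: for each limit augmentation $\mu_n\to\mu_{n+1}$ it yields $d(\mu_n\to\mu_{n+1})=\deg(\mu_{n+1})/\deg(\mu_n)$. Substituting this into the previous display gives exactly
\[
d(w/v)=\prod_{n\in I_{\lim}}\frac{\deg(\mu_{n+1})}{\deg(\mu_n)},
\]
which is the claim.

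There is no real obstacle here — the corollary is an immediate consequence of the already established Theorem \ref{DF}, Lemma \ref{d=1} and Lemma \ref{Lemma3}. The only points worth keeping in mind are the two bookkeeping subtleties: the formal initial step $v\to\mu_0$ is not one of the augmentations being multiplied (so the fact that $\mu_0$ is a depth-zero valuation is harmless), and in the henselian case the equality $\deg\ty(\mu_n,\mu_{n+1})=\deg(\mu_n)$ for a limit augmentation, coming from the standing convention on continuous families in Section \ref{subsecLimAug}, is precisely what turns the general formula $d=\deg(\mu_{n+1})/\deg\ty$ of Lemma \ref{Lemma3} into the ratio $\deg(\mu_{n+1})/\deg(\mu_n)$ appearing in the statement.
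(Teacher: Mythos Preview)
Your proof is correct and matches the paper's intended argument: the corollary is stated without explicit proof, as it follows immediately from Theorem \ref{DF} combined with Lemma \ref{d=1} for the ordinary steps and Lemma \ref{Lemma3} for the limit steps, exactly as you describe.
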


This formula for the defect in the henselian case was obtained by Vaqui\'e \cite{Vaq3}.

\section{The rank one case}\label{rankone}

All extensions $\mu$ of $v$ to $\kx$ considered in this section are assumed to satisfy ${\rm rk}(v)={\rm rk}(\mu)=1$.
By Abhyankar's inequality, the corresponding extensions $\muh$ to $\khx$ have rank one too. 

 Under these conditions, it is much easier to prove that the map $\mathcal G_\mu\hk \mathcal G_{\mu^h}$ is an isomorphism.

\begin{lemma}\label{lemmarankone}
For every $F\in K^h[x]\setminus\supp(\muh)$ there exists $f\in K[x]\setminus\supp(\mu)$, with same degree as $F$, such that $F\sim_{\muh}f$. Moreover, if $F\in\kp(\muh)$, then $f\in\kp(\muh)$.
\end{lemma}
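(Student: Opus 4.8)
The plan is to approximate $F$ by a polynomial over $K$ coefficient-by-coefficient, using that rank one valuations are dense in a suitable sense and that the henselization is an immediate-type extension for the relevant invariants. First I would reduce to the case where $F$ is monic (dividing by the leading coefficient, which lies in $(K^h)^\times$ and can itself be $\mu^h$-approximated by an element of $K^\times$ of the same value, since $\Gamma_{\mu^h}$ and $\Gamma_\mu$ have the same divisible hull and one can correct by an element of $K$). For the monic case, write $F = x^n + c_{n-1}x^{n-1} + \cdots + c_0$ with $c_i \in K^h$, and for each $i$ choose $a_i \in K$ with $\mu^h(c_i - a_i)$ very large; set $f = x^n + a_{n-1}x^{n-1} + \cdots + a_0 \in K[x]$. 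The point is that if each $\mu^h(c_i - a_i) > \mu^h(F) - \mu^h(x^i)$ (equivalently the correction term $(c_i - a_i)x^i$ has $\mu^h$-value strictly exceeding $\mu^h(F)$), then $\mu^h(f - F) > \mu^h(F)$, hence $\inv_{\mu^h} f = \inv_{\mu^h} F$, i.e. $F \sim_{\mu^h} f$. Such $a_i$ exist because $K$ is dense in $K^h$ with respect to $v^h$ (henselizations are always dense, as $K^h$ is a subfield of the completion) and $\mu^h$ restricted to each line $K^h$ is, up to the grading, controlled by $v^h$ — more precisely, $\mu^h(c_i - a_i) \geq v^h(c_i - a_i)$ is not quite right, so one actually argues directly: $K^h[x] $ carries $\mu^h$, and $K[x]$ is dense in $K^h[x]$ for the $\mu^h$-topology because $\mu^h$ is a valuation of rank one (hence its topology is induced by a real-valued absolute value) and $K$ is dense in $K^h$.

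The second assertion, that $f \in \kp(\muh)$ when $F \in \kp(\muh)$, should follow from the fact that $\mu^h$-equivalent polynomials of the same degree are simultaneously key polynomials or not: being a key polynomial for $\mu^h$ is a property of $\inv_{\mu^h} F$ together with $\deg F$ (a monic polynomial $\phi$ is a key polynomial iff $\inv_{\mu^h}\phi$ is prime and $\phi$ is $\mu^h$-minimal, and $\mu^h$-minimality for a polynomial of degree $\deg F$ depends only on its image in the graded algebra — this is \cite[Proposition 6.6]{KP} type reasoning). Since $f$ and $F$ are both monic of degree $n$ with $\inv_{\mu^h} f = \inv_{\mu^h} F$, and $F$ is a key polynomial, so is $f$. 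One must be slightly careful to ensure $f$ is monic of exactly degree $n$: choosing the leading coefficient correction to be exact (i.e. $a_n = 1$ since $F$ is monic) handles this.

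The main obstacle I anticipate is making the density argument fully precise in the graded setting: one needs that for a fixed $\alpha \in \Gamma_{\mu^h}$ and any $c \in K^h$, there is $a \in K$ with $\mu^h(c - a)$ exceeding a prescribed bound. This is where rank one is essential — it guarantees the order topology on $\Gamma_{\mu^h}$ is archimedean, so "very large value" is unambiguous and the valuation topology on $K^h$ is metrizable, making the classical density of $K$ in $K^h$ applicable. In higher rank this coefficient-wise approximation can fail, which is exactly why the general case (Theorem \ref{theoremabouthensel}) needs the much heavier graded-algebra machinery rather than this soft argument. I would also need to double-check the reduction to the monic case does not inflate the degree or change the support, but since support is $\phi K^h[x]$ or trivial and multiplying by a unit of $K^h$ changes neither, this is routine.

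\begin{proof}
We may assume $F$ is monic. Indeed, if $c\in (K^h)^\times$ is the leading coefficient of $F$, pick $a\in K^\times$ with $\muh(c-a)$ large enough that $\muh(c-a)>\muh(c)$; then $\inmh a=\inmh c$, and it suffices to prove the claim for the monic polynomial $c^{-1}F$, replacing $f$ at the end by $a\cdot(c^{-1}F\text{'s approximant})$, which has the same degree as $F$ and is $\muh$-equivalent to $F$. Also this reduction does not affect whether the polynomial lies in $\kp(\muh)$, since multiplying by a homogeneous unit preserves primality and $\muh$-minimality of the image, and does not change the degree.

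So let $F=x^n+c_{n-1}x^{n-1}+\cdots+c_0$ with $c_i\in K^h$. Since $\rnk(\muh)=1$, the valuation topology on $K^h$ is induced by a real absolute value, and $K$ is dense in $K^h$. Hence for each $i$ with $0\le i<n$ we may choose $a_i\in K$ with
\[
\muh(c_i-a_i)\ >\ \muh(F)-i\,\muh(x).
\]
Set $f=x^n+a_{n-1}x^{n-1}+\cdots+a_0\in K[x]$, a monic polynomial of degree $n=\deg(F)$. Then $f-F=\sum_{i=0}^{n-1}(a_i-c_i)x^i$, and by the choice of the $a_i$ each term satisfies $\muh\big((a_i-c_i)x^i\big)>\muh(F)$, whence $\muh(f-F)>\muh(F)$. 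Therefore $\muh(f)=\muh(F)$ and $\inmh f=\inmh F$; that is, $F\snu f$ with $\nu=\muh$, equivalently $F\sim_{\muh}f$. In particular $f\in\kx\setminus\supp(\muh)$.

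Finally, suppose $F\in\kp(\muh)$. Since $f$ and $F$ are monic of the same degree and $\inmh f=\inmh F$, the element $\inmh f=\inmh F$ is prime in $\ggmh$, so $f$ is $\muh$-irreducible. Moreover, $\muh$-minimality depends only on the image in the graded algebra: if $\inmh f\mid\inmh h$ with $h\ne0$, then $\inmh F\mid\inmh h$, so $\deg(F)\le\deg(h)$, i.e. $\deg(f)=\deg(F)\le\deg(h)$. Thus $f$ is $\muh$-minimal as well, and hence $f\in\kp(\muh)$.
\end{proof}
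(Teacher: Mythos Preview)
Your proof is correct and follows the same coefficient-wise approximation as the paper, using the density of $K$ in $K^h$ for rank-one valuations; the paper skips your reduction to the monic case (which is unnecessary, and your claim that it ``does not affect whether the polynomial lies in $\kp(\muh)$'' is slightly off since key polynomials must be monic --- but $F\in\kp(\muh)$ already forces $F$ monic, so no harm done). For the final assertion the paper simply cites \cite[Lemma 2.5]{KP}, which is exactly the observation you spell out directly.
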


\begin{proof}
Write $F=F_0+\ldots+F_rx^r$, with $F_i\in\kh$. Since ${\rm rk}(v)={\rm rk}(\muh)=1$, for each $i$, $0\leq i\leq r$, there exists $f_i\in K$ such that $v^h(F_i-f_i)>\muh(F)-i\nu(x)$. Set $f=f_0+f_1x+\ldots+f_rx^r$. Then
\[
\muh(F-f)\geq \min_{0\leq i\leq r}\{\muh\left((F_i-f_i)x^i\right)\}>\muh(F).
\] 
The last statement follows from \cite[Lemma 2.5]{KP}.
\end{proof}

\begin{corollary}\label{Thm1.3}
The map $\mathcal G_\mu\hk \mathcal G_{\mu^h}$ is an isomorphism.
\end{corollary}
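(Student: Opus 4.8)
The plan is to deduce the corollary immediately from Lemma \ref{lemmarankone}. The map $\iota\colon\ggm\hk\ggmh$ is the canonical map attached to the extension $\muh$ of $\mu$; it sends $\inm f\mapsto\inmh f$ for $f\in\kx\setminus\supp(\mu)$ (here $\muh(f)=\mu(f)$, so no coefficient is killed), and it is injective by Remark (i) of Section \ref{preliminaries}. So the only thing that remains to be shown is that $\iota$ is surjective.

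First I would record the (formal) observation that it suffices to hit every homogeneous element of $\ggmh$. Indeed, by definition $\ggmh=\bigoplus_{\al}\ppa/\ppa^+$, so every element is a finite sum of homogeneous ones; and a homogeneous element of degree $\al$ is a coset $g+\ppa^+$ with $g\in\khx$, $\muh(g)\ge\al$, which is either $0$ (if $\muh(g)>\al$) or equals $\inmh g$ for some $g\in\khx\setminus\supp(\muh)$ with $\muh(g)=\al$. Hence it is enough to show that $\inmh F\in\im(\iota)$ for every $F\in\khx\setminus\supp(\muh)$.

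Given such an $F$, Lemma \ref{lemmarankone} provides $f\in\kx\setminus\supp(\mu)$ with $F\sim_{\muh}f$, i.e. $\inmh F=\inmh f=\iota(\inm f)$. Thus $\inmh F$ lies in the image of $\iota$, proving surjectivity, and therefore $\iota$ is an isomorphism. There is essentially no obstacle left at this point: all the content sits in Lemma \ref{lemmarankone}, where the hypothesis $\op{rk}(v)=\op{rk}(\muh)=1$ is used to approximate each coefficient $F_i\in\kh$ of $F$ by some $f_i\in K$ with $v^h(F_i-f_i)$ large enough that $\muh(F-f)>\muh(F)$. The only points worth watching there are that the approximations must be carried out simultaneously against a uniform target value (which is exactly how that lemma is phrased) and that the degree is preserved, so that the statement about key polynomials also passes through via \cite[Lemma 2.5]{KP}.
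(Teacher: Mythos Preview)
Your proof is correct and follows essentially the same approach as the paper: note injectivity of the canonical map, then deduce surjectivity directly from Lemma \ref{lemmarankone}. The paper's version is terser, simply stating that surjectivity ``follows immediately'' from that lemma, whereas you spell out the reduction to homogeneous elements and add commentary on the role of the rank-one hypothesis; but the content is the same.
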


\begin{proof}
Since $\mathcal G_\mu\hk \mathcal G_{\mu^h}$ is injective, it is enough to show that it is surjective. This follows immediately from Lemma \ref{lemmarankone}.
\end{proof}\e


Now, let us show that  the rigidity statements for augmentations $\mu\to\nu$ take a very special form, with one exception: the limit augmentations in which $\supp(\nu)\ne0$. 

\begin{lemma}\label{kp=kp}
If $\kpm\ne\emptyset$, then $\kp(\muh)\cap \kx=\kpm$. In particular, all key polynomials for $\mu$ are irreducible in $\khx$.
\end{lemma}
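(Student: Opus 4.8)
The plan is to transfer the two defining conditions of a key polynomial ($\mu$-irreducibility and $\mu$-minimality) back and forth between $\mu$ and $\muh$, using the two facts already established in the rank one case: the canonical embedding $\iota\colon\ggm\hk\ggmh$ is an isomorphism (Corollary \ref{Thm1.3}), and every $F\in\khx\setminus\supp(\muh)$ is $\muh$-equivalent to some $f\in\kx\setminus\supp(\mu)$ of the same degree (Lemma \ref{lemmarankone}). Two preliminary observations are in order. First, since $\muh$ extends $\mu$, we have $\supp(\muh)\cap\kx=\supp(\mu)$, so the approximating polynomial $f$ provided by Lemma \ref{lemmarankone} automatically avoids $\supp(\muh)$ as well. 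Second, again because $\muh$ extends $\mu$, the isomorphism $\iota$ satisfies $\iota(\inm g)=\inmh g$ for all $g\in\kx\setminus\supp(\mu)$; in particular $\iota(\inm\phi)=\inmh\phi$ for any candidate $\phi\in\kx$.

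For the inclusion $\kpm\subseteq\kp(\muh)\cap\kx$, fix $\phi\in\kpm$; it is monic and lies in $\kx$, so it suffices to verify that $\phi$ is $\muh$-minimal and $\muh$-irreducible. For minimality, suppose $\inmh\phi\mid\inmh F$ with $F\in\khx\setminus\supp(\muh)$; replacing $F$ by the $f$ of Lemma \ref{lemmarankone} we may assume $\inmh F=\inmh f$ with $f\in\kx\setminus\supp(\mu)$ and $\deg f=\deg F$, so $\iota(\inm\phi)\mid\iota(\inm f)$, hence $\inm\phi\mid\inm f$ since $\iota$ is an isomorphism, hence $\deg\phi\le\deg f=\deg F$ by $\mu$-minimality of $\phi$. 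For irreducibility, if $\inmh\phi\mid\inmh(FG)$, replace $F,G$ by $f,g\in\kx\setminus\supp(\mu)$ with $\inmh F=\inmh f$ and $\inmh G=\inmh g$; then $\inmh(FG)=\inmh f\cdot\inmh g=\inmh(fg)=\iota(\inm(fg))$, so $\inm\phi\mid\inm(fg)$, so $\inm\phi$ divides $\inm f$ or $\inm g$ by $\mu$-irreducibility, and pushing forward through $\iota$ gives that $\inmh\phi$ divides $\inmh F$ or $\inmh G$. Thus $\phi\in\kp(\muh)$.

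For the reverse inclusion $\kp(\muh)\cap\kx\subseteq\kpm$, fix $\phi\in\kp(\muh)\cap\kx$; it is monic in $\kx$, and here no approximation is needed. If $\inm\phi\mid\inm f$ for some nonzero $f\in\kx\setminus\supp(\mu)$, applying $\iota$ gives $\inmh\phi\mid\inmh f$, so $\deg\phi\le\deg f$ by $\muh$-minimality; and if $\inm\phi\mid\inm(fg)$, then $\inmh\phi\mid\inmh f\cdot\inmh g$, so $\inmh\phi$ divides $\inmh f$ or $\inmh g$ by $\muh$-irreducibility, and applying $\iota^{-1}$ returns that $\inm\phi$ divides $\inm f$ or $\inm g$. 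Hence $\phi\in\kpm$, and the equality $\kp(\muh)\cap\kx=\kpm$ follows. The final assertion is then immediate: by the first inclusion every $\phi\in\kpm$ lies in $\kp(\muh)$, and key polynomials for $\muh$ are irreducible in $\khx$.

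I do not expect a genuine obstacle here; the only delicate points are the two preliminary observations above — keeping track of which polynomials avoid which support, and the identification $\iota(\inm\phi)=\inmh\phi$ — and both are forced by the fact that $\muh$ restricts to $\mu$ on $\kx$. All the substantive content has already been isolated into Corollary \ref{Thm1.3} and Lemma \ref{lemmarankone}.
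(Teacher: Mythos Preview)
Your proof is correct and follows essentially the same approach as the paper, relying on the isomorphism $\iota$ (Corollary \ref{Thm1.3}) and the approximation Lemma \ref{lemmarankone}. The only cosmetic difference is that the paper packages the irreducibility transfer as ``$\inm\phi$ is prime in $\ggm$ if and only if $\inmh\phi$ is prime in $\ggmh$'' (an immediate consequence of $\iota$ being a ring isomorphism), whereas you verify the defining condition of $\mu$-irreducibility by hand via approximation; both amount to the same thing.
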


\begin{proof}
By Corollary \ref{Thm1.3}, for all $f\in\kx\setminus\supp(\mu)$, the homogeneous element $\inv_\mu f$ is prime in $\ggm$ if and only if $\inn_{\muh}f$ is prime in $\ggmh$.	
	
Take $Q\in \kp(\muh)\cap \kx$. By Corollary \ref{Thm1.3}, $\inv_\mu Q$ is a prime element. Also,  the $\mu$-minimality of $Q$ follows trivially from  its $\muh$-minimality. Hence, $Q\in\kpm$. 

Conversely, take $\phi\in\kpm$. By Corollary \ref{Thm1.3}, $\inn_{\muh}\phi$ is a prime element. Let us show that $\phi$ is $\muh$-minimal. For $F\in\khx\setminus\supp(\muh)$, suppose that $\inn_{\muh}\phi\mid \inn_{\muh}F$. By Lemma \ref{lemmarankone}, $\inn_{\muh}\phi\mid \inn_{\muh}f$, for some $f\in \kx\setminus \supp(\mu)$ such that $\deg(f)=\deg(F)$. By Corollary \ref{Thm1.3}, $\inn_\mu\phi\mid \inv_\mu f$, so that $\deg(\phi)\le\deg(f)=\deg(F)$, by the $\mu$-minimality of $\phi$. Hence, $\phi$ is $\muh$-minimal and $\phi\in\kp(\muh)$.
\end{proof}

\begin{corollary}\label{deg=deg}
If $\kpm\ne\emptyset$, then $\deg(\mu)=\deg(\muh)$.
\end{corollary}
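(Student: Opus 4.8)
The plan is to prove the two inequalities $\deg(\muh)\le\deg(\mu)$ and $\deg(\mu)\le\deg(\muh)$ separately, each as an immediate consequence of Lemmas \ref{lemmarankone} and \ref{kp=kp}. For the first inequality, recall that $\deg(\mu)=\min_{\phi\in\kpm}\deg(\phi)$; pick $\phi\in\kpm$ attaining this minimum. By Lemma \ref{kp=kp} we have $\kpm=\kp(\muh)\cap\kx\subseteq\kp(\muh)$, so $\phi\in\kp(\muh)$, and therefore $\deg(\muh)=\min_{Q\in\kp(\muh)}\deg(Q)\le\deg(\phi)=\deg(\mu)$. (Note that $\kpm\ne\emptyset$ guarantees $\kp(\muh)\ne\emptyset$, so $\deg(\muh)$ is defined in the intended sense.)

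For the reverse inequality, choose $Q\in\kp(\muh)$ with $\deg(Q)=\deg(\muh)$. By Lemma \ref{lemmarankone} applied to $F=Q$, there exists $f\in\kx\setminus\supp(\mu)$ with $\deg(f)=\deg(Q)$ and, since $Q\in\kp(\muh)$, also $f\in\kp(\muh)$. As $f\in\kx$, Lemma \ref{kp=kp} gives $f\in\kp(\muh)\cap\kx=\kpm$, whence $\deg(\mu)\le\deg(f)=\deg(Q)=\deg(\muh)$. Combining the two inequalities yields $\deg(\mu)=\deg(\muh)$. There is no real obstacle here: the content has already been absorbed into Corollary \ref{Thm1.3} and its consequences; the only point to be careful about is invoking the "Moreover" clause of Lemma \ref{lemmarankone} so that the approximating polynomial $f$ is again a key polynomial, and then recognizing it inside $\kx$ via Lemma \ref{kp=kp}. \qed
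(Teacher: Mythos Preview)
Your proof is correct and follows essentially the same approach as the paper. The paper establishes $\deg(\muh)\le\deg(\mu)$ exactly as you do, and for the reverse inequality argues by contradiction (if some $F\in\kp(\muh)$ had $\deg(F)<\deg(\phi)$, Lemma \ref{lemmarankone} would yield $f\in\kpm$ of the same degree), which is just the contrapositive of your direct argument; you are also slightly more explicit in separating the use of Lemma \ref{lemmarankone} from the subsequent appeal to Lemma \ref{kp=kp}.
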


\begin{proof}
Let $\phi\in\kx$ be a key polynomial for $\mu$ of minimal degree. By Lemma \ref{kp=kp}, $\phi$ is  a key polynomial for $\muh$. By Lemma \ref{lemmarankone}, there cannot exist $F\in\kp(\muh)$ with $\deg(F)<\deg(\phi)$, because then it would exist $f\in\kpm$ with $\deg(f)=\deg(F)$, contradicting the minimality of $\deg(\phi)$ in $\kpm$.	
\end{proof}

\begin{lemma}\label{OrdAug}
	Let $\nu=[\mu;\phi,\ga]$ be an ordinary augmentation. 	Then, $\phi\in\kp(\muh)$ and $\nuh=[\muh;\phi,\ga]$.

\end{lemma}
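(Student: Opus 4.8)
The plan is to construct the ordinary augmentation $[\muh;\phi,\ga]$ directly on $\khx$ and then identify it with $\nuh$ via the uniqueness of common extensions provided by Theorem \ref{Rig}. First I would observe that, by the very definition of an ordinary augmentation, $\phi\in\kpm$ and $\ga>\mu(\phi)$; in particular $\kpm\ne\emptyset$, so Lemma \ref{kp=kp} applies and gives $\phi\in\kp(\muh)$ (hence $\phi$ is irreducible in $\khx$). Since $\muh$ restricts to $\mu$ on $\kx$ and $\phi\in\kx$, we have $\muh(\phi)=\mu(\phi)$, so $\ga>\muh(\phi)$ and the ordinary augmentation $\nu':=[\muh;\phi,\ga]$ is well defined by Lemma \ref{Mlvord} (applied over $\kh$); it is a valuation on $\khx$ with $\muh<\nu'$.

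The core of the argument is then to check that $\nu'$ is a common extension of $\nu$ and $\vh$. For $a\in\kh$ the $\phi$-expansion of $a$ is $a$ itself, so $\nu'(a)=\muh(a)=\vh(a)$ and $\nu'$ extends $\vh$. For $f\in\kx$, the $\phi$-expansion $f=\sum_i a_i\phi^i$ with $a_i\in\kx$, $\deg(a_i)<\deg(\phi)$, computed over $K$ is also the $\phi$-expansion of $f$ over $\kh$, because $\phi$ is monic with coefficients in $K$ and the $\phi$-expansion is unique. As $\muh(a_i)=\mu(a_i)$ for all $i$, we obtain
\[
\nu'(f)=\min_i\{\muh(a_i)+i\ga\}=\min_i\{\mu(a_i)+i\ga\}=\nu(f),
\]
so $\nu'$ restricts to $\nu$ on $\kx$.

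Finally, by Theorem \ref{Rig} there is a unique common extension of $\nu$ and $\vh$ to $\khx$, namely $\nuh$; since $\nu'$ is such an extension, $\nu'=\nuh$, i.e. $\nuh=[\muh;\phi,\ga]$. I do not expect a genuine obstacle here: the only point demanding a little care is the compatibility of the $\phi$-expansions over $K$ and over $\kh$, which is immediate since $\phi\in\kx$. As an alternative route, one could invoke Lemma \ref{Ordinary} together with Lemma \ref{kp=kp}: the latter forces $Q_{\mu,\phi}=\phi$, whence $\muh(\phi/Q_{\mu,\phi})=0$ and $\ga'=\ga$, recovering the same conclusion.
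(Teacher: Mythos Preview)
Your proof is correct. The main argument you give---constructing $[\muh;\phi,\ga]$ explicitly, checking by comparison of $\phi$-expansions that it restricts to $\nu$ on $\kx$ and to $\vh$ on $\kh$, and then invoking the uniqueness in Theorem \ref{Rig}---is a valid and self-contained route. The paper instead takes exactly the ``alternative route'' you sketch at the end: it applies Lemma \ref{kp=kp} to see that $\phi$ is already irreducible in $\khx$, hence $Q_{\mu,\phi}=\phi$, and then reads off the conclusion from Lemma \ref{Ordinary} (with $\muh(\phi/Q)=0$, so $\ga'=\ga$). Your direct computation avoids appealing to the machinery behind Lemma \ref{Ordinary} (Proposition \ref{P3.3Isom}, the analysis of $\inmh(\phi/Q)$, etc.), at the cost of repeating a small part of that lemma's content by hand; the paper's version is shorter because it cashes in on work already done in the general setting.
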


\begin{proof}
By Lemma \ref{kp=kp}, $\phi\in\kp(\muh)$. Hence, its irreducible factor $Q_{\mu,\phi}$ is equal to $\phi$ itself. The result follows from Lemma \ref{Ordinary}.
\end{proof}

\begin{lemma}\label{kpi=kpi}
Let $\nu=[\cc;\phi,\ga]$ be a limit augmentation such that $\ga<\infty$. Then, $\phi\in\kpi(\cc^h)$ and $\nuh=[\cc^h;\phi,\ga]$.
\end{lemma}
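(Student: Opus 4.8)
The plan is to deduce the statement from Lemma \ref{unstabilitythm}, exactly as Lemma \ref{OrdAug} is deduced from Lemma \ref{Ordinary} in the ordinary case. The only thing that needs to be checked is that the irreducible factor $Q_{\cc,\phi}\in\irr(\kh)$ of $\phi$ singled out by $\cc$ is $\phi$ itself; equivalently, that $\phi$ remains irreducible in $\khx$.

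First I would record that, since $\ga<\infty$, the valuation $\nu=[\cc;\phi,\ga]$ has trivial support and $\phi$ is a key polynomial of minimal degree for $\nu$ (Section \ref{subsecLimAug}); in particular $\kp(\nu)\ne\emptyset$, and under the standing hypothesis of this section $\nu$ has rank one. I would then apply the rank-one rigidity already established, namely Lemma \ref{kp=kp}, to the valuation $\nu$ itself (not to $\mu$ nor to the family $\cc$): it gives $\kp(\nuh)\cap\kx=\kp(\nu)$, so $\phi\in\kp(\nuh)$, and since key polynomials are irreducible in the ambient polynomial ring, $\phi$ is irreducible in $\khx$. (One could equally invoke Corollary \ref{deg=deg} together with Corollary \ref{corollaryandrei} for $\nu$.) Hence $\phi$ has a unique monic irreducible factor in $\khx$, namely $\phi$, and therefore $Q_{\cc,\phi}=\phi$.

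Now I would invoke Lemma \ref{unstabilitythm} with $Q:=Q_{\cc,\phi}=\phi$. It yields simultaneously that $\phi$ is a limit key polynomial for $\cc^h$, i.e. $\phi\in\kpi(\cc^h)$, and that $\nuh=[\cc^h;\phi,\ga']$ with $\ga'=\ga-(\rho_\cc)^h(\phi/Q)=\ga-(\rho_\cc)^h(1)=\ga$, since $\phi/Q=1$. This is precisely the asserted description $\nuh=[\cc^h;\phi,\ga]$, which completes the argument.

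I do not anticipate a genuine obstacle: all the real work sits in the rank-one rigidity results and in Lemma \ref{unstabilitythm}. The one point that requires attention is conceptual rather than technical, namely to apply the rank-one statements to $\nu$ — which is legitimate precisely because $\ga<\infty$ makes $\nu$ an honest rank-one valuation on $\kx$ admitting $\phi$ as a key polynomial — instead of attempting to push the general machinery through the value-transcendental minimal limit augmentation $\nu_\cc$, for which the rank-one hypotheses are not available. This also clarifies why the case $\ga=\infty$ (nontrivial support, $\kp(\nu)=\emptyset$) is genuinely exceptional, as noted just before the lemma.
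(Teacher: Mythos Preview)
Your proposal is correct and follows essentially the same route as the paper's proof: use $\ga<\infty$ to get $\phi\in\kp(\nu)$, apply Lemma \ref{kp=kp} to $\nu$ to obtain $\phi\in\kp(\nuh)$ and hence $\phi$ irreducible in $\khx$, conclude $Q_{\cc,\phi}=\phi$, and then invoke Lemma \ref{unstabilitythm}. The paper's argument is more terse and leaves the computation $\ga'=\ga$ implicit, but the structure is the same.
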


\begin{proof}
Since $\ga<\infty$, we have $\phi\in\kpn$. By Lemma \ref{kp=kp}, $\phi\in\kp(\nuh)$.  Hence, its irreducible factor $Q_{\cc,\phi}$ is equal to $\phi$ itself. The result follows from Lemma \ref{unstabilitythm}.
\end{proof}

\begin{corollary}\label{def=def}
	Let $\mu\to\nu$ be a limit augmentation such that $\supp(\nu)=0$. Then,
	\[
	d(\mu\to\nu)=\deg(\nu)/\deg(\mu).
	\]	
\end{corollary}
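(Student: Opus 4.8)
The plan is to push the whole computation to the henselization, where Lemma~\ref{Lemma3} computes the defect of a limit augmentation as a plain ratio of degrees, and then to use the rank-one rigidity results of this section to recognize those degrees as the ones we started with.

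First I would observe that the hypothesis $\supp(\nu)=0$ means precisely that the parameter of the augmentation is finite, so I may write $\nu=[\cc;\phi,\ga]$ with $\ga<\infty$ and $\phi\in\kpi(\cc)$. This is exactly the setting of Lemma~\ref{kpi=kpi}, which I would invoke to conclude that $\phi$ is again a limit key polynomial for the henselized family $\cc^h$ and that $\nuh=[\cc^h;\phi,\ga]$; in particular $\muh\to\nuh$ is still a \emph{limit} augmentation. Next, Theorem~\ref{defectcte} gives $d(\mu\to\nu)=d(\muh\to\nuh)$, and since $(\kh,\vh)$ is henselian and $\muh\to\nuh$ is a limit augmentation, Lemma~\ref{Lemma3} yields $d(\muh\to\nuh)=\deg(\nuh)/\deg(\muh)$. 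Finally, $\kpm\ne\emptyset$ (as $\mu$ is residue-transcendental, being the initial member of $\cc$) and $\kp(\nu)\ne\emptyset$ (because $\ga<\infty$, so $\phi$ is a key polynomial of minimal degree of $\nu$), so two applications of Corollary~\ref{deg=deg} give $\deg(\muh)=\deg(\mu)$ and $\deg(\nuh)=\deg(\nu)$. Substituting these into the previous identity gives $d(\mu\to\nu)=\deg(\nu)/\deg(\mu)$.

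There is no real calculation to do; the argument is a chain of already-established facts. The one load-bearing step is the appeal to Lemma~\ref{kpi=kpi}, and this is where the hypothesis $\supp(\nu)=0$ is used in an essential way: it is precisely the finiteness of $\ga$ that makes $\phi$ a genuine key polynomial for $\nu$, so that Lemma~\ref{kp=kp} (together with rank one) forces $\phi$ to stay irreducible over $\kh$ and hence to serve itself as the limit key polynomial defining $\nuh$. Were $\phi$ allowed to split over $\kh[x]$, the augmentation upstairs would be governed by a proper irreducible factor of strictly smaller degree and the clean ratio $\deg(\nu)/\deg(\mu)$ would be lost. So I expect no obstacle beyond citing Lemma~\ref{kpi=kpi}, Theorem~\ref{defectcte}, Lemma~\ref{Lemma3} and Corollary~\ref{deg=deg} in the right order.
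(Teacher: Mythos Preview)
Your proposal is correct and follows essentially the same route as the paper: reduce to the henselian situation via Theorem~\ref{defectcte}, compute the defect there as $\deg(\nuh)/\deg(\muh)$ (the paper packages this via Proposition~\ref{degG}, you via Lemma~\ref{kpi=kpi} plus Lemma~\ref{Lemma3}), and then invoke Corollary~\ref{deg=deg} to identify $\deg(\muh)=\deg(\mu)$ and $\deg(\nuh)=\deg(\nu)$. Your extra care in citing Lemma~\ref{kpi=kpi} to guarantee that $\muh\to\nuh$ remains a limit augmentation is a welcome bit of explicitness that the paper leaves implicit.
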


\begin{proof}
	By Theorem 1.2 and Proposition 7.4,
	\[
d(\mu\to\nu)=d(\muh\to\nuh)=\deg(\nuh)/\deg(\muh).	
	\]
The result follows from Corollary 8.4.
\end{proof}\e

Nevertheless, for a limit augmentation $\mu\to\nu$ where $\supp(\nu)\ne0$, we have
\[
d(\mu\to\nu)=\deg(\nuh)/\deg(\muh)=\deg(\nuh)/\deg(\mu).
\] \bs

\section{Examples}
We start this section by presenting examples of key polynomials for valuations on $K[x]$ that are not irreducible in $K^h[x]$.  These are counterexamples for the final conjecture in \cite{Andrei}.

\begin{lemma}\label{lemmaeveryirrediskey}
Let $v$ be a valuation on $K$ and take $g\in \irr(K)$. Then there exists a valuation on $K[x]$, extending $v$, for which $g$ is a key polynomial.
\end{lemma}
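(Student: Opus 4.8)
The plan is to produce the desired valuation by first realizing the nontrivially supported valuation attached to $g$ and then peeling off its last augmentation via the Mac Lane--Vaqui\'e structure theorem.

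First I would fix an extension $\vb$ of $v$ to an algebraic closure $\kb$ and a root $\theta\in\kb$ of $g$. Since $g$ is monic irreducible and $g(\theta)=0$, evaluation at $\theta$ identifies $\kx/g\kx$ with the field $K(\theta)$, so $\mu_\infty(f):=\vb(f(\theta))$ defines a valuation on $\kx$ extending $v$ with $\supp(\mu_\infty)=g\kx$. (In the notation of Section~\ref{secFinLeaves} this is the valuation $w_G$ for $G$ the minimal polynomial of $\theta$ over $\kh$, although we will not need that description.)

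Next, since $\mu_\infty$ has nontrivial support, the Mac Lane--Vaqui\'e theorem \cite[Theorem~4.3]{MLV} presents it in exactly one of two shapes: either (a)~$\mu_\infty=[\eta;\phi,\infty]$ is an ordinary augmentation of a valuation-transcendental valuation $\eta$ with $\phi\in\kp(\eta)$, or (b)~$\mu_\infty=[\cc;\phi,\infty]$ is a limit augmentation of a continuous family $\cc$ with $\phi\in\kpi(\cc)$. In either case $\supp(\mu_\infty)=\phi\kx$ (Section~\ref{subsecLimAug}); as $\phi$ and $g$ are monic and $\kx$ is a principal ideal domain, this forces $\phi=g$.

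Finally I would read off the conclusion. In case (a) we are done at once: $g=\phi\in\kp(\eta)$, so $\eta$ is a valuation on $\kx$ extending $v$ for which $g$ is a key polynomial. In case (b), $g=\phi$ is a limit key polynomial of $\cc$, and by Lemma~\ref{MinAug} the minimal limit augmentation $\nu_\cc$ satisfies $\kp(\nu_\cc)=\kpi(\cc)$; hence $g\in\kp(\nu_\cc)$, and $\nu_\cc$ is again a valuation on $\kx$ extending $v$ with $g$ as a key polynomial. The argument carries no genuine obstacle; the only step demanding a little care is the identification $\phi=g$, which rests on the fact (Section~\ref{subsecLimAug}) that an augmented valuation with slope $\infty$ has support generated by its (limit) key polynomial.
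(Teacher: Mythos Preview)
Your argument is correct and takes a genuinely different route from the paper. The paper constructs the desired valuation directly: it picks an extension $w$ of $v$ to $L=K(\theta)$ and sets
\[
\mu\left(\sum_i f_i\,g^i\right)=\min_i\,(i,\,w(f_i(\theta)))\in(\Z\times_{\lx}\gq)\infty,
\]
so that $\mu(g)=(1,0)$ lies strictly above the values $(0,\cdot)$ of polynomials of degree $<\deg(g)$; from this it is immediate that $g$ is a key polynomial of minimal degree for $\mu$. Your approach instead builds the nontrivially supported valuation $\mu_\infty$ and invokes the Mac Lane--Vaqui\'e structure theorem (as in the proof of Theorem~\ref{StrongRigEmpty}) together with Lemma~\ref{MinAug} to peel off a penultimate valuation $\eta$ (or $\nu_\cc$) for which $g$ is key. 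The paper's method is more elementary and self-contained, and it yields the stronger conclusion that $g$ has \emph{minimal} degree among key polynomials of the constructed valuation; your method is shorter once the MLV machinery is in hand, and produces a valuation whose value group sits inside $\hat\La$ rather than the rank-enlarged $\Z\times_{\lx}\gq$. Two minor remarks: the dichotomy (a)/(b) is not literally exclusive (an augmentation may be simultaneously ordinary and limit), though this does not affect your argument; and in case~(b) the minimal limit augmentation $\nu_\cc$ takes values in the completion $\hat\La$, which is harmless for the statement as written but worth noting.
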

\begin{proof}
Let $L=K[x]/(g)=K(\theta)$ be the simple extension defined by $g$. Let $w$ be any valuation on $L$ extending $v$. Then, by Theorem 1.1 of \cite{N2021}, the map
\[
\mu(f_0+f_1g+\ldots+f_rg^r):=\min_{0\leq i\leq r}\{(i,w(f_i(\theta))\}\in(\Z\times_{\rm lex}\gq)\infty
\]
is a valuation on $K[x]$ (extending $v$). It is not difficult to show that $g$ is a key polynomial for $\mu$ of minimal degree.
\end{proof}
\begin{remark}
The valuation above can seen as an augmentation of $w$ defined by $\mu(g)=(1,0)$.
\end{remark}

 The next example shows that key polynomials for a valuation on $K[x]$ do not need to be irreducible in $K^h[x]$.
\begin{example}\label{exampnaphel}
Let $(K,v)$ be a non-henselian field. Then, there exists $a\in K^h\setminus K$. Let $g$ be the minimal polynomial of $a$ over $K$. By Lemma \ref{lemmaeveryirrediskey}, there exists a valuation $\mu$ on $K[x]$ such that $g$ is a key polynomial. However, since $a\in K^h$ is a root of $g$, we deduce that $g$ is not irreducible in $K^h[x]$. 
\end{example}
\begin{example}\label{exa1}

Let $(K,v)$ be a valued field such that $p:={\rm char}(K)>0$. Assume that $a\in K$ is such that $g=x^p-x-a\in K[x]$ is irreducible and that $a^{\frac{1}{p^i}}\in K$ for every $i> 0$. Set
\[
S=\left\{\frac{v(g(b))}{p}\mid b\in K\right\}\subseteq \frac 1p vK.
\]
We will discuss two cases.\\
\textbf{Case 1.} Assume that $S<0$, $S\subseteq vK$ and $S$ does not have a last element (this can be realized taking $K=\F_p(t)^{\frac{1}{p^\infty}}$ with the $t$-adic valuation and $a=t^{-1}$). In this case (see \cite[Theorem 1.6]{BN}) there is a unique extension $w$ of $v$ to $K[x]/(g)$ and $d(w/v)=p$. Let $\mu$ be the corresponding valuation on $K[x]$ with support $gK[x]$. Set
\[
a_n=\sum_{i=1}^n a^{\frac{1}{p^i}}\in K.
\]
Consider the depth-zero valuation $\mu_0=\left[v; x,\frac{v(a)}{p}\right]$ and the family of valuations $\mathcal C=(\rho_n)_{n\in \N}$ defined by $\rho_n=\left[\mu_0;x-a_n, \frac{v(a)}{p^{n+1}}\right]$. One can show that this family is a continuous family of valuations (supported on $\N$), $g$ is a limit key polynomial for $\mathcal C$ and that $\mu=[\mathcal C;g,\infty]$. Also, for each $n\in\N$ we have
\[
g=(x-a_n)^p-(x-a_n)-g(a_n).
\]
Since $\rho_n(x-a_n)<0$, this implies that $v(g(a_n))=\rho_n\left((x-a_n)^p\right)<\rho_n(x-a_n)$. Thus, $S_{\rho_n,x-a_n}(g)=\{0,p\}$ and $\deg_{\rho_n}(g)=p$. In particular,
\[
v\ \stackrel{x,\frac{v(a)}{p}}\lra\  \mu_0\ \stackrel{g,\infty}\lra\mu.
\]
is a MLV chain for $\mu$. Fix a henselization $K^h$ of $K$ and an extension $v^h$ of $v$ to $K^h$. One can show that
\[
\mu^h_0:=\left[v^h; x,\frac{v(a)}{p}\right]\mbox{ and }\rho^h_n=\left[\mu^h_0;x-a_n, \frac{v(a)}{p^{n+1}}\right]\mbox{ for }n\in\N,
\]
defines a continuous family $\mathcal C^h$ of valuations on $K^h[x]$. Also, this family is the image of $\mathcal C$ by the map $\mu\mapsto \mu^h$, $g$ is a limit key polynomial for $\mathcal C^h$ and $\mu^h=[\mathcal C^h;g,\infty]$. In particular,
\[
v\ \stackrel{x,\frac{v(a)}{p}}\lra\  \mu_0\ \stackrel{g,\infty}\lra\mu
\]
is a MLV chain for $\mu$ and
\[
p=d(w/v)=d(\mu_0\ra \mu)=\deg(\mu)/\deg(\mu_0)=\deg(\mu^h)/\deg(\mu_0^h).
\]\\
\textbf{Case 2.}  Assume now that $v(a)>0$. Then $v$ admits $p$ many distinct extensions to $K[x]/(g)$. They can be computed as follows. One can show that
\[
\theta_j:=j+\sum_{n=0}^\infty a^{p^n}, 0\leq j\leq p-1,
\]
are all the roots of $g$ (in the completion of $K$). Also, all these roots lie in the henselization of $K$ and hence $g$ is not irreducible in $K^h[x]$. The valuations on $K[x]/(g)$ extending $v$ are
\[
w_j(f+gK[x]):=\overline{v}(f(\theta_j))
\]
where $\overline v$ is a fixed extension of $v$ to an algebraic closure of $K$ (containing $\theta_0,\ldots,\theta_{p-1}$). Fix $w=w_0$, $\theta=\theta_0$ and $\mu$ the corresponding valuation on $K[x]$ with support $gK[x]$. In a similar way as in {\bf Case 1.} we can construct the valuations 
\[
\mu_0:=\left[v; x,v(a)\right]\mbox{ and }\rho_n=\left[\mu_0;x-a_n, p^{n+1}v(a)\right]\mbox{ for }n\in \N,
\]
where
\[
a_n=\sum_{i=0}^n a^{p^i}.
\]
As in {\bf Case 1.} the family $\mathcal C=(\rho_n)_{n\in \N}$ is a continuous family of valuations, $g$ is a limit key polynomial for $\mathcal C$ and $\mu=[\mathcal C;g,\infty]$. In particular,
\[
v\ \stackrel{x,v(a)}\lra\  \mu_0\ \stackrel{g,\infty}\lra\mu
\]
is a MLV chain for $\mu$. However, in this case $g$ is not a limit key polynomial for the corresponding family $\mathcal C^h$. It is easy to show that $x-\theta\in K^h[x]$ is a limit key polynomial for $\mathcal C^h$ and that $\mu^h=[\mathcal C^h;x-\theta,\infty]$. In particular,
\[
\deg(\mu)/\deg(\mu_0)=p>1=\deg(\mu^h)/\deg(\mu_0^h)=d(\mu_0^h\ra \mu^h).
\] 

Now, let us show directly (i.e., without using Theorem \ref{defectcte}) that $d(\mu_0\to\mu)=1$. For each $n\geq 0$ we have
\[
g=(x-a_n)^p-(x-a_n)-g(a_n).
\]
Since $\rho_n(x-a_n)>0$, this implies that $\rho_n\left((x-a_n)^p\right)>\rho_n(x-a_n)=v(g(a_n))$. Thus, $S_{\rho_n,x-a_n}(g)=\{0,1\}$ and $\deg_{\rho_n}(g)=1$. Consequently, $d(\mu_0\to\mu)=1$.

\end{example}
\begin{remark}
In \textbf{Case 2.} of the previous example, the chain
\[
v^h\ \stackrel{x-\theta,\infty}\lra\  \mu^h
\]
is a MLV chain for $\mu^h$. This also shows that $\mu\mapsto \mu^h$ does not preserve the ``depth" of valuations.
\end{remark}

The previous example presents a valuation $\mu$ on $K[x]$ whose MLV chain admits a limit augmentation but the corresponding valuation $\mu^h$ on $K^h[x]$ does not. The next example presents a valuation whose MLV chain needs two limit augmentations and the corresponding valuation on $K^h[x]$ needs none. Also, this example allows us to present a residue-transcendental valuation that admits a key polynomial that is not irreducible in $K^h[x]$. 
\begin{example}\label{exampletwoaug}
Consider the field $K=\Q(t)$ equipped with the $\ord_t$ valuation. Every $u\in K^*$ has an initial term 
$$
\inn(u)=\left(u\,t^{-\ord_t(u)}\right)(0)\in \Q^*.
$$

Take a prime number $p\equiv1\md4$ and let $\ord_p$ be the $p$-adic valuation.

Consider the following discrete rank-two valuation on $K$:
$$
v\colon K^*\lra \Z^2_{\lx},\qquad v(u)=\left(\ord_t(u),\ord_p(\inn(u))\right).
$$ 


For a fixed algebraic closure $\kb$, let $\vb$ be an extension of $v$ to $\kb$
and $(\kh,\vh)$ the henselization of $(K,v)$ determined by this choice. 

Let  $i\in\kb$ be a root  of the polynomial $x^2+1$. Consider its $p$-adic expansion
$$
i=i_0+i_1p^{\ell_1}+\cdots+ i_np^{\ell_n}+\cdots,
$$
with $0<i_n<p$ for all $n$. Denote the truncations of $i$ by
$$
a_n=i_0+i_1p^{\ell_1}+\cdots+ i_{n-1}p^{\ell_{n-1}}\in\Z.
$$

On the other hand, consider $\al\in\kb$ defined as
$$
\al=\sqrt{1+t}=1+(1/2)t+j_2t^2+\cdots +j_nt^n+\cdots,
$$
where $j_n\in\Q$ for all $n$. It is easy to check that $i$ and $\al$ belong to $\kh$.

The following observation is an easy exercise.
\begin{equation}\label{exercise}
\left\{\vb(i-a)\mid a\in K\right\}=\{0\}\times \Z.
\end{equation}

Consider the  algebraic element $\t=i+i\al\in\kh$. Clearly,
\begin{equation}\label{z}
\vb(\t-2i)=\vb(i(\al-1))=\vb(\al-1)=(1,0).
\end{equation}

Let $G=x-\t\in\khx$.
Our aim is to compute a MLV chain of the valuation 
$$
w_G=\left(v_G\right)_{\mid \kx}, \qquad v_G=[\vh;\,G,\infty].
$$
Although $v_G$ is a depth-zero valuation, its restriction $w_G$ has depth equal to two, because it will have a MLV chain of length two.

Note that $\supp(w_G)=g\kx$, where $g$ is the minimal polynomial of $\t$ over $K$: 
\[
g=x^4+(2t+4)x^2+t^2\in \kx. 
\]

For all $a\in K$, by (\ref{exercise}) and (\ref{z}) we deduce that
\begin{equation}\label{wa}
w_G(x-a)=\vb(\t-a)=\vb\left((\t-2i)+(2i-a)\right)=\vb(2i-a).
\end{equation}

Consider the continuous family of degree-one valuations:
$$
\cc=(\rho_n)_{n\ge0},\qquad \rho_n=[v; x-2a_n,(0,\ell_n)],
$$
based on the Gauss' valuation $\mu_0:=\rho_0=[v;x,(0,0)]$.

By (\ref{wa}), $w_G(x-2a_n)=\vb(2i-2a_n)=(0,\ell_n)$. Hence, $\rho_n<w_G$ for all $n$.

Also, it is easy to deduce from (\ref{wa}) that all polynomials of degree one are $\cc$-stable.
On the other hand, the polynomial $\phi=x^2+4$ is a limit key polynomial for $\cc$ because it is $\cc$-unstable. Indeed, since $v(a_n^2+1)=(0,\ell_n)$ and
\[
\phi=(x-2a_n)^2+4a_n(x-2a_n)+4+4a_n^2,
\]
we deduce that $\rho_n(\phi)=(0,\ell_n)$. Thus,  $\rho_n(\phi)$ grows with $n$.

Finally, $w_G(\phi)=\vb(\phi(\t))=(1,0)$, because
\[
\phi(\t)= \t^2+4=-2\left(t+j_2t^2+\cdots+j_nt^n+\cdots\right).
\]

Therefore, the limit augmentation $\mu_1=[\cc;\,\phi,(1,0)]$ 
satisfies $\mu_0<\mu_1< w_G$. \e

 This valuation $\mu_1$ is residue-transcendental and $\phi$ is a key polynomial of minimal degree for $\mu_1$, which is not irreducible in $ \khx$.
We have $Q_{\cc,\phi}=x-2i$.

In order to reach $w_G$, we need to consider another limit augmentation, with respect to the continuous family:
\[
\cc_1=\left(\eta_n\right)_{n\ge0},\qquad \eta_n=[\cc;Q_n,w_G(Q_n)]\qquad Q_n:=\phi+b_n,
\]
where $b_n\in K$ are taken to be
\[
b_0=0,\qquad  b_n=2\left(t+j_2t^2+\cdots+j_nt^n\right),\quad n\ge1.
\]
One can check that $g$ is a limit key polynomial for $\cc_1$, and the MLV of $w_G$  consists of two consecutive limit augmentations:
$$
v\ \stackrel{x,(0,0)}\lra\ \mu_0 \ \stackrel{\phi,(1,0)}\lra\ \mu_1\ \stackrel{g,\infty}\lra w_G.
$$ 

\end{example}


\begin{thebibliography}{}


\bibitem{VT}M. Alberich-Carrami$\tilde{\mbox{n}}$ana,  J. Gu\`ardia, E. Nart, J. Ro\'e, \emph{Valuative trees of valued fields}, preprint arXiv:2107.09813v3 [math.AG].


\bibitem{BN}M. dos Santos Barnab\'e, J. Novacoski, \emph{Valuations on $\kx$ approaching a fixed irreducible polynomial}, J. Algebra {\bf 592} (2022), 100--117.



\bibitem{Andrei} A. Bengus-Lasnier, \textit{Minimal Pairs, Truncation and Diskoids}, J. Algebra \textbf{579} (2021), 388--427.

\bibitem{Dec}J. Decaup, W. Mahboub, M. Spivakovsky, \emph{Abstract key polynomials and comparison theorems with the key polynomials of Maclane-Vaqui\'e}, Illin. J. Math. {\bf 62}, Number 1-4 (2018), 253--270.


\bibitem{endler}O. Endler, \emph{Valuation Theory}, Universitex, Springer-Verlag Berlin Heidelberg, 1972.























\bibitem{csme} F.-V. Kuhlmann, E. Nart, \emph{Cuts and small extensions of abelian ordered groups}, J. Pure Appl. Algebra \textbf{226} (2022), 107103.


\bibitem{mcla} S. MacLane, \emph{A construction for absolute values in polynomial rings}, Trans. Amer. Math. Soc. {\bf40} (1936), 363--395.



\bibitem{defless} N. Moraes de Oliveira, E. Nart, \emph{Defectless polynomials over henselian fields and inductive valuations}, J. Algebra {\bf 541} (2020), 270--307.





\bibitem{KP} E. Nart, \emph{Key polynomials over valued fields}, Publ. Mat. {\bf 64} (2020), 195--232.

\bibitem{MLV} E. Nart, \emph{Maclane-Vaqui\'e chains of valuations on a polynomial ring}, Pacific J. Math. {\bf 311-1} (2021), 165--195.

\bibitem{Rig} E. Nart, \emph{Rigidity of valuative trees under henselization}, arXiv:2202.0204v1 [math.AG], to appear in Pacific J. Math.




\bibitem{N2019} J. Novacoski, \emph{Key polynomials and minimal pairs}, J. Algebra {\bf 523} (2019), 1--14.

\bibitem{N2021} J. Novacoski, \emph{On Maclane--Vaqui\'e key polynomials}, J. Pure Appl. Algebra {\bf 225} (2021), 106644.

\bibitem{caiospiv} J. Novacoski, C. H. Silva de Souza, M. Spivakovsky, \emph{Graded rings associated to valuations and direct limits}, arXiv:2205.05035 (2022).




\bibitem{PP}L. Popescu, N. Popescu, \emph{On the residual transcendental extensions of a valuation. Key polynomials and augmented valuations}, Tsukuba Journal of Mathematics  {\bf 15} (1991), 57--78.








\bibitem{Vaq0}M. Vaqui\'e, \emph{Famille admisse associ\'ee \`a une  valuation de $\kx$}, Singularit\'es Franco-Japonaises, S\'eminaires et Congr\'es 10, SMF, Paris (2005), Actes du colloque franco-japonais, juillet 2002, \'edit\'e par Jean-Paul Brasselet et Tatsuo Suwa, 391--428.

\bibitem{Vaq}M. Vaqui\'e, \emph{Extension d'une valuation}, Trans. Amer. Math. Soc.  {\bf 359} (2007), no. 7, 3439--3481.




\bibitem{Vaq3}M. Vaqui\'e, \emph{Valuation augment\'ee, paire minimal et valuation approch\'ee}, preprint 2021, hal-02565309, version 2.


\end{thebibliography}
\end{document}